\newcommand{\seq}{\coloneqq}
\newtheorem{Thm}{Theorem}[section]
\newtheorem{Cor}[Thm]{Corollary}
\newtheorem{Prop}[Thm]{Proposition}
\newtheorem{Lem}[Thm]{Lemma}
\newtheorem{conj}[Thm]{Conjecture}
\newtheorem{Conj}[Thm]{Conjecture}
\theoremstyle{definition}
\newtheorem{Def}[Thm]{Definition}
\newtheorem{Rem}[Thm]{Remark}
\newtheorem{Ex}[Thm]{Example}
\numberwithin{equation}{section}
\newcommand{\arxiv}[1]{\href{http://arxiv.org/abs/#1}{\texttt{arXiv:#1}}}
\newcommand{\ol}{\overline}
\newcommand{\ul}{\underline}
\newcommand{\ep}{\epsilon}
\newcommand{\bep}{{\boldsymbol{\ep}}}
\newcommand{\Hom}{\mathop{\mathrm{Hom}}\nolimits}
\newcommand{\Aut}{\mathop{\mathrm{Aut}}\nolimits}
\newcommand{\End}{\mathop{\mathrm{End}}\nolimits}
\newcommand{\Ext}{\mathop{\mathrm{Ext}}\nolimits}
\newcommand{\Ker}{\mathop{\mathrm{Ker}}\nolimits}
\newcommand{\Cok}{\mathop{\mathrm{Coker}}\nolimits}
\newcommand{\Image}{\mathop{\mathrm{Im}}\nolimits}
\newcommand{\Res}{\mathop{\mathrm{Res}}\nolimits}
\newcommand{\Ind}{\mathop{\mathrm{Ind}}\nolimits}
\newcommand{\vdim}{\mathop{\ul{\dim}}}
\newcommand{\fg}{\mathfrak{g}}
\newcommand{\SG}{\mathfrak{S}}
\newcommand{\tv}{\mathfrak{T}}
\newcommand{\fo}{\mathfrak{o}}
\newcommand{\N}{\mathbb{N}}
\newcommand{\Z}{\mathbb{Z}}
\newcommand{\Q}{\mathbb{Q}}
\newcommand{\C}{\mathbb{C}}
\newcommand{\kk}{\Bbbk}
\newcommand{\Oo}{\mathbb{O}}
\newcommand{\Kk}{\mathbb{K}}
\newcommand{\bP}{\mathbb{P}}
\newcommand{\bD}{\mathbb{D}}
\newcommand{\mof}{\text{-$\mathsf{mod}$}}
\newcommand{\Mod}{\text{-$\mathsf{Mod}$}}
\newcommand{\sQ}{\mathsf{Q}}
\newcommand{\sW}{\mathsf{W}}
\newcommand{\sR}{\mathsf{R}}
\newcommand{\bB}{\mathsf{B}}
\newcommand{\Cc}{\mathscr{C}}
\newcommand{\Mm}{\mathscr{M}}
\newcommand{\scrQ}{\mathscr{Q}}
\newcommand{\scrP}{\mathscr{P}}
\newcommand{\cA}{\mathcal{A}}
\newcommand{\ocA}{\bar{\cA}}
\newcommand{\cR}{\mathcal{R}}
\newcommand{\cF}{\mathcal{F}}
\newcommand{\cG}{\mathcal{G}}
\newcommand{\cQ}{\mathcal{Q}}
\newcommand{\cL}{\mathcal{L}}
\newcommand{\cC}{\mathcal{C}}
\newcommand{\cM}{\mathcal{M}}
\newcommand{\rH}{\mathrm{H}}
\newcommand{\tL}{\tilde{L}}
\newcommand{\tM}{\tilde{M}}
\newcommand{\tB}{\tilde{B}}
\newcommand{\tE}{\tilde{E}}
\newcommand{\tF}{\tilde{F}}
\newcommand{\tY}{\tilde{Y}}
\newcommand{\tA}{\tilde{A}}
\newcommand{\tm}{\tilde{m}}
\newcommand{\wh}{\widehat}
\newcommand{\hI}{\hat{I}}
\newcommand{\hH}{\widehat{H}}
\newcommand{\rr}{\mathbf{r}}
\newcommand{\gMod}{\text{-$\mathsf{Mod}_\Z$}}
\newcommand{\bd}{{\boldsymbol{d}}}
\newcommand{\ii}{{\mathbf{i}}}
\newcommand{\bdelta}{{\boldsymbol{\delta}}}
\newcommand{\bv}{{\boldsymbol{v}}}
\newcommand{\Rep}{\mathop{\mathrm{Rep}}}
\newcommand{\For}{\mathop{\mathrm{For}}}
\newcommand{\Perv}{\mathop{\mathrm{Perv}}}
\newcommand{\Stab}{\mathop{\mathrm{Stab}}\nolimits}
\newcommand{\dhom}{\mathrm{h}}
\newcommand{\dext}{\mathrm{e}}
\newcommand{\id}{\mathsf{id}}
\newcommand{\op}{\mathrm{op}}
\newcommand{\qv}{\mathfrak{M}^\bullet}
\newcommand{\qvs}{Z^\bullet}
\newcommand{\qvr}{\mathfrak{M}_0^{\bullet \mathrm{reg}}}
\newcommand{\pt}{\mathrm{pt}}
\newcommand{\sM}{M^!}
\newcommand{\sbM}{\bar{M}^!}
\newcommand{\coM}{M^*}
\newcommand{\cbM}{\bar{M}^*}
\newcommand{\wt}{\mathrm{wt}}
\newcommand{\IC}{\mathrm{IC}}
\newcommand{\IH}{\mathrm{IH}}
\newcommand{\Gm}{{\C^{\times}}}
\newcommand{\ev}{\mathop{\mathrm{ev}}\nolimits}
\newcommand{\Gr}{\mathop{\mathrm{Gr}}\nolimits\!}
\newcommand{\fdim}{\mathrm{f}}
\newcommand{\nilp}{\mathrm{nilp}}
\title{Monoidal Jantzen filtrations}
\date{\today}
\subjclass[2020]{17B37, 20G42, 16T25, 81R50, 17B10, 17B67}
\author[R.~Fujita]{Ryo Fujita}
\address[R.~Fujita]{Research Institute for Mathematical Sciences, Kyoto University, Oiwake-Kitashirakawa, Sakyo, Kyoto, 606-8502, Japan}
\email{rfujita@kurims.kyoto-u.ac.jp}
\author[D.~Hernandez]{David Hernandez}
\address[D.~Hernandez]{Universit\'e Paris Cit\'e, Sorbonne Universit\'e, CNRS, IMJ-PRG, F-75013 Paris, France}
\email{david.hernandez@imj-prg.fr}
\begin{document}

\begin{abstract}
We introduce a monoidal analogue of Jantzen filtrations in the framework of monoidal abelian categories with generic braidings. 
It leads to a deformation of the multiplication of the Grothendieck ring. We conjecture, and we prove 
in many remarkable situations, that this deformation is associative so that our construction yields a 
quantization of the Grothendieck ring as well as analogs of Kazhdan-Lusztig polynomials. 
As a first main example, for finite-dimensional representations of simply-laced quantum loop algebras, 
we prove the associativity and we establish that the resulting quantization coincides with the quantum Grothendieck ring constructed by Nakajima and Varagnolo-Vasserot in a geometric manner. 
Hence, it yields a unified representation-theoretic interpretation of the quantum Grothendieck ring. As a second main example, we establish an analogous result for a monoidal category of finite-dimensional modules over symmetric quiver Hecke algebras categorifying the coordinate ring of a unipotent group associated with a Weyl group element. We obtain various applications, in particular on the homological structure of representations.
\end{abstract}

\maketitle

\tableofcontents

\section{Introduction}

Jantzen filtrations are at the origin of fundamental developments of representation theory. For instance, the celebrated Jantzen conjecture \cite{J} (and its reformulation by Gabber-Joseph \cite{GJ}), originally proved by Beilinson-Bernstein \cite{BB}, implies that the (original) Kazhdan-Lusztig polynomials \cite{KL} are interpreted in terms of Jantzen filtrations of Verma modules in the category $\mathscr{O}$ of a simple Lie algebra. 
 This explains remarkable properties of these polynomials: their coefficients are positive and their evaluation at $1$ are the multiplicities of simple modules in certain distinguished representations. This 
gives rise to the Kazhdan-Lusztig algorithm to compute characters of simple modules in certain important categories by using geometric representation theory.

The definition of Jantzen filtrations relies on an isomorphism of 
$\Kk$-vector spaces
$$\phi \colon V\otimes_{\Oo}\Kk\simeq W\otimes_\Oo\Kk$$
where $\Kk$ is the fraction field of an integral domain $\Oo$, and $V$, $W$ are $\Oo$-modules.
For $\mathfrak{p}$ a maximal ideal of $\Oo$, one has the respective filtrations 
$\mathfrak{p}^iV$ and $\mathfrak{p}^jW$ of $V$ and $W$, $i,j\geq 0$. The Jantzen filtrations
are obtained from their interplay via the isomorphism $\phi$ (see \cite[II.8]{J2} for the precise definition).

\subsection{Main construction}
\label{Ssec:main_construction}
We introduce a monoidal analogue of Jantzen filtrations in the framework of monoidal 
categories with generic braidings, which we call  $R$-matrices, with the following salient points in comparison to ordinary 
Jantzen filtrations:

\begin{enumerate}
\item Instead of one isomorphism $\phi$, our definition of the filtration of $W$ 
 is obtained from two remarkable isomorphisms
\begin{equation}\label{vwv}V\otimes_{\Oo} \Kk \simeq W\otimes_{\Oo}\Kk 
\simeq V'\otimes_{\Oo} \Kk,\end{equation}
by an interplay of the images of three relevant filtrations. 
\item Our filtrations lead to the deformation not only of certain multiplicities, but also of the structure constants of the Grothendieck ring of the monoidal category.
\end{enumerate}

The precise formula for the monoidal Jantzen filtrations is given in (\ref{expfm}).

Our general construction depends on the choice of a PBW-theory in the monoidal category,
that is a choice of a family of simple objects (the cuspidal objects) whose monoidal products (the mixed products) satisfy certain remarkable 
properties. Then the construction involves a deformation of this PBW-theory along a formal parameter together with $R$-matrices, crucial 
isomorphisms between deformations of the mixed products. These are isomorphisms in \eqref{vwv} where $W$ is a mixed product and $V$, $V'$ are distinguished mixed products, called respectively standard and costandard.

Our monoidal Jantzen filtrations are filtrations by subobjects
$$F_\bullet M\colon \quad  M \supset \cdots \supset F_{-1}M \supset F_0M \supset F_1M \supset \cdots \supset \{ 0 \}.$$

We establish that, under mild conditions, the filtrations are compatible 
with specializations of $R$-matrices and satisfy certain duality properties.

The decategorification version of the filtration $F_\bullet M$ is defined as 
$$ [M]_t \seq \sum_{n \in \Z} [\Gr_n^F M] t^n.$$
It belongs to the Grothendieck group of the category, with the coefficients
extended to $\mathbb{Z}[t^{\pm 1}]$ for a formal variable $t$.

Some of these coefficients are defined to be the analogues of Kazhdan-Lusztig polynomials. We establish the existence of a corresponding canonical basis under reasonable conditions.

Moreover, this decategorification defines a $\mathbb{Z}[t^{\pm 1/2}]$-bilinear operation $*$ (after a slight twist) that deforms the multiplication of the Grothendieck ring. We conjecture that in a general setting, this deformation defines a ring, that is the operation $*$ is associative. This is one of the new salient points in comparison to the original theory of Jantzen filtrations.

In this paper, we apply the general construction of monoidal Jantzen filtrations to the monoidal categories of finite-dimensional modules over quantum loop algebras and symmetric quiver Hecke algebras, and verify the expected associativity in many remarkable situations.
We can expect our theory extends to other frameworks, such as to the coherent Satake category \cite{cw} or to the representation theory of $p$-adic groups.

\subsection{Quantum loop algebras}
Our first main examples for monoidal Jantzen filtrations are realized in categories of finite-dimensional representations 
of the quantum loop algebra $U_q(L\mathfrak{g})$ associated with a complex simple Lie algebra $\mathfrak{g}$ and a generic quantum parameter $q \in \C^\times$.
This is a Hopf algebra whose finite-dimensional modules form an interesting abelian monoidal category  $\Cc$, which is neither semisimple nor braided. 
In particular, the tensor product $V \otimes W$ is not isomorphic to its opposite $W \otimes V$ for general simple modules $V, W \in \Cc$.
Nevertheless, their Jordan-H\"{o}lder factors coincide up to reordering. 
In other words, we have $[V \otimes W] = [W \otimes V]$ in the Grothendieck ring $K(\Cc)$, and hence $K(\Cc)$ is commutative.  
Indeed, this commutativity follows from the injectivity of the so-called $q$-character homomorphism $\chi_q \colon K(\Cc) \to \mathcal{Y} = \Z[Y_{i,a}^{\pm 1} \mid i \in I, a \in \mathbb{C}^\times]$ due to Frenkel-Reshetikhin~\cite{FR}, where $I$ is an index set of the simple roots of $\mathfrak{g}$.
Thus, one may identify $K(\Cc)$ with a subring of $\mathcal{Y}$.  

By the classification result due to Chari-Pressley~\cite{CP}, the set of classes of simple modules in $\Cc$ is in bijection with the set $ \cM^+  \subset \mathcal{Y}$ of monomials in the variables $Y_{i,a}$. 
For each $m \in \cM^+ $, the corresponding simple module $L(m)$ is of highest weight $m$, namely $\chi_q(L(m))$ has $m$ as its highest term.
The problem to compute $\chi_q(L(m))$ for all $m \in \cM^+ $ is of fundamental importance.
At the present moment, a  general closed formula (like the Weyl character formula) is not known.

  One possible strategy is to find an algorithm to compute $\chi_q(L(m))$ recursively, analogous to the Kazhdan-Lusztig algorithm. 
For each $x \in I \times \C^\times$, the $q$-character of the simple module $V_x \seq L(Y_x)$ (called a fundamental module) can be computed by an algorithm due to Frenkel-Mukhin~\cite{FM01}.
For each monomial $m = Y_{x_1}\cdots Y_{x_d} \in  \cM^+ $, if $(x_1, \ldots, x_d)$ is ordered suitably, the corresponding tensor product $M(m) \seq V_{x_1} \otimes \cdots \otimes V_{x_d}$ has a simple head isomorphic to $L(m)$. 
Moreover, there exists a partial ordering of $\cM$ (called the Nakajima partial ordering) such that we have
\[ [M(m)] = [L(m)] + \sum_{m' < m} P_{m,m'} [L(m')] \]
in $K(\Cc)$.  
The module $M(m)$ is called a standard module.
Since we know $\chi_q(M(m))$, it is enough to compute the multiplicities $P_{m,m'}$.
For this purpose, we consider a one-parameter (non-commutative) deformation of $K(\Cc)$, called the quantum Grothendieck ring.
It was introduced by Nakajima~\cite{Nak04} and by Varagnolo-Vasserot~\cite{VV} for $\mathfrak{g}$ of simply-laced type, and by the second author \cite{H1} for general $\mathfrak{g}$.
The quantum Grothendieck ring $K_t(\Cc)$ is a $\Z[t^{\pm 1/2}]$-subalgebra of a quantum torus $\mathcal{Y}_t$ deforming $\mathcal{Y}$, stable under a natural anti-involution $y \mapsto \bar{y}$ of $\mathcal{Y}_t$, and comes with a standard $\Z[t^{\pm 1/2}]$-basis $\{M_t(m)\}_{m \in  \cM^+}$. 
Under the specialization $t \to 1$, $M_{t}(m)$ goes to $[M(m)]$. 
We can prove (see \cite{Nak04, H1}) that there exists the canonical basis $\{L_t(m)\}_{m \in \cM^+}$ satisfying $\overline{L_t(m)} = L_t(m)$ and
\[ M_t(m) = L_t(m) + \sum_{m' < m} P_{m,m'}(t) L_t(m')\]
for some $P_{m,m'}(t) \in t\Z[t]$.
This characterization enables us to compute the polynomials $P_{m,m'}(t)$ recursively.
When $\mathfrak{g}$ is of simply-laced type, the following result was obtained by using perverse sheaves on quiver varieties.

\begin{Thm}[\cite{Nak04, VV}]
When $\mathfrak{g}$ is of simply-laced type, the following properties hold:
\begin{itemize}
\item[(KL)] Analog of Kazhdan-Lusztig conjecture: under the specialization $t \to 1$, $L_t(m)$ goes to $[L(m)]$, or equivalently, we have $P_{m,m'}(1) = P_{m,m'}$.
\item[(P)] Positivity: for any $m' < m$, we have $P_{m,m'}(t) \in \Z_{\ge 0}[t]$.
\end{itemize}
\end{Thm}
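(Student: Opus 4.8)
The plan is to realize both the standard basis $\{M_t(m)\}_{m \in \cM^+}$ and the canonical basis $\{L_t(m)\}_{m \in \cM^+}$ geometrically, by means of Nakajima's graded quiver varieties, and then to deduce (P) from the decomposition theorem and (KL) from the specialization $t \to 1$. First I would fix, for each dominant monomial $m_W$ encoded by a graded $I$-space $W$, the graded quiver variety $\mathfrak{M}(V,W)$, its affinization $\mathfrak{M}_0(V,W)$, and the projective morphism $\pi_W \colon \mathfrak{M}(W) \coloneqq \bigsqcup_V \mathfrak{M}(V,W) \to \mathfrak{M}_0(\infty,W) \coloneqq \bigcup_V \mathfrak{M}_0(V,W)$, all carrying an action of a torus $T$ (a loop-rotation torus times the framing torus) whose relevant strata $\mathfrak{M}_0(m) \subset \mathfrak{M}_0(\infty,W)$ are indexed by the monomials $m \le m_W$. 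Then I would use Nakajima's identification of a suitable completion of the direct sum, over all $W$, of the $T$-equivariant K-groups of the $\mathfrak{M}_0(\infty,W)$ (or, after passing to associated graded, of the $T$-equivariant cohomology groups) with the quantum torus $\mathcal{Y}_t$: under it convolution becomes the twisted product underlying $K_t(\Cc)$, the variable $t$ records the loop-rotation grading, Verdier duality becomes the bar-involution, the class of the constant-sheaf pushforward $(\pi_W)_! \, \underline{\C}_{\mathfrak{M}(W)}$ supported on the stratum of $m$ becomes $M_t(m)$, and the class of $\IC(\overline{\mathfrak{M}_0(m)})$ becomes a geometric candidate $L_t^{\mathrm{geo}}(m)$.

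Next I would invoke the decomposition theorem for the proper map $\pi_W$: there is an isomorphism
\[ (\pi_W)_! \, \underline{\C}_{\mathfrak{M}(W)}[d_W] \;\cong\; \bigoplus_{m \le m_W} \IC\bigl(\overline{\mathfrak{M}_0(m)}\bigr) \otimes H_m \]
with $d_W \in \Z$ an appropriate shift, each $H_m$ a graded vector space, $H_{m_W} = \C$, and the Poincar\'e polynomial of $H_m$ lying in $\Z_{\ge 0}[t]$. Decategorifying yields
\[ M_t(m) \;=\; L_t^{\mathrm{geo}}(m) + \sum_{m' < m} a_{m,m'}(t)\, L_t^{\mathrm{geo}}(m'), \qquad a_{m,m'}(t) \in \Z_{\ge 0}[t], \]
and a parity argument (the graded quiver varieties are even-dimensional and $\pi_W$ is semismall) shows moreover that the off-diagonal coefficients lie in $t\,\Z_{\ge 0}[t]$ and that the diagonal term is isolated with coefficient $1$. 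Since $\overline{L_t^{\mathrm{geo}}(m)} = L_t^{\mathrm{geo}}(m)$ by Verdier self-duality of $\IC$, the uniqueness in the characterization of $\{L_t(m)\}$ recalled just before the theorem forces $L_t^{\mathrm{geo}}(m) = L_t(m)$ and $P_{m,m'}(t) = a_{m,m'}(t) \in \Z_{\ge 0}[t]$, which is (P).

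For (KL) I would specialize $t \to 1$, which geometrically amounts to forgetting the loop-rotation grading, i.e. to passing to Euler characteristics. Then $L_1^{\mathrm{geo}}(m)$ is the ungraded class attached to $\IC(\overline{\mathfrak{M}_0(m)})$; by Nakajima's realization of $\Cc$ on the equivariant homology of graded quiver varieties this class equals $[L(m)]$, the standard module $[M(m)]$ being realized by the constant-sheaf pushforward and having $L(m)$ as its simple head. Hence the specialization at $t = 1$ of the identity above reads $[M(m)] = [L(m)] + \sum_{m' < m} P_{m,m'}(1)\,[L(m')]$ in $K(\Cc)$, and comparison with the defining identity $[M(m)] = [L(m)] + \sum_{m' < m} P_{m,m'}\,[L(m')]$ (using that $\{[L(m)]\}$ is a $\Z$-basis) gives $P_{m,m'}(1) = P_{m,m'}$, which is (KL).

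The step I expect to be the main obstacle is the dictionary of the first paragraph: making precise the isomorphism between the algebraically defined quantum torus $\mathcal{Y}_t$ — with its bar-involution and the twisted multiplication underlying $K_t(\Cc)$ — and the geometric convolution algebra built from the graded quiver varieties, and in particular normalizing the perverse and cohomological shifts so that the classes of constant-sheaf pushforwards match the algebraically defined $M_t(m)$ with the correct powers of $t$. A secondary difficulty is the purity input needed to upgrade positivity from $\Z_{\ge 0}[t]$ to $P_{m,m'}(t) \in t\,\Z[t]$, which rests on the evenness of the graded quiver varieties and the semismallness of $\pi_W$; the alternative route of Varagnolo--Vasserot packages the same facts through $\Ext$-algebras of the relevant semisimple complexes, and either approach should go through.
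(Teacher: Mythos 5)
This theorem is not proved in the paper at all: it is quoted from \cite{Nak04, VV}, and your proposal is essentially a reconstruction of that geometric proof (graded quiver varieties, decomposition theorem, $\IC$ sheaves as canonical basis, specialization at $t=1$ via Nakajima's realization of standard and simple modules), the same geometry the paper later reuses in Section 6. However, the step you yourself flag as delicate is where the argument breaks. First, the dictionary: if $M_t(m)$ were the class of the pushforward $(\pi_W)_!\,\ul{\kk}[d_W]$ and the bar-involution were Verdier duality, then $M_t(m)$ would be bar-invariant (the pushforward of the constant sheaf from a smooth source along a proper map is self-dual), which it is not (e.g.\ $E_t(m)=L_t(m)+tL_t(1)$ for $\fg=\mathfrak{sl}_2$). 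For the same reason the graded multiplicity spaces $H_{m'}$ in your decomposition are symmetric under $t\leftrightarrow t^{-1}$, so they can never lie in $t\Z_{\ge 0}[t]$ off the diagonal; your claim that they do is internally inconsistent. Second, the justification by ``parity and semismallness'' fails twice over: $\pi_W$ is not semismall for graded quiver varieties in general, and even if it were, semismallness would concentrate the multiplicities in degree zero (constants), which is the opposite of the strictly positive degrees you need to invoke the uniqueness characterization of $L_t(m)$.

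The correct bookkeeping in \cite{Nak04, VV03} (and in this paper's proof of Theorem \ref{simply-laced}) is local rather than global: the standard module is realized as the (co)stalk of $(\pi_\bd)_*\ul{\kk}$ at the distinguished $\C^\times$-fixed point $0\in\qv_0(\bd)$ (the homology of the central fiber), and the coefficient of $L_t(m')$ in $E_t(m)$ is the Poincar\'e polynomial of $\rH^\bullet(i_0^!\,\IC(m'))$, i.e.\ local intersection cohomology at the most degenerate stratum, not a decomposition-theorem multiplicity (compare Theorem \ref{Thm:VVqgr} and the examples in Sections \ref{exfil2} and \ref{ssec:Prqaa}, where one finds $P_{m,m'}(t)=t$, $t+t^3$, $t^4$). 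With this dictionary the containment $P_{m,m'}(t)\in t\Z_{\ge 0}[t]$ for $m'<m$ is immediate from the perverse cosupport condition at the point stratum together with simplicity: for a simple perverse sheaf other than the skyscraper at $0$ one has $\rH^n(i_0^!\,\IC)=0$ for all $n\le 0$; the skyscraper gives the isolated diagonal term $1$. This replaces your semismallness step; passing between $i_0^!$ and $i_0^*$ (standard versus costandard order) is exactly where the attracting $\C^\times$-action and the Bernstein--Lunts picture recalled in Section \ref{sec:preliminary} enter. Once this is repaired, the remainder of your plan --- bar-invariance of the geometric classes from Verdier self-duality of $\IC$, the uniqueness characterization forcing $L^{\mathrm{geo}}_t(m)=L_t(m)$ and hence (P), and specialization at $t=1$ via Nakajima's theorem that the Jordan--H\"older multiplicities are the ungraded costalk dimensions, giving (KL) --- does go through and reproduces the cited proof.
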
    

The second author \cite{H1} conjectured that these properties hold for general $\mathfrak{g}$.
Very recently, with Oh and Oya, we obtained some pieces of evidence of this conjecture. 

\begin{Thm}[\cite{FHOO, FHOO2}]
The property {\rm (KL)}  also holds when $\mathfrak{g}$ is of type $B$. For 
general $\mathfrak{g}$, the property {\rm (KL)}  also holds for all simple modules that are reachable (in the sense of cluster algebras).
The property {\rm (P)}  holds for general $\mathfrak{g}$. 
\end{Thm}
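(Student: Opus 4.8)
The plan is to reduce both (KL) and (P) to the simply-laced case — where the theorem of Nakajima and Varagnolo--Vasserot applies — by transporting them along explicit isomorphisms of quantum Grothendieck rings attached to Dynkin diagram foldings, and, for the reachable modules in arbitrary type, by exploiting the (quantum) cluster algebra structure of $K_t(\Cc)$. To each non-simply-laced $\mathfrak{g}$ one attaches a simply-laced $\mathfrak{g}'$ through a folding datum: one takes $A_{2n-1}$ for $B_n$, $D_{n+1}$ for $C_n$, $E_6$ for $F_4$, and $D_4$ for $G_2$; these exhaust all non-simply-laced finite types, so one uniform mechanism covers all of (P).

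The core step is to construct, for such a pair $(\mathfrak{g},\mathfrak{g}')$, a $\Z[t^{\pm 1/2}]$-algebra isomorphism $\Psi$ identifying $K_t(\Cc_{\mathfrak{g}})$ — or a suitable version of it, e.g. the quantum Grothendieck ring of a subcategory whose monomials are supported on a fixed coset — with a subring of $K_t(\Cc_{\mathfrak{g}'})$. I would build $\Psi$ inside the quantum tori of $(q,t)$-characters: the folding data furnish a combinatorial bijection between the dominant monomials $\cM^+$ of type $\mathfrak{g}$ and a distinguished set of dominant monomials of type $\mathfrak{g}'$, under which the $t$-commutation relations on the $\mathfrak{g}$-side match the restriction of those of type $\mathfrak{g}'$. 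One then has to verify that $\Psi$ intertwines the two bar-involutions and sends the standard basis $\{M_t(m)\}$ to the corresponding standard basis elements $M_t(\Psi m)$ of type $\mathfrak{g}'$; this is the technical heart, and I would carry it out via the inductive characterization of $M_t(m)$ through truncated $(q,t)$-characters — using that $M_t(m)$ is the unique bar-invariant element with the prescribed dominant monomial and no lower-than-expected monomials — so that the truncations on both sides correspond under $\Psi$. By uniqueness of the canonical basis this forces $\Psi(L_t(m)) = L_t(\Psi m)$, whence $\Psi$ identifies $P_{m,m'}(t)$ with a Kazhdan--Lusztig-type polynomial of the simply-laced type $\mathfrak{g}'$.

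Granting this, (P) for all $\mathfrak{g}$ is immediate, since the matched polynomial lies in $\Z_{\ge 0}[t]$ by the simply-laced theorem. For (KL) in type $B$ I would moreover set up a classical counterpart $\bar\Psi$ of $\Psi$, an isomorphism between $K(\Cc_{B_n})$ and the corresponding subring of $K(\Cc_{A_{2n-1}})$, and establish the compatibility $\bar\Psi([L(m)]) = [L(\Psi m)]$. This is not formal — there is no functor between the module categories realizing $\Psi$ — and I would prove it by characterizing both $[L(m)]$ and $[L(\Psi m)]$ through their dominant monomial together with the Frenkel--Mukhin recursion for fundamental $q$-characters, in exact parallel with the argument for $M_t(m)$. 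Specializing $\Psi(L_t(m)) = L_t(\Psi m)$ at $t \to 1$ and invoking (KL) for type $A_{2n-1}$ then gives $P_{m,m'}(1) = P_{m,m'}$ in type $B_n$.

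For arbitrary $\mathfrak{g}$ and reachable $m$ the argument is cluster-theoretic: a suitable monoidal subcategory of $\Cc_{\mathfrak{g}}$ (of Hernandez--Leclerc type) is a monoidal categorification of a cluster algebra with initial seed given by classes of Kirillov--Reshetikhin modules, and $K_t$ of it carries a compatible quantum cluster algebra structure. Every quantum cluster monomial is bar-invariant, and by Qin's common-triangular-basis theory it must coincide with some $L_t(m)$ with $m$ reachable — and conversely every reachable $L_t(m)$ arises this way. Specializing the quantum cluster structure at $t = 1$ sends such a quantum cluster monomial to the corresponding classical cluster monomial, which equals $[L(m)]$ by the monoidal categorification theorem; comparing with $L_t(m)|_{t=1}$ yields $P_{m,m'}(1) = P_{m,m'}$ for reachable $m$. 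The main obstacle is the construction and fine analysis of $\Psi$: normalizing it so that the bar-involutions and, above all, the standard bases correspond requires delicate control of the symmetrized Cartan data and of the quantum-torus cocycles on the two sides, and the non-formal classical compatibility $\bar\Psi([L(m)]) = [L(\Psi m)]$ needed in type $B$ is of comparable difficulty.
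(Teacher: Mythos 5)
The overall architecture of your proposal — folding/``Q-datum'' isomorphisms of quantum Grothendieck rings, transport of (P) from the simply-laced side, and a cluster-theoretic argument for reachable modules — is indeed the route taken in the cited works. But there is a genuine gap at exactly the step you flag as ``not formal,'' namely the classical compatibility $\bar\Psi([L(m)])=[L(\Psi m)]$ needed for (KL) in type $B$.

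Your proposed fix — characterize $[L(m)]$ and $[L(\Psi m)]$ by their dominant monomial together with the Frenkel--Mukhin recursion, ``in exact parallel with the argument for $M_t(m)$'' — does not go through. The analogy breaks at two points. First, the characterization of $L_t(m)$ genuinely uses the bar-involution on $K_t$, and there is nothing to replace it in the classical ring $K(\Cc)$: knowing $\chi_q(M(m))$ does not single out $\chi_q(L(m))$ without an auxiliary structure. Second, the Frenkel--Mukhin algorithm does not compute $\chi_q(L(m))$ for arbitrary $m$ (it fails for modules that are not ``special'' in their sense), so it cannot serve as a universal input to a recursive characterization. What is actually used in the literature is an external theorem of a different nature: one invokes a functor between the type $B$ and type $A$ module categories (a generalized quantum affine Schur--Weyl duality in the line of Kang--Kashiwara--Kim, refined by Kashiwara--Kim--Oh / Kashiwara--Oh) which is exact, monoidal, and sends simple modules to simple modules; that functoriality is precisely what yields $[M(m):L(m')]=[M(\Psi m):L(\Psi m')]$, and it is not recoverable from ring-level data alone. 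Without replacing your F--M argument by some such categorical input, the deduction of (KL) in type $B$ is incomplete. The rest of the plan — the construction of $\Psi$ matching bar-involutions and standard bases via Q-data and quantum-torus cocycles, (P) in all types by positivity transport, and (KL) for reachable modules via the compatibility of $\Psi$ with the (quantum) cluster structure, where one may indeed appeal to Qin-type triangularity or to the transport along $\Psi$ of Nakajima's theorem — is sound in outline and consistent with the approach of FHOO and FHOO2.
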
  

Having these results, we ask: what is representation-theoretic meaning of $K_t(\Cc)$ or $P_{m,m'}(t)$?

Here we propose an answer to this question by introducing monoidal Jantzen filtrations for any tensor products of fundamental modules.
For \emph{any} sequence $\bep = (\ep_1, \ldots, \ep_d)$ of elements of $I \times \C^\times$, let $M(\bep) \seq V_{\ep_1} \otimes \cdots \otimes V_{\ep_d}$ be the corresponding tensor product, which is not necessarily a standard module (we call it is a mixed product).
By using $R$-matrices, we define a monoidal Jantzen filtration $F_\bullet M(\bep)$ 
by $U_q(L\mathfrak{g})$-submodules (in the paper, we will also handle more general PBW-theories in $\Cc$). The decategorification gives a corresponding element $[M(\bep)]_t$ of the $t$-deformed Grothendieck group $K(\Cc)_t \seq K(\Cc)\otimes \Z[t^{\pm 1/2}]$.

Then we define a $\Z[t^{\pm 1/2}]$-bilinear map $* \colon K(\Cc)_t \times K(\Cc)_t \to K(\Cc)_t$ by
\[ [M(m)]_t * [M(m')]_t \seq t^{\gamma(m,m')} [M(m) \otimes M(m')]_t, \]
where $\gamma$ is a certain skew-symmetric bilinear form on $ \cM^+$.  
Also, $K(\Cc)_t$ is endowed with a natural involution $\overline{X \otimes f(t)} = X \otimes f(t^{-1})$.
Now we propose the following:

\begin{Conj}[= Conjecture \ref{qgrconj}] \label{conjF}
The pair $(K(\Cc)_t, *)$ defines a $\Z[t^{\pm 1/2}]$-algebra with anti-involution, and it is isomorphic to the quantum Grothendieck ring $K_t(\Cc)$ identifying the standard basis $\{M_t(m) \}_{m \in  \cM^+ }$ with the basis $\{[M(m)]_t\}_{m \in  \cM^+ }$. 
\end{Conj}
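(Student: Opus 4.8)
The plan is to pass to $q$-characters, reduce the whole statement to an identity in the quantum torus $\mathcal{Y}_t$, and prove that identity by analysing the orders of vanishing of the generic $R$-matrices between fundamental modules.

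\emph{Step 1: reduction to a multiplicativity formula.} Recall that $\chi_q$ identifies $K(\Cc)$ with a subring of $\mathcal{Y}$; extending scalars $\Z[t^{\pm 1/2}]$-linearly and using the injective $(q,t)$-character $\chi_{q,t}\colon K_t(\Cc)\hookrightarrow \mathcal{Y}_t$, it suffices to work inside $\mathcal{Y}_t$. Since the classes $[M(m)]_t$ specialize at $t=1$ to the standard basis of $K(\Cc)$ and the transition matrix to $\{[L(m)]\}$ is unitriangular for the Nakajima partial order, $\{[M(m)]_t\}_{m\in\cM^+}$ is a $\Z[t^{\pm 1/2}]$-basis of $K(\Cc)_t$, so there is a unique $\Z[t^{\pm 1/2}]$-linear isomorphism $\Phi\colon K(\Cc)_t\to K_t(\Cc)$ with $\Phi([M(m)]_t)=M_t(m)$. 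The conjecture follows once we establish the \emph{multiplicativity formula}
\[
\Phi\bigl([M(\bep)]_t\bigr)=t^{a(\bep)}\, M_t(Y_{\ep_1})\,M_t(Y_{\ep_2})\cdots M_t(Y_{\ep_d}) \qquad\text{in } K_t(\Cc)
\]
for every sequence $\bep=(\ep_1,\dots,\ep_d)$ in $I\times\C^\times$, where $M_t(Y_\ep)=L_t(Y_\ep)$ is the fundamental $(q,t)$-character and $a(\bep)\in\tfrac12\Z$ is the correction exponent fixed by the pairwise commutation data of the $\ep_k$ in $\mathcal{Y}_t$. Indeed, $[M(m)]_t*[M(m')]_t=t^{\gamma(m,m')}[M(m)\otimes M(m')]_t$ and $M(m)\otimes M(m')$ is the mixed product of the concatenated sequence, so the formula turns $*$ into the (associative) multiplication of $K_t(\Cc)$ once $\gamma$ is pinned down by the exponents $a(-)$; and compatibility with the anti-involutions follows because $y\mapsto\bar y$ on $K(\Cc)_t$ fixes every simple class while inverting $t$, which matches the bar-involution of $K_t(\Cc)$ through the duality property of monoidal Jantzen filtrations.

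\emph{Step 2: the two-factor case.} Using the behaviour of monoidal Jantzen filtrations under concatenation and under transposition of adjacent cuspidal factors (the compatibility with specializations of $R$-matrices and the duality property proved earlier), the multiplicativity formula reduces to $d=2$: one must identify the $t$-graded class $[V_{\ep_i}\otimes V_{\ep_j}]_t$, which reads off the orders of vanishing of the normalized $R$-matrix $R_{\ep_i,\ep_j}(z)$ along the Jordan--H\"older filtration, with $t^{a(\ep_i,\ep_j)}\,M_t(Y_{\ep_i})\,M_t(Y_{\ep_j})$. For simply-laced $\fg$ this is where one invokes the geometry: fundamental modules, their tensor products, and $K_t(\Cc)$ all live on Nakajima's graded quiver varieties, the $(q,t)$-character being expressed through weight-graded (equivalently, perverse) cohomology; one shows that the monoidal Jantzen filtration of $V_{\ep_i}\otimes V_{\ep_j}$ coincides with the weight filtration on the relevant stalk, so that both sides of the $d=2$ identity are literally computed on the same geometric model.

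\emph{Step 3: from two factors to the general case.} Propagating along a reduced sequence of adjacent transpositions that sorts $\bep$ into an $l$-dominant order gives $[M(\bep)]_t = t^{b(\bep)}[M(m)]_t + (\text{strictly lower mixed products})$ with $m=\prod_k Y_{\ep_k}$; an induction on the Nakajima order, compared with the analogous recursion for the ordered products $M_t(Y_{\ep_1})\cdots M_t(Y_{\ep_d})$ in $K_t(\Cc)$ available from \cite{Nak04, VV}, upgrades this to the exact multiplicativity formula. As a byproduct one obtains $\Phi([L(m)])=L_t(m)$ — hence a representation-theoretic meaning for the $P_{m,m'}(t)$ — from the bar-invariance of $[L(m)]$ in $K(\Cc)_t$, a parity/purity statement for the Jantzen graded pieces, and the uniqueness of the canonical basis. \textbf{Main obstacle.} The crux is Step 2: showing that the vanishing orders of the deformed $R$-matrices between fundamental modules reproduce exactly the structure constants and lower-order corrections of $\mathcal{Y}_t$. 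In simply-laced type this works because the quiver-variety realization aligns the monoidal Jantzen filtration with a genuine weight filtration — precisely the input that makes the conjecture a theorem there; outside the simply-laced case no such geometric model is available, so one would need an independent, purely representation-theoretic proof that the $R$-matrix combinatorics matches the (conjecturally defined) $\mathcal{Y}_t$, and it is this gap that keeps the statement a conjecture in general.
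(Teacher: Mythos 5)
Your Step 1 is a faithful reformulation: the conjecture is indeed equivalent to the statement $\phi([M(\bep)]_t)=E_t(\bep)$ for all mixed sequences $\bep$ (this is Conjecture~\ref{qgrmix} in the paper), and the associativity and bar-compatibility would fall out of that identity as you indicate. So far so good.

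The problem is in your Steps 2 and 3. You assert that, using the compatibility of the monoidal Jantzen filtrations with specialized $R$-matrices and with transposition of adjacent cuspidal factors, one can reduce the multiplicativity formula to the $d=2$ case and then bootstrap. This reduction is not available. The compatibility results you invoke are Propositions~\ref{compfil} and~\ref{compfil2}, and they give only one-sided containments
$\mathbf{r}_{\bep',\bep}(F_N M(\bep))\subset F_{N-2\beta(\bep',\bep)} M(\bep')$
(resp.\ the shifted variant); they say nothing about the graded pieces and in particular do not allow you to reconstruct $F_\bullet M(\bep)$ from the filtrations of two-factor products $V_{\ep_i}\otimes V_{\ep_j}$. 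Already in the four-factor Example~\ref{exfil2_3} the filter submodules involve intersections and sums of kernels and images of several $\mathbf{r}_{\bep',\bep}$, which cannot be obtained by iterating two-factor data; and more fundamentally, the very associativity of $*$ (Conjecture~\ref{Conj:assoc}), which your induction tacitly presupposes, is one of the things being conjectured. The paper's actual proof in the simply-laced case (Sections~\ref{sec:preliminary} and~\ref{ssec:Prqaa}) does \emph{not} reduce to $d=2$. It identifies, for the whole sequence $\bep$ at once, the Jantzen filtration $F_\bullet M(\bep)$ with the cohomological degree filtration on the hyperbolic localization $i_{\bep,0}^*i_\bep^!\bar{\cA}_{\bd}$, using the ``Fundamental Example'' of Bernstein--Lunts and hard Lefschetz (Theorem~\ref{Thm:Fex}, Lemmas~\ref{Lem:hL}, \ref{Lem:Lef}, culminating in Theorem~\ref{Amain}), and then compares directly with Varagnolo--Vasserot's geometric formula for $E_t(\bep)$ (Theorem~\ref{Thm:VVqgr}). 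The crucial input is the hard Lefschetz property of the quotient module $L=\rH(\bep_c)/z\rH(\bep_s)$, which lives at the level of the full $d$-fold product and has no analogue in a pairwise induction. A minor further point: you justify the anti-involution compatibility ``through the duality property of monoidal Jantzen filtrations,'' but the Duality Conjecture~\ref{Conj:duality} is only proved for the standard/costandard pair (Proposition~\ref{Prop:Kdual}); for general $\bep$ it is a consequence of the multiplicativity formula, not an independent input.
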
 

Note that the associativity of the map $*$ is unclear from the definition. Besides, Conjecture~\ref{conjF} implies the above properties  {\rm (KL)} and {\rm (P)}.  
We prove the Conjecture~\ref{conjF} for  $\fg$ of simply-laced type. This is one of the main results of this paper.

\begin{Thm}[= Theorem \ref{simply-laced}] \label{Intro:simply-laced}
Conjecture~\ref{conjF} is true when $\mathfrak{g}$ is of simply-laced type.
\end{Thm}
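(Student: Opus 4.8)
The plan is to construct a $\Z[t^{\pm 1/2}]$-linear isomorphism $\Psi \colon K(\Cc)_t \xrightarrow{\sim} K_t(\Cc)$ with $\Psi([M(m)]_t) = M_t(m)$ for all $m \in \cM^+$ and to check that it intertwines $*$ with the multiplication of $K_t(\Cc)$; since $K_t(\Cc)$ is already known (by \cite{Nak04, VV}) to be an associative $\Z[t^{\pm 1/2}]$-algebra carrying the anti-involution $y \mapsto \bar y$, the associativity of $*$ and the anti-involution part of Conjecture~\ref{conjF} then follow automatically (and one recovers the properties {\rm (KL)} and {\rm (P)}). First one checks that $\{[M(m)]_t\}_{m \in \cM^+}$ is a $\Z[t^{\pm 1/2}]$-basis of $K(\Cc)_t$: writing $[M(m)]_t = \sum_n [\Gr_n^F M(m)]\, t^n$, the relevant normalization of the monoidal Jantzen filtration has top graded piece $L(m)$ while every other graded piece involves only the $L(m')$ with $m' < m$ for the Nakajima partial ordering, so the transition matrix to $\{[L(m)]\}_{m}$ is unitriangular with entries in $\Z[t^{\pm 1}]$. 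This makes $\Psi$ a well-defined module isomorphism.

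The key device is a $t$-deformed $q$-character. Using the explicit formula (\ref{expfm}) together with the compatibility of the monoidal Jantzen filtration with specializations of $R$-matrices proved in the general theory, one shows that the filtration of any mixed product $M(\bep)$ is compatible with its $\ell$-weight space decomposition; decategorifying and applying $\chi_q$ to the graded pieces yields a $\Z[t^{\pm 1/2}]$-linear map $\chi_{q,t} \colon K(\Cc)_t \to \mathcal{Y}_t$ into the quantum torus of \cite{Nak04, VV} that reduces to the injective homomorphism $\chi_q$ at $t = 1$, hence is itself injective. The crucial step --- the \emph{$t$-straightening} --- is to prove that for every sequence $\bep = (\ep_1, \ldots, \ep_d)$ one has
\[ \chi_{q,t}\big([M(\bep)]_t\big) = t^{c(\bep)}\, \chi_{q,t}\big([V_{\ep_1}]_t\big) \cdots \chi_{q,t}\big([V_{\ep_d}]_t\big) \]
(ordered product in $\mathcal{Y}_t$) for an explicit power $c(\bep)$ compatible with the form $\gamma$, and that $\chi_{q,t}([V_x]_t) = M_t(Y_x)$ for a fundamental module. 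The argument is an induction on the number of inversions needed to bring $\bep$ into standard order: each adjacent transposition is governed by the normalized $R$-matrix between two fundamental modules, which in simply-laced type has at worst a simple pole, so the tensor product of the two factors has length at most two and the corresponding step of the monoidal Jantzen filtration contributes only a linear-in-$t$ correction, matching the commutation rule of $\mathcal{Y}_t$. The base case $\chi_{q,t}([V_x]_t) = M_t(Y_x)$ is pinned down by matching the $\ell$-weight-graded structure of the deformed fundamental module with Nakajima's algorithm computing $M_t(Y_x)$ in $\mathcal{Y}_t$ (equivalently, by realizing the monoidal Jantzen filtration on the cohomology of graded quiver varieties as the perverse filtration of the relevant proper map via the $R$-matrix comparisons of \cite{Nak04, VV}, or by transporting the statement through quantum affine Schur--Weyl duality to symmetric quiver Hecke algebras, where the filtration is computed by the intrinsic $\Z$-grading).

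Granting the $t$-straightening the theorem follows quickly. The image of $\chi_{q,t}$ is the $\Z[t^{\pm 1/2}]$-subalgebra of $\mathcal{Y}_t$ generated by the $\chi_{q,t}([V_x]_t) = M_t(Y_x)$, which is exactly $K_t(\Cc)$ (the $M_t(m)$ being, up to powers of $t$, the ordered products of fundamental classes by construction in \cite{Nak04, VV}); since $\chi_{q,t}$ is injective this transports the associative multiplication of $K_t(\Cc)$ back to an associative operation on $K(\Cc)_t$, and a comparison of $t$-powers identifies this operation with $*$. Taking $\bep$ in standard order, the $t$-straightening gives $\chi_{q,t}([M(m)]_t) = M_t(m)$ once the normalization is fixed so that $c$ vanishes on standard sequences, so $\Psi = \chi_{q,t}$ under the identification of $K_t(\Cc)$ with its image; in particular $\Psi$ matches the two standard bases. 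Finally, the anti-involution $y \mapsto \bar y$ of $\mathcal{Y}_t$ corresponds under $\chi_{q,t}$ to the involution $\overline{X \otimes f(t)} = X \otimes f(t^{-1})$ of $K(\Cc)_t$ precisely because of the self-duality of the monoidal Jantzen filtration established in the general part of the paper, which yields $\Gr_n^F M(\bep) \simeq (\Gr_{-n}^F M(\bep))^\vee$ and hence $\overline{\chi_{q,t}([M(\bep)]_t)} = \chi_{q,t}(\overline{[M(\bep)]_t})$.

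The main obstacle is the $t$-straightening identity: showing that the decategorified monoidal Jantzen filtration of an arbitrary mixed product reassembles into the ordered product of the fundamental classes inside the quantum torus. This is where the simply-laced hypothesis is genuinely used --- through the simple-pole property of $R$-matrices between fundamental modules, which keeps each inductive step linear in $t$ and matches the quadratic form governing $\mathcal{Y}_t$, and through the availability of quiver-variety geometry (or quantum affine Schur--Weyl duality) to identify the fundamental $t$-characters $\chi_{q,t}([V_x]_t)$ with the Nakajima--Varagnolo--Vasserot classes $M_t(Y_x)$. Everything else is either a formal consequence of the known structure of $K_t(\Cc)$ or a compatibility of the monoidal Jantzen filtration already established in the general theory.
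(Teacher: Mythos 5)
Your outline reduces everything to the ``$t$-straightening'' identity, but that identity \emph{is} the theorem: it is equivalent to Conjecture~\ref{qgrmix} (i.e.\ $\phi([M(\bep)]_t)=E_t(\bep)$), and the induction you propose for it does not go through. The inductive step rests on the claim that in simply-laced type the normalized $R$-matrix between two fundamental modules has at worst a simple pole and that the corresponding tensor product has length at most two; this is true for $\mathfrak{sl}_2$ (and type $A$) but false in types $D$ and $E$, where double poles occur and $V_{i,p}\otimes V_{j,s}$ can have more than two composition factors. More fundamentally, even in the simple-pole case you give no argument that one adjacent transposition changes the monoidal Jantzen filtration of the \emph{whole} mixed product only by a ``linear-in-$t$ correction'': the filtration \eqref{expfm} is governed by the interplay of the lattices coming from both the standard and the costandard ends, and Propositions~\ref{compfil}, \ref{compfil2} only give one-sided containments, not the equalities of graded pieces your step needs. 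Controlling exactly this is where the paper has to invoke a genuinely non-formal input: the identification of $F_\bullet M(\bep)$ with the cohomological degree filtration of the hyperbolic localization $i_{\bep,0}^*i_\bep^!\ocA_\bd$ on graded quiver varieties (Propositions~\ref{Nsheaf}, \ref{tau} and Theorem~\ref{Amain}), proved via the hard Lefschetz property through the Bernstein--Lunts ``Fundamental Example'', and then the comparison with Varagnolo--Vasserot's geometric structure constants (Theorem~\ref{Thm:VVqgr}).

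There is also a problem at the base of your induction. The map $\chi_{q,t}$ you describe --- decategorify the Jantzen filtration and apply $\chi_q$ to the graded pieces --- cannot satisfy $\chi_{q,t}([V_x]_t)=M_t(Y_x)$: for a single fundamental module the monoidal Jantzen filtration is trivial, so your construction only sees $\chi_q(V_x)$ written in commutative monomials, whereas $F_t(Y_x)$ has genuinely nontrivial $t$-coefficients on $\ell$-weight spaces outside type $A$ (e.g.\ at the trivalent node in type $D_4$). Fixing the base case by ``matching with Nakajima's algorithm'' or ``realizing the filtration geometrically'' is precisely the content one has to prove, and the Schur--Weyl route does not cover general simply-laced types. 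So the proposal, while pointing in a reasonable direction (and correctly predicting that the known associativity and bar-involution of $K_t(\Cc_\Z)$ can be transported back once a multiplicative identification is in hand), leaves the two essential steps --- the identification of the filtration with a geometric/degree filtration and the resulting product formula for arbitrary mixed products --- unproved, and its proposed substitute argument would fail beyond type $A$.
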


As a consequence, we obtain a categorification of the quantum Grothendieck ring in terms of 
finite-dimensional representations enhanced with their monoidal Jantzen filtrations. 
Note also that it was established in \cite{HL15} that, when $\mathfrak{g}$ is of simply-laced type, the quantum Grothendieck ring contains a copy of the positive part $U_q(\mathfrak{n})$ of 
the finite-type quantum group $U_q(\mathfrak{g})$ (it corresponds to the quantum Grothendieck  ring 
of a monoidal subcategory of finite-dimensional representations). Hence we obtain as well a new
categorification of $U_q(\mathfrak{n})$ in terms of our monoidal Jantzen filtrations.

\subsection{Applications} After decades of intensive study, the structure of finite-dimensional
representations of quantum loop algebras
is  still  largely not understood, even in the $\mathfrak{sl}_2$-case. For example,
the classification  of  finite-dimensional
indecomposable representations is not known. Up to the authors
knowledge, the only known general result,
beyond the structure of the Grothendieck ring of the category, is that
standard modules have a unique
simple quotient, and that co-standard modules have a unique simple
submodule \cite{Chari, Kas, VV}. As an application of the results
of our paper, we obtain many new informations on the homological
structure of mixed products, which were
not known even in the $\mathfrak{sl}_2$-case. Indeed, by direct algebraic
computations in the quantum Grothendieck ring,
we can determine the simple constituents of the submodules and
subquotients obtained from the monoidal Jantzen
filtrations. This is illustrated in Examples of Section \ref{exfil2}.

For example, we obtain the following vast generalization of the result
of \cite{Chari, Kas, VV} recalled above 
 (see its proof in Section \ref{pfadd}). 

%\begin{Thm}
%Let $M$ be a mixed product. Let $[S]\in K(\mathscr{C})$ (resp.\ $Q\in K(\mathscr{C})$) be the coefficient of the highest
%(resp.\  lowest) power  of $t$ arising in $[M]_t$. Then $M$ admits a submodule
%(resp.\ a quotient) whose image in $K(\mathscr{C})$ is $S$
%(resp.\ $Q$).
%\end{Thm}

\begin{Thm}\label{thmapp}
Let $M$ be a mixed product. Let $S\in K(\mathscr{C})$ (resp.\ $Q\in K(\mathscr{C})$) be the coefficient of the highest
(resp.\ lowest) power of $t$ arising in $[M]_t$. Then $M$ admits a submodule
(resp.\ a quotient) whose image in $K(\mathscr{C})$ is $S$
(resp.\ $Q$).
\end{Thm}

Through our approach, we can see quantum Grothendieck rings as a
powerful tool to compute monoidal Jantzen filtrations,
which themselves form a new method to analyze the structure of mixed
tensor products.

Another application, at the moment conjectural, is the extension of the
Kazhdan-Lusztig algorithm
to compute $q$-character of simple modules in non-simply laced cases.
Indeed, it is known that
such an algorithm gives the correct answer for all simple modules in
simply-laced cases \cite{Nak04}, and
for all reachable simple modules in non-simply-laced cases \cite{FHOO2}.
If our associativity conjecture
is correct, then the coefficients of $[M]_t$ for $M$ a standard module
can be computed by an analog of
Kazhdan-Lusztig algorithm, and so we can obtain the result for all
simple representations in non-simply-laced cases.
Hence, the problem of computation of the character of simple modules is
reduced to the study of
the associativity of our bilinear operation $*$. 

\subsection{Strategy of the proof}
\label{Ssec:strategy}
Our proof of Theorem \ref{Intro:simply-laced} uses geometric method due to Nakajima involving perverse sheaves on quiver varieties. 
Actually, our strategy is much inspired by Grojnowski's unpublished note \cite{Groj}, which studies filtrations on standard modules over quantum loop algebras and affine Hecke algebras using perverse sheaves.  

Recall that the first proof of the original Jantzen conjecture for Verma modules by Beilinson-Bernstein \cite{BB} was also geometric, where the Jantzen filtrations are identified with the weight filtrations of some standard $\mathcal{D}$-modules on flag manifolds through the Beilinson-Bernstein localization.  
There is another approach due to Soergel \cite{Soe08} and K\"ubel \cite{Kub12}, which is a Koszul dual picture to Beilinson-Bernstein's proof.
In this second approach, the Jantzen filtrations are related to the Andersen filtrations on the $\Hom$-space from Verma to tilting modules in the category $\mathscr{O}$, which is in turn identified with the degree filtrations of  the local intersection cohomology of Schubert varieties.  
A key ingredient here is the hard Lefschetz theorem applied to the setting of the ``Fundamental Example'' of Bernstein-Lunts \cite{BL94}.  
See the introduction of \cite{Wil16} for more details and recent further development. 

Our proof of Theorem \ref{Intro:simply-laced} has a similar flavor to this second approach.
Based on Nakajima's geometric construction, we identify our monoidal Jantzen filtrations of the mixed products $M(\bep)$ with the degree filtrations of certain hyperbolic localizations (in the sense of Braden \cite{Braden}) of perverse sheaves on graded quiver varieties.
Here, key ingredients are again the hard Lefschetz property and the ``Fundamental Example'' mentioned above.
Since the Poincar\'e polynomials of these hyperbolic localizations serve the structure constants of the quantum Grothendieck ring $K_t(\Cc)$ in its geometric definition \cite{VV03}, we obtain the desired result.

\subsection{Symmetric quiver Hecke algebras}
Our second examples of the monoidal Jantzen filtrations are given by the finite-dimensional modules over symmetric quiver Hecke algebras. 
For any symmetric Kac-Moody algebra $\fg$ and an element $w$ of its Weyl group, one has a monoidal abelian category $\Cc_w$ consisting of finite-dimensional ungraded modules over the quiver Hecke algebras (or rather their completions), which categorifies the coordinate ring $\C[N(w)]$ of a unipotent algebraic group $N(w)$.  
Note that this category $\Cc_w$ is obtained from its graded version $\Cc^\bullet_w$ categorifying the quantized coordinate ring $A_t[N(w)]$ by forgetting the grading. 
When $w$ is the longest element $w_0$ of the Weyl group of finite type, the category $\Cc^\bullet_{w_0}$ is the category of all the finite-dimensional graded modules. 
To each reduced word $\ii = (i_1, i_2, \ldots, i_\ell)$ for $w$, one can associate the dual PBW-basis of $A_t[N(w)]$, which are categorified by ordered products of the so-called cuspidal modules in $\Cc_w^\bullet$ \cite{KKOP}.
Forgetting the grading, we have a basis of standard modules for the Grothendieck ring $K(\Cc_w)$.
Since the category $\Cc_w$ has generic braidings, one can apply the same construction as above to define the monoidal Jantzen filtrations and hence get a deformation $(K(\Cc_w)_t, *)$ of $K(\Cc_w)$.
Thus, it makes sense to expect that $(K(\Cc_w)_t, *)$ defines an associative algebra isomorphic to the quantum coordinate ring $A_t[N(w)]$.
In other words, the monoidal Jantzen filtrations in $\Cc_w$ may recover the forgotten gradings of the Jordan-H\"older multiplicities in $\Cc^\bullet_w$.
This is an analog of Conjecture \ref{conjF} above.
Note also that the same construction applies to the affine Hecke algebras of general linear groups as well, since their central completions are identical to the completions of quiver Hecke algebras of type $A$.
In this paper, we verify the conjecture in the following special case.

\begin{Thm}[= Theorem \ref{Thm:adapted}] \label{Intro:adapted}
The analog of Conjecture \ref{conjF} for $\Cc_w$ is true when the reduced word $\ii$ is adapted to a quiver.  
\end{Thm}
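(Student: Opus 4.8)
The plan is to adapt to the category $\Cc_w$ the geometric argument behind Theorem~\ref{Intro:simply-laced}, with the graded quiver varieties there replaced by a representation space of a quiver; when $\fg$ is of finite symmetric type this is the same as transporting Theorem~\ref{Intro:simply-laced} along the Dynkin quiver type quantum affine Schur--Weyl duality. Fix an acyclic orientation $Q$ of the Dynkin diagram of $\fg$ to which $\ii$ is adapted, and let $E_Q$ be the corresponding representation space. Through the geometric realization of symmetric quiver Hecke algebras, finite-dimensional modules over the quiver Hecke algebra are realized by equivariant perverse sheaves on $E_Q$, and the adaptedness of $\ii$ identifies the cuspidal modules attached to $\ii$ with the simple perverse sheaves supported on the relevant orbit closures and the PBW order with the $Q$-order. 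Consequently the standard and costandard mixed products, and more generally every mixed product $M(\bep)$, are realized by the corresponding ``Lusztig sheaves'' (pushforwards along the proper maps from flag-type varieties), and the $R$-matrices between deformations of the mixed products, hence the whole PBW-theory underlying the monoidal Jantzen filtration, become geometric. When $\fg$ is of finite symmetric type this dictionary amounts to a monoidal equivalence $\Cc_w \simeq \Cc_Q^w \subseteq \Cc$ onto a subcategory attached to $w$, compatible with the generic braidings and PBW-theories, under which all of the following is the restriction to $\Cc_Q^w$ of Theorem~\ref{Intro:simply-laced}.

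The first and main step is to prove that $F_\bullet M(\bep)$ coincides with the degree filtration of a hyperbolic localization (in the sense of Braden~\cite{Braden}) of the Lusztig sheaf attached to $M(\bep)$; this is done exactly as in the proof of Theorem~\ref{Intro:simply-laced}, and in the spirit of Grojnowski's note~\cite{Groj}, by matching the formal deformation parameter and the specializations of the $R$-matrices with the grading by degree coming from the $\Gm$-action that defines the hyperbolic localization. Then hard Lefschetz together with the ``Fundamental Example'' of Bernstein--Lunts~\cite{BL94} show that these degree filtrations have the required features: the classes $[M(\bep)]_t$ are read off from the Poincar\'e polynomials of the hyperbolic localizations, the skew-symmetric form $\gamma$ and the normalizing twists match the geometric normalization, and the operation $*$ is associative. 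Finally, since these Poincar\'e polynomials are the structure constants of the quantized coordinate ring $A_t[N(w)]$ in its geometric presentation --- in finite symmetric type, equivalently, since $K_t(\Cc_Q^w) \cong A_t[N(w)]$ by \cite{HL15} and its extension to general $w$ --- one obtains an isomorphism of $\Z[t^{\pm 1/2}]$-algebras with anti-involution $(K(\Cc_w)_t, *) \cong A_t[N(w)]$ carrying $\{[M(\bep)]_t\}_\bep$, with $\bep$ a PBW sequence, onto the dual PBW basis attached to $\ii$, which is the analog of Conjecture~\ref{conjF} for $\Cc_w$.

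The main obstacle is precisely the first step: identifying the purely algebraic monoidal Jantzen filtration, built from a formal deformation of the PBW-theory and the $R$-matrices, with the degree filtration that the hyperbolic localization produces, and doing this compatibly with the monoidal structure. It requires the same delicate analysis as in Theorem~\ref{Intro:simply-laced}, and for a general Weyl group element $w$ one must in addition control which Lusztig sheaves survive the truncation and keep careful track of the bookkeeping of $\gamma$ and the twists. Once this identification and the hard Lefschetz input are in hand, transporting the filtrations is formal, since the monoidal Jantzen filtration is intrinsic to the data of a monoidal category with generic braidings together with a choice of PBW-theory.
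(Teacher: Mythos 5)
Your proposal follows essentially the same route as the paper's proof: realize the quiver Hecke algebra and the deformed mixed products via Lusztig sheaves on the representation space of $Q$ (Varagnolo--Vasserot), identify the renormalized $R$-matrices with the canonical maps between hyperbolic localizations, deduce via hard Lefschetz and the Bernstein--Lunts ``Fundamental Example'' that $F_\bullet M_\ii(\bep)$ is the degree filtration, and compare the resulting Poincar\'e polynomials with the dual-canonical-basis expansion of $\tE^*_{\ii}(\bep)$, which gives Conjecture~\ref{qHmix} and hence the theorem. The only notable differences of detail are that the paper carries out this comparison on Lusztig's transversal slice $S(\bd)$ through $x(\bd)$ (needed to make the torus action attractive so the machinery of Section~\ref{sec:preliminary} applies), obtains associativity as a consequence of the identification with $A_t[N(w)]$ rather than directly, and never uses the quantum affine Schur--Weyl comparison you mention, which would in any case only be available for finite-type $\fg$ while the theorem covers arbitrary symmetric Kac--Moody $\fg$.
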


In fact, when the reduced word $\ii$ is adapted to a quiver $Q$, we have a geometric interpretation of the quiver Hecke algebra due to Varagnolo-Vasserot~\cite{VV11} and the relevant mixed product modules in terms of the equivariant perverse sheaves on the space of representations of the quiver $Q$, which appear in the construction of the canonical bases of quantized enveloping algebras due to Lusztig~\cite{LusB}. 
Theorem \ref{Intro:adapted} can be proved by applying the strategy as in Section \ref{Ssec:strategy} to this geometric situation. 
 
\subsection{Further application} 
We end this introduction with a brief discussion on another application of our monoidal Jantzen filtrations and  Theorem \ref{Intro:adapted}. 

Let $F \colon \Cc_1 \to \Cc_2$ be an exact monoidal functor between monoidal categories with generic braidings as in Section \ref{Ssec:main_construction}.
If $F$ sends cuspidal objects of $\Cc_1$ to cuspidal objects of $\Cc_2$ and sends $R$-matrices among cuspidal objects in $\Cc_1$ to those in $\Cc_2$, then it is immediate from the construction that $F$ sends the monoidal Jantzen filtration of a mixed tensor product in $\Cc_1$ to that of its image in $\Cc_2$. 
Therefore, it induces a homomorphism $(K(\Cc_1)_t, *) \to (K(F(\Cc_1))_t, *)$, where $F(\Cc_1) \subset \Cc_2$ denotes the essential image of $F$. 
In a good situation, the associativity of $(K(\Cc_1)_t, *)$ implies the associativity of $(K(F(\Cc_1))_t, *)$,

Examples of such nice functors may be provided by the generalized quantum affine Schur-Weyl duality introduced by Kang-Kashiwara-Kim~\cite{KKK}, which connect the monoidal categories of finite-dimensional modules over symmetric quiver Hecke algebras and quantum affine algebras.
As a remarkable special case, associated with a Q-datum $\mathcal{Q}$ for a finite-dimensional simple Lie algebra $\fg$ in the sense of \cite{FO21}, we have a certain monoidal Serre subcategory $\Cc_{\mathcal{Q}}$ of $\Cc$ for the quantum loop algebra of $\fg$, and a monoidal equivalence $F_\mathcal{Q} \colon \Cc_{w_0} \simeq \Cc_{\mathcal{Q}}$ with $\Cc_{w_0}$ being the category for the symmetric quiver Hecke algebra associated with the unfolding of $\fg$.  
See \cites{KKK15, KO19, OS19, Naoi21}.
To each reduced word $\ii$ for $w_0$, we have the associated PBW-theory for $\Cc_{w_0}$, whose image under $F_{\mathcal{Q}}$ gives a PBW-theory for $\Cc_{\mathcal{Q}}$. 
As discussed in \cite[Section 4]{KKOP}, the functor $F_\mathcal{Q}$ respects the $R$-matrices and hence the monoidal Jantzen filtrations.

When $\fg$ is of simply-laced type, a Q-datum $\mathcal{Q}$ for $\fg$ is the same as a Dynkin quiver $Q$ (plus a choice of height function).
In this case, if the word $\ii$ is adapted to $Q$, the functor $F_Q$ sends the PBW-theory of $\Cc_{w_0}$ associated with $\ii$ to the PBW-theory of $\Cc_Q$ arising from the fundamental modules.
Thus, the functor $F_Q$ directly connects the associativity of $(K(\Cc_{w_0})_t, *)$ established in Theorem \ref{Intro:adapted}, to the associativity of $(K(\Cc_Q)_t, *)$ established in Theorem \ref{Intro:simply-laced}.

When $\fg$ is of non-simply-laced type, the functor $F_\mathcal{Q}$ sends the PBW-theory of $\Cc_{w_0}$ associated with a reduced word $\ii$ adapted to a quiver into a non-standard PBW-theory arising from a collection of simple modules which are not fundamental in general.
In this case, we have the associativity of $(K(\Cc_\mathcal{Q})_t, *)$ with respect to such a non-standard PBW-theory by Theorem \ref{Intro:adapted}.  
Comparing with the HLO-isomorphism $\Phi_\mathcal{Q} \colon A_t[N(w_0)] \simeq K_t(\Cc_\mathcal{Q})$ studied in \cite{FHOO}, our deformation $(K(\Cc_\mathcal{Q})_t, *)$ with respect to such a non-standard PBW-theory gets identified with the quantum Grothendieck ring $K_t(\Cc_\mathcal{Q})$.
Thus, we obtain some evidence of a version of Conjecture \ref{conjF} with a non-standard PBW-theory when $\fg$ is of non-simply-laced type.
Note that the analog of Kazhdan-Lusztig conjecture for the category $\Cc_\mathcal{Q}$ is already verified in \cite{FHOO}.

In any case, one concludes that the functor $F_\mathcal{Q}$ together with our formalism of monoidal Jantzen filtrations gives a representation-theoretic interpretation of the HLO isomorphism $\Phi_\mathcal{Q}$ for any Q-datum $\mathcal{Q}$ for any $\fg$.   
 
\subsection*{Organization}

This paper is organized as follows.  
In Section \ref{secun}, we develop a general theory of monoidal Jantzen filtrations in the setting of monoidal abelian category of representations (modules) over an algebra.
In Section \ref{fqla}, we discuss the case of quantum loop algebras and state our main Conjecture \ref{conjF}. 
 We also provide some concrete examples of monoidal Jantzen filtrations at the end (Section \ref{exfil2}). 
In Section \ref{mjqha}, we discuss the case of quiver Hecke algebras and state the analogous conjecture.
The remaining part of the paper is devoted to the proofs of our main theorems, where we apply some geometric methods including perverse sheaves.
Before going into individual discussions, in Section \ref{sec:preliminary}, we assemble some relevant facts on equivariant perverse sheaves which we commonly use in the proofs.
Finally, we prove our main Theorems \ref{Intro:simply-laced} and \ref{Intro:adapted} above in Sections \ref{ssec:Prqaa} and \ref{ssec:PrqH} respectively.

\subsection*{Acknowledgements}
The first named author is grateful to Hironori Oya for stimulating discussion.  
 We also thank Geoffrey Janssens and Ricardo Canesin for valuable questions.  
R.\ F.\ was supported by JSPS Overseas Research Fellowships and KAKENHI Grant No.\ JP23K12955.  
D.\ H.\ was supported by the Institut Universitaire de France.

\subsection*{Overall conventions}
\begin{enumerate}
\item For a statement $P$, we set $\delta(P)$ to be $1$ or $0$ according that $P$ is true or false.
We often abbreviate $\delta(i=j)$ as $\delta_{i,j}$.
\item For an object $X$ in a category, we denote by $\id_X$ the identity morphism on $X$.  We often abbreviate it as $\id$ suppressing the subscript $X$ when it is clear from the context.
\item We write $\Z$, $\N$, $\Q$, and $\C$ for the sets of integers, non-negative integers, rational numbers, and complex numbers, respectively.
Note that we have $0 \in \N$ in our convention.
\item For a set $J$, we define $\N^{\oplus J}$ to be the subset of $\N^J$ consisting of $J$-tuples $\bd =(d_j)_{j \in J}$ with finite support, i.e., $\#  \{ j \in J \mid d_j > 0\} < \infty$. 
For each $i \in J$, let $\bdelta_i \seq (\delta_{i,j})_{j \in J} \in \N^{\oplus J}$ be the delta function. 
\end{enumerate}

\section{General definitions for monoidal categories of representations}\label{secun}

In this section we explain our general categorical framework to construct monoidal Jantzen filtrations (Definition~\ref{defjmon} and Formula~\eqref{expfm}). They depend on a PBW-theory 
in a monoidal category (Section~\ref{pbw}) and on a deformation of this PBW-theory (Section \ref{defoc}) together with $R$-matrices (Section~\ref{mjf}). We establish in general the compatibility 
of the monoidal Jantzen filtrations with specialized $R$-matrices (Propositions~\ref{compfil}, \ref{compfil2}). We explain in Section~\ref{decato} the decategorification process and the construction of analogs of Kazhdan-Lusztig polynomials. 
We conjecture that we obtain a ring through this process (Conjectures~\ref{Conj:assoc}, \ref{Conj:sassoc}). Then we establish a general duality result 
(Proposition~\ref{Prop:Kdual}) between filtrations of standard 
and costandard objects, a Kazhdan-Lusztig type characterization of a canonical 
basis and we state a Duality 
Conjecture~\ref{Conj:duality} related to the existence of a bar involution.

\subsection{PBW-theory for monoidal categories of representations}\label{pbw}

Let $A$ be an associative algebra over a field $\kk$.
In what follows, we abbreviate $\otimes_\kk$ as $\otimes$.
We assume that there is a non-trivial $\kk$-algebra homomorphism $\varepsilon \colon A \to \kk$, through which $\kk$ is regarded as an $A$-module.  
Let $B$ be an $(A, A^{\otimes 2})$-bimodule which is free of finite rank as a right $A^{\otimes 2}$-module and equipped with isomorphisms
\begin{equation} \label{eq:bimodule}
B \otimes_{A^{\otimes 2}} (B \otimes A) \simeq B \otimes_{A^{\otimes 2}} (A \otimes B)
\end{equation} 
of $(A, A^{\otimes 3})$-bimodules, and
\begin{equation} \label{eq:epsilon}
B \otimes_{A^{\otimes 2}} (A \otimes \kk) \simeq B \otimes_{A^{\otimes 2}}(\kk \otimes A) \simeq A
\end{equation} 
of $(A,A)$-bimodules, making the category of (left) $A$-modules into a $\kk$-linear monoidal category with respect to the product 
\begin{equation} \label{eq:star} 
M \star N \seq B \otimes_{A^{\otimes 2}} (M \otimes N). 
\end{equation}
Note that the category $A \mof$ of finite-dimensional left $A$-modules is stable under this monoidal structure, and the Grothendieck group $K(A\mof)$ becomes a ring with a canonical $\Z$-basis formed by the classes of finite-dimensional simple $A$-modules.  

\begin{Ex}
We mainly consider the following case.
Let $A$ be a bialgebra over $\kk$ with coproduct $\Delta \colon A \to A^{\otimes 2}$ and counit $\varepsilon \colon A \to \kk$. 
We regard $B \seq A^{\otimes 2}$ as an $(A, A^{\otimes 2})$-bimodule with the structure map $(\Delta, \id)$.
Then the product $\star$ is the ordinary tensor product of left $A$-modules. The case of 
quantum loop algebras will be of particular interest
in the following (see Section~\ref{fqla}).
\end{Ex}

We can also consider a slight generalization of the above situation.
Now, we may not assume that $A$ is unital, but we assume that there is a collection of mutually orthogonal central idempotents $\{1_{\gamma}\}_{\gamma \in \Gamma} \subset A$ labelled by a  commutative  monoid $\Gamma = (\Gamma, +)$ such that $A = \bigoplus_{\gamma \in \Gamma} A_\gamma$, where $A_\gamma \seq 1_\gamma A$. An $A$-module $M$ is always supposed to satisfy $M = \bigoplus_{\gamma \in \Gamma} 1_\gamma M$. 
Let $\varepsilon \colon A \to \kk$ be a non-trivial $\kk$-algebra homomorphism satisfying $\varepsilon(1_\gamma) = \delta_{\gamma, 0}$.
Let $B$ be an $(A,A^{\otimes 2})$-bimodule, which is $\Gamma$-graded (that is, $B = \bigoplus_{\gamma \in \Gamma} 1_\gamma B$ and $1_\gamma B = \bigoplus_{\gamma' + \gamma'' = \gamma} B(1_{\gamma'}\otimes 1_{\gamma''})$ for all $\gamma \in \Gamma$) 
and locally free of finite rank as a right $A^{\otimes 2}$-module (that is, $B (1_\gamma \otimes 1_{\gamma'})$ is free of finite rank as a right $A_\gamma \otimes A_{\gamma'}$-module for each $\gamma, \gamma' \in \Gamma$).
We assume that these are equipped with isomorphisms as in \eqref{eq:bimodule}, \eqref{eq:epsilon} making the category of (left) $A$-modules into a $\kk$-linear monoidal category with respect to the product $\star$ in \eqref{eq:star}.
Note that the category $A \mof$ in this case is a $\Gamma$-graded monoidal category, that is, we have a natural decomposition $A \mof = \bigoplus_{\gamma \in \Gamma} A_\gamma \mof$ with $(A_\gamma \mof) \star (A_{\gamma'}\mof) \subset A_{\gamma + \gamma'}\mof$.
The situation in the previous paragraph can be thought of  as  a special case where $\Gamma$ is trivial. 

\begin{Ex}
We mainly consider the following case.
Let $A = \bigoplus_{\beta \in \sQ^+} \hH_\beta$, where $\hH_\beta$ is a natural completion of the quiver Hecke algebra $H_\beta$, and $\varepsilon \colon A \to \kk$ the projection to $\hH_0 = \kk$. We take $B = \bigoplus_{\beta, \beta' \in \sQ_+}\hH_{\beta + \beta'}e(\beta, \beta')$ with a natural $(A, A^{\otimes 2})$-bimodule structure. 
Then the product $\star$ is the usual convolution product (or parabolic induction) of left $A$-modules 
(see Section~\ref{Ssec:qH} below for details). 
\end{Ex}

\begin{Rem}
One could develop our theory of monoidal Jantzen filtrations in a more general setting of an abstract monoidal abelian category with an appropriate notion of deformation.
For example, one may employ the notion of affinization in an abstract monoidal abelian category recently studied in \cite{KKOPaff}. 
\end{Rem}

Let $\Cc$ be a monoidal Serre subcategory of $A \mof$. 

\begin{Def} \label{Def:PBW}
Let $\{ L_j \}_{j \in J}$ be a collection of simple objects of $\Cc$ parameterized by a subset $J \subset \Z$, and $\preceq$ a partial ordering of the set $\N^{\oplus J}$.
We say that such a pair $(\{L_j \}_{j \in J}, \preceq)$ gives a \emph{PBW-theory} of $\Cc$ 
if the following conditions are satisfied: 
\begin{enumerate} 
\item For each $\bd = (d_j)_{j \in J} \in \N^{\oplus J}$, the \emph{oppositely} ordered product (here the ordering of $J\subset \mathbb{Z}$ is induced from the natural ordering of $\mathbb{Z}$)
\[ M(\bd) \seq \mathop{\star}^{\leftarrow}_{j \in J} L_j^{\star d_j} \]
has a simple head $L(\bd)$.
 
\item The set $\{ L(\bd)\}_{\bd \in \N^{\oplus J}}$ gives a complete collection of simple objects of $\Cc$ up to isomorphisms.
\item In the Grothendieck ring $K(\Cc)$, for each $\bd \in \N^{\oplus J}$, we have 
\[ [M(\bd)] = [L(\bd)] + \sum_{\bd' \prec \bd} P_{\bd,\bd'} [L(\bd')], \]
where $P_{\bd,\bd'} = [M(\bd):L(\bd')]\in \N$ is the Jordan-H\"older multiplicity.
\end{enumerate}
\end{Def}

We refer to the modules $M(\bd)$ as the \emph{standard modules}. 
Note that their classes $\{ [M(\bd)] \}_{\bd \in \N^{\oplus J}}$ form a $\Z$-basis of $K(\Cc)$. 
On the other hand, we also consider the naturally ordered product 
\[
M^\vee(\bd) \seq \mathop{\star}^{\to}_{j \in J} L_j^{\star d_j},
\] 
which we refer to as the \emph{constandard modules}.

\begin{Rem}
Let $(\{L_j \}_{j \in J}, \preceq)$ be a PBW-theory of $\Cc$. 
For each $i \in J$, let $\bdelta_i = (\delta_{i,j})_{j \in J} \in \N^{\oplus J}$ denote the delta function.
By definition, we have
\[ M(\bdelta_i) = M^\vee(\bdelta_i) = L(\bdelta_i) = L_i. \]
\end{Rem}

\begin{Rem}
In all the examples below, we will only encounter the situation where the partial ordering $\preceq$ of $\N^{\oplus J}$ can be taken to be the bi-lexicographic ordering.
\end{Rem}

\subsection{Generically commutative deformations of simple modules}\label{defoc}
 
For a commutative $\kk$-algebra $R$, we write $A_R \seq A \otimes R$ and $B_R \seq B \otimes R$.
Note that $B_R$ is an $(A_R, A_R \otimes_R A_R)$-bimodule.
Let $A_R \mof$ denote the category of left $A_R$-modules which are finitely generated over $R$. 
This is an $R$-linear monoidal category with respect to the product 
\begin{equation}
M \star_R N \seq B_R \otimes_{(A_R \otimes_R A_R)}(M \otimes_R N).
\end{equation}

Consider an indeterminate $z$. 
Let $\Oo \seq \kk [\![ z ]\!]$ be the ring of formal power series and $\Kk \seq \kk (\!( z )\!)$ its fraction field 
(the ring of Laurent series).
For an $\Oo$-module $M$, we write 
\begin{equation} \label{eq:M_0}
M_\Kk \seq M \otimes_\Oo \Kk \quad \text{ and } \quad M_0 \seq M \otimes_\Oo \kk.
\end{equation}
These operations give the monoidal functors
\[ A \mof \leftarrow A_\Oo \mof \to A_\Kk \mof.\]

\begin{Def} \label{Def:gcdef}
Let $\{L_j\}_{j \in J}$ be a collection of simple objects of $A \mof$ labelled by a subset $J \subset \Z$.
We say that a collection $\{ \tL_j \}_{j \in J}$ of objects of $A_\Oo \mof$ gives a \emph{generically commutative deformation} of $\{L_j\}_{j \in J}$ if the following conditions are satisfied:
\begin{itemize}
\item[(D1)] For each $j \in J$, we have $(\tL_j)_0 \simeq L_j$ and $\tL_j$ is free over $\Oo$. 
\item[(D2)] For any $i, j \in J$, we have an 
isomorphism of $A_\Kk$-modules
\[(\tL_i \star_\Oo \tL_j)_\Kk \simeq (\tL_j \star_\Oo \tL_i)_\Kk \quad\]
and an equality
\[\End_{A_\Kk}\left((\tL_i \star_\Oo \tL_j)_\Kk\right) = \Kk \id. \]
\end{itemize}
\end{Def}

Under the condition (D2), we always find an isomorphism of $A_{\Kk}$-modules
\[R_{i,j} \colon (\tL_i \star_\Oo \tL_j)_\Kk \to (\tL_j \star_\Oo \tL_i)_\Kk\] 
satisfying $R_{i,j}(\tL_i \star_\Oo \tL_j) \subset \tL_j \star_\Oo \tL_i$ and $R_{i,j}(\tL_i \star_\Oo \tL_j) \not \subset z(\tL_j \star_\Oo \tL_i)$. 
Here we naturally regard $\tL_i \star_\Oo \tL_j$ as an $\Oo$-lattice of $(\tL_i \star_\Oo \tL_j)_\Kk$.
Such a morphism $R_{i,j}$ is unique up to 
a multiple in $\Oo^\times$ and is called a \emph{renormalized $R$-matrix}.

\begin{Lem}[cf.~{\cite[5.5.4]{ES}}]
Let $\Cc$ be a monoidal Serre subcategory of $A\mof$.
If there is a PBW-theory $(\{L_j \}_{j \in J}, \prec)$ of $\Cc$ which admits a generically commutative deformation $\{\tL_j\}_{j \in J}$, the Grothendieck ring $K(\Cc)$ is isomorphic to a polynomial ring in $J$-many variables: 
\[K(\Cc) \simeq \Z[X_j \mid j \in J]; \quad [L_j] \mapsto X_j. \]
In particular, $K(\Cc)$ is a commutative ring. 
\end{Lem}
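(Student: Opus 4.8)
The plan is to show that the classes $\{[M(\bd)]\}_{\bd \in \N^{\oplus J}}$ of standard modules satisfy the multiplication rule of the monomial basis of a polynomial ring, and then invoke the unitriangularity in Definition~\ref{Def:PBW}(3) to transfer this to the basis $\{[L(\bd)]\}$. Concretely, I would first prove that in $K(\Cc)$ one has
\[ [L_i][L_j] = [L_j][L_i] \]
for all $i,j \in J$, and more generally $[M(\bd)] = \prod_{j\in J} [L_j]^{d_j}$ regardless of the order of the product. The key input is the generically commutative deformation: by (D2) there is a renormalized $R$-matrix $R_{i,j}\colon (\tL_i \star_\Oo \tL_j)_\Kk \xrightarrow{\sim} (\tL_j \star_\Oo \tL_i)_\Kk$. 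Restricting the source lattice, $R_{i,j}$ gives a nonzero $A_\Oo$-module morphism $\tL_i \star_\Oo \tL_j \to \tL_j \star_\Oo \tL_i$; specializing at $z = 0$ (using that both sides are free over $\Oo$ by (D1) and that $\star_\Oo$ commutes with $-\otimes_\Oo\kk$) yields a morphism $L_i \star L_j \to L_j \star L_i$ of $A$-modules. In the Grothendieck ring the two sides have a priori equal classes once we know the specialization is injective on $K$-theory: since $\tL_i\star_\Oo\tL_j$ is a free $\Oo$-module with an $A_\Oo$-action, the classes $[(\tL_i\star_\Oo\tL_j)_0]$ and $[(\tL_i\star_\Oo\tL_j)_\Kk]$ agree under the natural identification of Grothendieck groups of the $\Oo$-, $\kk$-, and $\Kk$-linear categories (flatness preserves composition series multiplicities; this is the content of the ``generic flatness / semicontinuity'' argument and is where I would be slightly careful). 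Hence
\[ [L_i \star L_j] = [(\tL_i\star_\Oo \tL_j)_\Kk] = [(\tL_j\star_\Oo \tL_i)_\Kk] = [L_j \star L_i] \]
in $K(\Cc)$, which is exactly commutativity of $[L_i][L_j]$.

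Granting this, I would define the ring homomorphism $\psi\colon \Z[X_j \mid j\in J]\to K(\Cc)$ by $X_j \mapsto [L_j]$; it is well defined precisely because the $[L_j]$ pairwise commute. For surjectivity, note that any monomial $\prod_j X_j^{d_j}$ maps to $\prod_j [L_j]^{d_j} = [M(\bd)]$ for a suitable ordering, so the image contains the $\Z$-basis $\{[M(\bd)]\}$ of $K(\Cc)$ — where I use that reordering a product of classes does not change it, by iterating the pairwise commutativity established above. For injectivity, observe that $\psi$ sends the monomial basis $\{\prod_j X_j^{d_j}\}_{\bd}$ bijectively onto $\{[M(\bd)]\}_{\bd}$, and by Definition~\ref{Def:PBW}(3) the transition matrix between $\{[M(\bd)]\}$ and $\{[L(\bd)]\}$ is unitriangular with respect to $\preceq$, hence invertible over $\Z$; therefore $\{[M(\bd)]\}$ is genuinely a $\Z$-basis of $K(\Cc)$ and $\psi$ carries one $\Z$-basis to another, so it is an isomorphism.

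The main obstacle is the bookkeeping in the specialization step: one must know that passing from $A_\Oo\mof$ to $A_0\mof = A\mof$ (resp.\ to $A_\Kk\mof$) via $M \mapsto M_0$ (resp.\ $M\mapsto M_\Kk$) induces, on Grothendieck groups of the relevant subcategories, maps under which the class of a free-over-$\Oo$ module has the same image on both ends. For objects like $\tL_i \star_\Oo \tL_j$ this follows because freeness over $\Oo$ forces the generic and special fibres to have the same composition factor multiplicities; the mild point is to confirm that $\tL_i\star_\Oo\tL_j$ is indeed $\Oo$-free (it is, since $B$ is locally free over $A^{\otimes 2}$ and the $\tL_j$ are $\Oo$-free by (D1), so $\star_\Oo$ of $\Oo$-free modules is $\Oo$-free) and that these objects lie in (a suitable $\Oo$-form of) $\Cc$. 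Everything else — well-definedness of $\psi$, surjectivity onto the standard basis, unitriangular change of basis — is formal. The cited reference \cite[5.5.4]{ES} presumably handles exactly this flatness/specialization mechanism, so I would model the argument on it.
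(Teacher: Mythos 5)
Your proposal is correct and follows essentially the same route as the paper: commutativity of $[L_i][L_j]$ via the renormalized $R$-matrix together with a specialization-at-$z=0$ argument, and then the fact that the standard classes $\{[M(\bd)]\}$ form a $\Z$-basis (by the unitriangularity in Definition~\ref{Def:PBW}) to identify $K(\Cc)$ with $\Z[X_j \mid j \in J]$. The only point to tighten is the ``natural identification'' of Grothendieck groups: what is actually needed (and what the paper cites, \cite[Lemma 2.3.4]{CG}) is that any two $A_\Oo$-lattices of the same $A_\Kk$-module have special fibres with equal class in $K(A\mof)$, applied to the lattices $R_{i,j}(\tL_i\star_\Oo\tL_j)$ and $\tL_j\star_\Oo\tL_i$ inside $(\tL_j\star_\Oo\tL_i)_\Kk$; this is precisely the flatness/semicontinuity mechanism you flagged.
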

\begin{proof}
For any $M \in A_\Kk \mof$ and $A_\Oo$-lattices $N, N' \subset M$, we have $[N_0] = [N'_0]$ in $K(A \mof)$ (cf.~\cite[Lemma 2.3.4]{CG}). 
Applying this fact to the case when $M = (\tL_j \star_\Oo \tL_i)_\Kk$, $N= R_{ij}(\tL_i \star_\Oo \tL_j)$ and $N' = \tL_j \star_\Oo \tL_i$, we find $[L_i \star L_j] = [L_j \star L_i]$ for any $i, j \in I$.
Since $\{ [M(\bd)]\}_{\bd \in \N^{\oplus J}}$ forms a $\Z$-basis of $K(\Cc)$, we obtain the assertion.
\end{proof}

Let $(\{L_j\}_{j \in J}, \preceq)$ be a PBW-theory of a monoidal Serre subcategory $\Cc \subset A \mof$.
Assume $\{L_j \}_{j \in J}$ admits a generically commutative deformation $\{\tL_j \}_{j \in J}$.   
Then for any $d \in \N$ and any sequence $\bep = (\ep_1, \ldots, \ep_d) \in J^d$, we define the \emph{mixed product} $M(\bep) \in \Cc$ and its deformation $\tM(\bep) \in A_\Oo \mof$ by
\[ M(\bep) \seq L_{\ep_1} \star \cdots \star L_{\ep_d} \quad \text{ and } \quad \tM(\bep) \seq \tL_{\ep_1} \star_\Oo \cdots \star_\Oo \tL_{\ep_d}.\]
By definition, we have $\tM(\bep)_0 = M(\bep)$. 

\begin{Def}
For $J \subset \Z$ and $\bd =(d_j)_{j \in J} \in \N^{\oplus J}$, we set
\begin{equation}
J^{\bd} \seq \{ \bep = (\ep_1, \ldots, \ep_{d}) \in J^{d} \mid  \#\{ k \mid \ep_k = j \} = d_j, \forall j \in J \},
\end{equation}
where $d \seq \sum_{j \in J} d_j$.
A sequence $\bep = (\ep_1, \ldots, \ep_{d})\in J^\bd$ is said to be \emph{standard} (resp.~\emph{costandard}) if it satisfies $\ep_1 \ge \cdots \ge \ep_{d}$ (resp.~$\ep_1 \le \cdots \le \ep_{d}$). 
Given $\bd \in \N^{\oplus J}$, there is a unique standard (resp.~costandard) sequence in $J^\bd$, which we often denote by $\bep_s = \bep_s(\bd)$ (resp.~$\bep_c = \bep_c(\bd)$).    
By definition, we have $M(\bep_s) = M(\bd)$ and $M(\bep_c) = M^\vee(\bd)$. 
\end{Def} 

\subsection{Intertwiners arising from $R$-matrices}
Let $\{ L_j \}_{j \in J}$ be a collection of simple modules in $\Cc$ labelled by a set $J \subset \Z$, and $\{ \tL_j \}_{j \in J}$ its generically commutative deformation.
For any pair $(i,j) \in J^2$, we have a unique non-negative integer $\alpha(i,j)$ satisfying 
\begin{equation} \label{eq:dd}
R_{i,j} \circ R_{j,i} \equiv z^{\alpha(i,j)} \id \mod \Oo^\times
\end{equation} 
by the condition~(D2) in Definition~\ref{Def:gcdef}.
Note that 
$$\alpha(i,j) = \alpha(j,i) \quad \text{ and } \quad \alpha(i,i) = 0$$ 
hold. We have the following three cases: 
\begin{itemize}
\item[(i)] $\alpha(i,j) = 0$;
\item[(ii)] $\alpha(i,j) > 0$ and $i > j$;
\item[(iii)] $\alpha(i,j) > 0$ and $i < j$.
\end{itemize}
We say that the renormalized $R$-matrix $R_{i,j}$ (or an isomorphism of the form $\id \star R_{i,j} \star \id$) is \emph{neutral} (resp.\ \emph{positive, negative}) when the above condition (i) (resp.\ (ii), (iii)) is satisfied.

\begin{Def}
Let $\bd \in \N^{\oplus J}$.
For $\bep, \bep' \in J^\bd$, we write 
$\bep \lesssim \bep'$ if $\bep'$ is obtained from $\bep$ by replacing a consecutive pair $(i,j)$ in $\bep$ satisfying either (i) or (ii) (see above) with the opposite pair $(j, i)$. 
It generates a preorder on the set $J^\bd$, which we denote by the same symbol $\lesssim$. 
Let $\sim$ denote the induced equivalence relation on $J^\bd$. 
In other words, for $\bep, \bep' \in J^\bd$, 
we write 
$\bep \sim  \bep'$ if and only if $\bep \lesssim \bep'$ and $\bep' \lesssim \bep$.
\end{Def}

Now let us assume our generically commutative deformation is consistent in the following sense.

\begin{Def} A generically commutative deformation $\{ \tL_j \}_{j \in j}$ of $\{L_j\}_{j \in J}$ is said to 
be \emph{consistent} if 
\begin{itemize}
\item[(D3)] \label{consistency} For $i < j <k$, we have the \emph{quantum Yang-Baxter relation}:
\begin{equation} \label{eq:YB}
(R_{j,k} \star_\Oo \id) \circ (\id \star_\Oo R_{i,k})\circ (R_{i,j} \star_\Oo \id)  \equiv (\id \star_\Oo R_{i,j}) \circ (R_{i,k}\star_\Oo \id) \circ (\id \star_\Oo R_{j,k}) \mod \Oo^\times
\end{equation}
as morphisms from $\tL_i \star_\Oo \tL_j \star_\Oo \tL_k$ to $\tL_k \star_\Oo \tL_j \star_\Oo \tL_i$.
\end{itemize}
\end{Def}

\begin{Rem} \label{Rem:negYB}
Thanks to \eqref{eq:dd}, the above consistency condition ( D3) ensures the quantum Yang-Baxter relation \eqref{eq:YB} holds for any triple $(i,j,k)$ in $J$. 
For example, if we multiply by $(R_{j,k} \star_\Oo \id)^{-1} \equiv z^{-\alpha(j,k)}(R_{k,j} \star_\Oo \id)$ from the left and by $(\id \star_\Oo R_{j,k})^{-1} \equiv z^{-\alpha(j,k)} (\id \star_\Oo R_{k,j})$ from the right to the relation~\eqref{eq:YB}  with $i < j < k$, we obtain the quantum Yang-Baxter relation for the triple $(i,k,j)$.
\end{Rem}

Assume that $\{ \tL_j \}_{j \in J}$ is a consistent generically commutative deformation of $\{ L_j \}_{j \in J}$.
Let $\bd \in \N^{\oplus J}$ and $\bep, \bep' \in J^\bd$.
When $\bep \lesssim \bep'$ (resp.~$\bep' \lesssim \bep$), we can consider the $A_\Kk$-isomorphism
\begin{equation}
R_{\bep',\bep} \colon \tM(\bep)_\Kk \to \tM(\bep')_\Kk
\end{equation}
obtained by composing the neutral or positive (resp.~negative) renormalized $R$-matrices.
Thanks to the quantum Yang-Baxter relation \eqref{eq:YB} and Remark~\ref{Rem:negYB}, it is well-defined up to multiples in $\Oo^\times$.
If $\bep \lesssim \bep' \lesssim \bep''$ or $\bep'' \lesssim \bep' \lesssim \bep$, we have
$$R_{\bep'', \bep'} \circ R_{\bep', \bep} \equiv R_{\bep'', \bep} \mod \Oo^\times.$$
In particular, we obtain the following.
\begin{Prop}\label{riso1} If $\bep \sim \bep'$, the homomorphism $R_{\bep', \bep}$ induces isomorphisms
\[
\tM(\bep) \simeq \tM(\bep') \quad \text{ and } \quad M(\bep) \simeq M(\bep').
\] 
\end{Prop}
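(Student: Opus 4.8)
\emph{Proof idea.} The strategy is to show that $R_{\bep',\bep}$ restricts to an isomorphism of $A_\Oo$-modules between the lattices $\tM(\bep) \subset \tM(\bep)_\Kk$ and $\tM(\bep') \subset \tM(\bep')_\Kk$; once this is done the isomorphism $M(\bep) \simeq M(\bep')$ follows by applying the specialization functor $(-)_0 = (-)\otimes_\Oo \kk$, since $\tM(\bep)_0 = M(\bep)$ and $\tM(\bep')_0 = M(\bep')$ by construction.

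First I would treat one elementary step. Suppose $\bep'$ is obtained from $\bep$ by swapping a consecutive pair $(i,j)$ satisfying (i) or (ii). By definition of a renormalized $R$-matrix, $R_{i,j}$ restricts to a map of $A_\Oo$-modules $\tL_i \star_\Oo \tL_j \to \tL_j \star_\Oo \tL_i$, which is injective since it becomes an isomorphism after $-\otimes_\Oo\Kk$. Because $B_\Oo$ is locally free of finite rank over $A_\Oo \otimes_\Oo A_\Oo$, the product $\star_\Oo$ is exact, so tensoring this inclusion with the remaining deformed cuspidal factors shows that $\id \star_\Oo \cdots \star_\Oo R_{i,j} \star_\Oo \cdots \star_\Oo \id$ sends the lattice $\tM(\bep)$ into $\tM(\bep')$. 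Composing such steps along a chain, I get: whenever $\bep \lesssim \bep'$, the map $R_{\bep',\bep}$ (built out of neutral and positive renormalized $R$-matrices) satisfies $R_{\bep',\bep}(\tM(\bep)) \subset \tM(\bep')$.

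Now assume $\bep \sim \bep'$, i.e.\ $\bep \lesssim \bep'$ and $\bep' \lesssim \bep$. Applying the last observation to both relations yields $R_{\bep',\bep}(\tM(\bep)) \subset \tM(\bep')$ and $R_{\bep,\bep'}(\tM(\bep')) \subset \tM(\bep)$. On the other hand, since the deformation is consistent, the composition rule $R_{\bep'',\bep'}\circ R_{\bep',\bep} \equiv R_{\bep'',\bep}$ recalled above, applied to the chains $\bep \lesssim \bep' \lesssim \bep$ and $\bep' \lesssim \bep \lesssim \bep'$, gives the congruences $R_{\bep,\bep'} \circ R_{\bep',\bep} \equiv R_{\bep,\bep} \equiv \id$ and $R_{\bep',\bep} \circ R_{\bep,\bep'} \equiv R_{\bep',\bep'} \equiv \id$ modulo $\Oo^\times$ (here $R_{\bep,\bep}$ and $R_{\bep',\bep'}$ are represented by the identity via the empty chain, and $R_{\bullet,\bullet}$ is well defined up to $\Oo^\times$). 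Hence $R_{\bep,\bep'}$ agrees with $R_{\bep',\bep}^{-1}$ up to a unit of $\Oo$, so $R_{\bep',\bep}^{-1}(\tM(\bep')) \subset \tM(\bep)$; combined with $R_{\bep',\bep}(\tM(\bep)) \subset \tM(\bep')$ this forces $R_{\bep',\bep}(\tM(\bep)) = \tM(\bep')$. Thus $R_{\bep',\bep}$ restricts to an isomorphism $\tM(\bep) \simeq \tM(\bep')$ of $A_\Oo$-modules, and specializing at $z=0$ completes the argument.

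The only point that requires genuine care is the exactness of $\star_\Oo$ used in the elementary step, i.e.\ that applying $\tL_{(-)}\star_\Oo(-)\star_\Oo\tL_{(-)}$ to an inclusion of lattices again produces an inclusion; this is immediate from the local-freeness hypothesis on $B_\Oo$, and the rest is bookkeeping with relations already in place. Alternatively, one may observe that $\bep \sim \bep'$ can only be realized through \emph{neutral} swaps: the quantity $\#\{(k,l) : k<l,\ \alpha(\ep_k,\ep_l)>0,\ \ep_k>\ep_l\}$ is unchanged by a neutral swap and strictly decreases under a positive one, hence is constant along any chain witnessing $\bep \lesssim \bep' \lesssim \bep$. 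Every elementary renormalized $R$-matrix occurring is then neutral, hence invertible over $\Oo$ with inverse the opposite renormalized $R$-matrix up to $\Oo^\times$ (as $\alpha(i,j) = \alpha(j,i) = 0$), and the lattice isomorphism is immediate.
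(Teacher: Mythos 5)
Your main argument is exactly the paper's (which leaves the proposition as an immediate consequence of the composition rule $R_{\bep'',\bep'}\circ R_{\bep',\bep}\equiv R_{\bep'',\bep}\bmod\Oo^\times$): since $\bep\sim\bep'$ gives $R_{\bep,\bep'}\circ R_{\bep',\bep}\equiv\id$ and each direction sends lattices into lattices, both inclusions combine to an equality, and the statement for $M(\bep)$ follows by specialization at $z=0$. Your closing alternative — that the count $\#\{(k,l):k<l,\ \alpha(\ep_k,\ep_l)>0,\ \ep_k>\ep_l\}$ is a chain invariant of neutral swaps and a strict decreaser for positive ones, forcing every elementary step in a cycle $\bep\lesssim\bep'\lesssim\bep$ to be neutral and hence an $\Oo$-lattice isomorphism by \eqref{eq:dd} — is a clean combinatorial refinement of property~(3) in Section~\ref{rnomat}, and worth noting, though it buys nothing logically beyond the paper's argument since well-definedness of $R_{\bep',\bep}$ still rests on the Yang--Baxter consistency.
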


\subsection{Monoidal Jantzen filtrations}\label{mjf}
In what follows, let $(\{ L_j \}_{j \in J}, \preceq)$  be  a PBW-theory of a monoidal Serre subcategory $\Cc \subset A\mof$ and $\{ \tL_j \}_{j \in J}$ a consistent generically commutative deformation of $\{ L_j \}_{j \in J}$.
Fix $\bd \in \N^{\oplus J}$ and write $\bep_s$ and $\bep_c$ for the standard and costandard sequences in $J^\bd$ respectively. 
For any $\bep \in J^\bd$, we have $\bep_s \lesssim \bep \lesssim \bep_c$ and hence the $A_\Kk$-isomorphisms
\[
\tM(\bep_s)_\Kk \xrightarrow{R_{\bep, \bep_s}} \tM(\bep)_\Kk \xrightarrow{R_{\bep_c, \bep}} \tM(\bep_c)_\Kk
\]
constructed in the previous subsection. 
We regard $\tM(\bep)$ as an $\Oo$-lattice of $\tM(\bep)_\Kk$.
Now, we define the decreasing filtration of $A$-submodules
\begin{equation} \label{eq:Filt}
M(\bep) \supset \cdots \supset F_{-1}M(\bep) \supset F_0M(\bep) \supset F_1M(\bep) \supset \cdots 
\end{equation}  
by the formula \begin{equation}\label{expfm} F_nM(\bep) \seq \ev_{z=0} \left( \tM(\bep) \cap \sum_{k \in \Z} \left( z^k R_{\bep, \bep_s} \tM(\bep_s) \cap z^{n-k} R^{-1}_{\bep_c, \bep} \tM(\bep_c)\right)\right)\end{equation}
for each $n \in \Z$, where $\ev_{z=0} \colon \tM(\bep) \to \tM(\bep)_0 = M(\bep)$ is the natural evaluation map (recall \eqref{eq:M_0}). 
By construction, we have $F_{-n}M(\bep) = M(\bep)$ and $F_nM(\bep) = \{0\}$ for $n$ large enough.

\begin{Def}\label{defjmon} We call the filtration $F_\bullet M(\bep) = \{ F_n M(\bep) \}_{n \in \Z}$ in \eqref{eq:Filt} the \emph{(monoidal) Jantzen filtration} of $M(\bep)$. 
\end{Def}

\begin{Ex} \label{Ex:standard}
When $\bep = \bep_s$, we have $R_{\bep, \bep_s} \in \Oo^\times \id$ and hence
\[ F_n M(\bep_s) = \ev_{z=0} \left(\tM(\bep_s) \cap z^n R^{-1}_{\bep_c, \bep_s} \tM(\bep_c) \right)\]
for each $n \in \Z$. 
The filtration $F_\bullet M(\bd)$ of the standard module $M(\bd) = M(\bep_s)$ is given in this way, which is analogous to the usual Jantzen filtration of standard (Verma) modules of Lie algebras.

Dually, when $\bep = \bep_c$, we have $R_{\bep_c, \bep} \in \Oo^\times \id$ and hence
\[ F_n M(\bep_c) = \ev_{z=0} \left(z^n R_{\bep_c, \bep_s} \tM(\bep_s) \cap \tM(\bep_c) \right)\]
for each $n \in \Z$.
The filtration $F_\bullet M^\vee(\bd)$ of the costandard module $M^\vee(\bd) = M(\bep_c)$ is given in this way.
\end{Ex}

\subsection{Specialized $R$-matrices}\label{rnomat}
We keep the assumption from the previous subsection.
Suppose that $\bep \lesssim \bep'$ or $\bep' \lesssim \bep$. 
Then $R_{\bep',\bep}$ is defined and there is a unique integer $\beta(\bep',\bep)\geq 0$ so that 
$$R_{\bep',\bep} \tM(\bep) \subset z^{\beta(\bep',\bep)}\tM(\bep')\quad \text{ and } \quad 
R_{\bep',\bep}\tM(\bep) \not \subset z^{\beta(\bep',\bep) + 1} \tM(\bep').$$
Note that if $R_{\bep',\bep}$ is of the form $\id \star R_{i,j}\star \id$, then $\beta(\bep',\bep) = 0$.
Under the same assumption, there is also a unique integer $\alpha(\bep',\bep)\geq 0$ such that
\begin{equation}\label{sldef}
R_{\bep',\bep} \circ R_{\bep,\bep'} \equiv z^{\alpha(\bep',\bep)}\id_{\tM(\bep')} 
\quad \text{ and } \quad 
R_{\bep,\bep'} \circ R_{\bep',\bep} \equiv z^{\alpha(\bep',\bep)}\id_{\tM(\bep)} \mod \Oo^\times.\end{equation}
These numbers satisfy the following properties:
\begin{enumerate}
\item By definition, we have $\alpha(\bep, \bep') = \alpha(\bep', \bep)$;
\item Recall the notation $\alpha(i,j)$ for $i,j \in J$ from the previous section.
When $R_{\bep', \bep}$ is the composition of homomorphisms of the form $\id \star R_{i_k, j_k} \star \id$ for $1 \le k \le n$, we have $\alpha(\bep', \bep) = \sum_{k=1}^n \alpha(i_k, j_k).$
In particular, if $\bep \lesssim \bep' \lesssim \bep''$, we have the additivity $\alpha(\bep'', \bep) = \alpha(\bep'', \bep') + \alpha(\bep', \bep)$;
\item We have $\bep \sim \bep'$ if and only if $\alpha(\bep, \bep') = 0$ (case of Proposition \ref{riso1});
\item If $\bep \lesssim \bep' \lesssim \bep''$, we have $\beta(\bep'', \bep') + \beta(\bep', \bep) \le \beta(\bep'', \bep)$ and $\beta(\bep, \bep') + \beta(\bep', \bep'') \le \beta(\bep, \bep'')$; 
\item For $\bep \lesssim \bep'$, we have $\alpha(\bep, \bep') \ge \beta(\bep, \bep') + \beta(\bep', \bep)$.
In particular, we always have $\alpha(\bep, \bep') - \beta(\bep, \bep') \ge 0$.
\end{enumerate}

Now $z^{-\beta(\bep',\bep)}R_{\bep',\bep}$ induces a non-zero morphism  of $A$-modules $$\rr_{\bep',\bep} \colon M(\bep) \rightarrow M(\bep')$$
called the \emph{specialized $R$-matrix}, which is uniquely determined up to an invertible element in $\kk$. 
The following propositions are useful to compute examples.

\begin{Prop}\label{compfil} Let $\bep, \bep' \in J^\bd$ satisfying $\bep \lesssim \bep'$. 
For any $N\in\mathbb{Z}$, we have  
$$\mathbf{r}_{\bep',\bep}(F_N M(\bep))\subset F_{N-2\beta(\bep',\bep)} M(\bep').$$
\end{Prop}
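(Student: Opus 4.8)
The plan is to track the two lattice conditions defining $F_NM(\bep)$ through the renormalized $R$-matrix $R_{\bep',\bep}$ and use the additivity properties of $\alpha$ and $\beta$ listed above. Write $\rr_{\bep',\bep}=\ev_{z=0}(z^{-\beta(\bep',\bep)}R_{\bep',\bep})$ and abbreviate $\beta\seq\beta(\bep',\bep)$. The key identities I would exploit are: first, the decomposition $R_{\bep',\bep_s}\equiv R_{\bep',\bep}\circ R_{\bep,\bep_s}\bmod\Oo^\times$ (valid since $\bep_s\lesssim\bep\lesssim\bep'$), so that $z^\beta R_{\bep',\bep}$ carries $R_{\bep,\bep_s}\tM(\bep_s)$ into $R_{\bep',\bep_s}\tM(\bep_s)$; and second, for the costandard side, $R_{\bep_c,\bep}\equiv R_{\bep_c,\bep'}\circ R_{\bep',\bep}\bmod\Oo^\times$, equivalently $R^{-1}_{\bep_c,\bep}\equiv R^{-1}_{\bep',\bep}\circ R_{\bep',\bep}\circ R^{-1}_{\bep_c,\bep}$ — more usefully, $R_{\bep',\bep}\circ R^{-1}_{\bep_c,\bep}\equiv R^{-1}_{\bep_c,\bep'}\circ(R_{\bep',\bep}\circ R_{\bep,\bep'})\equiv z^{\alpha(\bep',\bep)}R^{-1}_{\bep_c,\bep'}\bmod\Oo^\times$ using \eqref{sldef}.

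First I would take an element $x\in\tM(\bep)$ whose class lies in $F_NM(\bep)$; by \eqref{expfm} we may assume (after modifying $x$ by an element of $z\tM(\bep)$, which does not change its image in $M(\bep)$ nor, crucially, its membership in the intersections once we allow the $\sum_k$) that $x\in z^kR_{\bep,\bep_s}\tM(\bep_s)\cap z^{N-k}R^{-1}_{\bep_c,\bep}\tM(\bep_c)$ for a single $k$ — or rather work with a finite sum of such terms, since the conditions are additive in $k$. Then I apply $z^{-\beta}R_{\bep',\bep}$. On the standard side: $z^{-\beta}R_{\bep',\bep}(z^kR_{\bep,\bep_s}\tM(\bep_s))=z^{k-\beta}R_{\bep',\bep}R_{\bep,\bep_s}\tM(\bep_s)\subset z^{k-\beta}\cdot z^{-\beta}\cdot z^\beta R_{\bep',\bep_s}\tM(\bep_s)$; more carefully, since $R_{\bep',\bep}\tM(\bep)\subset z^\beta\tM(\bep')$ and $R_{\bep',\bep}R_{\bep,\bep_s}\equiv R_{\bep',\bep_s}\bmod\Oo^\times$, I get $z^{-\beta}R_{\bep',\bep}(z^kR_{\bep,\bep_s}\tM(\bep_s))\subset z^{k-\beta}R_{\bep',\bep_s}\tM(\bep_s)$. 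On the costandard side I use the displayed identity $R_{\bep',\bep}R^{-1}_{\bep_c,\bep}\equiv z^{\alpha(\bep',\bep)}R^{-1}_{\bep_c,\bep'}\bmod\Oo^\times$, giving $z^{-\beta}R_{\bep',\bep}(z^{N-k}R^{-1}_{\bep_c,\bep}\tM(\bep_c))\subset z^{N-k-\beta+\alpha(\bep',\bep)}R^{-1}_{\bep_c,\bep'}\tM(\bep_c)$. So the image lies in $z^{k'}R_{\bep',\bep_s}\tM(\bep_s)\cap z^{N'-k'}R^{-1}_{\bep_c,\bep'}\tM(\bep_c)$ with $k'=k-\beta$ and $N'=N-2\beta+\alpha(\bep',\bep)-\beta\,$? — here I must be careful with the bookkeeping: summing the two exponent constraints gives $k'+(N'-k')$, so I need $N'=(k-\beta)+(N-k-\beta+\alpha(\bep',\bep))=N-2\beta+\alpha(\bep',\bep)$, which would only give $F_{N-2\beta+\alpha}$, not $F_{N-2\beta}$. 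The resolution is that $z^{-\beta}R_{\bep',\bep}$ does not land in $\tM(\bep')$ with room to spare on \emph{both} sides simultaneously; one should instead symmetrize, applying $z^{-\beta(\bep',\bep)}R_{\bep',\bep}$ but noting it factors through the lattice $\tM(\bep')$ exactly, and the gain $\alpha(\bep',\bep)$ on the costandard side is compensated because $R_{\bep',\bep}\tM(\bep)$ meets $z^\beta\tM(\bep')$ but the \emph{preimage} structure forces the intersection to drop; concretely, property (5), $\alpha(\bep',\bep)\ge 2\beta(\bep',\bep)$ is false in general — rather $\alpha\ge\beta(\bep',\bep)+\beta(\bep,\bep')$ — so I should instead run the argument applying $R_{\bep,\bep'}$ to $M(\bep')$ and dualize, or directly verify that the image of $x$ under $\ev_{z=0}(z^{-\beta}R_{\bep',\bep})$ satisfies the $F_{N-2\beta}$-condition by writing $R^{-1}_{\bep_c,\bep'}=z^{-\alpha(\bep',\bep)}R_{\bep',\bep}\circ R^{-1}_{\bep_c,\bep}\circ (R_{\bep,\bep'})^{-1}\cdots$ — this sign/exponent reconciliation is the crux.

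\emph{Main obstacle.} The genuine difficulty is precisely the exponent accounting in the costandard term: one needs to see that the apparent extra factor $z^{\alpha(\bep',\bep)}$ is offset, so that the net shift is $-2\beta(\bep',\bep)$ rather than $-2\beta+\alpha$. I expect this is handled by re-expressing the costandard condition in \eqref{expfm} not via $R^{-1}_{\bep_c,\bep}$ directly but via $R_{\bep,\bep_c}$ (using $R^{-1}_{\bep_c,\bep}\equiv z^{-\alpha(\bep,\bep_c)}R_{\bep,\bep_c}$), after which both the standard and costandard halves involve only \emph{forward} $R$-matrices out of $\tM(\bep)$, each contributing its own $\beta$, and the composition law $\beta(\bep',\bep)+\beta(\bep,\ast)\le\beta(\bep',\ast)$ from property (4) gives the clean bound $N\mapsto N-2\beta(\bep',\bep)$. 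Once the filtration \eqref{expfm} is rewritten symmetrically in this way, the proof is a direct application of property (4) (sub-additivity of $\beta$) to each of the two lattice conditions, followed by $\ev_{z=0}$, which is exact enough to preserve the inclusion. I would also remark that the factor $2$ is essential and reflects that $\beta$ enters once through $R_{\bep,\bep_s}$ and once through $R_{\bep_c,\bep}$, matching the two-sided nature \eqref{vwv} of the monoidal Jantzen construction.
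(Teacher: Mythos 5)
Your bookkeeping on the costandard side contains a computational slip that creates a phantom obstruction. You assert the identity $R_{\bep',\bep}\circ R^{-1}_{\bep_c,\bep}\equiv z^{\alpha(\bep',\bep)}R^{-1}_{\bep_c,\bep'}\bmod\Oo^\times$, but the correct identity carries no $z^{\alpha}$ factor: since $\bep\lesssim\bep'\lesssim\bep_c$, the composition law gives $R_{\bep_c,\bep'}\circ R_{\bep',\bep}\equiv R_{\bep_c,\bep}$, and precomposing with $R^{-1}_{\bep_c,\bep}$ yields $R_{\bep',\bep}\circ R^{-1}_{\bep_c,\bep}\equiv R^{-1}_{\bep_c,\bep'}\bmod\Oo^\times$. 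The intermediate rewrite $R^{-1}_{\bep_c,\bep'}\circ(R_{\bep',\bep}\circ R_{\bep,\bep'})\equiv z^{\alpha(\bep',\bep)}R^{-1}_{\bep_c,\bep'}$ in your proposal is valid on its own, but it is \emph{not} equal to $R_{\bep',\bep}\circ R^{-1}_{\bep_c,\bep}$; the two differ by exactly the factor $z^{\alpha(\bep',\bep)}$ that then haunts the rest of your discussion.

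Once this is corrected the argument closes as you started it, and it is precisely the paper's proof: take $y(z)\in z^k R_{\bep,\bep_s}\tM(\bep_s)\cap z^{N-k}R^{-1}_{\bep_c,\bep}\tM(\bep_c)$, set $y'(z)\seq z^{-\beta(\bep',\bep)}R_{\bep',\bep}(y(z))\in\tM(\bep')$, and apply the composition law on each side to obtain $y'(z)=R_{\bep',\bep_s}(z^{k-\beta(\bep',\bep)}x(z))$ and $R_{\bep_c,\bep'}(y'(z))=z^{N-k-\beta(\bep',\bep)}x'(z)$ up to units, so that $y'(z)\in z^{k-\beta(\bep',\bep)}R_{\bep',\bep_s}\tM(\bep_s)\cap z^{(N-2\beta(\bep',\bep))-(k-\beta(\bep',\bep))}R^{-1}_{\bep_c,\bep'}\tM(\bep_c)$. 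Thus $y'(z)$ witnesses membership in $F_{N-2\beta(\bep',\bep)}\tM(\bep')$, and applying $\ev_{z=0}$ gives the claim. The entire ``main obstacle'' paragraph --- rewriting the costandard condition via $R_{\bep,\bep_c}$, appealing to subadditivity of $\beta$, wondering whether $\alpha\ge 2\beta$ holds --- is an attempt to cancel an $\alpha$-factor that should never have appeared, and it does not by itself constitute a proof.
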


\begin{proof} We have the following commutative (up to multiples in $\Oo^\times$) diagram
\begin{equation}
\vcenter{\xymatrix{\tM(\bep_s)_\Kk\ar[dd]_{R_{\bep', \bep_s}}\ar[rr]^{R_{\bep, \bep_s}} && \tM(\bep)_\Kk  \ar[lldd]^{R_{\bep',\bep}}\ar[dd]^{R_{\bep_c, \bep}}
\\
\\\tM(\bep')_\Kk \ar[rr]^{R_{\bep_c, \bep'}} && \tM(\bep_c)_\Kk }}.
\end{equation}
Let $y(z)\in z^k R_{\bep, \bep_s} \tM(\bep_s) \cap z^{N-k} R^{-1}_{\bep_c, \bep} \tM(\bep_c)$ with $k\in\mathbb{Z}$. Then
$$y(z) = R_{\bep, \bep_s}(z^k x(z)) \quad \text{ and } \quad R_{\bep_c, \bep}(y(z)) = z^{N-k} x'(z),$$
for some $x(z)\in \tM(\bep_s)$ and $x'(z)\in \tM(\bep_c)$. 
Then $y'(z) = z^{-\beta(\bep',\bep)}R_{\bep',\bep}(y(z))\in \tM(\bep')$
satisfies
$$y'(z) = z^{-\beta(\bep',\bep)}(R_{\bep',\bep}\circ R_{\bep, \bep_s})(z^k x(z)) = R_{\bep', \bep_s}(z^{k-\beta(\bep',\bep)} x(z))$$
and 
$$R_{\bep_c, \bep'}(y'(z)) = z^{-\beta(\bep',\bep)}R_{\bep_c, \bep}(y(z)) = z^{N-k-\beta(\bep',\bep)} x'(z)$$
up to multiples in $\Oo^\times$.
Then the result follows from
\begin{equation}
y'(z) \in z^{k-\beta(\bep',\bep)}  R_{\bep', \bep_s}\tM(\bep_s) \cap z^{N-k-\beta(\bep',\bep)} R^{-1}_{\bep_c, \bep'}\tM(\bep_c).
\qedhere
\end{equation}
\end{proof}

\begin{Prop}\label{compfil2} 
Let $\bep, \bep' \in J^\bd$ satisfying $\bep' \lesssim \bep$. For any  $N\in\mathbb{Z}$, we have  
$$\mathbf{r}_{\bep',\bep}(F_N M(\bep))\subset F_{N + 2\alpha(\bep',\bep)-2\beta(\bep',\bep)} M(\bep').$$
\end{Prop}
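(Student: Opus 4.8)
The plan is to run the same argument as in the proof of Proposition~\ref{compfil}, the only new ingredient being the defining relation~\eqref{sldef} of $\alpha(\bep',\bep)$: since now $R_{\bep',\bep}$ runs \emph{against} the preorder $\lesssim$, one has to trade it for a power of $z$ times a morphism running \emph{along} $\lesssim$, namely $R_{\bep,\bep'}$, and each such trade costs a factor $z^{\alpha(\bep',\bep)}$; tracking these factors through the two legs of the defining square for $F_\bullet M(\bep')$ produces the extra shift by $2\alpha(\bep',\bep)$. Abbreviate $\alpha \seq \alpha(\bep',\bep)$ and $\beta \seq \beta(\bep',\bep)$. Since $\bep' \lesssim \bep$ we have the chain $\bep_s \lesssim \bep' \lesssim \bep \lesssim \bep_c$, so the composition rule $R_{\bep'',\bep'}\circ R_{\bep',\bep}\equiv R_{\bep'',\bep}$ (valid along any increasing chain by the quantum Yang--Baxter relation) gives, modulo $\Oo^\times$,
\[
R_{\bep,\bep_s} \equiv R_{\bep,\bep'}\circ R_{\bep',\bep_s}, \qquad R_{\bep_c,\bep'} \equiv R_{\bep_c,\bep}\circ R_{\bep,\bep'},
\]
while \eqref{sldef} gives $R_{\bep,\bep'}\circ R_{\bep',\bep} \equiv z^{\alpha}\,\id_{\tM(\bep)}$ and $R_{\bep',\bep}\circ R_{\bep,\bep'} \equiv z^{\alpha}\,\id_{\tM(\bep')}$.

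Next I would repeat the computation of Proposition~\ref{compfil} almost verbatim. Take $y(z) \in z^k R_{\bep,\bep_s}\tM(\bep_s) \cap z^{N-k}R^{-1}_{\bep_c,\bep}\tM(\bep_c)$, so that $y(z) = R_{\bep,\bep_s}(z^k x(z))$ and $R_{\bep_c,\bep}(y(z)) = z^{N-k}x'(z)$ for some $x(z) \in \tM(\bep_s)$, $x'(z) \in \tM(\bep_c)$, and set $y'(z) \seq z^{-\beta}R_{\bep',\bep}(y(z))$; this lies in $\tM(\bep')$ by the very definition of $\beta$. Substituting $R_{\bep,\bep_s}\equiv R_{\bep,\bep'}\circ R_{\bep',\bep_s}$ and then using $R_{\bep',\bep}\circ R_{\bep,\bep'}\equiv z^{\alpha}\id_{\tM(\bep')}$ one gets, up to $\Oo^\times$,
\[
y'(z) = R_{\bep',\bep_s}\bigl(z^{\,k+\alpha-\beta}x(z)\bigr);
\]
substituting $R_{\bep_c,\bep'}\equiv R_{\bep_c,\bep}\circ R_{\bep,\bep'}$ and then $R_{\bep,\bep'}\circ R_{\bep',\bep}\equiv z^{\alpha}\id_{\tM(\bep)}$ one gets, up to $\Oo^\times$,
\[
R_{\bep_c,\bep'}(y'(z)) = z^{\,N-k+\alpha-\beta}\,x'(z).
\]
Writing $k' \seq k+\alpha-\beta$, these two displays say exactly that $y'(z) \in z^{k'}R_{\bep',\bep_s}\tM(\bep_s) \cap z^{\,(N+2\alpha-2\beta)-k'}R^{-1}_{\bep_c,\bep'}\tM(\bep_c)$. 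Summing over $k \in \Z$ and applying $\ev_{z=0}$, and recalling that $\mathbf{r}_{\bep',\bep}$ is induced by $z^{-\beta}R_{\bep',\bep}$, we conclude $\mathbf{r}_{\bep',\bep}(F_N M(\bep)) \subset F_{N+2\alpha-2\beta}M(\bep')$.

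I do not expect a genuine obstacle here: the argument is bookkeeping on powers of $z$. The single point requiring care is that $y'(z)$ really lands in the lattice $\tM(\bep')$ and not merely in $\tM(\bep')_\Kk$ — this is precisely what the inclusion $z^{-\beta}R_{\bep',\bep}\tM(\bep)\subset\tM(\bep')$ guarantees, and it is the reason the shift involves $\beta$ and not only $\alpha$. (Note also that one may freely allow $k'<0$, since the sum in~\eqref{expfm} ranges over all $k\in\Z$ and the conditions defining the summands make sense inside $\tM(\bep')_\Kk$.) As a consistency check, $\alpha-\beta\ge 0$ by the inequality $\alpha(\bep',\bep)\ge \beta(\bep',\bep)+\beta(\bep,\bep')$ recorded above, so $\mathbf{r}_{\bep',\bep}$ at worst raises the filtration degree — as one expects for a specialized $R$-matrix running against $\lesssim$.
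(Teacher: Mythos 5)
Your proof is correct and follows the paper's own argument essentially verbatim: you run the computation of Proposition~\ref{compfil} with $y(z)$, $x(z)$, $x'(z)$, $y'(z)$ defined the same way, and use \eqref{sldef} to convert $R_{\bep',\bep}\circ R_{\bep,\bep_s}$ into $z^{\alpha}R_{\bep',\bep_s}$ and $R_{\bep_c,\bep'}\circ R_{\bep',\bep}$ into $z^{\alpha}R_{\bep_c,\bep}$, yielding the membership $y'(z)\in z^{k+\alpha-\beta}R_{\bep',\bep_s}\tM(\bep_s)\cap z^{N-k+\alpha-\beta}R_{\bep_c,\bep'}^{-1}\tM(\bep_c)$, exactly as in the paper. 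The only difference is that you spell out the intermediate decompositions $R_{\bep,\bep_s}\equiv R_{\bep,\bep'}\circ R_{\bep',\bep_s}$ and $R_{\bep_c,\bep'}\equiv R_{\bep_c,\bep}\circ R_{\bep,\bep'}$ where the paper applies the composite identities directly.
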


\begin{proof} We have the  same diagram as in the proof of Proposition \ref{compfil} 
and we consider $y(z)$, $x(z)$, $x'(z)$, $y'(z)$ in the same way. But now we have 
$$y'(z) = z^{-\beta(\bep',\bep)}(R_{\bep',\bep}\circ R_{\bep, \bep_s})(z^k x(z)) = z^{\alpha(\bep',\bep)-\beta(\bep',\bep)} R_{\bep', \bep_s}(z^k x(z)), $$
$$R_{\bep_c, \bep'}(y'(z)) = z^{\alpha(\bep',\bep)-\beta(\bep',\bep)} R_{\bep_c, \bep}(y(z)) =  z^{N-k + \alpha(\bep',\bep)-\beta(\bep',\bep)} x'(z)$$
up to multiples in $\Oo^\times$. Then the result follows from 
$$y'(z) \in z^{k+\alpha(\bep',\bep)-\beta(\bep',\bep)} R_{\bep', \bep_s}\tM(\bep_s) \cap z^{N-k+\alpha(\bep',\bep)-\beta(\bep',\bep) } R^{-1}_{\bep_c, \bep'}\tM(\bep_c). 
\qedhere$$
\end{proof}

\begin{Rem} \label{Rem:eed}
For $\bd \in \N^{\oplus J}$, we set 
$$\beta(\bd) \seq \beta(\bep_c(\bd), \bep_s(\bd)),$$ 
where $\bep_s(\bd)$ and $\bep_c(\bd)$ are the standard and costandard sequences in $J^\bd$ respectively.
By Example \ref{Ex:standard}, we have 
\begin{equation}
M(\bd) =  F_{\beta(\bd)} M(\bd) \supsetneq F_{\beta(\bd) + 1} M(\bd). 
\end{equation}
In particular, the simple head $L(\bd)$ of $M(\bd)$ contributes to $F_nM(\bd)/F_{n+1}M(\bd)$ as a composition factor if and only if $n = \beta(\bd)$.
\end{Rem}

\subsection{Decategorification}\label{decato}

We keep the same assumption from the previous subsections. 
Let $t$ be another indeterminate with a formal square root $t^{1/2}$.
Consider the $\Z[t^{\pm 1/2}]$-module 
\[K(\Cc)_t \seq K(\Cc) \otimes_\Z \Z[t^{\pm 1/2}] = \bigoplus_{\bd \in \N^{\oplus J}} \Z[t^{\pm 1/2}] [L(\bd)],\]
where we abbreviate $[M] \otimes 1$ as $[M]$.
For each $\bep \in J^d$ with $d \in \N$, using the Jantzen filtration \eqref{eq:Filt}, we define an element $[M(\bep)]_t \in K(\Cc)_t$ by 
\begin{equation} \label{eq:Mt} [M(\bep)]_t \seq \sum_{n \in \Z} [\Gr_n^F M(\bep)] t^n, 
\end{equation}
where $\Gr_n^F M(\bep) \seq F_nM(\bep) / F_{n+1}M(\bep)$. 
As a special case, for each $\bd \in \N^{\oplus J}$, we have defined the element $[M(\bd)]_t = [M(\bep_s(\bd))]_t$.
By Definition~\ref{Def:PBW}, it comes with the relation
\begin{equation} \label{eq:KtL}
[M(\bd)]_t = 
t^{\beta(\bd)}\left([L(\bd)] + \sum_{\bd' \prec \bd} P_{\bd,\bd'}(t) [L(\bd')] \right),
\end{equation}
where $\beta(\bd) \seq \beta(\bep_c(\bd), \bep_s(\bd)) \in \N$ is as in Remark \ref{Rem:eed}, and $P_{\bd,\bd'}(t) \in \N[t]$ is an analog of Kazhdan-Lusztig polynomial defined by
\[ P_{\bd,\bd'}(t) \seq t^{-\beta(\bd)}\sum_{n \in \Z} [\Gr^F_n M(\bd) : L(\bd')] t^n.\]
Here $[M:L]$ denotes the Jordan-H\"older multiplicity of $L$ in $M$. 
Then, by definition, we have 
\[P_{\bd,\bd'}(1) = P_{\bd,\bd'} = [M(\bd):L(\bd')] \quad \text{ for any $\bd'\prec \bd$.}\]  
Note that $\{ [M(\bd)]_t\}_{\bd \in \N^{\oplus J}}$ forms a $\Z[t^{\pm 1/2}]$-basis of $K(\Cc)_t$ by the relation \eqref{eq:KtL}.

Let $\gamma \colon \N^{\oplus J} \times \N^{\oplus J} \to \frac{1}{2}\Z$ be a skew-symmetric bilinear map.
With the above notation, we define a $\Z[t^{\pm 1/2}]$-bilinear operation $* = *_\gamma$ on $K(\Cc)_t$ in terms of the basis $\{ [M(\bd)]_t\}_{\bd \in \N^{\oplus J}}$ by 
\[ [M(\bd)]_t * [M(\bd')]_t \seq t^{\gamma(\bd,\bd')} [M(\bd) \star M(\bd')]_t, \]
where the RHS is given by \eqref{eq:Mt} with $M(\bep) = M(\bd) \star M(\bd')$ (that is, $\bep \in J^{\bd + \bd'}$ is the concatenation of two standard sequences $\bep_s(\bd)$ and $\bep_s(\bd')$, which is not necessarily standard).
 Be aware that the operation $*$ depends on many choices: a PBW-theory $(\{ L_j\}_{j \in J}, \preceq)$, its consistent, generically commutative deformation $\{ \tL_j\}_{j \in J}$, and a bilinear form $\gamma$.

We may expect the associativity of $*$, but it seems unclear from the construction.
We state it as our general conjecture.

\begin{conj}[Associativity Conjecture] \label{Conj:assoc} The $\Z[t^{\pm 1/2}]$-module $K(\Cc)_t$ with this operation $*$ is a $\Z[t^{\pm 1/2}]$-algebra, and so it gives a (not necessarily commutative) $t$-deformation of the Grothendieck ring $K(\Cc)$. 
\end{conj}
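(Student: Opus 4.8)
The plan is to reduce the associativity of $*$ to a coherence statement about ``straightening'' words in the classes of the cuspidal objects. Since $*$ is $\Z[t^{\pm 1/2}]$-bilinear and $\{[M(\bd)]_t\}_{\bd \in \N^{\oplus J}}$ is a basis, it suffices to verify
\[ \big([M(\bd)]_t * [M(\bd')]_t\big) * [M(\bd'')]_t = [M(\bd)]_t * \big([M(\bd')]_t * [M(\bd'')]_t\big) \]
for all $\bd, \bd', \bd'' \in \N^{\oplus J}$; both sides specialize at $t = 1$ to $[M(\bd) \star M(\bd') \star M(\bd'')]$, so the whole content lies in the $t$-grading. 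First I would treat the rank-one and rank-two cases: using the formula \eqref{expfm} with $\bep = (i,j)$ one computes the Jantzen filtrations of $L_i \star L_j$ and $L_j \star L_i$ in terms of the renormalized $R$-matrices $R_{i,j}, R_{j,i}$ and the exponent $\alpha(i,j)$ of \eqref{eq:dd}, and extracts an explicit \emph{deformed commutation relation} between $[L_i]_t * [L_j]_t = t^{\gamma(\bdelta_i,\bdelta_j)}[M(\bdelta_i+\bdelta_j)]_t$ and $[L_j]_t * [L_i]_t$. The skew-symmetric form $\gamma$ is chosen precisely so that the leading terms of the two products differ by $t^{\pm\alpha(i,j)}$; the remaining discrepancy is a $\Z[t^{\pm 1/2}]$-combination of classes $[M(\bd')]_t$ with $\bd' \prec \bdelta_i + \bdelta_j$, controlled by \eqref{eq:KtL} and by the dual identity for costandard modules from Example~\ref{Ex:standard}.

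Next I would show that $(K(\Cc)_t, *)$ is \emph{generated} by the classes $[L_j]_t$: the $*$-product $[L_{j_1}]_t * \cdots * [L_{j_d}]_t$ taken in decreasing order $j_1 \ge \cdots \ge j_d$ equals, with no re-expansion needed, an explicit power of $t$ times $[M(\bd)]_t$. This rests on Proposition~\ref{riso1} — so that $[M(\bep)]_t$ depends only on the $\sim$-class of $\bep$ — together with Propositions~\ref{compfil} and~\ref{compfil2}, which bound the filtration shift of the specialized $R$-matrix $\rr_{\bep',\bep}$ by $\alpha(\bep',\bep)$ and $\beta(\bep',\bep)$. Combining this with the unitriangular expansion of $[M(\bep)]_t$ in $\{[M(\bd')]_t : \bd' \preceq \bd\}$ identifies $(K(\Cc)_t,*)$ with the algebra presented by the generators $[L_j]_t$ subject to the rank-two relations from the previous step. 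Associativity then becomes confluence of the straightening procedure that rewrites an arbitrary word in the $[L_j]_t$ into decreasing order: the elementary rewrites are the specialized $R$-matrices, and their higher compatibility is exactly the quantum Yang--Baxter relation \eqref{eq:YB}, which holds for \emph{all} triples by Remark~\ref{Rem:negYB}. A diamond-lemma argument, with the length-three overlap resolved by Yang--Baxter, would upgrade pairwise confluence to associativity, provided the rewriting is also coherent at the level of $t$-degrees.

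That last proviso is where I expect the real difficulty to lie. The monoidal Jantzen filtration \eqref{expfm} of a mixed product $M(\bep)$ is built from the interplay of the \emph{two} isomorphisms \eqref{vwv} (standard $\to$ mixed $\to$ costandard), so even granting the Yang--Baxter relation for the $R$-matrices themselves it is not formal that the induced filtrations on the various $M(\bep)$ satisfy the cocycle-type identity needed to make straightening preserve $t$-degrees: the lattice intersections in \eqref{expfm} are not additive in general, and Propositions~\ref{compfil}, \ref{compfil2} only provide \emph{inequalities} such as $\beta(\bep'',\bep') + \beta(\bep',\bep) \le \beta(\bep'',\bep)$, so a priori a graded piece could collapse under reassociation. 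I do not expect to close this gap by purely algebraic means in general; the realistic route — and the one the paper follows in its main theorems — is to produce a \emph{geometric model} in which the Jantzen filtration of $M(\bep)$ is realized as the degree filtration of a hyperbolic localization of a perverse sheaf (on a graded quiver variety, or on a representation space of a quiver), where the hard Lefschetz theorem and the decomposition theorem force the relevant lattice intersections to be maximally nondegenerate, hence the straightening to be exactly $t$-graded, and where the Poincar\'e polynomials of the hyperbolic localizations are already known to be structure constants of an associative quantum Grothendieck ring. Absent such a model, one can still verify associativity in low rank by direct computation, which is how the evidence cited in the introduction is obtained.
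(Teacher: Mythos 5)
The statement you are asked to prove is stated in the paper as a \emph{conjecture}, and the paper gives no general proof of it: it is established only in the two geometric situations of Theorems \ref{simply-laced} and \ref{Thm:adapted}, and there not by verifying associativity intrinsically but by computing every class $[M(\bep)]_t$ via hyperbolic localization of perverse sheaves (Theorems \ref{Amain} and \ref{Thm:main_qH}) and matching the result with an algebra that is already associative — the quantum Grothendieck ring $K_t(\Cc_\Z)$ in its Varagnolo--Vasserot description, respectively the quantum unipotent coordinate ring $A_t[N(w)]_{\Z[t^{\pm 1/2}]}$ — so that associativity is \emph{inherited} through the isomorphism $\phi$ rather than proved by any straightening argument. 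Your closing paragraph describes this route accurately, and your identification of where the algebraic approach stalls is essentially correct.

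The algebraic portion of your plan, however, contains a concrete circularity in addition to the gap you acknowledge. The step in which you ``identify $(K(\Cc)_t,*)$ with the algebra presented by the generators $[L_j]_t$ subject to the rank-two relations'' presupposes that arbitrary $*$-monomials in the $[L_j]_t$ are unambiguous and equal explicit $t$-powers times $[M(\bep)]_t$; that is precisely the identity \eqref{eq:*mix}, which by Remark \ref{remmulti} is, together with Conjecture \ref{Conj:assoc}, equivalent to the strong Conjecture \ref{Conj:sassoc}. So the presentation you want to feed into the diamond lemma is equivalent to the statement being proved. Moreover, the quantum Yang--Baxter relation \eqref{eq:YB} controls the deformed $R$-matrices only up to units of $\Oo$ and says nothing about how the three lattices entering \eqref{expfm} interact once $[M(\bep')]_t$ is re-expanded in the standard basis before the second $*$ is applied; Propositions \ref{compfil} and \ref{compfil2} yield only one-sided containments (inequalities on $\beta$), which cannot exclude collapse of graded pieces under reassociation — exactly the failure mode you flag. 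Consequently the confluence scheme does not close, no purely algebraic proof is currently known, and the statement remains open outside the geometric cases treated in Sections \ref{ssec:Prqaa} and \ref{ssec:PrqH}.
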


We also write a stronger version of the above Conjecture.
For each integer $n \ge 2$, we consider the $\Z[t^{\pm 1/2}]$-multilinear operation $m_n \colon K(\Cc)_t^{n} \to K(\Cc)_t$ given by
\[ 
m_n\left([M(\bd_1)]_t, \ldots , [M(\bd_n)]_t\right) \seq  t^{\sum_{1 \le k < l \le n}\gamma(\bd_k, \bd_l)}[M(\bd_1)\star \cdots \star M(\bd_n)]_t
\]
for $\bd_1, \ldots, \bd_n \in \N^{\oplus J}$.
Of course, we have $m_2(x,y) = x*y$.
By convention, we set $m_1 \seq \id$.

\begin{conj}[Strong Associativity Conjecture]
\label{Conj:sassoc}
For any integers $n \ge 3$ and $0 < k < n$, we have
\[ m_n(x_1, \ldots, x_n) = m_{k}(x_1,\ldots,x_k)*m_{n-k}(x_{k+1}, \ldots, x_n)\]
for any $x_1, \ldots, x_n \in K(\Cc)_t$.
\end{conj}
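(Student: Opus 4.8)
The plan is to first reduce the multilinear statement to a single bilinear identity about mixed products, and then to attack that identity through the $R$-matrix formula \eqref{expfm}.

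By $\Z[t^{\pm 1/2}]$-multilinearity it suffices to test both sides on basis vectors $x_i = [M(\bd_i)]_t$. Let $\bep'$ (resp.\ $\bep''$) be the concatenation of the standard sequences $\bep_s(\bd_1),\dots,\bep_s(\bd_k)$ (resp.\ $\bep_s(\bd_{k+1}),\dots,\bep_s(\bd_n)$), so that $M(\bep') = M(\bd_1)\star\cdots\star M(\bd_k)$, $M(\bep'') = M(\bd_{k+1})\star\cdots\star M(\bd_n)$, and $\bep'\in J^{\mathbf{a}}$, $\bep''\in J^{\mathbf{b}}$ with $\mathbf{a}\seq\bd_1+\cdots+\bd_k$, $\mathbf{b}\seq\bd_{k+1}+\cdots+\bd_n$. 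The skew-symmetric bilinearity of $\gamma$ gives
\[
\sum_{1\le a<b\le n}\gamma(\bd_a,\bd_b) = \gamma(\mathbf{a},\mathbf{b}) + \sum_{1\le a<b\le k}\gamma(\bd_a,\bd_b) + \sum_{k<a<b\le n}\gamma(\bd_a,\bd_b),
\]
so that, writing $C\seq t^{\sum_{a<b\le k}\gamma(\bd_a,\bd_b)+\sum_{k<a<b\le n}\gamma(\bd_a,\bd_b)}$, one has $m_n(x_1,\dots,x_n) = C\,t^{\gamma(\mathbf{a},\mathbf{b})}\,[M(\bep')\star M(\bep'')]_t$ while $m_k(x_1,\dots,x_k)*m_{n-k}(x_{k+1},\dots,x_n) = C\,\big([M(\bep')]_t * [M(\bep'')]_t\big)$. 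Thus Conjecture~\ref{Conj:sassoc} is equivalent to the \emph{concatenation identity}
\[
[M(\bep')]_t * [M(\bep'')]_t = t^{\gamma(\mathbf{a},\mathbf{b})}\,[M(\bep')\star M(\bep'')]_t
\]
for all pairs $\bep',\bep''$ of mixed sequences obtained as concatenations of standard ones. When both are single standard sequences this is just the definition of $*$; already the case $n=3$ subsumes, and is strictly stronger than, Conjecture~\ref{Conj:assoc}; and conversely the concatenation identity \emph{implies} associativity of $*$. So this identity is the real content. Since $*$ is defined only on the standard basis and then extended bilinearly, proving it comes down to controlling the classes $[M(\bep)]_t$ of genuinely mixed products and their behaviour under concatenation.

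To prove the concatenation identity I would analyze \eqref{expfm} for the mixed sequence $\bep$ obtained by concatenating $\bep'$ and $\bep''$. The key first ingredient is a block factorization of $R$-matrices. Since a composite of renormalized $R$-matrices depends, modulo $\Oo^\times$, only on the underlying permutation of tensor factors (Remark~\ref{Rem:negYB}, via the quantum Yang--Baxter relation \eqref{eq:YB} guaranteed by consistency (D3)), the rearrangement from $\bep_s(\mathbf{a}+\mathbf{b})$ to $\bep$ can be carried out in three stages: sort the first block into $\bep'$, sort the second block into $\bep''$, and interleave the two globally sorted blocks. Hence both $R_{\bep,\,\bep_s(\mathbf{a}+\mathbf{b})}$ and $R_{\bep_c(\mathbf{a}+\mathbf{b}),\,\bep}$ decompose into $R$-matrices internal to the $\mathbf{a}$-block, $R$-matrices internal to the $\mathbf{b}$-block, and one ``crossing'' $R$-matrix between the two blocks; this crossing factor is the genuinely new phenomenon, and the $z$-degree shift it induces is precisely what the twist $\gamma(\mathbf{a},\mathbf{b})$ must compensate --- which forces $\gamma$ to be the bilinear form determined by the invariants $\alpha(i,j)$, as it is in the applications. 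Substituting this decomposition into \eqref{expfm}, one expects the $\Oo$-lattice cutting out $F_n$ of the mixed product to become a ``convolution'' over $k\in\Z$ of the lattices cutting out $F_\bullet M(\bep')$ and $F_\bullet M(\bep'')$, twisted by the crossing $R$-matrix; upon decategorification, tracking the $z$-degree shifts produced by that crossing yields the overall factor $t^{\gamma(\mathbf{a},\mathbf{b})}$ by bilinearity of $\gamma$. Propositions~\ref{compfil} and~\ref{compfil2}, applied to the mixed product against the standard and costandard modules of content $\mathbf{a}+\mathbf{b}$, provide the two-sided bounds on the associated graded pieces that any such argument has to exploit.

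The hard part is the last step: passing from the $\Oo$-lattice ``convolution'' to an \emph{exact} equality of associated graded pieces. The evaluation $\ev_{z=0}$ does not commute with the sums of intersections appearing in \eqref{expfm}, and the lattice of submodules of a product $M\star N$ is not built functorially from those of $M$ and $N$, so a priori there is spurious $\kk[\![z]\!]$-torsion that Propositions~\ref{compfil}, \ref{compfil2} bound but do not annihilate. Forcing those bounds to be sharp requires an additional input of hard-Lefschetz / Euler-characteristic type; in the absence of such a principle in general, the cleanest route to the concatenation identity is to embed $(K(\Cc)_t,*)$ into an ambient associative algebra --- a quantum-torus-type algebra --- in which the classes $[M(\bep)]_t$ of mixed products become manifestly normally ordered monomials, so that the identity follows at once from associativity in the target. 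This is exactly what the geometric arguments behind Theorems~\ref{Intro:simply-laced} and~\ref{Intro:adapted} achieve, through hyperbolic localization of perverse sheaves on (graded) quiver varieties together with the hard Lefschetz theorem, and I would expect any proof of the general conjecture to follow the same pattern.
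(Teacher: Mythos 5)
What you were asked to prove is a conjecture; the paper does not prove it in general, only in two special cases (Theorems~\ref{simply-laced} and~\ref{Thm:adapted}) via perverse-sheaf geometry, so there is no algebraic ``paper's own proof'' to compare against. Your reduction to the concatenation identity is correct and is essentially the paper's Remark~\ref{remmulti}, which reformulates Conjecture~\ref{Conj:sassoc} as Conjecture~\ref{Conj:assoc} together with~\eqref{eq:*mix}; the use of skew-symmetric bilinearity of $\gamma$ to isolate the cross-block term $\gamma(\mathbf{a},\mathbf{b})$, and of the consistency axiom (D3) to obtain well-defined block-factored $R$-matrix composites, are both right. Your diagnosis of the key obstruction is also accurate: formula~\eqref{expfm} defines the filtration as the image under $\ev_{z=0}$ of a sum of intersections of $\Oo$-lattices, this evaluation does not commute with the lattice arithmetic, and Propositions~\ref{compfil} and~\ref{compfil2} give only one-sided containments. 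What sharpens them to equalities in the two proved cases is the hard-Lefschetz input of Lemma~\ref{Lem:hL} and the ``Fundamental Example'' (Theorem~\ref{Thm:Fex}), applied via hyperbolic localization exactly as in Theorems~\ref{Amain} and~\ref{Thm:main_qH}. So your verdict --- that no algebraic proof is known and geometric input is needed --- is the state of the art. This is a sound reduction and a correct account of the obstruction, not a proof; since the statement is a conjecture, that is the right outcome.

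One imprecision to flag: the $z$-degree bookkeeping of the crossing $R$-matrix does not ``force $\gamma$ to be the bilinear form determined by the invariants $\alpha(i,j)$.'' The skew-symmetric form $\gamma$ is an external input to the construction. The paper chooses it via $\mathcal{N}$ in Section~\ref{fqla} and via the root pairing $\gamma_\ii$ in Section~\ref{mjqha}; compatibility with the intended target algebra constrains $\gamma$, and the quantities $\alpha(\bep',\bep)$, $\beta(\bep',\bep)$ of Section~\ref{rnomat} enter those constraints, but they do not by themselves determine $\gamma$.
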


\begin{Rem}\label{remmulti}
If Conjecture~\ref{Conj:sassoc} holds, then Conjecture~\ref{Conj:assoc} also holds and moreover, for any $d \in \N$ and any sequence $\bep = (\ep_1, \ldots, \ep_d) \in J^d$, we have
\begin{equation} \label{eq:*mix}
[L_{\ep_1}] * \cdots * [L_{\ep_d}] = t^{\sum_{1 \le k < l \le d}\gamma(\bdelta_{\ep_k}, \bdelta_{\ep_l})}[M(\bep)]_t, 
\end{equation}
where $\bdelta_i \in \N^{\oplus J}$ denotes the delta function.
Note that the converse is true.
Namely, Conjecture \ref{Conj:sassoc} holds if and only if Conjecture \ref{Conj:assoc} and \eqref{eq:*mix} hold for any $d \in \N$ and $\bep \in J^d$. 
\end{Rem}

\subsection{Bar-involution and normality}\label{bars}
Let $\{L_j\}_{j \in J}$ be a PBW-theory for a monoidal Serre subcategory $\Cc \subset A\mof$ which admits a consistent, generically commutative deformation $\{\tL_j\}_{j \in J}$ as above.
We have the following general fact.

\begin{Prop} \label{Prop:Kdual}
For each $\bd \in \N^{\oplus J}$ and $n \in \Z$, we have an isomorphism of $A$-modules: 
\[ \Gr_n^F M(\bd) \simeq \Gr_{-n}^{F} M^\vee(\bd).\]
\end{Prop}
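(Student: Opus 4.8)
The plan is to compare the Jantzen filtration of the standard module $M(\bd) = M(\bep_s)$ with that of the costandard module $M^\vee(\bd) = M(\bep_c)$ via the renormalized $R$-matrix $R_{\bep_c, \bep_s}$, and to observe that this isomorphism essentially interchanges the two $\Oo$-lattices that enter the defining formula \eqref{expfm}, up to an explicit power of $z$. More precisely, recall from Example~\ref{Ex:standard} that
\[
F_n M(\bd) = \ev_{z=0}\!\left(\tM(\bep_s) \cap z^n R^{-1}_{\bep_c, \bep_s}\tM(\bep_c)\right),
\qquad
F_m M^\vee(\bd) = \ev_{z=0}\!\left(z^m R_{\bep_c, \bep_s}\tM(\bep_s) \cap \tM(\bep_c)\right).
\]
Applying the $A_\Kk$-isomorphism $R_{\bep_c, \bep_s} \colon \tM(\bep_s)_\Kk \to \tM(\bep_c)_\Kk$ to the first expression turns $\tM(\bep_s) \cap z^n R^{-1}_{\bep_c,\bep_s}\tM(\bep_c)$ into $R_{\bep_c,\bep_s}\tM(\bep_s) \cap z^n \tM(\bep_c)$, and after multiplying by $z^{-n}$ and using $\alpha(\bep_c,\bep_s) = \alpha(\bd)$ (so that $R^{-1}_{\bep_c,\bep_s} \equiv z^{-\alpha(\bd)} R_{\bep_s,\bep_c}$) one recognizes an expression of the shape defining $F_m M^\vee(\bd)$ for a suitably shifted index $m$. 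The first step, then, is to carry out this substitution carefully and read off the precise index shift; I expect it to be $m = -n + c$ for a constant $c$ depending only on $\bd$ (built from $\alpha(\bd)$ and $\beta(\bd)$), which then gives an isomorphism of $A_\Oo$-lattices inducing $F_n M(\bd) \simeq F_{m}M^\vee(\bd)$ after $\ev_{z=0}$.

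The second step is to pin down that the shift constant $c$ is in fact $0$, i.e.\ that the correct statement is $\Gr_n^F M(\bd) \simeq \Gr_{-n}^F M^\vee(\bd)$ with no extra translation. The cleanest way to see this is a bookkeeping/normalization argument: both filtrations are, by construction, normalized so that the ``top'' graded piece containing the simple head $L(\bd)$ sits in the same degree $\beta(\bd)$ (Remark~\ref{Rem:eed} for $M(\bd)$, and the dual statement, obtained by the analog of Example~\ref{Ex:standard} applied to $\bep_c$, for $M^\vee(\bd)$; one should check that $\beta(\bep_s,\bep_c) = \beta(\bep_c,\bep_s) = \beta(\bd)$ using property~(5) in Section~\ref{rnomat} together with the symmetry $\alpha(\bep,\bep')=\alpha(\bep',\bep)$). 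Since $L(\bd)$ is the simple head of both $M(\bd)$ and $M^\vee(\bd)$ (the latter because $M^\vee(\bd) \simeq M(\bep_c)$ and one applies the head computation to the costandard sequence — or, more directly, because the constandard product has simple head $L(\bd)$ by the same argument that gives the standard one a simple head), the index-reversing isomorphism must send the $L(\bd)$-graded piece of $M(\bd)$ in degree $\beta(\bd)$ to the $L(\bd)$-graded piece of $M^\vee(\bd)$ in degree $\beta(\bd)$; comparing $n = \beta(\bd) \mapsto m = \beta(\bd)$ with the formula $m = -n + c$ forces $c = 2\beta(\bd)$. I would therefore expect the honest statement to come out as $\Gr_n^F M(\bd) \simeq \Gr_{2\beta(\bd) - n}^F M^\vee(\bd)$; reconciling this with the stated $\Gr_{-n}^F$ requires either that $\beta(\bd) = 0$ in the relevant normalization, or that the authors have absorbed the shift into a (re)definition of the filtration index on costandard modules. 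This discrepancy is exactly the point I would double-check against the authors' conventions.

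The main obstacle is thus not the algebraic manipulation — intersecting lattices, tracking powers of $z$, and applying $\ev_{z=0}$ is routine given Example~\ref{Ex:standard} — but rather the \emph{normalization bookkeeping}: making sure the $R$-matrices $R_{\bep_c,\bep_s}$ and $R_{\bep_s,\bep_c}$ are chosen compatibly (each is only well-defined up to $\Oo^\times$), that the two $\Oo$-lattices $\tM(\bep_s)$ and $\tM(\bep_c)$ inside their common $\Kk$-span interact with these $R$-matrices via the symmetric integer $\alpha(\bd)$ as claimed, and that the resulting degree shift matches the indexing in the statement. A secondary technical point is to justify that $R_{\bep_c,\bep_s}$ really does carry lattices to lattices in the precise way needed: this is the content of $\beta(\bep_c,\bep_s) = \beta(\bd)$ and of the renormalization property \eqref{sldef}, which must be invoked to rewrite $R^{-1}_{\bep_c,\bep_s}$ in terms of $R_{\bep_s,\bep_c}$ without losing track of the $z$-power. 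Once these normalization constants are fixed, the isomorphism is induced directly by (a $z$-power multiple of) the renormalized $R$-matrix and passes to graded pieces formally.
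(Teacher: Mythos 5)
Your first step is essentially the paper's proof, but you stop short of carrying it out and then talk yourself out of the correct statement. Writing $M=\tM(\bep_s)$, $N=\tM(\bep_c)$, $R=R_{\bep_c,\bep_s}$, the map $z^{-n}R$ sends $M\cap z^{n}R^{-1}N$ onto $z^{-n}RM\cap N$, which is \emph{exactly} the lattice computing $F_{-n}M^\vee(\bd)$ in Example~\ref{Ex:standard}; no rewriting of $R^{-1}$ as $z^{-\alpha(\bd)}R_{\bep_s,\bep_c}$ is needed, and no normalization constant can appear, because the very same $R$ occurs in both defining formulas (so the $\Oo^\times$-ambiguity cancels). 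One then checks that $z^{-n}R$ also matches the denominators, i.e.\ it carries $(M\cap z^{n+1}R^{-1}N)+(zM\cap z^{n}R^{-1}N)$ onto $(z^{-n}RM\cap zN)+(z^{-n+1}RM\cap N)$, and identifies $\Gr_n^F M(\bd)$ with $\Gr_{-n}^F M^\vee(\bd)$ directly, with shift constant $c=0$. This is precisely how the paper argues.

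The genuine gap is your second step: the ``normalization bookkeeping'' rests on two false claims. First, $L(\bd)$ is \emph{not} in general the simple head of $M^\vee(\bd)$; Definition~\ref{Def:PBW} only requires the oppositely ordered product to have simple head, and in the costandard product $L(\bd)$ typically occurs as the socle (in the paper's Example~\ref{exfil}, the head of $S_1\otimes S_3$ is the trivial module and $L$ is the unique proper submodule). Second, and decisively, $L(\bd)$ does not sit in degree $\beta(\bd)$ of the costandard filtration: since $RM\subset z^{\beta(\bd)}N$ and $RM\not\subset z^{\beta(\bd)+1}N$, Example~\ref{Ex:standard} gives $F_{-\beta(\bd)}M^\vee(\bd)=\Image(\rr_{\bep_c,\bep_s})$ and $F_{-\beta(\bd)+1}M^\vee(\bd)=0$, so $L(\bd)$ contributes in degree $-\beta(\bd)$ — mirroring Remark~\ref{Rem:eed} on the standard side and consistent with Corollary~\ref{scase}. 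Hence your consistency check forces $c=0$, not $c=2\beta(\bd)$, and your proposed ``honest statement'' $\Gr_n^F M(\bd)\simeq\Gr_{2\beta(\bd)-n}^F M^\vee(\bd)$ is incorrect except when $\beta(\bd)=0$; the proposition as stated needs no assumption on $\beta(\bd)$ and no re-indexing convention. (Also note that property~(5) of Section~\ref{rnomat} is only the inequality $\alpha\ge\beta+\beta$ and does not give the symmetry $\beta(\bep_s,\bep_c)=\beta(\bep_c,\bep_s)$ you invoke; nothing of that kind is needed once you observe that the same $R$ governs both filtrations.)
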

\begin{proof}
Recall Example~\ref{Ex:standard}. 
Let $\bep_s, \bep_c \in J^\bd$ be the standard and costandard sequences respectively. 
For brevity, we write $M \seq \tM(\bep_s)$, $N \seq \tM(\bep_c)$ and $R = R_{\bep_c, \bep_s} \colon M_\Kk \simeq N_\Kk$.
Then, we have isomorphisms of $A$-modules:
\begin{align}
\Gr_n^F M(\bd) &\simeq \frac{M\cap z^{n}R^{-1}N}{(M \cap z^{n+1} R^{-1} N) + (zM\cap z^nR^{-1} N)} \\
&\simeq \frac{z^{-n}RM\cap N}{(z^{-n}RM \cap z N) + (z^{-n+1}RM\cap N)} \simeq \Gr_{-n}^{F} M^\vee(\bd),
\end{align}
where the second isomorphism is induced by the isomorphism $z^{-n}R$.
\end{proof}

Let $\ol{(\cdot)} \colon K(\Cc)_t \to K(\Cc)_t$ be the involution of abelian group given by 
\[  \ol{t^{n}[L(\bd)]} \seq t^{-n}[L(\bd)] \]
for any $n \in \frac{1}{2}\Z$ and $\bd \in \N^{\oplus J}$.
The following is an immediate consequence of Proposition \ref{Prop:Kdual}. 

\begin{Cor}\label{scase}
For each $\bd \in \N^{\oplus J}$, we have
\[ [M^\vee(\bd)]_t = t^{-\beta(\bd)}\left([L(\bd)] + \sum_{\bd' \prec \bd} P_{\bd,\bd'}(t^{-1}) [L(\bd')] \right) = \ol{[M(\bd)]_t}.\]
In particular, for any $i,j \in J$, we have
\[ \ol{[L_i] * [L_j]} = [L_j] * [L_i].\]
\end{Cor}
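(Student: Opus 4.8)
The plan is to derive both assertions directly from Proposition~\ref{Prop:Kdual} and the definition of the bar involution, with essentially no extra input.

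First I would prove the last equality $[M^\vee(\bd)]_t = \ol{[M(\bd)]_t}$. Expanding each $[\Gr_n^F M(\bd)]$ into simple classes, write $[M(\bd)]_t = \sum_{\bd'}\big(\sum_n [\Gr_n^F M(\bd):L(\bd')]\,t^n\big)[L(\bd')]$; since all the multiplicities $[\Gr_n^F M(\bd):L(\bd')]$ lie in $\N$, the group involution $\ol{(\cdot)}$ acts on this element by $t \mapsto t^{-1}$ in each coefficient, so $\ol{[M(\bd)]_t} = \sum_n [\Gr_n^F M(\bd)]\,t^{-n} = \sum_n [\Gr_{-n}^F M(\bd)]\,t^n$. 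Then I would invoke Proposition~\ref{Prop:Kdual}, which gives $\Gr_{-n}^F M(\bd) \simeq \Gr_n^F M^\vee(\bd)$ for every $n$, so the right-hand side is exactly $\sum_n [\Gr_n^F M^\vee(\bd)]\,t^n = [M^\vee(\bd)]_t$. The remaining (middle) closed formula for $[M^\vee(\bd)]_t$ is then obtained by applying $\ol{(\cdot)}$ to the already established identity \eqref{eq:KtL}, using $\beta(\bd) \in \N$ and $P_{\bd,\bd'}(t) \in \N[t]$ so that $\ol{t^{\beta(\bd)}} = t^{-\beta(\bd)}$ and $\ol{P_{\bd,\bd'}(t)} = P_{\bd,\bd'}(t^{-1})$.

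For the ``in particular'' statement, I would reduce to the first part. Fix $i,j \in J$; the case $i=j$ is immediate (both sides equal $[M(\bep)]_t$ for $\bep = (i,i)$, and this class is bar-invariant by the first part since $M(\bep) = M(\bd) = M^\vee(\bd)$ for $\bd = 2\bdelta_i$), and by symmetry one may assume $i > j$. Put $\bd \seq \bdelta_i + \bdelta_j$; then $(i,j)$ is the standard and $(j,i)$ the costandard sequence in $J^\bd$, so $L_i \star L_j = M(\bd)$ and $L_j \star L_i = M^\vee(\bd)$. Unwinding the definition of $*$ and using skew-symmetry of $\gamma$, one gets $[L_i]*[L_j] = t^{\gamma(\bdelta_i,\bdelta_j)}[M(\bd)]_t$ and $[L_j]*[L_i] = t^{-\gamma(\bdelta_i,\bdelta_j)}[M^\vee(\bd)]_t$. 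Applying $\ol{(\cdot)}$, which is $\Z[t^{\pm1/2}]$-semilinear (it inverts $t^{1/2}$) and hence commutes with multiplication by $t^{\gamma(\bdelta_i,\bdelta_j)}$ up to inverting the exponent, together with $\ol{[M(\bd)]_t} = [M^\vee(\bd)]_t$ from the first part, yields $\ol{[L_i]*[L_j]} = [L_j]*[L_i]$.

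The argument presents no real obstacle; the only points to watch are the bookkeeping of which of the two one-element sequences is standard and which is costandard (so that Proposition~\ref{Prop:Kdual} is applied in the correct direction) and the fact that $\ol{(\cdot)}$ is only additive, hence must be handled via its semilinearity when moving past the scalar $t^{\gamma}$. Both Proposition~\ref{Prop:Kdual} and formula \eqref{eq:KtL} are already available, so this is genuinely a short deduction.
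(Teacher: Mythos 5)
Your proof is correct and is exactly the deduction the paper has in mind: the paper simply states the corollary as an "immediate consequence of Proposition~\ref{Prop:Kdual}", and your expansion into simple classes, the application of Proposition~\ref{Prop:Kdual} (equivalently with $n \mapsto -n$), the use of \eqref{eq:KtL}, and the reduction of the ``in particular'' claim to $\bd = \bdelta_i + \bdelta_j$ with skew-symmetry of $\gamma$ are precisely the intended steps. No gaps; your bookkeeping of standard versus costandard sequences and the semilinearity of $\ol{(\cdot)}$ is handled correctly.
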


\begin{Rem}  Note that we need the map $\gamma$ to be skew-symmetric for the second assertion of the above Corollary \ref{scase}. This justifies this condition on $\gamma$, which will be satisfied in all examples below.
\end{Rem}

As a generalization, we also conjecture the following.

\begin{conj}[Duality Conjecture] \label{Conj:duality}
For any $d \in \N$ and $\bep = (\ep_1, \ldots, \ep_d) \in J^d$, we have
\[ \ol{[M(\bep)]_t} = [M(\bep^\op)]_t,\]
where $\bep^\op = (\ep_d, \ldots, \ep_1)$ is the opposite sequence. 
\end{conj}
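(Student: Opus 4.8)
The plan is to prove the Duality Conjecture (Conjecture~\ref{Conj:duality}) by showing that the Jantzen filtration of $M(\bep)$ is exchanged with that of $M(\bep^\op)$ under a suitable ``flip'' isomorphism on the Grothendieck level, generalizing the argument of Proposition~\ref{Prop:Kdual}. The key observation is that passing from $\bep$ to $\bep^\op$ exchanges the roles of the standard and costandard sequences, and simultaneously replaces each constituent renormalized $R$-matrix by its inverse (up to the factor $z^{\alpha(i,j)}$ coming from \eqref{eq:dd}). Concretely, if $\bep\lesssim\bep'$ then $\bep'^\op\lesssim\bep^\op$, and $R_{\bep'^\op,\bep^\op}$ should be identifiable with a renormalization of $R_{\bep,\bep'}^{-1}$; in particular $\bep_s(\bd)^\op=\bep_c(\bd)$ and $\bep_c(\bd)^\op=\bep_s(\bd)$.

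First I would set up the comparison isomorphism. Write $\bep_s,\bep_c\in J^\bd$ for the standard and costandard sequences ($\bd$ the content of $\bep$), and abbreviate $M_s\seq\tM(\bep_s)$, $M_c\seq\tM(\bep_c)$, $X\seq\tM(\bep)$, $X'\seq\tM(\bep^\op)$, together with the isomorphisms $R_{\bep,\bep_s}\colon (M_s)_\Kk\to X_\Kk$, $R_{\bep_c,\bep}\colon X_\Kk\to (M_c)_\Kk$, and their $\bep^\op$-analogues $R_{\bep^\op,\bep_c}\colon (M_c)_\Kk\to X'_\Kk$, $R_{\bep_s,\bep^\op}\colon X'_\Kk\to (M_s)_\Kk$. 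Using the quantum Yang--Baxter relation \eqref{eq:YB} (valid for all triples by Remark~\ref{Rem:negYB}) and the relation \eqref{eq:dd}, one checks that $R_{\bep_s,\bep^\op}\circ R_{\bep^\op,\bep_c}\circ R_{\bep_c,\bep}\circ R_{\bep,\bep_s}\equiv z^{\alpha(\bd)}\id\bmod\Oo^\times$, where $\alpha(\bd)\seq\alpha(\bep_c,\bep_s)=\sum_{k<l}\alpha(\ep_k,\ep_l)$; this is the analogue of $R_{\bep_c,\bep_s}\circ R_{\bep_s,\bep_c}\equiv z^{\alpha(\bd)}\id$ used implicitly in Proposition~\ref{Prop:Kdual}. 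Then the composite $\Phi\seq z^{-\beta}R_{\bep_s,\bep^\op}\colon X'_\Kk\to (M_s)_\Kk$ (for the appropriate normalization $\beta$) carries the lattice $X'$ isomorphically onto an $\Oo$-lattice in $(M_s)_\Kk$, and conjugating the defining formula \eqref{expfm} for $F_nM(\bep^\op)$ through $\Phi$ transforms it into a formula of the same shape for $F_{c-n}M(\bep)$, for a constant shift $c$ depending only on $\bd$ (one must track the exponents $\beta(\bep,\bep_s)$, $\beta(\bep_c,\bep)$ and their $\bep^\op$-counterparts, and the symmetry $\beta(\bep',\bep)+\beta(\bep,\bep')\le\alpha(\bep,\bep')$ from property~(5)). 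The upshot is an isomorphism $\Gr_n^F M(\bep^\op)\simeq\Gr_{c-n}^F M(\bep)$ of $A$-modules for all $n$.

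Next I would pin down the shift constant $c$. When $\bep=\bep_s$ (so $\bep^\op=\bep_c$), this must reduce to Proposition~\ref{Prop:Kdual}, forcing $c=0$ in that case; by the additivity of $\beta$ along chains and the fact that an isomorphism of the form $\id\star R_{i,j}\star\id$ has $\beta=0$ (so passing through a $\sim$-equivalence does not shift the filtration, cf.\ Proposition~\ref{riso1} and Proposition~\ref{compfil}), one expects $c=0$ for every $\bep$. Granting this, $[M(\bep^\op)]_t=\sum_n[\Gr_n^F M(\bep^\op)]t^n=\sum_n[\Gr_{-n}^F M(\bep)]t^n=\ol{[M(\bep)]_t}$, which is exactly the claim. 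Note that the special case $d=2$, $\bep=(i,j)$ of this identity recovers $\ol{[L_i]\ast[L_j]}=[L_j]\ast[L_i]$ from Corollary~\ref{scase}, which is a useful consistency check (and pins down why $\gamma$ must be skew-symmetric).

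The main obstacle I anticipate is the bookkeeping of $\Oo$-lattices: the formula \eqref{expfm} intersects three lattices inside $\tM(\bep)_\Kk$, and transporting it through $\Phi$ requires that $\Phi$ send not just $X'$ but also the two ``boundary'' lattices $R_{\bep^\op,\bep_c}^{-1}(M_c)$-type and $R_{\bep_s,\bep^\op}(M_s)$-type objects to the corresponding boundary lattices for $\bep$, up to controlled powers of $z$. This is where the precise, rather than mod-$\Oo^\times$, comparison of the various $R$-matrix composites matters, and where the Yang--Baxter coherence of Section~\ref{rnomat} must be invoked carefully; sign/exponent errors here would produce a spurious shift $c\ne 0$. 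A secondary subtlety is that $\Phi$ is only canonical up to $\Oo^\times$, so one should phrase everything in terms of lattices and their images rather than chosen morphisms, exactly as in the proof of Proposition~\ref{Prop:Kdual}. I expect no genuinely new input beyond Section~\ref{secun} is needed—the conjecture should follow from a careful lattice-theoretic argument—but the constant-tracking is the delicate point.
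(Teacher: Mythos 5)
The statement you are trying to prove is stated in the paper as a \emph{conjecture}, not a proposition. The authors establish it only in special cases, and only as a consequence of the much heavier geometric results: for simply-laced quantum loop algebras (via Theorem \ref{simply-laced}, which uses perverse sheaves on graded quiver varieties and Braden's hyperbolic localization), and for symmetric quiver Hecke algebras with an adapted reduced word (via Theorem \ref{Thm:adapted}). There is no general lattice-theoretic proof in Section~\ref{secun}, and this absence is not an oversight: the natural ``flip'' argument you sketch has a genuine obstruction.

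Here is the gap. Apply $R_{\bep,\bep_s}^{-1}$ to rewrite the formula \eqref{expfm} entirely inside the single ambient space $V \seq \tM(\bep_s)_\Kk$. Set $L_s \seq \tM(\bep_s)$, $L_c \seq R_{\bep_c,\bep_s}^{-1}\tM(\bep_c)$, and $L_{\bep} \seq R_{\bep,\bep_s}^{-1}\tM(\bep)$. Then, up to $\Oo^\times$, one has
\[
F_nM(\bep) \simeq \ev_{z=0}\Bigl(L_\bep \cap \sum_{k \in \Z} z^k L_s \cap z^{n-k}L_c\Bigr),
\]
with $\ev_{z=0}$ being the projection $L_\bep \to L_\bep / z L_\bep$, and the analogous formula holds for $\bep^\op$ with $L_{\bep^\op} \seq R_{\bep^\op,\bep_s}^{-1}\tM(\bep^\op)$ in place of $L_\bep$. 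The multiplication-by-$z^{-n}$ trick from Proposition~\ref{Prop:Kdual} does exactly what is needed on the two \emph{boundary} lattices: $z^{-n}\sum_k (z^kL_s \cap z^{n-k}L_c) = \sum_k (z^kL_s \cap z^{-n-k}L_c)$. But the third, \emph{inner} lattice $L_\bep$ has no such symmetry: to match the two filtration formulas you would need a $\Kk$-automorphism $\Phi$ of $V$ that simultaneously (a) sends $z^kL_s\cap z^{n-k}L_c$ to $z^kL_s\cap z^{-n-k}L_c$ and (b) sends $L_{\bep^\op}$ to $L_\bep$. Property (a) forces $\Phi$ to behave like $z^{-n}$ on the boundary lattices (hence $\Phi$ must depend on $n$), while (b) is an $n$-independent condition. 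A map of the form $z^{-n}\Psi$ with $\Psi(L_{\bep^\op}) = L_\bep$ would then require $z^{-n}L_\bep = L_\bep$ for all $n$, which is absurd. Proposition~\ref{Prop:Kdual} escapes this trap precisely because for $\bep = \bep_s$ the inner lattice coincides with the boundary: $L_{\bep_s} = L_s$ and $L_{\bep_c} = L_c$; for general $\bep$ this degeneration does not occur, and indeed $L_\bep/zL_\bep \simeq M(\bep)$ and $L_{\bep^\op}/zL_{\bep^\op} \simeq M(\bep^\op)$ are non-isomorphic $A$-modules in general, so the lattices cannot be identified by any $A$-linear $\Kk$-automorphism.

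As a consequence, the intermediate claim in your proposal — an $A$-module isomorphism $\Gr_n^F M(\bep^\op) \simeq \Gr_{c-n}^F M(\bep)$ — is not available by this route, and the subsequent argument fixing $c=0$ ``by additivity of $\beta$'' has nothing to attach to. The conjectural equality lives only at the level of classes in $K(\Cc)_t$, and what the paper actually proves in the accessible cases is the weaker Grothendieck-group statement, obtained by identifying $[M(\bep)]_t$ with Poincar\'e polynomials of hyperbolic localizations and invoking Verdier duality for perverse sheaves (cf.~Remark~\ref{Rem:duality_g}). That geometric input — in particular the hard Lefschetz theorem and the purity underlying the ``Fundamental Example'' — is genuinely required; it cannot be replaced by the formal lattice manipulation you outline.
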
 

Note that, if both the Strong Associativity Conjecture (= Conjecture~\ref{Conj:sassoc}) and the Duality Conjecture (= Conjecture~\ref{Conj:duality}) are true, the involution $\ol{(\cdot)}$ defines an anti-algebra involution of $(K(\Cc)_t, *)$.

We  also introduce  the notion of normality, following  \cite[Definition 2.5]{KK19}.
\begin{Def}[Normality]
We say that our deformation $\{ \tL_j\}_{j \in J}$ as above is \emph{normal} if 
\begin{itemize}
\item[(N1)] we have $\beta(\bd) = 0$ for each $\bd \in \N^{\oplus J}$ (cf.~Remark~\ref{Rem:eed}), and 
\item[(N2)] $\Gr^F_0 M(\bd) \simeq L(\bd)$ for each $\bd \in \N^{\oplus J}$. 
\end{itemize}
\end{Def}

If $\{ \tL_j \}_{j \in J}$ satisfies the condition (N1), the non-zero homomorphism 
\[\rr_\bd \seq \rr_{\bep_c(\bd),\bep_s(\bd)} \colon M(\bd) \to M^\vee(\bd) \]
is induced by $R_{\bep_c, \bep_s}$ (no rescaling here).
Therefore, assuming (N1), the condition (N2) is equivalent to the condition
\begin{itemize} 
\item[$($N2$)'$] $\Image(\rr_\bd) \simeq L(\bd)$ for each $\bd \in \N^{\oplus J}$. 
\end{itemize}
Note that this condition $($N2$)'$ is automatically satisfied if $M^\vee(\bd)$ has a simple socle  isomorphic to $L(\bd)$  for each $\bd \in \N^{\oplus J}$. 
If $\{ \tL_j \}_{j \in J}$ is normal, we have
\begin{equation}\label{pos1}
P_{\bd,\bd'}(t) \in t \N[t]
\end{equation}
for any $\bd' \prec \bd$.
Therefore, we obtain the following Kazhdan-Lusztig type characterization of $\{[L(\bd)]\}_{\bd \in \N^{\oplus J}}$ that can be seen as a canonical basis of $K(\Cc)_t$.

\begin{Prop}
Assume that our deformation $\{\tL_j\}_{j \in J}$ as above is normal.
Then, the $\Z[t^{\pm 1/2}]$-basis $\{[L(\bd)]\}_{\bd \in \N^{\oplus J}}$ of $K(\Cc)_t$ is characterized by the following two properties:
\begin{enumerate}
\item $\ol{[L(\bd)]} = [L(\bd)]$, and
\item $[L(\bd)] - [M(\bd)]_t \in \sum_{\bd' \prec \bd}t\Z[t] [M(\bd')]_t$.
\end{enumerate}
\end{Prop}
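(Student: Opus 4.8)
The plan is to run the classical unitriangular plus bar-involution argument — the ``Kazhdan--Lusztig lemma'' — feeding in the input already supplied by the normality hypothesis, so that the proof splits into an existence part (the displayed basis satisfies (1) and (2)) and a uniqueness part (these two properties pin it down).

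\textbf{Existence.} First I would check that $\{[L(\bd)]\}_{\bd}$ satisfies (1) and (2). Property (1) is immediate from the definition of $\ol{(\cdot)}$, since $[L(\bd)] = t^0[L(\bd)]$ carries no nontrivial power of $t$. For (2), I would use that normality forces $\beta(\bd)=0$ for all $\bd$, so the relation \eqref{eq:KtL} becomes
\[ [M(\bd)]_t = [L(\bd)] + \sum_{\bd' \prec \bd} P_{\bd,\bd'}(t)[L(\bd')], \]
with $P_{\bd,\bd'}(t)\in t\N[t]$ by \eqref{pos1}, and with all sums finite because the standard modules have finite length. Thus the change-of-basis matrix from $\{[M(\bd)]_t\}$ to $\{[L(\bd)]\}$ is unitriangular for $\prec$ with off-diagonal entries in $t\Z[t]$, and I would invert it by induction along $\prec$ — restricted, for each fixed $\bd$, to the finitely many $\bd'$ that occur — to obtain
\[ [L(\bd)] = [M(\bd)]_t + \sum_{\bd'\prec\bd} Q_{\bd,\bd'}(t)\,[M(\bd')]_t, \qquad Q_{\bd,\bd'}(t)\in t\Z[t]. \]
In the inductive step one only multiplies the known expansions of $[L(\bd')]$ by $P_{\bd,\bd'}(t)\in t\Z[t]$, so every coefficient stays in $t\Z[t]$; this is exactly property (2).

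\textbf{Uniqueness.} Let $\{b_\bd\}_{\bd\in\N^{\oplus J}}$ be any $\Z[t^{\pm1/2}]$-basis of $K(\Cc)_t$ satisfying (1) and (2). I would prove $b_\bd=[L(\bd)]$ by induction along $\prec$. Subtracting the two instances of (2) for $b_\bd$ and for $[L(\bd)]$ gives $b_\bd-[L(\bd)]\in\sum_{\bd'\prec\bd} t\Z[t]\,[M(\bd')]_t$; re-expanding the $[M(\bd')]_t$ in the basis $\{[L(\bd'')]\}$ via the unitriangular relation \eqref{eq:KtL} keeps all coefficients in $t\Z[t]$ (again because the $P$'s lie in $\Z[t]$), so
\[ b_\bd-[L(\bd)] = \sum_{\bd'\prec\bd} d_{\bd'}(t)\,[L(\bd')], \qquad d_{\bd'}(t)\in t\Z[t]. \]
Applying $\ol{(\cdot)}$ and using (1) for both $b_\bd$ and $[L(\bd)]$, the left-hand side is bar-invariant, whence $d_{\bd'}(t)=d_{\bd'}(t^{-1})$ for each $\bd'\prec\bd$; since $t\Z[t]\cap t^{-1}\Z[t^{-1}]=\{0\}$, each $d_{\bd'}$ vanishes and $b_\bd=[L(\bd)]$.

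\textbf{Main obstacle.} There is no genuine obstacle here: all the substance — the vanishing $\beta(\bd)=0$ and the positivity $P_{\bd,\bd'}(t)\in t\N[t]$ — is already packaged into the normality hypothesis together with \eqref{pos1}, and what remains is the textbook argument above. The only points deserving a line of care are the finiteness of the sums, which follows from the finite length of the standard modules, and the well-foundedness of the induction along $\prec$ on the finite set of indices actually appearing; both are routine and require no further input.
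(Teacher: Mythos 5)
Your argument is correct and is exactly the standard Kazhdan--Lusztig unitriangularity-plus-bar-invariance argument that the paper implicitly relies on; the paper states the proposition without proof as a direct consequence of normality and \eqref{pos1}, and your write-up supplies the routine details accurately. Two very minor remarks: in the uniqueness step the induction along $\prec$ you announce is not actually used (the argument is direct once $b_\bd-[L(\bd)]$ is expanded in the $\{[L(\bd')]\}$-basis), and in the existence step the finite, well-founded induction is, as you observe, really an induction on the finite set of indices occurring in the (finite) expansion of $[L(\bd)]$ in the standard basis, which one can pin down cleanly by first showing $[L(\bd)]\in\sum_{\bd'\preceq\bd}\Z[t^{\pm 1/2}][M(\bd')]_t$ with leading coefficient $1$ via the unitriangularity of \eqref{eq:KtL}.
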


\subsection{Proof of Theorem \ref{thmapp}}\label{pfadd}
We use the notations as in Theorem \ref{thmapp}. It is proved in the following way.

 By definition of $S$ and $Q$, we have
$$[M]_t = t^r Q + t^{r+1}M_{r+1} + \cdots + t^{s-1}M_{s-1} + t^s S$$
for integers $r \leq s$ and classes $M_l\in  K(\mathscr{C})$ defined for $r < l < s$.
In particular, the monoidal Jantzen filtration
$$F_\bullet M\colon \quad  M \supset \cdots \supset F_{-1}M \supset F_0M \supset F_1M \supset \cdots \supset \{ 0 \}$$
satisfies
$$F_{s+1} M = 0 \text{ and }[F_s M] = S,$$
$$F_{r} M = M \text{ and }[F_{r+1} M] + Q = [M].$$
This implies that $F_sM$ is a submodule of $M$ whose image in $K(\mathscr{C})$ is $S$. Also 
$$M/(F_{r+1} M)$$ 
is a quotient of $M$ whose image in $K(\mathscr{C})$ is $Q$.

%%%%%%%%%%%%%%%%%%%%%%%%%%%%%%%%%%%%%%%%%%%%%%%%%%%%%%%%%

\section{Monoidal Jantzen filtrations for quantum loop algebras}\label{fqla}

We study our first main examples for monoidal Jantzen filtrations, 
realized in categories of finite-dimensional representations 
of quantum loop algebras. 
More precisely, we first give general reminders on these representations. Then we introduce the ordinary PBW-theory arising from fundamental modules (Theorem \ref{fundpbw}) and more general PBW-theories of affine cuspidal modules from \cite{KKOP}. We recall the relevant $R$-matrices, we 
introduce relevant deformations of the PBW-theory in Section \ref{defocons} and we check it fits into our general framework (Theorem \ref{normt}). 
Hence we obtain monoidal Jantzen filtrations.
Independently, we recall the construction of quantum Grothendieck rings, the 
corresponding Kazhdan-Lusztig polynomials which are now known to be positive 
(Theorem \ref{pospol}). 
We conjecture that our 
decategorified monoidal Jantzen filtrations recover this 
quantum Grothendieck ring (Conjectures \ref{qgrconj}, \ref{qgrmix}). So this gives an explanation for the positivity of Kazhdan-Lusztig polynomials in this context.

\subsection{Quantum loop algebras and their representations}
Let $\fg$ be a complex finite-dimensional simple Lie algebra.
Let $C = (c_{ij})_{i,j \in I}$ denote the Cartan matrix of $\fg$, where $I$ is the set of Dynkin nodes. 
Let $r \in \{1,2,3\}$ be the lacing number of $\fg$, and $(r_i)_{i \in I} \in \{1,r\}^I$ the left symmetrizer of $C$, i.e., satisfying $r_i c_{ij} = r_j c_{ji}$ for all $i, j \in I$.

Let $U_q(L\fg)$ be  the  quantum loop algebra associated to $\fg$.
It is a Hopf algebra defined over an algebraic closed field $\kk = \ol{\Q(q)}$, where $q$ is a formal parameter. It has a family of Chevalley generators $e_i, f_i, k_i^{\pm 1}$ where $i\in I \sqcup\{0\}$.
 In this paper, we use the coproduct $\Delta$ given by 
\[\Delta(e_i) = e_i \otimes k_i^{-1} + 1 \otimes e_i, \quad 
\Delta(f_i) = f_i \otimes 1 + k_i \otimes f_i, \quad 
\Delta(k_i^{\pm 1}) = k_i^{\pm 1} \otimes k_i^{\pm 1}\]
for each $i\in I \sqcup\{0\}$.

Let $\Cc$ denote the rigid monoidal category of finite-dimensional $U_q(L\fg)$-modules (with the standard type $1$ condition).   
Recall that the isomorphism classes of simple modules of the category $\Cc$ are parameterized by the set $(1+z\kk[z])^I$ of $I$-tuples of monic polynomials (the Drinfeld polynomials). Such a $I$-tuple encode the eigenvalues of distinguished operators on a highest weight vector of the 
simple representation \cite{CP}.

We will focus on the monoidal subcategory $\Cc_\Z$ of $\Cc$ introduced by Hernandez-Leclerc~\cite{HL10} and so that every prime simple module of $\Cc$ (that is every simple module which can not be factorized into a non trivial tensor product of modules) is in $\Cc_\Z$ after a suitable spectral parameter shift. Precisely, we fix a parity function $\varepsilon \colon I \to \{0,1\}$ satisfying the condition 
\begin{equation} \label{eq:ep}
\varepsilon_i \equiv \varepsilon_j + \min(r_i, r_j) \pmod 2 \quad \text{if $c_{ij}<0$}, 
\end{equation} 
and let $$\hI \seq \{(i,p) \in I \times \Z \mid p \equiv \varepsilon_i \pmod 2 \}.$$
 We introduce a formal variable $Y_{i,p}$ for each $(i,p) \in \hI$ 
 and $\cM$ be the group of all the Laurent monomials
\begin{equation} \label{eq:monomial}
m = \prod_{(i,p) \in \hI} Y_{i,p}^{u_{i,p}(m)}.
\end{equation}
We say that $m \in \cM$ is \emph{dominant} if $u_{i,p}(m) \ge 0$ for all $(i,p) \in \hI$, and denote the set of dominant monomials by $\cM^+$. 
For each such dominant monomial, we have a simple module $L(m) \in \Cc$ corresponding to the Drinfeld polynomials $(\prod_{p}(1-q^pz)^{u_{i,p}(m)})_{i \in I}$.
The category $\Cc_\Z$ is defined to be the Serre subcategory of $\Cc$ generated by these simple modules.
It is closed under taking tensor products and left/right duals. In other words, $\Cc_\Z$ is a rigid monoidal subcategory of $\Cc$. 

\subsection{Standard modules and PBW-theory}\label{spbwt}

For $(i,p)\in\hI$, consider the fundamental representation defined by 
\[V_{i,p} = L(Y_{i,p}).\] 
We choose a numbering $I = \{1,\cdots, n\}$ where $n$ is the rank of $\fg$ and we define an embedding $e \colon \hI\rightarrow \mathbb{Z}$ by setting 
\begin{equation} \label{eq:e}
e \colon (i,p)\mapsto i + np.
\end{equation}
This induces an ordering on $\hI$ so that $p < q$ implies $(i,p) < (j,q)$. We will just denote $V_{e(i,p)} = V_{i,p}$ so that we have a family of simple modules $\{V_j\}_{j\in J}$ as in 
Section \ref{pbw}, where $J$ is the image $e(\hI)\subset \mathbb{Z}$.

\begin{Rem}
More generally, one can work with any embedding $e \colon \hI \to \Z$ satisfying the condition
\[ e(i,p) < e(j,s) \quad \text{if $\fo(V_{i,p}, V_{j,s}) > 0$, }\]
where the number $\fo(M,N) \in \N$ is the pole order of the normalized $R$-matrix defined below.  
It follows that the resulting deformed product $*$ on $K(\Cc_\Z)_t$ does not depend on the choice of such an embedding at least when $\fg$ is of simply-laced type from the proof of Theorem \ref{simply-laced} given in Section \ref{ssec:Prqaa} below.
\end{Rem}

 In what follows, we often identify $\cM$ with $\Z^{\oplus \hI}$ by the correspondence $m \mapsto (u_{i,p}(m))$.
Then, the set $\cM^+$ is identified with $\N^{\oplus \hI}$.
We define a partial ordering on  $\cM^+ \simeq \mathbb{N}^{\oplus\hI}$ in the following way.
For each $(i,p) \in I \times \Z$ with $(i, p- r_i ) \in \hI$, following \cite{FR}, we define the loop analog of a simple root
\[ A_{i,p} =  Y_{i, p-r_i} Y_{i,p+r_i} \prod_{(j,s) \in \hI \colon c_{i,j} < 0, |s-p| < r_i} Y_{j,s}^{-1}\in \cM. \]
For $m,m' \in \cM$, we write $m \preceq  m'$ if $m'm^{-1}$ is a monomial in various $A_{i,p}$ for  $(i,p-r_i)\in \hI$.
This defines a partial ordering on $\cM$, called the \emph{Nakajima partial ordering}. 
As one also can view an element in 
$\cM \simeq \Z^{\oplus \hI}$ as an element in $\mathbb{Z}^{\oplus J}$ through the map $e$, 
this induces a partial ordering $\preceq$ on $\mathbb{N}^{\oplus J}$.

The following is a reformulation of well-known results by various authors, in particular \cite{CP, Chari, Kas, VV}.

\begin{Thm}\label{fundpbw} The pair $(\{V_j\}_{j\in J},\preceq)$ gives a PBW-theory of $\Cc_{\Z}$. 
\end{Thm}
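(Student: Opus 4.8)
The plan is to verify the three axioms of Definition~\ref{Def:PBW} for the pair $(\{V_j\}_{j \in J}, \preceq)$, drawing on the classical structure theory of finite-dimensional $U_q(L\fg)$-modules. For axiom (1), the key input is the existence, for a \emph{standard} (i.e.\ oppositely ordered with respect to $e$) sequence of fundamental modules, of a cyclic highest-weight quotient: one shows that $M(\bd) = \overset{\leftarrow}{\star}_{j} V_j^{\star d_j}$ is generated by the tensor product of highest-weight vectors, hence is a highest-weight module with Drinfeld polynomial $m_\bd = \prod Y_j^{d_j}$. Cyclicity in the right order is exactly the content of Chari's theorem \cite{Chari} (see also \cite{VV, Kas}) on tensor products of fundamental modules ordered compatibly with the poles of $R$-matrices; combined with the fact that a highest-weight module has a unique simple quotient, this gives that $M(\bd)$ has simple head $L(\bd) = L(m_\bd)$.

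For axiom (2), I would invoke the Chari--Pressley classification \cite{CP}: every simple object of $\Cc_\Z$ is $L(m)$ for a unique dominant monomial $m \in \cM^+ \simeq \N^{\oplus\hI} \simeq \N^{\oplus J}$, and the assignment $\bd \mapsto m_\bd$ is precisely this bijection, so $\{L(\bd)\}_{\bd}$ is a complete irredundant list. For axiom (3), I need the triangularity $[M(\bd)] = [L(\bd)] + \sum_{\bd' \prec \bd} P_{\bd,\bd'}[L(\bd')]$ with respect to the Nakajima ordering. Here one uses that all Jordan--H\"older constituents $L(m')$ of $M(\bd)$ satisfy $m' \preceq m_\bd$: the $q$-character $\chi_q(M(\bd)) = \prod_j \chi_q(V_j)$ has $m_\bd$ as its unique maximal monomial (each $\chi_q(V_j)$ has highest monomial $Y_j$ and all others strictly lower in the Nakajima order by Frenkel--Mukhin \cite{FM01}), and every monomial appearing is $\preceq m_\bd$; since $\chi_q$ is injective on $K(\Cc_\Z)$ by Frenkel--Reshetikhin \cite{FR}, any constituent $L(m')$ must have its highest monomial $m' \preceq m_\bd$, with $m_\bd$ itself occurring with multiplicity one. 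This yields the displayed expansion with $P_{\bd,\bd'} = [M(\bd):L(\bd')] \in \N$.

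The main obstacle is axiom (1), specifically pinning down that the oppositely ordered product — with the ordering on $J$ induced from $e$ as in \eqref{eq:e}, i.e.\ larger spectral parameter further left — is the one for which cyclicity holds. One must check that the embedding $e$ is compatible with the pole orders $\fo(V_{i,p}, V_{j,s})$ in the sense that $\fo(V_{i,p}, V_{j,s}) > 0$ forces the relevant inequality, so that Chari's ordering criterion applies; this is where the parity condition \eqref{eq:ep} and the precise form of $A_{i,p}$ enter, and it requires care about the direction of the product (left-to-right vs.\ right-to-left) matching the convention in \cite{Chari, VV}. Once the correct cyclicity statement is in place, the simple-head property and the rest follow formally, so I would organize the proof as: (i) recall $q$-characters and the Nakajima order; (ii) state and apply the cyclicity theorem for $e$-ordered tensor products; (iii) deduce the simple head; (iv) deduce completeness from \cite{CP}; (v) deduce the unitriangular expansion from $q$-character positivity and injectivity.
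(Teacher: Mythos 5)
The paper gives no explicit proof here; Theorem~\ref{fundpbw} is stated as a reformulation of known results with the same references you invoke (\cite{CP, Chari, Kas, VV}), so your detailed reconstruction is precisely the intended argument: cyclicity of the $e$-ordered tensor product gives the simple head, Chari--Pressley gives completeness, and $q$-character positivity plus injectivity gives the unitriangular expansion. Your caveat about matching the convention is resolved by the fact that $\fo(V_{i,p},V_{j,s})>0$ forces $p<s$, hence $e(i,p)<e(j,s)$, which is exactly the Remark the authors record just after the theorem.
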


We will call the corresponding standard modules the ordinary standard modules as they were studied by many authors, in particular from the point of view of geometric
representation theory for simply-laced quantum loop algebras.

A generalization of this PBW-theory is proposed  by  Kashiwara-Kim-Oh-Park in \cite{KKOP}. 
Consider a strong complete duality datum in the sense of \cite{KKOP} (such a family can be obtained from a $Q$-datum in 
the sense of \cite{FO21}). Then there is a corresponding collection of simple representations $(S_k)_{k\in\mathbb{Z}}$ in $\Cc_{\Z}$ called the 
affine cuspidal modules, see \cite[Section 5.2]{KKOP} 
(in the particular case above, the affine cuspidal modules are fundamental representations, 
now parameterized by $\mathbb{Z}$, that is we have fixed an increasing bijection between $J$ and $\mathbb{Z}$). 
Then let $\preceq$ be the bi-lexicographic ordering on $\N^{\oplus \Z}$.

\begin{Thm}[\cite{KKOP}] The pair $(\{S_k\}_{k\in\mathbb{Z}},\preceq)$ gives a PBW-theory of $\Cc_\Z$.
\end{Thm}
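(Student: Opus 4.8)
The plan is to verify the three conditions of Definition~\ref{Def:PBW} for the pair $(\{S_k\}_{k \in \Z}, \preceq)$, taking as input the structure theory of affine cuspidal modules developed in \cite{KKOP}. First I would recall from \cite[Section 5.2]{KKOP} that a strong complete duality datum $\mathcal{D}$ in $\Cc_\Z$ gives rise to the affine cuspidal modules $(S_k)_{k \in \Z}$, which are the images under the associated monoidal functor of the cuspidal modules in the category of finite-dimensional modules over a symmetric quiver Hecke algebra. This transports the PBW-type combinatorics of the dual PBW-basis to $\Cc_\Z$. Concretely, for each $\bd = (d_k)_{k \in \Z} \in \N^{\oplus \Z}$, the oppositely ordered product $M(\bd) = \mathop{\star}^{\leftarrow}_{k} S_k^{\star d_k}$ corresponds to an ordered product of cuspidal modules on the quiver Hecke side, which by \cite{KKOP} (building on \cite{McN, BKM, KKOP}) has simple head: this gives condition~(1), with $L(\bd)$ the corresponding simple.

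For condition~(2), I would invoke the fact, again from \cite{KKOP}, that the classes of these ordered products of cuspidal modules form a basis of the Grothendieck ring categorifying a dual PBW-basis of the relevant (quantum) coordinate ring, and that their simple heads $L(\bd)$ therefore exhaust, without repetition, the isomorphism classes of simple modules in $\Cc_\Z$. This uses that the duality functor from \cite{KKOP} induces a bijection on simple objects once one accounts for all the ``blocks'' corresponding to the $\Z$-parametrization. For condition~(3), the unitriangularity
\[ [M(\bd)] = [L(\bd)] + \sum_{\bd' \prec \bd} P_{\bd,\bd'}[L(\bd')], \qquad P_{\bd,\bd'} \in \N, \]
with respect to the bi-lexicographic order $\preceq$, is precisely the categorical shadow of the unitriangular change of basis between the dual PBW-basis and the upper global (dual canonical) basis, established in \cite{KKOP}; the positivity of the multiplicities is automatic since they are Jordan--H\"older multiplicities.

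The main obstacle, and the place where I would spend the most care, is checking that the \emph{ordering conventions} match: the PBW-theory in Definition~\ref{Def:PBW} uses the \emph{oppositely} ordered product relative to the natural order on $J \subset \Z$, whereas \cite{KKOP} may set things up with the opposite convention or with cuspidal modules indexed so that the cuspidal decomposition reads in the other direction. One must fix the increasing bijection between the index set and $\Z$ so that the ``standard'' (oppositely ordered) products here correspond to the ordered products of cuspidal modules that genuinely have simple heads in \cite{KKOP}, and then confirm that the bi-lexicographic order used there is compatible with the one $\preceq$ declared above (rather than its reverse). Once the dictionary between our $M(\bd)$, $L(\bd)$, $\preceq$ and the corresponding objects in \cite{KKOP} is pinned down, conditions~(1)--(3) are direct translations of results already proved there, so the proof reduces to a careful citation with the correct identifications.
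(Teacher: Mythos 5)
The paper gives no proof of this statement at all---it is quoted verbatim from \cite{KKOP}---so your strategy of assembling the three conditions of Definition~\ref{Def:PBW} from the results of that paper (with care about the ordering conventions and the increasing bijection with $\Z$) is essentially the same ``proof'' as the paper's, namely a careful citation. One minor caveat on attribution: the affine cuspidal modules $S_k$ for arbitrary $k\in\Z$ are not all images of quiver Hecke cuspidal modules under the duality functor (those outside a fundamental window are generated by taking repeated left/right duals), and \cite{KKOP} establish the simple-head property and the unitriangularity for products of affine cuspidal modules by categorical arguments with $R$-matrices rather than by direct transport of the dual-PBW/dual-canonical unitriangularity from the quiver Hecke side; since you ultimately defer to their theorems rather than reprove them, this does not affect the validity of your reduction.
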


The ordinary PBW-theory given by fundamental representations in Theorem \ref{fundpbw} is a particular case of this result (see \cite[Remark 6.4]{KKOP}), but there are more general PBW-theories arising in this form.

\subsection{$R$-matrices}\label{Rmat}

The algebra $U_q(L\fg)$ has a $\mathbb{Z}$-grading defined on Chevalley generators by $\text{deg}(e_i)
= \text{deg}(f_i) = \text{deg}(k_i^{\pm 1}) = 0$ for $i\in I$ and
$\text{deg}(e_0) = - \text{deg}(f_0) = 1$. There is a corresponding algebra morphism 
$\tau_u \colon U_q(L\fg)\rightarrow U_q(L\fg)[u^{\pm 1}]$ such that a homogeneous element $g$ of degree
$m\in\mathbb{Z}$ satisfies $\tau_u(g) = u^m g$. 

Let $V$ be a representation of $U_q(L\fg)$. 
Consider the ring $\Oo = \kk[\![ z ]\!]$ as above with the formal variable $z = u - 1$. Then the $\Oo$-module $(V)_{u} = V\otimes \Oo$ has a structure 
of $U_q(L\fg)_{\Oo}$-module obtained as the twist of the module structure of $V$ by $\tau_u$. The morphism 
$\tau_u$ is compatible with the coproduct of $U_q(L\fg)$, and so for two $U_q(L\fg)$-modules $V$ and $W$ we have
$$(V\otimes W)_u\simeq (V)_u\otimes (W)_u.$$
We can also consider the tensor product $V_u \otimes W_v$ and its scalar extension 
\[((V)_u \otimes (W)_v)_{\kk(\!( z,w )\!)} \seq ((V)_u \otimes (W)_v) \otimes_{\kk[\![ z ]\!] \otimes \kk [\![w]\!]}\kk(\!( z,w )\!)\] 
to the ring of Laurent formal power series with two variables $z = u - 1$ and $w = v - 1$.

\begin{Thm}\label{ext} 
Let $M$, $N$, $P$ be simple modules in $\Cc_{\Z}$. 
There is a unique isomorphism of $U_q(L\fg)$-modules
$$T_{M,N}(u,v) \colon ((M)_u \otimes  (N)_v)_{\kk(\!(z,w)\!)}\rightarrow ((N)_v \otimes (M)_u)_{\kk(\!(z,w)\!)},$$
normalized so that for $y\in M$, $y'\in N$ highest weight vectors, the image of $y\otimes y'$ by $(T_{M,N}(u,v))$ is $y'\otimes y$. Moreover $T_{M,N}(u,v) = T_{M,N}(u/v)$ depends only on $u/v$ and is rational
$$T_{M,N}(u,v)(M\otimes N) \subset (N\otimes M)\otimes  \kk (u/v).$$ 
It satisfies the quantum Yang-Baxter equation, that is we have
\begin{align}
&(T_{N,P}(v)\otimes \id) \circ (\id\otimes T_{M,P}(u)) \circ (T_{M,N}(u/v)\otimes \id)\\
&  = (\id \otimes T_{M,N}(u/v)) \circ 
(T_{M,P}(u)\otimes \id) \circ (\id \otimes T_{N,P}(v)). 
\end{align}
\end{Thm}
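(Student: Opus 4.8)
The plan is to reduce the two-variable assertion to a one-variable one by homogeneity of the spectral parameter, and then to extract the intertwiner, its uniqueness, its rationality and the quantum Yang--Baxter relation from the generic irreducibility of tensor products of simple $U_q(L\fg)$-modules, which is the one genuinely representation-theoretic ingredient. First I would use that $\tau_u$ scales a homogeneous element of degree $m$ by $u^m$, so that $\tau_u\circ\tau_c=\tau_{cu}$ and twisting $(M)_u$ by $\tau_c$ gives $(M)_{cu}$; since $\tau_c$ respects the coproduct it twists tensor products factorwise. The global twist $\tau_{1/v}$ then identifies $(M)_u\otimes(N)_v$ with $(M)_{u/v}\otimes N$ and $(N)_v\otimes(M)_u$ with $N\otimes(M)_{u/v}$, and as twisting by an automorphism is an autoequivalence of the module category it matches $U_q(L\fg)$-linear maps and preserves the normalization on products of highest weight vectors. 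Hence it suffices to build, over $\Kk=\kk(\!(z)\!)$ with $z$ the single remaining parameter $u/v-1$, a normalized intertwiner $((M)_u\otimes N)_\Kk\to(N\otimes(M)_u)_\Kk$; the desired $T_{M,N}(u,v)$ is then its pullback, so it automatically depends only on $u/v$.

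Next I would bring in the key input: for $M,N$ simple in $\Cc_\Z$, the module $(M)_u\otimes N$ over $U_q(L\fg)\otimes\kk(u)$ is irreducible with only scalar endomorphisms. This ``generic irreducibility'' is classical; it can be deduced from the cyclicity of tensor products of $\ell$-highest weight modules (generated by the product of highest weight vectors) together with the multiplicativity and injectivity of the $q$-character of \cite{FR}. Granting it, after base change to $\Kk$ both $(M)_u\otimes N$ and $N\otimes(M)_u$ are simple $U_q(L\fg)\otimes\Kk$-modules whose highest weight vectors $y\otimes y'$ and $y'\otimes y$ carry the same product $\ell$-weight; by the classification over $\Kk$ \cite{CP} they are isomorphic, with a one-dimensional space of intertwiners. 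Rescaling the unique nonzero intertwiner so that $y\otimes y'\mapsto y'\otimes y$ produces $T_{M,N}$ and settles uniqueness. (Alternatively one may manufacture an intertwiner directly from the universal $R$-matrix of $U_q(L\fg)$, using the rationality of its action on finite-dimensional modules proved by Akasaka--Kashiwara.) For rationality I would note that $(M)_u\otimes N$ and $N\otimes(M)_u$ are already defined over $\kk[u^{\pm1}]$ and are finite-dimensional over $\kk(u)$, so by flat base change the one-dimensionality of the $\Hom$ space over $\Kk$ descends to $\kk(u)$; hence there is an intertwiner defined over $\kk(u)$, and comparing its value at $y\otimes y'$ with $y'\otimes y$ shows that the normalized $T_{M,N}$ satisfies $T_{M,N}(u,v)(M\otimes N)\subset(N\otimes M)\otimes\kk(u/v)$.

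For the quantum Yang--Baxter relation I would work over $\kk(u,v)$ and observe that both composites in the statement are $U_q(L\fg)$-linear maps $(M)_u\otimes(N)_v\otimes P\to P\otimes(N)_v\otimes(M)_u$, assembled from the elementary intertwiners of the previous step applied to the appropriate twists via the homogeneity described above. The source $(M)_u\otimes(N)_v\otimes P$ is generically cyclic, generated by the product $y\otimes y'\otimes y''$ of its highest weight vectors; by the normalization each elementary factor carries the relevant product of highest weight vectors to the swapped one, so both composites send $y\otimes y'\otimes y''$ to $y''\otimes y'\otimes y$. Since two module homomorphisms out of a cyclic module that agree on a generator coincide, the relation follows.

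The hard part is the generic irreducibility invoked above, together with the generic cyclicity of triple tensor products used in the Yang--Baxter step: these are not formal and rest on the structure theory of $\ell$-highest weight modules over $U_q(L\fg)$ --- cyclicity of tensor products of fundamental (more generally standard) modules and the invariance of the $q$-character under reordering, essentially the facts underlying Theorem~\ref{fundpbw} and \cite{KKOP}. Once that input is granted, everything else reduces to homogeneity and Schur's lemma over the appropriate function field.
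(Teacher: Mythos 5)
The paper does not prove Theorem~\ref{ext} directly: it simply points to the universal $R$-matrix of Frenkel--Reshetikhin and its normalization on highest weight vectors (\cite{ifre}, \cite[Proposition 9.5.3]{efk}), with rationality coming from the standard Akasaka--Kashiwara type argument. Your proposal takes a genuinely different and more self-contained route: you bypass the universal $R$-matrix altogether and manufacture the intertwiner by Schur's lemma, relying on generic irreducibility of $(M)_u\otimes N$ over $\kk(u)$ for existence and uniqueness, on flat base change $\kk(u)\hookrightarrow\Kk$ for rationality (descent of the one-dimensional $\Hom$ space), and on generic cyclicity of the triple product for Yang--Baxter, after first using the Hopf-compatibility of $\tau_c$ to reduce from two spectral parameters to one. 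The trade-off is real: the paper's citation is short but opaque, while your argument exposes exactly where the representation theory enters; on the other hand, it shifts the entire burden onto ``generic irreducibility'' and ``generic cyclicity,'' which you cite rather than prove and which are themselves nontrivial (the cyclicity-plus-$q$-character sketch you give is circular as stated without an independent control on $\chi_q(L(m_Mm_N))$, so one really must invoke the known results of Chari/Kashiwara rather than rederive them). You are candid about this being the hard input, so the proposal is acceptable as a blueprint, but it is not strictly easier than the paper's reference---it relocates the difficulty rather than removing it. One small presentational remark: when you reduce to one variable you should note explicitly that twisting by the Hopf automorphism $\tau_{1/v}$ is an autoequivalence that also matches the highest-weight normalization, so the normalized intertwiner over $\kk(\!(u/v-1)\!)$ pulls back to the normalized $T_{M,N}(u,v)$; you say this in passing, but it is the step that makes ``depends only on $u/v$'' literally true rather than merely up to scalar.
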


The isomorphism $T_{M,N}(u,v)$ is obtained by the specialization of the universal $R$-matrix normalized on tensor products of highest weight vectors (see \cite{ifre} and \cite[Proposition 9.5.3]{efk}). 

Let us consider the order of $1$ as a pole of $T_{M,N}(u)$: \
$$\fo(M,N)\in\N.$$ 
The renormalized $R$-matrix is defined as
$$R_{M,N}(u) = (u-1)^{ \fo(M,N) }T_{M,N}(u).$$
Its limit at $u \rightarrow 1$ is a non zero morphism of $U_q(L\fg)$-modules (considered in \cite{KKK}): 
\begin{equation}\label{rmn}\mathbf{r}_{M,N} \colon M\otimes N\rightarrow N\otimes M.\end{equation}

\begin{Rem} 
It is not clear how to define the quantity $\fo(M,N)$ for general categories as considered in Section \ref{secun}.
However, for the ordinary PBW-theory $(\{ V_j \}_{j \in J}, \preceq)$ in Theorem \ref{fundpbw} and its generically commutative deformation introduced in the next subsection, we have 
\[ \fo(V_i, V_j) = \begin{cases*}
\alpha(i,j) & if $i < j$, \\
0 & otherwise,
\end{cases*}
\]
where $\alpha(i,j)$ is the number defined in Section 2.
The operators $\mathbf{r}_{M,N}$ defined as the limits of operators $R_{M,N}$ coincide with the specialized $R$-matrices in the general framework of Section \ref{mjf}. 
In the situations considered below, these notations 
will not lead to confusions because, as explained above, they are well-defined up to multiples in $\kk^\times$.
\end{Rem}

\begin{Ex}\label{exsl2} Let $\mathfrak{g} = \mathfrak{sl}_2$ and $M = L(Y_{1,a})$, $N = L(Y_{1,b})$ be fundamental representations. The structure of $M\otimes N$ is well-known. We have $ \fo(M,N) = \delta_{b-a, 2}$ and $R_{M,N}$ is an isomorphism if $|b-a|\neq 2$. If $b = a -2$, its image is simple of dimension $3$ isomorphic to $L(Y_{1,a}Y_{1,b})$ and its kernel is the trivial module of dimension $1$. If $b = a + 2$, its image is simple of dimension $1$ and its kernel is isomorphic to $L(Y_{1,a}Y_{1,b})$. All this can be checked by direct computations. Indeed, there are respective bases $(v_a^+,v_a^-)$ and $(v_b^+,v_b^-)$ of weight vectors of $M$ and of $N$, so that 
in the basis $(v_a^+\otimes v_b^+, v_a^+ \otimes v_b^-, v_a^-\otimes v_b^+, 
v_a^-\otimes v_b^-)$, we see that 
$$\begin{pmatrix}1&0&0&0\\ 0&\frac{u(1 - q^{-2})}{u-q^{b-a-2}}&\frac{q^{-1}(u-q^{b-a})}{u-q^{b-a-2}}&0\\0&\frac{q^{-1}(u - q^{b-a})}{u - q^{b-a-2}}&\frac{q^{b-a}(1-q^{-2})}{u - q^{b-a-2}}&0\\0&0&0&1\end{pmatrix}$$ 
is the matrix of $T_{M,N}(u)$. From the basis 
$$(v_a^+\otimes v_b^+,  v_a^+\otimes v_b^- +  q^{-1}  v_a^-\otimes v_b^+, 
q^{-1} v_a^+\otimes v_b^- - v_a^-\otimes v_b^+,  v_a^-\otimes v_b^-)$$ to the basis 
$$(v_b^+\otimes v_a^+, v_b^+\otimes v_a^- + q^{-1} v_b^-\otimes v_a^+, q^{-1} v_b^-\otimes 
v_a^+ - q^{-1} v_b^+\otimes v_a^-,  v_b^-\otimes v_a^-)$$ the matrix is diagonal
$$T_{M,N}(u) = \text{diag}(1,1,\delta(u),1),$$
where $\delta(u) = \frac{q^{b-a} - uq^{-2}}{u - q^{b-a-2}}$. When $a = b+2$, at the limit $u \rightarrow 1$ one obtains 
$$\mathbf{r}_{ M,  N}  = \text{diag}(1,1, 0 ,1).$$ 
When $b = a+2$, multiplying by $u - 1$, we obtain at the limit
$$\mathbf{r}_{M,  N } = \text{diag}(0,0, q^2 - q^{-2} ,0).$$
We note that  in these cases $|b-a| = 2$  we have 
$$R_{M,  N}(u) \circ R_{N,  M }(u) = (u - 1)\id.$$
\end{Ex}

\subsection{Deformation}\label{defocons}
We fix a PBW-theory $(\{S_k\}_{k\in J },\preceq)$ of $\Cc_\Z$ as above ($J = e(\hI)$ or $\Z$). We set
$$\tilde{S}_k \seq (S_k)_{\text{exp}(k z)}.$$
It is a $U_q(L\fg)_{\Oo}$-module. 
For any $k, k' \in J$, we have an isomorphism
$$R_{k,k'} = R_{S_k,S_{k'}}(\text{exp}((k-k')z)) \colon (\tilde{S}_k \star_\Oo \tilde{S}_{k'})_\Kk \simeq (\tilde{S}_{k'} \star_\Oo \tilde{S}_{k})_\Kk.$$

\begin{Thm}\label{normt} The collection $\{\tilde{S}_k\}_{k\in J}$ is a normal, consistent, generically commutative deformation of $\{S_k\}_{k\in J }$.
\end{Thm}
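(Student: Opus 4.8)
The plan is to verify the four required properties — generic commutativity (conditions (D1), (D2)), consistency (condition (D3)), and normality (conditions (N1), (N2)) — one at a time, leaning on the known structure theory of $R$-matrices for quantum loop algebras recalled in Section~\ref{Rmat} together with the properties of affine cuspidal modules from \cite{KKOP}. First, (D1) is essentially tautological from the definition $\tilde S_k = (S_k)_{\exp(kz)}$: the twist $\tau_{\exp(kz)}$ reduces to the identity at $z=0$, so $(\tilde S_k)_0 \simeq S_k$, and $\tilde S_k \simeq S_k \otimes \Oo$ is free over $\Oo$ by construction. For (D2), I would invoke Theorem~\ref{ext}: the normalized intertwiner $T_{S_k,S_{k'}}(u/v)$ is a rational isomorphism after inverting the poles, and substituting $u = \exp(kz)$, $v = \exp(k'z)$ makes $u/v = \exp((k-k')z)$ a nonzero element of $\Kk$ away from the (finitely many) zeros/poles, yielding the required $A_\Kk$-isomorphism $(\tilde S_k \star_\Oo \tilde S_{k'})_\Kk \simeq (\tilde S_{k'}\star_\Oo \tilde S_k)_\Kk$. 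The $\End$-simplicity statement $\End_{A_\Kk}((\tilde S_k\star_\Oo\tilde S_{k'})_\Kk) = \Kk\,\id$ follows because the generic tensor product $S_k \star S_{k'}$ of simple modules over a quantum loop algebra is simple (the spectral parameters are in generic position over $\Kk$, so the head/socle argument of \cite{KKK}, \cite{KKKO} forces simplicity), whence its endomorphism ring is $\Kk$.

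For consistency (D3), the quantum Yang–Baxter relation \eqref{eq:YB} for the $R_{k,k'}$ is a direct consequence of the Yang–Baxter equation for the $T_{M,N}(u)$ stated in Theorem~\ref{ext}: specializing the three spectral parameters to $\exp(kz), \exp(k'z), \exp(k''z)$ transports the identity, and since each $R_{k,k'}$ differs from the corresponding $T$ only by a power of $(u-1)$ (a scalar in $\Kk$), the relation holds up to $\Oo^\times$ — indeed up to a power of $z$, which is then absorbed by matching pole orders on both sides. For normality, (N1) asserts $\beta(\bd)=0$, i.e.\ the composite of positive renormalized $R$-matrices taking $\tM(\bep_s(\bd))$ to $\tM(\bep_c(\bd))$ already lands in $\tM(\bep_c(\bd))$ with nonzero image at $z=0$; this is exactly the statement that the corresponding specialized $R$-matrix $\rr_\bd\colon M(\bd)\to M^\vee(\bd)$ is obtained without rescaling, which for affine cuspidal modules (and in particular fundamental modules) is the content of the ordered-product/PBW results of \cite{KKOP} — the ordered product $M(\bd)$ has simple head $L(\bd)$ and the canonical map to $M^\vee(\bd)$ has image $L(\bd)$. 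Then (N2), equivalently (N2)$'$, says $\Image(\rr_\bd)\simeq L(\bd)$, which again is precisely the head/socle compatibility established in \cite{KKOP}: $M^\vee(\bd)$ has simple socle $L(\bd)$, so the nonzero map $\rr_\bd$ from the module $M(\bd)$ with simple head $L(\bd)$ has image $L(\bd)$.

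The main obstacle I anticipate is (N1): unwinding the definition of $\beta(\bd)$ in terms of $\Oo$-lattices and identifying the normalization constant coming from the chain of renormalized $R$-matrices $R_{k,k'}$ with the ``no rescaling'' normalization of the specialized $R$-matrices $\rr$ in \cite{KKOP}. One must check carefully that the deformation parameter choice $\tilde S_k = (S_k)_{\exp(kz)}$ — as opposed to, say, a linear twist — produces exactly the pole orders $\fo(S_k,S_{k'})$ along $z=0$ with multiplicity one in the relevant directions, so that no extra power of $z$ creeps in when one composes the $R_{k,k'}$ along a reduced-word-type path from $\bep_s$ to $\bep_c$; here the additivity property $\alpha(\bep'',\bep) = \alpha(\bep'',\bep')+\alpha(\bep',\bep)$ from Section~\ref{rnomat} and the cuspidality of the $S_k$ (which ensures $\fo(S_k,S_{k'})$ is controlled by the ordering) are the key inputs. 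Everything else reduces to transporting statements from \cite{KKK, KKOP} through the substitution $u\mapsto\exp(kz)$, which is routine once one observes that $\exp((k-k')z)$ is a local coordinate vanishing to order one at $z=0$ precisely when $k\neq k'$.
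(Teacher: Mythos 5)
Your proposal is essentially the same as the paper's proof, which is a one-line citation to the results recalled in Section~\ref{Rmat} together with \cite[Proposition 5.7(iii)]{KKOP} for normality; you have simply unpacked the details. The verification of (D1), the use of Theorem~\ref{ext} for the rational isomorphism in (D2), the invocation of generic simplicity of tensor products over $\Kk$ for the endomorphism-ring condition, and the deduction of (D3) from the Yang–Baxter equation in Theorem~\ref{ext} are all correct and in the spirit of what the authors intend.

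One point worth tightening: you conflate the arguments for (N1) and (N2) a little. The ``canonical map $M(\bd)\to M^\vee(\bd)$ has image $L(\bd)$'' is the content of $(\mathrm{N}2)'$, not of (N1). The statement $\beta(\bd)=0$ is a lattice-normalization claim about the composite renormalized $R$-matrix $R_{\bep_c,\bep_s}$, and it is proved not by a head/socle argument but by tracking the highest $\ell$-weight vector: by the normalization in Theorem~\ref{ext}, each $T_{S_i,S_j}(u)$ sends the tensor product of highest weight vectors to the opposite tensor product with coefficient $1$, and since $\fo(S_i,S_j)=0$ when $i>j$ the renormalized $R$-matrices in the positive direction involve no rescaling, so the composite maps the highest $\ell$-weight vector of $\tM(\bep_s)$ to that of $\tM(\bep_c)$ with a unit coefficient, which is precisely $\beta(\bd)=0$. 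This is what the Remark immediately after Theorem~\ref{normt} records. Your worry that the specific choice $\tilde S_k = (S_k)_{\exp(kz)}$ over, say, a linear twist matters here is unfounded: all that is used is that $u/v-1 = \exp((k-k')z)-1$ vanishes to order exactly one in $z$ whenever $k\neq k'$, which a linear twist would also satisfy. After (N1) is settled, $(\mathrm{N}2)'$ (image of $\rr_\bd$ is $L(\bd)$) does follow from the simple-head/simple-socle statements for standard and costandard modules in \cite{KKOP}, exactly as you say.
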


\begin{proof}
The statement follows from the results recalled above, and \cite[Proposition 5.7(iii)]{KKOP} for the normality. 
\end{proof}

\begin{Rem} Recall $\beta(\bep', \bep)$ defined in Section \ref{rnomat}. For the ordinary  PBW-theory of the quantum loop algebras in Theorem~\ref{fundpbw}, we have $\beta(\bep',\bep) = 0$ if $\bep \lesssim \bep'$ by considering highest weight vectors as in the proof above. 
If $\bep'\lesssim \bep$, we may have $\beta(\bep',\bep) > 0$, but we have $\alpha(\bep',\bep) = \beta(\bep',\bep)$.\end{Rem}

As a consequence of Theorem \ref{normt}, we obtain a generalization of \eqref{eq:KtL} and Corollary \ref{scase} in the situation of this section. 
The class $[L]$ of the simple quotient $L$ 
of $M(\bep_s)$ occurs with multiplicity $1$ in $M(\bep)$: 
\begin{equation}\label{mxtf}[M(\bep)]_t = [L] + \sum_{L' \prec L} P_{L',\bep}(t) [L']\end{equation}
where $P_{L',\bep}(t)\in \N [t^{\pm 1}]$ and $\prec$ is the Nakajima partial ordering on simple classes.

\begin{Ex}\label{exfil} We continue Example \ref{exsl2} and we compute the corresponding monoidal Jantzen filtrations. We consider 
$$\bep = \bep_s = (3,1)$$ 
with 
$$S_3 = L(Y_{1,2})\text{ and }S_1 = L(Y_{1,0}).$$ 
Then $M(\bep_s) = S_3\otimes S_1$ has a unique proper submodule $S$ of dimension $1$ and $M(\bep_c) = S_1\otimes S_3$ has a unique proper submodule $L$ of dimension $3$.

For $M(\bep_s)$, we are in the first situation of Example \ref{Ex:standard}. We have $R_{\bep_c, \bep_s} = R_{3,1}$ and 
\[\begin{split}z^N R_{3,1}^{-1}\tM(\bep_c)\cap \tM(\bep_s)
=\begin{cases}\tM(\bep_s)&\text{ if $N\leq 0$,} \\z^N\tM(\bep_s) + z^{N-1} \Oo S&\text{ if $N \geq 1$,} \end{cases}\end{split}\]
$$F_\bullet M(\bep_s)\colon \quad  \cdots \supset F_{0}M(\bep_s) = M(\bep_s) \supset F_1M(\bep_s) = S \supset F_2M(\bep_s) = 0 \supset \cdots. $$
For $M(\bep_c)$, we are in the second situation of Example \ref{Ex:standard}. We have 
\[\begin{split} z^NR_{3,1}\tM(\bep_s)\cap \tM(\bep_c) = \begin{cases} \tM(\bep_c)&\text{ if $N\leq -1$,} 
\\z^{N+1}\tM(\bep_c) + z^{N} \Oo L&\text{ if $N \geq 0$,}\end{cases}\end{split}\]
$$F_\bullet M(\bep_c)\colon \quad \cdots \supset F_{-1}M(\bep_c) = M(\bep_c) \supset F_0M(\bep_c) = L \supset F_1M(\bep_c) = 0 \supset \cdots. $$
\end{Ex}

\begin{Ex} Let us illustrate Proposition \ref{compfil} with the filtrations computed in Example \ref{exfil} for the morphism
$$\mathbf{r}_{\bep_c,\bep_s}\colon M(\bep_s)\rightarrow M(\bep_c).$$
Then we have: 
$$\mathbf{r}_{\bep_c,\bep_s}(F_0 M(\bep_s))\subset \Image(\mathbf{r}_{\bep_c,\bep_s}) = L = F_0 M(\bep_c),$$
$$\mathbf{r}_{\bep_c,\bep_s}(F_1 M(\bep_s)) = \mathbf{r}_{\bep_c,\bep_s}(S) = 0 =   F_1 M(\bep_c).$$
\end{Ex}

\subsection{Quantum Grothendieck ring} \label{Ssec:qGr}
We recall the construction of the quantum Grothendieck ring.  For a representation $M$ in $\Cc_\Z$ we have its $q$-character defined in \cite{FR}. 
It can be proved \cite{HL10} that as $M$ is in $\Cc_\Z$, we have
$$\chi_q(M)\in \mathcal{Y} = \mathbb{Z}[Y_{i,p}^{\pm 1}]_{(i,p)\in\hI}.$$
It defines the $q$-character morphism on the Grothendieck ring $K(\Cc_\Z)$ of $\Cc_\Z$ 
$$\chi_q \colon K(\Cc_\Z) \rightarrow \mathcal{Y}.$$

Consider the quantum Cartan matrix $C(z)= (C_{i,j}(z))_{i,j\in I}$
defined by $C_{i,j}(z) = [c_{i,j}]_z$ if $i\neq j$ and
$C_{i,i}(z) = [2]_{z^{r_i}}$ for $i\in I$, where $[k]_z \seq (z^k-z^{-k})/(z-z^{-1})$ is the standard quantum integer. 
We will denote $\tilde{C}_{i,j}(z) = \sum_{m\geq 1}\tilde{c}_{i,j}(m) z^m$ the expansion 
of the $(i,j)$-entry of
the inverse $\tilde{C}(z)$ of the quantum Cartan matrix $C(z)$ at $z=0$.
We also extend the definition of $\tilde{c}_{i,j}(m)$ to
every $m\in\Z$ by setting $\tilde{c}_{i,j}(m) = 0$ if $m\le 0$.

For $(i,p), (j,s)\in \hI$, following \cite{H1}, we set 
\begin{equation}\label{defcN}
\mathcal{N}(i,p;j,s) \seq \tilde{c}_{i,j}(p-s-r_i) - \tilde{c}_{i,j}(p-s+r_i)
-\tilde{c}_{i,j}(s-p-r_i) + \tilde{c}_{i,j}(s-p+r_i). 
\end{equation} 
As $\mathcal{N}(i,p;j,s) = - \mathcal{N}(j,s;i,p)$, 
this defines a skew-symmetric bilinear form
\begin{equation}\label{caln}\mathcal{N} \colon\N^{\oplus \hI}\times\N^{\oplus \hI}\rightarrow \mathbb{Z}.\end{equation}

\begin{Def}[\cite{H1}] 
We define the quantum torus $\mathcal{Y}_{t}$ as the $\Z[t^{\pm1/2}]$-algebra presented by 
the set of generators $\{\tY_{i, p}^{\pm 1} \mid (i,p) \in \hI \ \}$ and the following relations:
\begin{enumerate}
\item 
$\tY_{i,p}\tY_{i,p}^{-1}= \tY_{i,p}^{-1} \tY_{i,p}=1$ for each $(i,p) \in \hI$,
\item 
$\tY_{i, p}\tY_{j, s} = t^{-\mathcal{N}(i,p;j,s)}\tY_{j,s}\tY_{i,p}$ for each $(i,p), (j,s) \in \hI$.
\end{enumerate}
\end{Def}

\begin{Rem}\label{remqt} See \cite[Remark 3.1]{HL15} for comments on the relations with the quantum torus in \cite{VV03} and \cite{Nak04} for simply-laced quantum loop algebras. \end{Rem}

\begin{Ex} Let $\mathfrak{g} = \mathfrak{sl}_2$. Then $\tY_{1,2}\tY_{1,0} = t^{-2}\tY_{1,0}\tY_{1,2}$. 
\end{Ex}

The evaluation at $t = 1$ is the $\Z$-algebra homomorphism $\ev_{t=1}   \colon \mathcal{Y}_{t} \to \mathcal{Y}$
given by
$$
t^{1/2} \mapsto 1, \qquad \tY_{i,p} \mapsto Y_{i,p}.
$$

An element $\tm \in \mathcal{Y}_{t}$
is called a {\em monomial} if it is a product of the generators
$\tY_{i,p}$ for $(i,p) \in \hI$ and $t^{\pm 1/2}$. 
For a monomial $\tm \in \mathcal{Y}_{t}$, we set $u_{i,p}(\tilde{m})$ 
to be  the  power of $Y_{i,p}$ in $ \ev_{t=1} (\tm)$.
A monomial $\tm$ in $\mathcal{Y}_{t}$ is said to be {\em dominant} if $ \ev_{t=1}  (\tm)\in \cM^+ $. 
Moreover, for monomials $\tm, \tm^{\prime}$ in $\mathcal{Y}_{t}$, set
$$
\text{$\tm \preceq \tm^{\prime}$ if and only if $\ev_{t=1} (\tm) \preceq \ev_{t=1}  (\tm^{\prime})$},
$$
with the ordering on $\cM$ defined above.
Following~\cite[Section~6.3]{H1}, we define the $\Z$-algebra anti-involution $\ol{(\cdot)}$ on $\mathcal{Y}_{t}$ by
$$
t^{1/2} \mapsto t^{-1/2}, \qquad \tY_{i,p} \mapsto t\tY_{i,p}.
$$
For any monomial $\tm$ in $\mathcal{Y}_{t}$,
there uniquely exists $a \in \Z$ such that $\ul{\tm} = t^{a/2}\tm$ is $\ol{(\cdot)}$-invariant. As $\ul{\tm}$ depends only on $\ev_{t=1} (\tm)$, for every monomial $m\in\cM$, the element $\ul{m}$
is well-defined in $\mathcal{Y}_{t}$.
These elements form the free $\Z[t^{\pm 1/2}]$-basis 
of $\mathcal{Y}_{t}$ called the basis of {\em commutative monomials}.
For example, for $(i,p) \in \hI$, we set
$$
\tA_{i,p + r_{i}} \seq \ul{A_{i, p+r_{i}}}.
$$

For each $i \in I$, denote by $K_{i,t}$ the $\Z[t^{\pm 1/2}]$-subalgebra
of $\mathcal{Y}_{t}$ generated by 
$$
\{ \tY_{i,p} (1+t^{-1}\tA_{i, p+ r_{i}}^{-1}) \mid (i,p)\in \hI \} 
\cup \{ \tY_{j,s}^{\pm 1} \mid (j,s) \in \hI, j \neq i \}.
$$
Following \cite{Nak04, VV03, H1}, {\em the quantum Grothendieck ring of} $\Cc_{\Z}$ is defined as
$$
%K_{t} \equiv 
K_{t}(\Cc_{\Z}) \seq \bigcap_{i \in I} K_{i,t}. 
$$
By construction, the quantum Grothendieck is stable by the  $\ol{(\cdot)}$-involution.

\begin{Thm}[{\cite[Theorem 5.11]{H1}}] \label{Thm:Ft}
For every dominant monomial $\tm$ in $\mathcal{Y}_{t}$, 
there uniquely exists an element $F_{t}(\tm)$ of $K_{t}(\Cc_\Z)$ such that $\tm$ is
the unique dominant monomial occurring in $F_{t}(\tm)$. The monomials $\tm^{\prime}$ occurring in $F_{t}(\tm) - \tm$
satisfy $\tm^{\prime}  \prec  \tm$.
In particular, the set $\{ F_{t}(\ul{m}) \mid m \in  \cM^+ \}$
forms a $\Z[t^{\pm 1/2}]$-basis of $K_{t}(\Cc_\Z)$.  
\end{Thm}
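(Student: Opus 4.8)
The plan is to mimic the $t$-analogue of the Frenkel--Mukhin algorithm: one builds $F_t(\tm)$ by a forced correction procedure starting from $\tm$, and extracts uniqueness from a structural description of the subalgebras $K_{i,t}$.

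First I would carry out the rank-one analysis of each $K_{i,t}$. Fixing $i \in I$ and setting $E_{i,p} \seq \tY_{i,p}(1 + t^{-1}\tA_{i,p+r_i}^{-1})$, one establishes that, since $\tA_{i,p+r_i}^{-1} \preceq 1$ and involves only the variables $\tY_{i,\cdot}$ and $\tY_{j,\cdot}$ with $c_{ij} < 0$, the algebra $K_{i,t}$ is free as a module over $\Z[t^{\pm 1/2}][\tY_{j,s}^{\pm 1} \mid (j,s) \in \hI,\ j \neq i]$ on the ordered monomials in the $E_{i,p}$, that every monomial occurring in $\prod_k E_{i,p_k}$ is $\preceq \prod_k \tY_{i,p_k}$, and that distinct $E$-monomials produce distinct $\preceq$-maximal terms. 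The consequence I want is the key lemma: \emph{a $\preceq$-maximal monomial of a nonzero element of $K_{i,t}$ is $i$-dominant}; hence a $\preceq$-maximal monomial of a nonzero element of $K_t(\Cc_\Z) = \bigcap_{i \in I} K_{i,t}$ is dominant.

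Uniqueness is then immediate: if $F$ and $F'$ both lie in $K_t(\Cc_\Z)$ and have $\tm$ as their only dominant monomial, then $F - F' \in K_t(\Cc_\Z)$ has no dominant monomial at all, hence no $\preceq$-maximal monomial by the key lemma, hence is zero. For existence I would argue by induction along $\preceq$. For a fundamental monomial $\tY_{i,p}$ one runs the $t$-analogue of the Frenkel--Mukhin algorithm, producing $F_t(\tY_{i,p})$ term by term from the top: at each stage the failure of $i$-dominance of the current partial sum inside some $K_{i,t}$ forces a unique correction by a combination of the $\ul{m'} E_{i,p_k}$, and one checks that the recursion terminates with an element of $\bigcap_i K_{i,t}$ whose only dominant monomial is $\tY_{i,p}$. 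For a general dominant $\tm$, write $\tm$ (up to a power of $t^{1/2}$) as a product of fundamental monomials $\tY_{i_k,p_k}$ with $p_1 \geq p_2 \geq \cdots$, form $E_t(\tm) \seq \prod_k F_t(\tY_{i_k,p_k})$, which lies in the \emph{subalgebra} $\bigcap_i K_{i,t}$, note that its $\preceq$-maximal monomial is $\tm$ while every other monomial of $E_t(\tm)$ is $\prec \tm$, and subtract $\sum_{\tm' \prec \tm} c_{\tm'}(t) F_t(\ul{\tm'})$ with $c_{\tm'}(t) \in \Z[t^{\pm 1/2}]$ to cancel the finitely many dominant monomials $\tm' \prec \tm$ occurring there; induction on this finite set yields $F_t(\tm) \in K_t(\Cc_\Z)$. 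Finally the family $\{F_t(\ul m)\}_{m \in \cM^+}$ is unitriangular with respect to the commutative-monomial basis of $\mathcal{Y}_t$ --- its only $\cM^+$-indexed term is $\ul m$ --- hence $\Z[t^{\pm 1/2}]$-linearly independent; and it spans, since any $\chi \in K_t(\Cc_\Z)$ has finite support and, by the key lemma, a dominant $\preceq$-maximal monomial $t^{a/2}\ul m$, so $\chi$ minus a $\Z[t^{\pm 1/2}]$-multiple of $t^{a/2}F_t(\ul m)$ again lies in $K_t(\Cc_\Z)$ with strictly smaller support, and a finite induction closes the argument.

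The main obstacle is the combination of the rank-one structural lemma with the termination of the recursion. For the former one must verify that the ``closure'' conditions defining $K_{i,t}$ genuinely identify it with the free module on the $E_{i,p}$-monomials, so that ``maximal monomial is $i$-dominant'' holds. For the latter one must control exactly which monomials can enter the algorithm: in the classical case termination is guaranteed by finite-dimensionality of the fundamental module, and the $t$-deformed construction needs a purely combinatorial substitute ensuring the support stays finite. Once these are secured, the remaining bookkeeping --- the bar-involution normalization producing $\ul m$ and the Nakajima-triangularity of $E_t(\tm)$ --- is routine.
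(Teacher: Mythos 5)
This theorem is imported from \cite[Theorem 5.11]{H1} and the present paper does not reprove it, so the relevant comparison is with Hernandez's original argument, whose broad architecture your proposal does reproduce: a rank-one structural description of $K_{i,t}$, the ``maximal monomial is $i$-dominant'' lemma, uniqueness from that lemma, existence for fundamental monomials via a $t$-deformed Frenkel--Mukhin algorithm, existence in general via the product $E_t(\tm)$ plus a triangular correction over the finite set $\{\tm' \in \cM^+ : \tm' \preceq \tm\}$, and triangularity yielding the basis property.

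That said, the proposal leaves the two substantive steps of \cite{H1} as acknowledged black boxes: the identification of $K_{i,t}$ with the free module over the sub-quantum-torus $\Z[t^{\pm1/2}][\tY_{j,s}^{\pm1} : j \neq i]$ on ordered $E_{i,p}$-monomials, and the termination of the deformed Frenkel--Mukhin recursion for $F_t(\tY_{i,p})$. These are not ``routine bookkeeping''; they are the content of Hernandez's proof. In addition, one step you present as an immediate ``consequence'' has a real gap: freeness plus $\tA_{i,p+r_i}^{-1} \preceq 1$ shows that each product $E_\alpha$ has its $\preceq$-maximal monomial equal to the $i$-dominant top term $\prod_{p\in\alpha}\tY_{i,p}$, but a $\preceq$-maximal monomial of a \emph{combination} $\chi = \sum_\alpha c_\alpha E_\alpha$ could a priori fail to be $i$-dominant if its top term is cancelled against a lower term of another $E_\beta$. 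To close this you need the finite ascending-chain argument: cancellation of a top term $\mu\prod_{p\in\alpha}\tY_{i,p}$ forces a contribution from some $(\beta,\nu,T)$ with $T\neq\varnothing$, whose associated top term $\nu\prod_{p\in\beta}\tY_{i,p}$ is strictly $\succ$ the cancelled one; since $\chi$ has only finitely many terms, the chain stabilizes at an uncancelled, $i$-dominant top term dominating the original monomial, contradicting maximality. Finally, in the spanning argument the invariant that decreases is not the support of $\chi$ (which can grow upon subtracting $cF_t(\ul m)$) but the finite set of dominant monomials of $\chi$, since $F_t(\ul m)$ contributes only the single dominant monomial $\ul m$.
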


Note that the $F_{t}(\ul{m})$ are $\ol{(\cdot)}$-invariant.

For a dominant monomial $\tm$ in $\mathcal{Y}_{t}$ and $u_{i,p}(\tm)$ the power of $Y_{i,p}$ in $ \ev_{t=1}  (\tm)$, set
$$
E_{t}(\tm) \seq \tm \left( \prod_{p \in \Z}^{\leftarrow}
\left( \prod_{i \in I: (i,p) \in \hI} \tY_{i,p}^{u_{i,p}(\tm)} \right)
\right)^{-1} 
\prod_{p \in \Z}^{\leftarrow}
\left( \prod_{i \in I: (i,p) \in \hI} F_{t}(\tY_{i,p})^{u_{i,p}(\tm)} \right).
$$
Note that by \cite{H1}, the products are well-defined.

The element $E_{t}(\ul{m})$ is called \emph{the $(q,t)$-character of the ordinary standard module} $M(m)$ associated to $m$ as above. 
By \cite{FM01, Her05}, the image by $ \ev_{t=1}  $ is $\chi_q(M(m))$.

We consider another kind of elements $L_{t}(\ul{m})$ in $K_{t}(\Cc_\Z)$ which is conjecturally a $t$-quantum version of the 
$q$-character of simple modules.

\begin{Thm}[{\cite[Theorem 8.1]{Nak04}}, {\cite[Theorem 6.9]{H1}}]
\label{Thm:qtch}
For a dominant monomial $m \in  \cM^+ $, there exists a unique element $L_{t}(\ul{m})$ in  $K_{t}(\Cc_\Z)$ 
such that 
\begin{itemize}
\item[(S1)] $\ol{L_{t}(\ul{m})} = L_{t}(\ul{m})$, and
\item[(S2)] $L_{t}(\ul{m}) = E_{t}(\ul{m}) + \sum_{m^{\prime} \in \cM^+} Q_{m, m^{\prime}}(t) E_{t}(\ul{m^{\prime}})$
with $Q_{m,m^{\prime}}(t) \in t\Z[t]$.
\end{itemize}
Moreover, we have $Q_{m, m^{\prime}}(t) = 0$ unless $m^{\prime}  \prec  m$. 
In particular, the set $\{ L_{t}( \ul{m} ) \mid m \in \cM^+\}$ forms a $\Z[t^{\pm 1/2}]$-basis of  $K_{t}(\Cc_\Z)$.
\end{Thm}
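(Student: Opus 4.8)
The plan is to deduce existence, uniqueness, and the triangularity of the $L_{t}(\ul m)$ from the standard Kazhdan--Lusztig/Lusztig-lemma machinery, once we have recorded two structural facts: that $\{E_{t}(\ul m)\}_{m \in \cM^+}$ is a $\Z[t^{\pm 1/2}]$-basis of $K_{t}(\Cc_\Z)$ adapted to the Nakajima order, and that the bar involution $\ol{(\cdot)}$ is unitriangular with respect to it.

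First I would record the combinatorics of the two distinguished families. By Theorem~\ref{Thm:Ft}, $\{F_{t}(\ul m)\}$ is a basis with $F_{t}(\ul m) \in \ul m + \sum_{m' \prec m}\Z[t^{\pm 1/2}]\,\ul{m'}$; from the defining formula, $E_{t}(\ul m)$ is $\ul m$ times the normalizing monomial times a fixed-order product of the $F_{t}(\tY_{i,p})$'s, so expanding in the commutative-monomial basis and using that each factor $F_{t}(\tY_{i,p})$ contributes $\tY_{i,p}$ plus strictly lower terms (and that $\ul m$ is exactly the dominant monomial of the product) shows $E_{t}(\ul m) \in \ul m + \sum_{m' \prec m}\Z[t^{\pm 1/2}]\,\ul{m'}$ as well. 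Hence $\{E_{t}(\ul m)\}_{m \in \cM^+}$ is a $\Z[t^{\pm 1/2}]$-basis, unitriangular over both $\{F_{t}(\ul m)\}$ and the commutative monomials. Here I would also use that for each fixed $m$ the set $\{m' \in \cM : m' \preceq m\}$ is finite (a standard property of the Nakajima order restricted to monomials occurring in $q$-characters of objects of $\Cc_\Z$), so all the sums above are finite and downward induction along $\preceq$ is available.

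Next I would show the bar involution is unitriangular on this basis: $\ol{E_{t}(\ul m)} = E_{t}(\ul m) + \sum_{m' \prec m} r_{m,m'}(t)\,E_{t}(\ul{m'})$ with $r_{m,m'}(t) \in \Z[t^{\pm 1/2}]$. Since $\ul m$, each $F_{t}(\tY_{i,p})$, and each $F_{t}(\ul m)$ are $\ol{(\cdot)}$-invariant, applying $\ol{(\cdot)}$ to the defining product of $E_{t}(\ul m)$ only reverses the order of the factors and flips the normalizing power of $t$; reordering the $F_{t}(\tY_{i,p})$'s back introduces a central power of $t$ together with terms supported on monomials strictly below $\ul m$, so $\ol{E_{t}(\ul m)} - E_{t}(\ul m)$ lies in $\sum_{m' \prec m}\Z[t^{\pm 1/2}]\,\ul{m'}$, and re-expanding in the (unitriangular) $E_{t}$-basis gives the stated shape. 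This is the step I expect to be the main obstacle: one must control the commutation in the quantum torus $\mathcal{Y}_{t}$ precisely, namely that permuting the factors $F_{t}(\tY_{i,p})$ costs only a central monomial in $t$ modulo strictly-lower terms, which is where the specific form of $\mathcal{N}(i,p;j,s)$ and of the Nakajima order must be used; the cleanest route is to reduce the required identity to the rank-one (or rank-two) subalgebras $K_{i,t}$, where the relevant products are explicit.

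Finally, with the $E_{t}$-basis, the finite lower sets, and the unitriangular bar involution in hand, I would invoke Lusztig's lemma. For uniqueness: if $L$ and $L'$ both satisfy (S1)--(S2) then $D \seq L - L' = \sum_{m' \prec m} c_{m'}(t)\,E_{t}(\ul{m'})$ with every $c_{m'} \in t\Z[t]$ and $\ol D = D$; taking $m'$ maximal in the support and using the unitriangularity of $\ol{(\cdot)}$ forces $\ol{c_{m'}(t)} = c_{m'}(t)$, while $c_{m'} \in t\Z[t]$ and $c_{m'}(t^{-1}) \in t^{-1}\Z[t^{-1}]$ force $c_{m'} = 0$; hence $D = 0$. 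For existence: construct $L_{t}(\ul m)$ by induction on the finite lower set of $m$, at each stage subtracting a suitable $\Z$-linear combination $\sum_{m' \prec m} a_{m'}\,L_{t}(\ul{m'})$ of the already-built (bar-invariant) elements to cancel the non-bar-invariant part of $E_{t}(\ul m)$ --- the standard fact that $f - \ol f$ with $f \in \Z[t^{\pm 1/2}]$ can be written as $g - \ol g$ with $g \in t\Z[t]$, applied coefficientwise and processed from the top of the poset downward, produces coefficients $Q_{m,m'}(t) \in t\Z[t]$ vanishing unless $m' \prec m$. The resulting $\{L_{t}(\ul m)\}_{m \in \cM^+}$ is then a $\Z[t^{\pm 1/2}]$-basis of $K_{t}(\Cc_\Z)$ by the unitriangularity of (S2).
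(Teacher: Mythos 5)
Your overall strategy --- unitriangularity of the $E_t(\ul m)$ over commutative monomials, unitriangularity of the bar involution in the $E_t$-basis, and then Lusztig's lemma --- is the right one and matches the algebraic proof in the cited reference \cite{H1} (the other cited reference \cite{Nak04} gives a geometric argument via quiver varieties). However, there is a gap in the concluding existence step. The claim that ``$f - \ol f$ with $f \in \Z[t^{\pm 1/2}]$ can be written as $g - \ol g$ with $g \in t\Z[t]$'' is false: for $f = t^{1/2}$ one has $f - \ol f = t^{1/2} - t^{-1/2}$, whereas $g - \ol g \in \Z[t^{\pm 1}]$ for every $g \in t\Z[t]$. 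Splitting off positive half-powers only produces $g \in t^{1/2}\Z[t^{1/2}]$, which would yield the weaker conclusion $Q_{m,m'}(t) \in t^{1/2}\Z[t^{1/2}]$, not $Q_{m,m'}(t) \in t\Z[t]$ as asserted. To close the gap you must first establish that the coefficients $r_{m,m'}(t)$ in the expansion $\ol{E_t(\ul m)} = E_t(\ul m) + \sum_{m' \prec m} r_{m,m'}(t)\, E_t(\ul{m'})$ actually lie in $\Z[t^{\pm 1}]$, which in turn requires verifying that each $F_t(\ul m)$, written in the commutative-monomial basis, has coefficients in $\Z[t^{\pm 1}]$ rather than $\Z[t^{\pm 1/2}]$. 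This integrality is a genuine input (it is established in \cite{H1}) and cannot be skipped; without it your inductive construction does not land in $t\Z[t]$.

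A secondary remark: the bar-unitriangularity step is easier than you suggest, and does not require controlling the commutation of the $F_t(\tY_{i,p})$-factors. Since $E_t(\ul m) = \ul m + \sum_{m' \prec m}(\cdots)\,\ul{m'}$, since each commutative monomial $\ul{m'}$ is $\ol{(\cdot)}$-invariant by definition, and since $K_t(\Cc_\Z)$ is stable under $\ol{(\cdot)}$, one sees immediately that $\ol{E_t(\ul m)} - E_t(\ul m)$ is supported on commutative monomials strictly below $\ul m$, and re-expanding in the unitriangular $E_t$-basis gives $\ol{E_t(\ul m)} - E_t(\ul m) \in \sum_{m' \prec m}\Z[t^{\pm 1/2}]\,E_t(\ul{m'})$ with no reordering argument. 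The reordering considerations you flag as the main obstacle are therefore not where the subtlety lies; the real work is the integrality point above, which your write-up does not address.
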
 

The element $L_{t}(\ul{m})$ is called \emph{the $(q,t)$-character of the simple module $L(m)$}.

In what follows, for a dominant monomial $m \in \cM^+$, we will write for simplicity
$$
F_{t}(m) \seq F_{t}(\ul{m}), \qquad 
E_{t}(m) \seq E_{t}(\ul{m}), \qquad
L_{t}(m) \seq L_{t}(\ul{m}) .
$$

\begin{Conj}[{\cite[Conjecture 7.3]{H1}}] \label{Conj:KL}
For all $m \in \cM^+$, we have
$$
 \ev_{t=1}  (L_{t}(m)) = \chi_{q}(L(m)).
$$
\end{Conj}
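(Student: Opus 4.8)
The plan is to deduce Conjecture~\ref{Conj:KL}, at least for simply-laced $\fg$, from the assertion that the decategorified monoidal Jantzen filtrations recover the quantum Grothendieck ring, i.e.\ from Conjecture~\ref{conjF} (Theorem~\ref{Intro:simply-laced}). The decategorification step is then formal. By Theorem~\ref{normt} the deformation $\{\tilde S_k\}_{k\in J}$ is normal, so the Proposition at the end of Section~\ref{secun} applies: the basis $\{[L(\bd)]\}_{\bd}$ of $K(\Cc_\Z)_t$ is the unique one that is bar-invariant and unitriangular over the standard basis $\{[M(\bd)]_t\}$ with off-diagonal entries in $t\N[t]$. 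On the other side, properties (S1), (S2) of Theorem~\ref{Thm:qtch} together with Theorem~\ref{Thm:Ft} show that $\{L_t(m)\}$ is characterized inside $K_t(\Cc_\Z)$ by exactly the same two conditions relative to the standard basis $\{M_t(m)\}$ (rewriting (S2) in terms of $M_t$ rather than $E_t$ only involves the unitriangular base change between the two bases). Hence any $\Z[t^{\pm1/2}]$-algebra isomorphism $(K(\Cc_\Z)_t,*)\simeq K_t(\Cc_\Z)$ that matches the standard bases and intertwines the bar-involutions must send $[L(\bd)]$ to $L_t(m_\bd)$. Applying $\ev_{t=1}$ to both sides, using that $\ev_{t=1}$ collapses $[L(\bd)]$ to $[L(m)]\in K(\Cc_\Z)$ tautologically and that the isomorphism is compatible with $\ev_{t=1}$ down to $\chi_q\colon K(\Cc_\Z)\to\mathcal Y$ on one side and $\ev_{t=1}\colon K_t(\Cc_\Z)\to\mathcal Y$ on the other, yields $\chi_q(L(m))=\ev_{t=1}(L_t(m))$.

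So the substance is the construction of that isomorphism, and for this I would follow the geometric route indicated in the introduction. First, in the simply-laced case, realize $\Cc_\Z$ through Nakajima's perverse sheaves on graded quiver varieties $\qv$, so that the mixed product $M(\bep)$ corresponds to a (shifted) pushforward complex and $K_t(\Cc_\Z)$ is described à la Varagnolo--Vasserot \cite{VV03}, with structure constants expressed as Poincar\'e polynomials of stalks and costalks of IC complexes. Next --- the crux --- identify the monoidal Jantzen filtration $F_\bullet M(\bep)$ of \eqref{expfm}, built from the $\Oo$-lattices $z^kR_{\bep,\bep_s}\tM(\bep_s)\cap z^{n-k}R_{\bep_c,\bep}^{-1}\tM(\bep_c)$ followed by $\ev_{z=0}$, with the degree (weight) filtration of a hyperbolic localization in the sense of Braden \cite{Braden} of the relevant perverse sheaf on $\qv$. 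The idea is to match the two renormalized $R$-matrices flanking $\tM(\bep)$ in \eqref{vwv} with the attracting- and repelling-cell adjunctions of a suitable $\Gm$-action --- using that $\tilde S_k=(S_k)_{\exp(k z)}$ is a one-parameter deformation over $\operatorname{Spec}\Oo$ that geometrizes as such an action --- and then to recognize the double-intersection-of-lattices recipe \eqref{expfm} as Braden's comparison between $i^!p_\ast$ and $j^\ast p_!$ on the hyperbolic localization. The analytic inputs are the hard Lefschetz theorem for semisimple perverse sheaves and the ``Fundamental Example'' of Bernstein--Lunts \cite{BL94}, which force the weight filtration to be opposite to the naive one and make $[M(\bep)]_t=\sum_n[\Gr^F_n M(\bep)]t^n$ reproduce the geometric Poincar\'e-polynomial expansion; this is precisely what delivers the associativity of $*$ and the matching of structure constants.

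The main obstacle I anticipate is exactly this identification of the algebraically defined filtration \eqref{expfm} with the geometric one. Two things must be carried out: (i) produce the renormalized $R$-matrices $R_{i,j}$ geometrically, i.e.\ realize them as maps of complexes along the deformation parameter line --- plausibly via a nearby-cycles interpretation of the family $\tilde S_k$ --- so that the orders $\alpha(i,j)$ and $\beta(\bep',\bep)$ of $z$ acquire cohomological meaning; and (ii) check that $\ev_{z=0}$ applied to $\tM(\bep)\cap\sum_k\big(z^kR_{\bep,\bep_s}\tM(\bep_s)\cap z^{n-k}R_{\bep_c,\bep}^{-1}\tM(\bep_c)\big)$ computes the same graded pieces as hyperbolic localization. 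Step (ii) is morally the content of Grojnowski's unpublished note \cite{Groj} transplanted to the monoidal framework, and making it rigorous --- in particular controlling the three filtrations of \eqref{vwv} simultaneously, rather than the single one of the classical Jantzen theory --- is the delicate point. Once (i) and (ii) are in place, the associativity of $*$, the isomorphism $(K(\Cc_\Z)_t,*)\simeq K_t(\Cc_\Z)$, and hence Conjecture~\ref{Conj:KL} in the simply-laced case all follow; the general case would require a substitute for the quiver-variety geometry, which is why the conjecture is currently only fully available in simply-laced type.
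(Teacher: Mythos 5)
You are proving a statement that this paper does not prove: Conjecture \ref{Conj:KL} is an open conjecture of \cite{H1} for general $\fg$, and the paper offers no proof of it. What the paper does is (a) recall that the simply-laced case is Nakajima's theorem \cite[Theorem 8.1]{Nak04}, proved earlier and independently via quiver varieties, and that type $B$ and the reachable cases are settled in \cite{FHOO, FHOO2}; and (b) observe, in the Remark following Conjecture \ref{qgrmix}, that Conjecture \ref{qgrmix} implies Conjecture \ref{Conj:KL}, so that Theorem \ref{simply-laced} re-proves the simply-laced case as a byproduct. Your proposal only ever addresses simply-laced $\fg$, so measured against the statement as given (all $m \in \cM^+$, arbitrary $\fg$) there is a genuine gap: nothing in your argument touches the non-simply-laced case, where the conjecture remains open, and you offer no substitute for the graded quiver variety geometry (as you acknowledge yourself).

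For the simply-laced case, your route is essentially the paper's own. The decategorification step (normality from Theorem \ref{normt}, the Kazhdan--Lusztig-type characterizations of the two canonical bases, matching of standard bases, and compatibility of $\phi$ with $\ev_{t=1}$ and $\chi_q$) is exactly the paper's remark that Conjecture \ref{qgrmix} implies Conjecture \ref{Conj:KL}, and your sketch of Theorem \ref{simply-laced} follows Section \ref{ssec:Prqaa}: realize the mixed products and their deformations on graded quiver varieties, identify the monoidal Jantzen filtration \eqref{expfm} with the degree filtration of a hyperbolic localization in Braden's sense, using hard Lefschetz and the Bernstein--Lunts ``Fundamental Example'', and compare with the Varagnolo--Vasserot structure constants of \cite{VV03}. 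One implementation detail in your sketch diverges from the paper: no nearby cycles are used; instead the deformation parameter $z$ is the $\Gm$-equivariant parameter for the action through $\rho^\vee$, the deformed modules are completed equivariant cohomologies (Proposition \ref{Nsheaf}), and the renormalized $R$-matrices $R_{\bep',\bep}$ are realized as the canonical morphisms $i^! \to i^*$ attached to the nested attracting subspaces $E(\bep) \subset E(\bep')$ (Proposition \ref{tau}); the lattice recipe \eqref{expfm} is then converted into the degree filtration by Lemmas \ref{Lem:hL} and \ref{Lem:Lef}. So the two points you flag as the crux are indeed the crux, and they are carried out in the paper, but only in simply-laced type; your proposal therefore establishes (a known instance of) a special case rather than the conjecture itself.
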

A fundamental theorem of Nakajima~\cite[Theorem 8.1]{Nak04} states that this holds true when $\fg$ is of simply-laced type. The proof used the geometry of quiver varieties. This was the main motivation for this conjecture. This conjecture is now proved for type $B$ in \cite{FHOO} and  for all simple modules that are reachable (in the sense of cluster algebras) for general types in \cite{FHOO2}.

Thanks to the unitriangular property (S2), we can write
\begin{equation} \label{eq:KL}
E_{t}(m) = L_{t}(m) + \sum_{m^{\prime} \in \cM^+ \colon m'  \prec  m } P_{m, m^{\prime}}(t) L_{t}(m^{\prime})
\end{equation}
with some $P_{m,m'}(t) \in t\Z[t]$ for each $m \in \cM^+$.
The polynomials $P_{m,m'}(t)$ are analogs of Kazhdan-Lusztig polynomials for finite-dimensional representations of quantum loop algebras. The following was proved by Nakajima \cite{Nak04} for simply-laced types, and by the authors of \cite{FHOO} for general types.

\begin{Thm}[\cite{Nak04, FHOO}] \label{pospol}
The polynomials $P_{m,m'}(t)$ are positive.\end{Thm}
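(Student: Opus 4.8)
The plan is to reduce the positivity of the polynomials $P_{m,m'}(t)$ to geometry, handling the simply-laced and the general case separately.

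For simply-laced $\fg$ I would follow Nakajima \cite{Nak04} and Varagnolo-Vasserot \cite{VV03} and realize $K_t(\Cc_\Z)$ through equivariant perverse sheaves on Nakajima's graded quiver varieties. To each dominant monomial $m$ one attaches the smooth graded quiver variety $\qv$ with its proper morphism $\pi$ onto the affine graded quiver variety $\mathfrak{M}^\bullet_0$, identifies $E_t(m)$ with the class of a shift of $\pi_!\,\underline{\C}_{\qv}$ and $L_t(m)$ with the class of the intersection cohomology complex $\IC_{m'}$ of the closure of the stratum labelled by $m'$, the variable $t$ recording the cohomological grading (the overall normalization being fixed by the characterizing properties (S1) and (S2) of $L_t(m)$). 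The decomposition theorem then yields
\[ \pi_!\,\underline{\C}_{\qv}[\dim \qv] \simeq \bigoplus_{m'} \IC_{m'}\otimes W_{m,m'}\]
for finite-dimensional graded vector spaces $W_{m,m'}$; matching this against \eqref{eq:KL} identifies $P_{m,m'}(t)$ with the Poincar\'e polynomial $\sum_{n}(\dim W_{m,m'}^{n})\,t^{n}$, which lies in $\N[t]$. Equivalently, by purity these coefficients are graded dimensions of $\IC$-stalks, or --- in the form used in Section \ref{ssec:Prqaa} --- Poincar\'e polynomials of Braden hyperbolic localizations, positive by the hard Lefschetz theorem; either way positivity is manifest.

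For general $\fg$, where Nakajima-type quiver varieties are unavailable, I would transport positivity from the simply-laced case along the lines of \cite{FHOO}. The key step is to construct a $\Z[t^{\pm1/2}]$-algebra isomorphism from $K_t(\Cc_\Z)$ onto a suitable (sub)algebra of a quantum Grothendieck ring of simply-laced type obtained by unfolding, compatible with the bar involution $\ol{(\cdot)}$ and matching the standard basis $\{E_t(m)\}$ and the canonical basis $\{L_t(m)\}$ with their simply-laced counterparts; this is done using the quantum cluster algebra structures on the two quantum Grothendieck rings together with an identification of an initial seed. Granting such an isomorphism, each $P_{m,m'}(t)$ equals a Kazhdan-Lusztig polynomial of simply-laced type, hence lies in $\N[t]$ by the previous paragraph.

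I expect the main obstacle to be the non-simply-laced case: the simply-laced case is essentially the decomposition theorem plus purity, whereas proving that the quantum Grothendieck ring isomorphisms of \cite{FHOO} are compatible with the bar involution, the Nakajima ordering $\preceq$, and both the standard and canonical bases --- which amounts to tracking the commutative-monomial basis, the elements $F_t(m)$ and $E_t(m)$, and the unitriangular passage to $L_t(m)$ through all the cluster mutations --- is the genuine technical heart and requires the full apparatus of \cite{FHOO}.
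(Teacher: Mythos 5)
The paper itself does not prove this theorem --- it is quoted from \cite{Nak04} (simply-laced) and \cite{FHOO} (general types) --- so you are reconstructing the arguments of those references, and your two-step strategy (geometry of graded quiver varieties for simply-laced $\fg$, then propagation of positivity along the quantum Grothendieck ring isomorphisms of \cite{FHOO} for general $\fg$) does match the cited sources at the level of strategy.

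There is, however, a genuine error in your simply-laced step. You identify $P_{m,m'}(t)$ with the Poincar\'e polynomial of the multiplicity space $W_{m,m'}$ in the decomposition $\pi_!\underline{\C}_{\qv}[\dim\qv] \simeq \bigoplus_{m'}\IC_{m'}\otimes W_{m,m'}$ and then assert that this is ``equivalently'' the graded dimension of an IC-stalk. These are different objects: in Nakajima's picture the multiplicity space $W_{m,m'}$ (the space $L^\bullet(\bv,\bd)$ of \eqref{DT}) is identified, after forgetting the grading, with the simple $U_q(L\fg)$-module $L(m')$ \emph{itself} via the geometric convolution action (Theorem~\ref{NakS14}), and it is \emph{not} the KL coefficient. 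The polynomial $P_{m,m'}(t)$ is instead the Poincar\'e polynomial of the costalk $i_0^!\IC(\bv,\bd)$ of the IC sheaf at the origin, as recorded in Theorem~\ref{Thm:VVqgr} (specialize equation \eqref{VV} to the standard sequence $\bep_s$); the identity $\rH^\bullet(\pi^{-1}(0)) \simeq \bigoplus_\bv \rH^\bullet(i_0^!\IC(\bv,\bd))\otimes L^\bullet(\bv,\bd)$ shows how the two graded objects complement rather than duplicate each other. Once this is corrected, positivity is indeed immediate --- each coefficient is the dimension of a cohomology group --- but it is not obtained from the hard Lefschetz theorem you invoke; in the present paper hard Lefschetz governs the Jantzen filtration via the Fundamental Example of Bernstein--Lunts (Section~\ref{ssec:Groj1}), not the positivity of IC costalks. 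Your sketch of the transfer to general $\fg$ via the isomorphisms of \cite{FHOO} is, by contrast, a faithful summary of what is done there.
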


\begin{Ex}\label{fet} Let $\mathfrak{g} = \mathfrak{sl}_2$ and $m = Y_{1,0}Y_{1,2}$. Then $E_t(m)$ equals
$$ t (\underline{Y_{1,2}} + \underline{Y_{1,4}^{-1}})(\underline{Y_{1,0}} + \underline{Y_{1,2}^{-1}})
= (\underline{Y_{1,0}Y_{1,2}} + \underline{Y_{1,0}Y_{1,4}^{-1}} + \underline{Y_{1,2}^{-1}Y_{1,4}^{-1}}) + t = L_t(m) + t L_t(1).$$
The specialization at $t = 1$ corresponds to $[L(Y_{1,2})\otimes L(Y_{1,0})] = [L(m)] + 1$ in $K(\Cc_\Z)$.
\end{Ex}

\subsection{Quantum Grothendieck ring conjecture}

Recall $\mathcal{N}$ defined in the previous  section 
By considering the powers of the variable of dominant monomials $m$, $m'$ 
in $\cM^+$, \eqref{caln} also defines $\mathcal{N}(m,m')\in\mathbb{Z}$.

\begin{Rem} There is an interpretation of $\mathcal{N}$. Let $M$ and $N$ be simple modules in $\Cc_\Z$. Then set
$$\Lambda(M,N) = \mathcal{N}(M,N) + 2 \fo(M,N), $$
where $\mathcal{N}(M,N) = \mathcal{N}(m_M,m_N)$ with 
$m_M,m_N\in \cM^+$ dominant monomials parameterizing 
$M$ and $N$ respectively. As proved in \cite{FO21}, $\Lambda(M,N)$ coincides with the invariant defined in \cite{kkop0}.
\end{Rem}

We continue with a PBW-theory as in Section \ref{defocons}. Let us denote $m_k\in \cM^+$ the dominant monomial so that $S_k = L(m_k)$.
We consider the skew-symmetric bilinear form $\gamma$ defined on $\mathbb{N}^{\oplus J}\times \mathbb{N}^{\oplus J}$ and so that for 
any $k,k'\in  J $: 
$$\gamma(\bdelta_k,\bdelta_{k'}) = -\mathcal{N}(m_k,m_{k'})/2.$$ 
We consider the associated bilinear operation $* = *_\gamma$ on $K(\Cc_{\mathbb{Z}})_t$.
Be aware that this operation $*$ also depends on our choice of PBW-theory and its deformation. 

Let us define the $\Z[t^{\pm 1/2}]$-linear isomorphism $\phi \colon K(\Cc_\Z)_t \simeq K_t(\Cc_\Z)$ by 
$\phi ([L(m)]) = L_t(m)$
for all $m \in \cM^+$. Clearly, we have $\phi \circ \ol{(\cdot)} = \ol{( \cdot )} \circ \phi$. 

\begin{conj}[Quantum Grothendieck Ring Conjecture] 
\label{qgrconj}
 With a chosen PBW-theory and its deformation,  
Associativity Conjectures \ref{Conj:assoc} and \ref {Conj:sassoc} hold for $(K(\Cc_{\mathbb{Z}})_t,*)$, and the linear isomorphism $\phi$ gives a $\Z[t^{\pm 1/2}]$-algebra isomorphism from $(K(\Cc_{\mathbb{Z}})_t,*)$ to the quantum Grothendieck ring $K_t(\Cc_{\mathbb{Z}})$. 
\end{conj}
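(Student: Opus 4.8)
The plan is to prove the conjecture in the simply-laced case --- this is Theorem~\ref{simply-laced}, carried out in Sections~\ref{sec:preliminary} and~\ref{ssec:Prqaa} --- by transporting the entire construction to the geometry of graded quiver varieties; for non-simply-laced $\fg$ I do not see how to avoid genuinely new input, so I would leave that case open. The guiding principle, following Grojnowski's note~\cite{Groj} and the Koszul-dual proof of the classical Jantzen conjecture (Soergel~\cite{Soe08}, K\"ubel~\cite{Kub12}), is that a Jantzen-type filtration should be a shadow of a cohomological (weight) grading on a stalk of a perverse sheaf, and that here the relevant geometric operation is Braden's hyperbolic localization~\cite{Braden}.

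First I would recall Nakajima's geometric realization: for each graded dimension datum one has a graded quiver variety $\qv$ with a semismall resolution $\pi \colon \qv \to \mathfrak{M}_0^\bullet$, and the decomposition theorem for $\pi$ produces the simple perverse sheaves whose classes, in Varagnolo--Vasserot's geometric model~\cite{VV03} of the quantum torus $\mathcal{Y}_t$, are the $(q,t)$-characters $L_t(m)$, while the constant complexes on the strata give the standard modules $E_t(m)$; the formal variable $t^{1/2}$ records a cohomological shift and the multiplication of $\mathcal{Y}_t$ is a convolution along a correspondence. The essential new point is that the deformation of Section~\ref{defocons} is itself geometric: the twist $\widetilde{S}_k = (S_k)_{\exp(kz)}$ should correspond to an auxiliary $\Gm$-action on the quiver variety (a generic cocharacter of the framing torus), the pole order $\fo(M,N)$ of the normalized $R$-matrix $T_{M,N}(u)$ of Theorem~\ref{ext} should record the codimension of the relevant attracting cell, and the specialized $R$-matrices $\rr_{\bep',\bep}$ should become the natural maps relating attracting and repelling sets. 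Under this dictionary I expect the mixed product $M(\bep)$ to be modeled by an equivariant complex whose stalk along a fixed-point stratum is computed by the hyperbolic localization functor, and the assertion to prove, which I label (A), is that the monoidal Jantzen filtration $F_\bullet M(\bep)$ defined by \eqref{expfm} coincides term by term with the degree filtration on those stalks, whence $[M(\bep)]_t$ equals the graded Poincar\'e series of that hyperbolic localization.

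For (A) the tools are the hard Lefschetz theorem applied to $\pi$ together with the ``Fundamental Example'' of Bernstein--Lunts~\cite{BL94}, in which $\Gm$ acts linearly on an affine space and the hyperbolic localization of an intersection complex, together with its grading, is governed by a single Lefschetz operator: these should force the filtration produced by the lattice-intersection formula \eqref{expfm} to be pure and to agree with the cohomological grading, which in particular re-establishes the normality asserted in Theorem~\ref{normt}. One must carry this out uniformly over \emph{all} sequences $\bep \in J^{\bd}$, not just the standard $\bep_s$ and costandard $\bep_c$ ones; here the quantum Yang--Baxter relation~\eqref{eq:YB} reduces an arbitrary $R_{\bep',\bep}$ to a composite of elementary transpositions $\id \star R_{i,j} \star \id$, each realized by a single elementary attracting/repelling cell decomposition, making the inductive comparison tractable. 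Granting (A): by \cite{VV03} the graded multiplicities of these hyperbolic localizations are precisely the structure constants of $K_t(\Cc_\Z) \subset \mathcal{Y}_t$, and the normalization $\gamma(\bdelta_k,\bdelta_{k'}) = -\mathcal{N}(m_k,m_{k'})/2$ is exactly the twist needed to match the $\mathcal{Y}_t$-commutation relations; hence $[M(\bep)]_t \mapsto (\text{its geometric class})$ intertwines $* = *_\gamma$ with the associative multiplication of $K_t(\Cc_\Z)$, so Conjectures~\ref{Conj:assoc} and~\ref{Conj:sassoc} hold, $\phi$ is the desired $\Z[t^{\pm 1/2}]$-algebra isomorphism, and (since Verdier duality realizes both $\ol{(\cdot)}$ and the exchange of attracting with repelling sets) the Duality Conjecture~\ref{Conj:duality} follows as well.

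The main obstacle is step (A): turning the dictionary between the purely algebraic data --- the $\Oo$-lattice $\tM(\bep)$, the $R$-matrices $R_{\bep,\bep_s}$ and $R_{\bep_c,\bep}$, and the lattice intersection $\tM(\bep) \cap \sum_{k}(z^k R_{\bep,\bep_s}\tM(\bep_s) \cap z^{n-k} R^{-1}_{\bep_c,\bep}\tM(\bep_c))$ --- and the geometric data --- the $\Gm$-action, the attracting/repelling filtrations, and the hyperbolic localization functor --- into a precise and functorial correspondence. Concretely, one needs a comparison at the level of the universal $R$-matrix and Nakajima's construction showing that pole orders and renormalizations of $R$-matrices match codimensions of cells and cohomological shifts; Grojnowski's treatment of standard modules over quantum loop and affine Hecke algebras is the essential prototype, and essentially all the real work of Sections~\ref{sec:preliminary} and~\ref{ssec:Prqaa} is absorbed here. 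For general $\fg$, the absence of a quiver-variety model means one would need either a folded or twisted geometric substitute, or the quiver-Hecke-algebra route of Section~\ref{mjqha} together with a folding argument, or a direct algebraic proof of associativity; I regard the conjecture as genuinely open in that generality.
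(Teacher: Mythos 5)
Your plan is essentially the paper's own proof of Theorem \ref{simply-laced}: Nakajima's graded quiver varieties and tensor-product varieties realize the deformed mixed products and renormalized $R$-matrices geometrically (Propositions \ref{Nsheaf} and \ref{tau}), your step (A) is exactly Theorem \ref{Amain}, proved via the Bernstein--Lunts ``Fundamental Example'' and hard Lefschetz (Theorem \ref{Thm:Fex}, Lemmas \ref{Lem:hL}, \ref{Lem:Lef}), and the comparison with Varagnolo--Vasserot (Theorem \ref{Thm:VVqgr}) then yields \eqref{mixedet}, with the general case of the conjecture indeed left open. The only notable deviation is a matter of technique inside the same framework: for general $\bep$ the paper does not reduce to elementary transpositions via Yang--Baxter, but works directly with the chain of attracting subspaces $E(\bep_s)\subset E(\bep)\subset E(\bep_c)$ and the lattice computation in the hard-Lefschetz module $\rH(\bep_c)/z\rH(\bep_s)$.
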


\begin{Rem} If Conjecture \ref{qgrconj} is true for any PBW-theory, it implies that the ring structure $(K(\Cc_{\mathbb{Z}})_t,*)$, with its canonical basis, does not depend on the choice of PBW-theory.
\end{Rem} 

\begin{Ex} We can illustrate first with the filtrations computed in Example \ref{exfil} with $\bep_s = (3,1)$. We have
$$[M(\bep_s)]_t = [L] + t \quad \text{ and } \quad [M(\bep_c)]_t = [L] + t^{-1}.$$
As $\mathcal{N}(1,3;1,1) = 2$, we recover the well-known formulas (see Example \ref{fet}):
$$[S_3]*[S_1] = t^{-1}[L] + 1 \quad \text{ and } \quad [S_1]*[S_3] = t [L] + 1.$$
\end{Ex}

Now, let us consider the ordinary PBW-theory of fundamental modules as in Theorem \ref{fundpbw}, and assume that Conjecture \ref{qgrconj} is true for this case. 
By Remark \ref{remmulti}, it implies that, for any $m\in \cM^+$, we have
$$\phi([M(m)]_t) = E_t(m). $$ 
More generally, for each $d \in \N$ and $\bep = (\ep_1,\cdots, \ep_d)\in J^d$, letting
\[ E_t(\bep) \seq t^{\sum_{1 \le k < l \le d}\mathcal{N}(i_k, p_k; i_l, p_l)/2}  F_t(Y_{i_1, p_1}) \cdots F_t(Y_{i_d, p_d}),\]
where $(i_k, p_k)$ denotes the element of $\hI$ such that $e(i_k, p_k) = \ep_k$ for each $1 \le k \le d$, we obtain
\begin{equation}\label{mixedet}
\phi( [M(\bep)]_t) = E_t(\bep).
\end{equation}
Compare with \eqref{eq:*mix}.
The converse statement is true.
Namely, Conjecture \ref{qgrconj} for the ordinary PBW-theory is equivalent to the following.

\begin{conj}\label{qgrmix}
The equality \eqref{mixedet} holds for any $d \in \N$ and  $\bep \in J^d$. 
\end{conj}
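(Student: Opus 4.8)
\textbf{Proof proposal for Conjecture~\ref{qgrmix}.}
The plan is to treat the simply-laced case — which is what the geometric methods of Nakajima~\cite{Nak04} and Varagnolo--Vasserot~\cite{VV03} deliver, following the circle of ideas in Grojnowski's note~\cite{Groj} — and to exploit the equivalence recorded just above: proving \eqref{mixedet} for all $\bep\in J^d$ is the same as establishing Conjecture~\ref{qgrconj}, and hence the Associativity Conjectures~\ref{Conj:assoc} and~\ref{Conj:sassoc}, for the ordinary PBW-theory. The argument has three layers: a geometric model for the mixed products $M(\bep)$, an identification of their monoidal Jantzen filtrations with a cohomological filtration in that model, and a translation into the quantum torus $\mathcal{Y}_t$.

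\emph{Geometric model.} For $\fg$ simply-laced, Nakajima's graded quiver varieties $\qv$ carry a category $\Perv$ of perverse sheaves whose shifted $\IC$-objects encode the simple modules $L(m)$, while the ordinary standard module $M(m)$ is realized by the push-forward of a constant sheaf along a natural proper morphism, so that the $P_{m,m'}(t)$ appear as the associated graded multiplicities. The mixed product $M(\bep)=V_{\ep_1}\otimes\cdots\otimes V_{\ep_d}$ is then obtained as the \emph{hyperbolic localization} (Braden~\cite{Braden}) of the external product $\IC_{\ep_1}\boxtimes\cdots\boxtimes\IC_{\ep_d}$ with respect to the $\Gm$-action on the product of quiver varieties whose weights are $(\ep_1,\dots,\ep_d)$; this is Nakajima's tensor-product construction, and the standard order $\ep_1<\cdots<\ep_d$ matches the combinatorics of Section~\ref{spbwt}.

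\emph{Matching the filtrations.} This is the technical heart. The deformation $\tM(\bep)=\tilde{S}_{\ep_1}\star_\Oo\cdots\star_\Oo\tilde{S}_{\ep_d}$ built from $\tilde{S}_k=(S_k)_{\exp(kz)}$ is precisely the family over the formal disk produced by that $\Gm$-action, and the three $\Oo$-lattices in \eqref{expfm} — $\tM(\bep)$, $R_{\bep,\bep_s}\tM(\bep_s)$ and $R^{-1}_{\bep_c,\bep}\tM(\bep_c)$ — are the natural integral structures on, respectively, the total space, the attracting set and the repelling set in Braden's diagram, with the renormalized $R$-matrices $R_{k,k'}=R_{S_k,S_{k'}}(\exp((k-k')z))$ realized by the canonical comparison morphisms between them. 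Computing $\ev_{z=0}$ of the lattice intersection in \eqref{expfm} then reads the $z$-adic orders of vanishing off cohomological degrees: this is Bernstein--Lunts' ``Fundamental Example''~\cite{BL94}, and hard Lefschetz for the $\Gm$-action forces the outcome to be the associated graded of a weight-type filtration. In particular the case $\bep=\bep_s$ recovers the Jantzen filtration of the standard module as the degree filtration on the $\IC$-stalk, and the decomposition theorem expresses $[M(\bep)]_t$ through the Poincar\'e polynomials of the relevant stalks, with positive, bar-compatible coefficients, so that $\phi([M(\bep)]_t)$ is a well-defined element of $K_t(\Cc_\Z)$.

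\emph{Passage to $\mathcal{Y}_t$, and the obstacle.} In Varagnolo--Vasserot's geometric presentation of $K_t(\Cc_\Z)$, the product $F_t(Y_{i_1,p_1})\cdots F_t(Y_{i_d,p_d})$ is, up to a monomial in $t^{1/2}$, exactly the generating series of Poincar\'e polynomials of the hyperbolic localization above; the $t$-power discrepancy between the geometric ordering and the quantum-torus product is governed by $\tY_{i,p}\tY_{j,s}=t^{-\mathcal{N}(i,p;j,s)}\tY_{j,s}\tY_{i,p}$, and summing these contributions over pairs $k<l$ reproduces exactly the prefactor $t^{\sum_{k<l}\mathcal{N}(i_k,p_k;i_l,p_l)/2}$ in the definition of $E_t(\bep)$; this gives \eqref{mixedet}, and the same $\mathcal{N}$-bookkeeping shows $\phi$ intertwines $*_\gamma$ with the multiplication of $K_t(\Cc_\Z)$. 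The step I expect to be the main obstacle is the middle one: making fully rigorous the dictionary between the algebraic description of $F_\bullet M(\bep)$ — via $R$-matrices and $z$-adic lattices — and the geometric degree filtration. One must verify that the renormalized $R$-matrices really are the structural morphisms of the hyperbolic-localization diagram, so that the three lattices and the maps among them literally coincide on the two sides, and then control the formal family carefully enough that $\ev_{z=0}$ of the intersection in \eqref{expfm} computes the cohomological filtration rather than a coarsening of it. This is exactly where the ``Fundamental Example'' and hard Lefschetz are indispensable, and where any convergence or extension subtleties for the deformation over $\Oo=\kk[\![z]\!]$ must be dispatched.
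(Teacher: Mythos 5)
Your strategy coincides with the paper's proof of Theorem \ref{simply-laced}: realize the mixed products geometrically through Nakajima's tensor-product construction on graded quiver varieties (Theorem \ref{Naktns}, recast sheaf-theoretically as a hyperbolic localization in Proposition \ref{Nsheaf}), identify the monoidal Jantzen filtration with the cohomological degree filtration via the Bernstein--Lunts ``Fundamental Example'' and hard Lefschetz, and then compare with Varagnolo--Vasserot's geometric expression for $E_t(\bep)$ (Theorem \ref{Thm:VVqgr}); the final passage to $\mathcal{Y}_t$ is indeed essentially bookkeeping once the middle step is secured.

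But the middle step, which you yourself flag as ``the main obstacle,'' is precisely what a proof must supply, and your proposal does not supply it; as written it is a plan rather than an argument. The paper closes this gap with two concrete ingredients. First, Proposition \ref{tau}: the renormalized $R$-matrices $R_{\bep',\bep}$ agree, up to $\Oo^\times$, with the completions of the canonical morphisms $i^!\to i^*$ between hyperbolic localizations of $\ocA_\bd$. This is not automatic; it uses the fact that, by simplicity of $\tM(\bep)_\Kk$ and normality ($\beta(\bep',\bep)=0$), $R_{\bep',\bep}$ is characterized as the unique $U_q(L\fg)_\Oo$-homomorphism $\tM(\bep)\to\tM(\bep')$ with nonzero specialization at $z=0$, while the geometric morphism specializes nontrivially because $\ocA_\bd$ contains a skyscraper summand; freeness over $\kk[z]=\rH^\bullet_{\Gm}(\pt,\kk)$ (Corollary \ref{Cor:hr}) then gives injectivity, so all three lattices sit inside $\rH^\bullet_{\Gm}(i_0^*\ocA_\bd)$. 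Second, one needs an algebraic mechanism converting the lattice formula \eqref{expfm} into a degree filtration: the paper applies Lemma \ref{Lem:hL} (for a graded $\kk[z]$-module with hard Lefschetz, $M^{\ge n}=\sum_{k-l=n}\Image(z_M^k)\cap\Ker(z_M^{l+1})$) to $L=\rH(\bep_c)/z\rH(\bep_s)$, whose hard Lefschetz property comes from the Fundamental Example exact sequence (Theorem \ref{Thm:Fex}) combined with Lemma \ref{Lem:Lef}; intersecting with $\rH(\bep)/z\rH(\bep_s)$ and projecting yields $F_nM(\bep)\simeq\rH^{\ge n}(i_{\bep,0}^*i_\bep^!\ocA_\bd)$, which is Theorem \ref{Amain} and is the decisive identity. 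Without these two steps (or substitutes for them) your sketch does not yet establish \eqref{mixedet}. A smaller imprecision: the hyperbolic localization is taken on the linear $T_\bd$-module $E(\bd)$ containing $\qv_0(\bd)$ (Theorem \ref{KS}), with the standard and costandard lattices arising as the $i_0^!$- and $i_0^*$-restrictions of $\ocA_\bd$, rather than on an external product of $\IC$-sheaves over a product of quiver varieties; the linear attractive $T_\bd$-action is exactly what makes Braden's theorem and the Fundamental Example applicable.
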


\begin{Rem}
Note that Conjecture \ref{qgrmix} also implies  Duality Conjecture \ref{Conj:duality} for the ordinary PBW-theory. Indeed, for each  $\bep\in J^\bd$, we have 
$\overline{\phi([M(\bep)]_t)} = \ol{E_t(\bep)} = E_t(\bep^{\op}) = \phi([M(\bep^\op)]_t)$ and so  $\ol{[M(\bep)]_t} = [M(\bep^\op)]_t$. 
\end{Rem}

\begin{Rem} Conjecture \ref{qgrmix} also implies the analog of Kazhdan-Lusztig conjecture (= Conjecture \ref{Conj:KL}). 
In addition, as \eqref{pos1} is a consequence of Conjecture \ref{qgrmix}, the positivity of Kazhdan-Lusztig polynomials of Theorem \ref{pospol} can be seen as an evidence for this conjecture.
\end{Rem}

When $\fg$ is of simply-laced type, we can actually establish that the conjectures are true for the ordinary PBW-theory with the help of geometry.  
Namely, we have the following, whose proof will be given in Section \ref{ssec:Prqaa} below.

\begin{Thm} \label{simply-laced}
When $\fg$ is of simply-laced type, Conjecture \ref{qgrmix} holds, or equivalently, Conjecture \ref{qgrconj} for the ordinary PBW theory holds.
\end{Thm}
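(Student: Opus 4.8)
The plan is to realize the monoidal Jantzen filtration of a mixed product $M(\bep)$ geometrically, as the grading filtration of a hyperbolic localization of a semisimple perverse sheaf on a graded quiver variety, and then to match the Poincar\'e polynomial of that hyperbolic localization with the structure constants of $K_t(\Cc_\Z)$ in the Nakajima--Varagnolo-Vasserot geometric definition. First I would recall Nakajima's construction: for each $\bep\in J^d$ one has a graded quiver variety $\mathfrak{M}^\bullet(\bep)$ (or rather a product/convolution diagram built from the $\bep_k$) together with a proper map $\pi_\bep$ to an affine variety, and $(\pi_\bep)_!\,\underline{\C}$ decomposes, by the Decomposition Theorem, into a direct sum of shifted $\mathrm{IC}$ sheaves $\mathrm{IC}(m')$ indexed by dominant monomials $m'\preceq m$, with graded multiplicities recording exactly the coefficients of $E_t$ expressed in the canonical basis $\{L_t(m')\}$. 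The $\Gm$-action on the quiver variety coming from the loop grading (the operator $\tau_u$ of Section \ref{Rmat}) gives a cocharacter whose Braden hyperbolic localization $p_!i^*$ computes, on the level of stalks, precisely the comparison of the two $\Oo$-lattices $R_{\bep,\bep_s}\tM(\bep_s)$ and $R^{-1}_{\bep_c,\bep}\tM(\bep_c)$ inside $\tM(\bep)_\Kk$: the attracting/repelling directions correspond to the two renormalized $R$-matrices, and the degree filtration on the hyperbolic localization matches the filtration \eqref{expfm}. This is where Grojnowski's note \cite{Groj} and Braden's theorem \cite{Braden} enter.

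The key steps, in order, would be: (1) set up the geometric model — recall from \cite{Nak04, VV03} the graded quiver varieties attached to tensor products of fundamental modules, the convolution algebra action, and the identification of $K_t(\Cc_\Z)$ with the relevant equivariant $K$-theory / perverse-sheaf Grothendieck group, so that $E_t(\bep)$ is the class of $(\pi_\bep)_!\underline{\C}$; (2) identify the deformation $\tM(\bep) = \tilde S_{\ep_1}\star_\Oo\cdots\star_\Oo\tilde S_{\ep_d}$, with its $\Oo$-lattice structure and the renormalized $R$-matrices $R_{\bep',\bep}$, with the nearby-cycle / specialization data of the family of perverse sheaves over the base $\mathbb{A}^1$ given by the $\Gm$-action; here one uses that the $R$-matrix between fundamental modules is, up to normalization, the geometric $R$-matrix on quiver varieties, and its pole order $\fo(V_i,V_j)=\alpha(i,j)$ is read off from the attracting-set combinatorics; (3) prove the core comparison: $\ev_{z=0}$ of the intersection of lattices in \eqref{expfm} equals the stalk of the hyperbolic localization, and the filtration index $n$ corresponds to cohomological degree. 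The engine for (3) is the hard Lefschetz theorem together with the ``Fundamental Example'' of Bernstein-Lunts \cite{BL94} — exactly as in Soergel--K\"ubel's Koszul-dual treatment of ordinary Jantzen filtrations — which forces the associated graded of the lattice filtration to be the weight/degree filtration; (4) conclude that $[M(\bep)]_t$, read through $\phi$, equals $E_t(\bep)$, which is the content of Conjecture \ref{qgrmix}, and invoke Remark \ref{remmulti}/the equivalence stated before the theorem to get associativity and the isomorphism $\phi$ with $K_t(\Cc_\Z)$.

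The main obstacle, I expect, will be step (3): carefully matching the \emph{two-lattice} intersection formula \eqref{expfm} — which is genuinely new relative to the classical one-isomorphism Jantzen setup — with Braden's hyperbolic localization, including getting all the normalizations, shifts, and the sign conventions of $\beta(\bep',\bep)$ versus $\alpha(\bep',\bep)$ (Propositions \ref{compfil}, \ref{compfil2}) to line up with the attracting/repelling decomposition. In particular one must check that the costandard lattice $R^{-1}_{\bep_c,\bep}\tM(\bep_c)$ corresponds to the \emph{repelling} set and the standard lattice to the \emph{attracting} set, and that the ambient lattice $\tM(\bep)$ plays the role of the total space; the interplay of all three, via the hard Lefschetz $\mathfrak{sl}_2$, is what produces the grading, and verifying that the $\Gm$-weight decomposition is compatible with the Decomposition Theorem summands $\mathrm{IC}(m')$ requires the semismallness/purity inputs assembled in Section \ref{sec:preliminary}. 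A secondary technical point is handling \emph{general} $\bep\in J^d$ (not just standard sequences), i.e.\ mixed products rather than standard modules, which means the geometry is a genuine convolution of several quiver-variety correspondences rather than a single $\pi_m$; one must check the geometric $R$-matrices intertwine these correspondences compatibly with the quantum Yang--Baxter relation \eqref{eq:YB}, so that the hyperbolic-localization computation is independent of the chosen reduced expression realizing $\bep_s\lesssim\bep\lesssim\bep_c$.
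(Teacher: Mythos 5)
Your plan follows essentially the same route as the paper's proof: realize $\tM(\bep)$ and $M(\bep)$ via (completed) $\Gm$-equivariant homology of Nakajima's tensor-product varieties, identify these with hyperbolic localizations $i_{\bep,0}^*i_\bep^!\ocA_\bd$ after embedding $\qv_0(\bd)$ in a linear space, identify the renormalized $R$-matrices with the canonical maps $i^!\to i^*$, use the Bernstein--Lunts ``Fundamental Example'' and hard Lefschetz to show the two-lattice Jantzen filtration equals the degree filtration, and then compare with Varagnolo--Vasserot's geometric formula for $E_t(\bep)$. Your steps (1)--(4), including the anticipated difficulty in step (3), are exactly the content of Sections \ref{sec:preliminary} and \ref{ssec:Prqaa} (with the minor difference that the paper uses the equivariant parameter $z\in\rH^\bullet_{\Gm}(\pt)$ rather than nearby cycles, and a single variety $\qv(\bd)$ with varying cocharacters $\tau_\bep$ rather than a convolution of correspondences).
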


\subsection{Examples}\label{exfil2}

\subsubsection{} 
\label{exfil2_1}
We consider examples as in Example \ref{exfil} but with more factors: 
$$\bep_s = (3,3,1), \quad \bep = (3,1,3), \quad \bep_c = (1,3,3)$$
so that we have
$$M(\bep_s) = S_3\otimes S_3\otimes S_1, \quad
M(\bep) = S_3\otimes S_1\otimes S_3, \quad 
M(\bep_c) = S_1\otimes S_3\otimes S_3.$$
We have the morphisms:
$$ M(\bep_s) \overset{\mathbf{r}_{\bep,\bep_s}}{\longrightarrow} M(\bep)\overset{\mathbf{r}_{\bep_c,\bep}}{\longrightarrow}M(\bep_c).$$
We obtain the monoidal Jantzen filtrations 
$$F_0M(\bep_s) = M(\bep_s)\supset F_1 M(\bep_s) = F_2M(\bep_s) =  \Ker(\mathbf{r}_{\bep,\bep_s}) \supset F_3M(\bep_s) = \{0\},$$
$$F_0 M(\bep) = M(\bep) \supset F_1M(\bep) = \{0\},$$
$$F_{-2}M(\bep_c)  = M(\bep_c) \supset F_{-1}M(\bep_c) = F_0 M(\bep_c) =  \Image(\mathbf{r}_{\bep_c,\bep})\supset F_1(M(\bep_c)) = \{0\}.$$
Let $L$ be the simple quotient of $M(\bep_s)$. We obtain
$$[M(\bep_s)]_t = [L] + t^2[S_3], \quad
[M(\bep)]_t = [L] + [S_3], \quad
[M(\bep_c)]_t = [L] + t^{-2}[S_3],
$$
$$m_3([S_3],[S_3],[S_1]) = t^{-2}[L] + [S_3], \quad 
m_3([S_3],[S_1],[S_3]) = [L] + [S_3],$$
$$m_3([S_1],[S_3],[S_3]) = t^2 [L] + [S_3].
$$
This is completely analogous to the case of $\bep_s = (3,1,1)$.

\subsubsection{}
\label{exfil2_2}
 Now we set 
$$\bep_s = (5,3,1), \quad \bep_c = (1,3,5)$$
so that we have
$$M(\bep_s) = S_5\otimes S_3\otimes S_1, \quad
M(\bep_c) = S_1\otimes S_3\otimes  S_5. $$
As $S_1\otimes S_5\simeq S_5\otimes S_1$, we have two intermediate modules:
$$M(\bep_1) = M(3,5,1) \simeq M(3,1,5) \quad \text{and} \quad M(\bep_2) = M(5,1,3)\simeq M(1,5,3).$$
We have the morphisms:
$$
\xymatrix{ &M(\bep_1) \ar@{->}[dr]^{\mathbf{r}_{\bep_c,\bep_1}}& \\
 M(\bep_s) \ar@{->}[ur]^-{\mathbf{r}_{\bep_1,\bep_s}}\ar@{->}[dr]_{\mathbf{r}_{\bep_2,\bep_s}} && M(\bep_c).  \\
& M(\bep_2) \ar@{->}[ur]_-{\mathbf{r}_{\bep_c,\bep_2}}&}
$$   
We obtain the monoidal Jantzen filtrations 
$$F_0M(\bep_s) = M(\bep_s)\supset F_1 M(\bep_s) = \Ker(\mathbf{r}_{\bep_c,\bep_s}) =  \Ker(\mathbf{r}_{\bep_1,\bep_s}) + \Ker(\mathbf{r}_{\bep_2,\bep_s}) 
\supset F_2M(\bep_s) = \{0\},$$
$$F_{-1} M(\bep_1) = M(\bep_1) \supset F_0 M(\bep_1) = \Image(\mathbf{r}_{\bep_1,\bep_s}) $$
$$\supset F_1 M(\bep_1) = \Ker(\mathbf{r}_{\bep_c,\bep_1})\cap \Image(\mathbf{r}_{\bep_1,\bep_s}) \supset F_2 M(\bep_1) = \{0\},$$
$$F_{-1} M(\bep_2) = M(\bep_2) \supset F_0 M(\bep_2) = \Image(\mathbf{r}_{\bep_2,\bep_s})$$
$$\supset F_1 M(\bep_2) = \Ker(\mathbf{r}_{\bep_c,\bep_2})\cap \Image(\mathbf{r}_{\bep_2,\bep_s}) \supset F_2 M(\bep_2) = \{0\},$$
$$F_{-1}M(\bep_c)  = M(\bep_c) \supset F_0 M(\bep_c)  = \Image(\mathbf{r}_{\bep_c,\bep}) = \Image(\mathbf{r}_{\bep_c,\bep_1}) \cap \Image(\mathbf{r}_{\bep_c,\bep_2})\supset F_1(M(\bep_c)) = \{0\}.$$
Let $L$ be the simple quotient of $M(\bep_s)$.
We obtain
$$[M(\bep_s)]_t = [L] + t[S_1] + t[S_5], \quad 
[M(\bep_1)]_t = [L] + t^{-1}[S_1] +  t [S_5]$$
$$[M(\bep_2)]_t = [L] + t [S_1] + t^{-1} [S_5], \quad 
[M(\bep_c)]_t = [L] + t^{-1}[S_1] + t^{-1} [S_5],
$$
$$m_3([S_5],[S_3],[S_1]) = t^{-1}[L] + [S_1] + [S_5],  \quad
m_3([S_1],[S_3],[S_5]) = t[L] + [S_1] + [S_5],$$
$$m_3([S_3],[S_5],[S_1]) =  t [L] + [S_1] +  t^2 [S_5] = t^2 m_3([S_3],[S_1],[S_5]),$$
$$m_3([S_5],[S_1],[S_3]) = t[L] + t^2 [S_1] + t [S_5] = t^2 m_3([S_1],[S_5],[S_3])
.$$

\subsubsection{} 
\label{exfil2_3}
Now we set 
$$\bep_s = (3,3,1,1), \quad \bep_c = (1,1,3,3)$$
so that we have
$$M(\bep_s) = S_3^{\otimes 2}\otimes S_1^{\otimes 2}, \quad
M(\bep_c) = S_1^{\otimes 2}\otimes S_3^{\otimes 2}.$$
We have four intermediate modules:
$$M(\bep_1) = M(3,1,3,1), \quad  M(\bep_2) = M(3,1,1,3), $$
$$ M(\bep_3) = M(1,3,3,1), \quad M(\bep_4) = M(1,3,1,3).$$
We have the specialized $R$-matrices:
$$\xymatrix{ &&M(\bep_2)\ar[dr]^{\mathbf{r}_{\bep_3,\bep_2}}&   &
\\ M(\bep_s) \ar[r]^{\mathbf{r}_{\bep_1,\bep_s}}&M(\bep_1) \ar[ur]^{\mathbf{r}_{\bep_2,\bep_1}}\ar[dr]_{\mathbf{r}_{\bep_3,\bep_1}}& & M(\bep_4) \ar[r]^{\mathbf{r}_{\bep_c,\bep_4}}&M(\bep_c)
\\ && M(\bep_3) \ar[ur]_{\mathbf{r}_{\bep_4,\bep_3}}& &}$$
as well as the morphism $\mathbf{r}_{\bep_s,\bep_c} \colon M(\bep_c)\rightarrow M(\bep_s)$ obtained 
as in Section \ref{Rmat}.

We obtain the monoidal Jantzen filtrations: 
$$F_0M(\bep_s) = M(\bep_s)\supset F_1 M(\bep_s) = \Ker(\mathbf{r}_{\bep_c,\bep_s}) \supset F_2M(\bep_s) = F_3 M(\bep_s) = \Ker(\mathbf{r}_{\bep_1,\bep_s}) $$
$$\supset F_4 M(\bep_s) = 
\Image(\mathbf{r}_{\bep_s,\bep_c})\supset F_5 M(\bep_s) = \{0\},$$
$$F_0 M(\bep_1) = M(\bep_1) \supset F_1 M(\bep_1) = \Ker(\mathbf{r}_{\bep_2,\bep_1}) + \Ker(\mathbf{r}_{\bep_3,\bep_2})$$
$$\supset F_2 M(\bep_1) = \Ker(\mathbf{r}_{\bep_2,\bep_1}) \cap \Ker(\mathbf{r}_{\bep_3,\bep_1}) \supset  F_3 M(\bep_1) = \{0\},$$
$$F_{-1} M(\bep_2) = M(\bep_2) \supset F_0 M(\bep_2) = \Ker(\mathbf{r}_{\bep_4,\bep_2}) + \Image(\mathbf{r}_{\bep_2,\bep_s})$$
$$\supset F_1 M(\bep_2) = \Ker(\mathbf{r}_{\bep_4,\bep_2}) \cap \Image(\mathbf{r}_{\bep_2,\bep_s}) \supset  F_2 M(\bep_2) = \{0\},$$
$$\supset F_1 M(\bep_3) = \Ker(\mathbf{r}_{\bep_4,\bep_3}) \cap \Image(\mathbf{r}_{\bep_3,\bep_s}) \supset  F_2 M(\bep_3) = \{0\},$$
$$F_{-2} M(\bep_4) = M(\bep_4) \supset F_{-1} M(\bep_4) = \Image(\mathbf{r}_{\bep_4,\bep_2}) + \Image(\mathbf{r}_{\bep_4,\bep_3})$$
$$\supset F_0 M(\bep_4) = \Image(\mathbf{r}_{\bep_4,\bep_2}) \cap \Image(\mathbf{r}_{\bep_4,\bep_3}) \supset  F_1 M(\bep_4) = \{0\},$$
$$F_{-4}M(\bep_c)  = M(\bep_c) \supset F_{-3} M(\bep_c)  = \Ker(\mathbf{r}_{\bep_s,\bep_c})$$
$$\supset F_{-2}(M(\bep_c)) =  F_{-1} M(\bep_c) = \Image(\mathbf{r}_{\bep_c,\bep_4}) \supset F_0 M(\bep_c) = \Image(\mathbf{r}_{\bep_c,\bep_s}) 
\supset F_1 M(\bep_c) = \{0\}.$$
Let $L$ be the simple quotient of $M(\bep_s)$ and $K = L(Y_{1,q^0}Y_{1,q^2})$. We obtain:
$$[M(\bep_s)]_t = [L] + (t^3 + t) [K] + t^4, \quad [M(\bep_1)]_t = [L] + 2 t [K] + t^2,$$
$$[M(\bep_2)]_t = [L] + (t + t^{-1}) [K] + 1 = [M(\bep_3)]_t,$$
$$[M(\bep_4)]_t = [L] + 2 t^{-1} [K] + t^{-2}, \quad [M(\bep_c)]_t = [L] + (t^{-3} + t^{-1}) [K] + t^{-4}.$$
This gives for the products:
$$[S_3^{\otimes 2}]*[S_1^{\otimes 2}] = t^{-4} [L] + (t^{-1} + t^{-3}) [K] + 1, \quad [S_1^{\otimes 2}]*[S_3^{\otimes 2}] = t^4[L] + (t^3 + t) [K] + 1,$$
$$m_4([S_3],[S_1],[S_3],[S_1])= t^{-2}[L] + 2 t^{-1} [K] + 1,$$
$$m_3([S_3],[S_1^{\otimes 2}],[S_3]) = [L] + (t + t^{-1}) [K] + 1 = m_3([S_1],[S_3^{\otimes 2}],[S_1]) ,$$
$$m_4([S_1],[S_3],[S_1],[S_3]) = t^2 [L] + 2 t [K] + 1.$$

\subsubsection{} Let $\mathfrak{g}$ be of simply-laced type, $i\in I$, $r\in\mathbb{Z}$ and  set 
$$\bep_s = (i+(r+2)n, i+rn), \quad \bep_c = (i+rn,i+(r+2)n)$$
so that we have
$$M(\bep_s) = L(Y_{i,r+2})\otimes L(Y_{ i,r}),  \quad
M(\bep_c) = L(Y_{i,r})\otimes L(Y_{i,r+2}).$$
We have the morphism:
$$\mathbf{r}_{\bep_c,\bep_s} \colon M(\bep_s) \rightarrow M(\bep_c).$$
of simple image $L$ isomorphic to $L(Y_{i,r}Y_{i,r+2})$ and kernel $K$ simple isomorphic to 
$\bigotimes_{j \in I \colon c_{i,j} = -1} L(Y_{ j,r+1})$.
The monoidal Jantzen filtrations are characterized by 
$$F_0M(\bep_s) = M(\bep_s)\supset F_1 M(\bep_s) = K\supset F_2M(\bep_s) = \{0\},$$
$$F_{-1}M(\bep_c)  = M(\bep_c) \supset F_0 M(\bep_c)  = L \supset F_1(M(\bep_c)) = \{0\}.$$
We obtain
$$[M(\bep_s)]_t = [L] + t[K], \quad [M(\bep_c)]_t = [L] + t^{-1}[K],
$$
$$[S_{i+(r+2)n}]*[S_{ i + rn }] = t^{\alpha}[L] + t^{\alpha + 1}[K], \quad
[S_{i+rn}]*[S_{i+(r+2)n}] = t^{-\alpha}[L] + t^{-\alpha - 1}[K],$$
where $\alpha  = -1 + (\tilde{c}_{i,i}(1) + \tilde{c}_{i,i}(3))/2$.

\noindent For $\mathfrak{g} = \mathfrak{sl}_3$, $i = 1$ and $r =0$, then $K\simeq S_4$ and $\alpha = -1/2$.

\begin{Rem}
In all the above examples, we find that every filter submodule $F_n M(\bep)$ of $M(\bep)$ can be expressed only in terms of the specialized $R$-matrices. 
 In particular, the monoidal Jantzen filtrations do not depend on the choice of deformations in these examples.
It would be interesting to study in which situation the monoidal Jantzen filtrations are characterized only by the specialized $R$-matrices.
\end{Rem}

\section{Monoidal Jantzen filtrations for symmetric quiver Hecke algebras}\label{mjqha}

We study our second main examples for monoidal Jantzen filtrations, 
realized in categories of representations 
of symmetric quiver Hecke algebras. 

We first give reminders on quantum unipotent coordinate rings with their 
PBW and canonical bases (Section \ref{qucr}). 
In Section \ref{Ssec:qH}, we recall their categorification in terms of representations 
of quiver Hecke algebras (Theorem \ref{Thm:VV}) which are compatible 
with specialization (Corollary \ref{Cor:VV}). Note that quiver Hecke algebras
have a natural grading and categorify the {\it quantum} 
unipotent coordinate rings (whereas quantum loop algebras above produce commutative Grothendieck rings),  although we work with ungraded modules in this paper. We recall the categories $\Cc_w$ of (ungraded) representations together with their PBW-theory from 
\cite{KKOP18} (Theorem \ref{pbwqh}). We construct the deformation of such a PBW-theory (Section \ref{ssec:dPBWqH}).
Hence we obtain monoidal Jantzen filtrations. 
We state the analog of
the quantum Grothendieck ring conjecture, which we
call the quantum unipotent  coordinate  ring conjecture 
(Conjectures \ref{conj:qH}, \ref{qHmix}).
It expresses our expectation that our monoidal Jantzen filtrations for ungraded modules should recover the graded Jordan-H\"older multiplicities for their graded counterparts.   

\subsection{Notation}

Let $C=(c_{ij})_{i,j \in I}$ be a \emph{symmetric} generalized Cartan matrix with $I$ being the set of Dynkin nodes.  
We write $i \sim j$ if $c_{ij} <0$.
We denote the associated Kac-Moody Lie algebra by $\fg$.
Let $\sQ$ be a free abelian  group  with a basis $\{ \alpha_i \}_{i \in I}$ endowed with the symmetric bilinear form $(-,-)$ given by $(\alpha_i, \alpha_j) = c_{ij}$.  
We set $\sQ^+ \seq \sum_{i \in I} \N \alpha_i \subset \sQ$.
For each $i \in I$, the simple reflection $s_i$ is defined by $s_i \alpha_j = \alpha_j - c_{ij}\alpha_i$.
The Weyl group $\sW$ is the subgroup of $\Aut(\sQ)$ generated by the simple reflections $\{ s_i \}_{i \in I}$.
The pair $(\sW, \{s_i\}_{i \in I})$ forms a Coxeter system.
The length of an element $w \in \sW$ is denoted by $\ell(w)$. 
The set of real roots is defined by $\sR \seq \bigcup_{i \in I} \sW \alpha_i$.
We have $\sR = \sR^+ \sqcup (-\sR^+)$ with $\sR^+ \seq \sR \cap \sQ^+$.

\subsection{Quantum unipotent coordinate rings}\label{qucr}
Let $t$ be an indeterminate. 
For $n \in \N$, we set $[n]_t \seq \frac{t^n-t^{-n}}{t-t^{-1}} \in \Z[t^{\pm 1}]$ and $[n]_t! \seq \prod_{k=1}^{n}[k]_t$.
Let $U_t^+(\fg)$ denote the positive half of the quantized enveloping algebra of $\fg$.
By definition, it is the $\Q(t)$-algebra presented by the generators $\{e_i\}_{i \in I}$ and the quantum Serre relations $\sum_{k=0}^{1-c_{ij}} e_{i}^{(k)}e_je_i^{(1-c_{ij}-k)} = 0$
for any $i,j \in I$ with $i \neq j$,
where $e_i^{(n)} \seq e_i^n/[n]_t!$ is the divided power.
The algebra $U_t^+(\fg)$ has the natural $\sQ^+$-grading $U_t^+(\fg) = \bigoplus_{\beta \in \sQ^+} U_t^+(\fg)_\beta$ with $e_i \in U_q^+(\fg)_{\alpha_i}$.
In addition, we have an algebra involution $\iota$ given by 
$\iota(t) = t^{-1}$ and $\iota(e_i) = e_i$ for any $i \in I$.
Let $U_t^+(\fg)_{\Z[t^{\pm 1}]}$ denote the $\Z[t^{\pm 1}]$-subalgebra generated by all the divided powers $\{e_i^{(n)}\}_{i \in I, n \in \N}$.
The algebra $U_t^+(\fg)_{\Z[t^{\pm 1}]}$ is free over $\Z[t^{\pm 1}]$ and has the canonical (or lower global) basis $\bB$ due to Lusztig and Kashiwara. 
Each element in $\bB$ is fixed by the involution $\iota$.
We give a review of Lusztig's construction of $\bB$ later in Section \ref{ssec:Lusztig_construction}.

We equip the tensor product $U_t^+(\fg)\otimes_{\Q(t)}U_t^+(\fg)$ with the structure of  $\Q(t)$-algebra by
\[ (x_1 \otimes x_2) \cdot (y_1 \otimes y_2) = t^{-(\beta_2, \gamma_1)} (x_1y_1 \otimes x_2y_2),  \]
where $x_i \in U_t^+(\fg)_{\beta_i}, y_i \in U_t^+(\fg)_{\gamma_i}$ for $i =1,2$.
There is a unique $\Q(t)$-algebra homomorphism 
\[ \mathrm{r} \colon U_t^+(\fg) \to U_t^+(\fg)\otimes_{\Q(t)}U_t^+(\fg)\]
satisfying $\mathrm{r}(e_i) = e_i \otimes 1 + 1 \otimes e_i$ for each $i \in I$.
Then, we have a unique non-degenerate symmetric bilinear pairing $\langle - , - \rangle$ on $U_t^+(\fg)$ satisfying 
\[ \langle 1,1 \rangle = 1, \quad  \langle e_i, e_j\rangle = \delta_{i,j}/(1-t^2), \quad \langle x, yz \rangle = \langle \mathrm{r}(x), y \otimes z \rangle\]
for any $x,y,z \in U_t^+(\fg)$, where $\langle x_1 \otimes x_2, y_1 \otimes y_2 \rangle \seq \langle x_1, y_1 \rangle \cdot \langle x_2 , y_2 \rangle$. 
Let $\iota'$ be the involution of $U_t^+(\fg)$ dual to $\iota$ with respect to $\langle -,- \rangle$.
By definition, it satisfies $\langle \iota'(x), y\rangle = \overline{\langle x, \iota(y)\rangle}$ for any $x, y \in U_t^+(\fg)$, 
where $\overline{f(t)} \seq f(t^{-1})$ for $f(t) \in \Q(t)$.
It is known that 
\begin{equation} \label{eq:iota*}
\iota'(xy) = t^{(\beta,\gamma)}\iota'(y)\iota'(x) 
\end{equation}
holds for any $x \in U^+_t(\fg)_\beta$ and $y \in U^+_t(\fg)_\gamma$.

Let $A_t[N]_{\Z[t^{\pm 1}]}$ be the dual of $U_t^+(\fg)_{\Z[t^{\pm 1}]}$, that is, 
\[ A_t[N]_{\Z[t^{\pm 1}]} = \{ x \in U_t^+(\fg) \mid \langle x, U_t^+(\fg)_{\Z[t^{\pm 1}]} \rangle \subset \Z[t^{\pm 1}]\}.\]
This is a $\Z[t^{\pm 1}]$-subalgebra of $U_t^+(\fg)$, endowed with the dual canonical basis $\bB^*$.
Each element of $\bB^*$ is fixed by the dual involution $\iota'$. 
The algebra $A_t[N]_{\Z[t^{\pm 1}]}$ is specialized at $t = 1$ to  a commutative ring, identical to the coordinate ring of the (pro-)unipotent group $N$ associated with the positive part of $\fg$.  
We call $A_t[N]_{\Z[t^{\pm 1}]}$ the quantum unipotent coordinate ring.

Fix $w \in \sW$.
We choose a reduced word $\ii = (i_1, \ldots, i_\ell) \in I^\ell$ for $w$, that is, we have  $w = s_{i_1} \cdots s_{i_\ell}$ and $\ell = \ell(w)$. 
In what follows, we set 
\begin{equation}
J \seq \{ j \in \Z \mid 1 \le j \le \ell \}. 
\end{equation}
For each $j \in J$, we define a real positive root $\alpha_{\ii, j} \in \sR^+$ by
\[ \alpha_{\ii,j} \seq s_{i_1}\cdots s_{i_{j-1}}\alpha_{i_j}.\]
Then, we have  $\{ \alpha_{\ii,j} \mid j \in J \} = \sR^+ \cap w(-\sR^+)$.
Correspondingly, we define the root vector $E_{\ii,j}$ and its dual $E^*_{\ii,j}$ for each $k \in J$ to be the elements of $U_t^+(\fg)_{\alpha_{\ii, j}}$ given by
\[ E_{\ii,j} \seq T_{i_1} \cdots T_{i_{j-1}}(e_{i_j}), \qquad E^*_{\ii,j} \seq (1-t^2)E_{\ii,j}, \] 
where $T_i$ denotes Lusztig's braid symmetry ($=  T''_{i,1} $ in Lusztig's notation, see \cite[37.1.3]{LusB} for its precise definition,  noting that our $t$ corresponds to $v^{-1}$ in loc.\ cit.).
We have $E_{\ii, j} \in U^+_t(\fg)_{\Z[t^{\pm 1}]}$ and $E^*_{\ii, j} \in A_t[N]_{\Z[t^{\pm 1}]}$.

Let $A_t[N(w)]_{\Z[t^{\pm 1}]}$ denote the $\Z[t^{\pm 1}]$-subalgebra of $A_t[N]_{\Z[t^{\pm 1}]}$ generated by $\{E^*_{\ii, j} \mid j \in J \}$.
As the notation suggests, this is independent of the choice of reduced word $\ii$, and can be thought of as the quantum coordinate ring of the unipotent group $N(w)$ corresponding to the finite-dimensional nilpotent Lie subalgebra $\bigoplus_{\alpha \in \sR^+ \cap w(-\sR^+)}\fg_\alpha$ of $\fg$. 

For each $\bd = (d_j)_{j \in J} \in \N^{\oplus J}$, we define 
\begin{equation} \label{eq:Ea} E_\ii^*(\bd) \seq t^{\sum_{j \in J} d_j(d_j-1)/2}\prod_{j \in J}^{\leftarrow}(E^*_{\ii,j})^{d_j}.
\end{equation}
Then, the set $\{E_\ii^*(\bd) \mid \bd \in \N^{\oplus J} \}$ forms a free $\Z[t^{\pm 1}]$-basis of $A_t[N(w)]_{\Z[t^{\pm 1}]}$, called the dual PBW basis associated to the reduced word $\ii$.   

\begin{Thm}[{\cite[Theorem 4.29]{Kimura}}] \label{Thm:Kimura}
There exists a unique free $\Z[t^{\pm 1}]$-basis $\{B^*_\ii(\bd)\mid\bd \in \N^{\oplus J}\}$ of $A_t[N(w)]_{\Z[t^{\pm 1}]}$ satisfying $\iota'B^*_\ii(\bd) = B^*_\ii(\bd)$ and
\[ E^*_\ii(\bd) = B^*_\ii(\bd) + \sum_{\bd' \prec \bd}c_\ii[\bd , \bd']B^*_\ii(\bd') \quad \text{for some $c_\ii[\bd,\bd'] \in t\Z[t]$},\]
for each $\bd \in \N^{\oplus J}$, where $\preceq$ is the bi-lexicographic ordering.
Moreover, we have 
\[\bB^*(w) \seq \bB^* \cap A_t[N(w)]_{\Z[t^{\pm 1}]} = \{B^*_\ii(\bd)\mid \bd\in \N^{\oplus J}\}.\]
\end{Thm}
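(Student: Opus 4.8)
The plan is to prove the statement in two stages. The first stage is the existence and uniqueness of a $\iota'$-invariant $\Z[t^{\pm 1}]$-basis of $A_t[N(w)]_{\Z[t^{\pm 1}]}$ that is unitriangular with respect to the dual PBW basis $\{E^*_\ii(\bd)\}_{\bd}$ and the bi-lexicographic order $\preceq$; this is a purely formal consequence of a Kazhdan--Lusztig type base-change argument, once the required input on the bar involution is in place. The second stage is the identification of this basis with $\bB^*(w) = \bB^* \cap A_t[N(w)]_{\Z[t^{\pm 1}]}$, which is where the substantial input enters.

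For the first stage, the essential point is that $A_t[N(w)]_{\Z[t^{\pm 1}]}$ is $\sQ^+$-graded, with $E^*_\ii(\bd)$ homogeneous of degree $\beta_\bd \seq \sum_{j \in J} d_j \alpha_{\ii,j}$, and that for each $\beta \in \sQ^+$ there are only finitely many $\bd \in \N^{\oplus J}$ with $\beta_\bd = \beta$ (since $J$ is finite and each $\alpha_{\ii,j}$ is a nonzero element of $\sQ^+$). Hence $\preceq$ restricted to each homogeneous component is a finite poset and the standard base-change lemma applies, provided one verifies
\[ \iota'\bigl(E^*_\ii(\bd)\bigr) \in E^*_\ii(\bd) + \sum_{\bd' \prec \bd} \Z[t^{\pm 1}]\, E^*_\ii(\bd'). \]
I would deduce this by transposing to $U_t^+(\fg)$ via $\langle -,-\rangle$: it suffices to show that $\iota$ sends the PBW monomial $\prod^{\leftarrow}_{j}(E_{\ii,j})^{d_j}$ into itself modulo a $\Z[t^{\pm 1}]$-combination of strictly $\preceq$-smaller PBW monomials, and then dualize using the commutation rule \eqref{eq:iota*} and $\overline{f} = f$ for $f \in \Z[t^{\pm 1}]$. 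The triangularity of $\iota$ on PBW monomials in turn follows from combining the compatibility of Lusztig's braid symmetries $T_i$ with $\iota$ (which controls $\overline{E_{\ii,j}}$) with the Levendorskii--Soibelman straightening relations, which write $E_{\ii,k}E_{\ii,j} - t^{-(\alpha_{\ii,j},\alpha_{\ii,k})}E_{\ii,j}E_{\ii,k}$ for $j<k$ as a $\Z[t^{\pm 1}]$-combination of PBW monomials supported on the roots lying strictly between $\alpha_{\ii,j}$ and $\alpha_{\ii,k}$ in the convex order; reordering factors with these relations rewrites $\overline{\prod^{\leftarrow}(E_{\ii,j})^{d_j}}$ in the PBW basis with only $\preceq$-smaller terms. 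The base-change lemma then produces a unique $\iota'$-invariant $B^*_\ii(\bd) = E^*_\ii(\bd) + \sum_{\bd'\prec\bd}c_\ii[\bd;\bd']E^*_\ii(\bd')$ with $c_\ii[\bd;\bd'] \in t\Z[t]$.

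For the second stage, the plan is to use the compatibility of the canonical basis $\bB$ of $U_t^+(\fg)_{\Z[t^{\pm 1}]}$ with the $\Z[t^{\pm 1}]$-span of the PBW monomials attached to $w$: this span has a basis consisting of a subset of $\bB$. Dualizing with respect to $\langle -,-\rangle$ yields that $\bB^*(w) = \bB^* \cap A_t[N(w)]_{\Z[t^{\pm 1}]}$ is a $\Z[t^{\pm 1}]$-basis of $A_t[N(w)]_{\Z[t^{\pm 1}]}$. Every element of $\bB^*(w)$ is $\iota'$-fixed and, by Lusztig's characterization of the dual canonical basis, expands unitriangularly in the dual PBW basis $\{E^*_\ii(\bd)\}$ with off-diagonal coefficients in $t\Z[t]$; comparing $\sQ^+$-degrees and invoking the uniqueness from the first stage, each such element coincides with the corresponding $B^*_\ii(\bd)$. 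Hence $\bB^*(w) = \{B^*_\ii(\bd) \mid \bd \in \N^{\oplus J}\}$, and in particular the $B^*_\ii(\bd)$ are globally, not merely $\ii$-locally, canonical.

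The hard part is the compatibility input used in the second stage --- that $\bB$ restricts to a basis of the PBW-span $U_t^+(w)$ --- since this is precisely the nontrivial fact linking the algebraically defined PBW bases to the canonical basis. Its proof rests on the geometric construction of $\bB$ via Lusztig sheaves on representation varieties of quivers (or on Kashiwara's crystal-theoretic compatibility of the lower global basis with the subspaces cut out by $T_w$), together with positivity; by contrast, the Levendorskii--Soibelman bookkeeping and the Kazhdan--Lusztig base-change argument of the first stage are routine once the behavior of $T_i$ and $\iota$ on $U_t^+(\fg)$ is available.
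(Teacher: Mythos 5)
The paper itself offers no argument for this statement: it is imported verbatim from Kimura's work (his Theorem 4.29), so the only comparison available is with the standard proof there. Your two-stage architecture does mirror that proof in outline, and you correctly single out the compatibility of the canonical basis $\bB$ with the span $U_t^+(w)$ of the PBW monomials as the essential nontrivial input. The genuine gap is in your Stage 1, which you present as a routine Kazhdan--Lusztig base change: the input you need there, namely that $\iota'$ preserves $A_t[N(w)]_{\Z[t^{\pm 1}]}$ and acts unitriangularly on the dual PBW basis, is \emph{not} obtainable by ``transposing'' a Levendorskii--Soibelman straightening argument, because the claim you reduce it to --- that $\iota$ sends the PBW monomial $\prod^{\leftarrow}_j (E_{\ii,j})^{d_j}$ into itself modulo $\preceq$-smaller PBW monomials --- is false for proper $w$. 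The braid symmetries $T_i$ do not commute with $\iota$, the root vectors $E_{\ii,j}$ are not $\iota$-fixed, and $\iota$ does not even preserve $U_t^+(w)$. Concretely, for $\fg = \mathfrak{sl}_3$, $w = s_1 s_2$, $\ii = (1,2)$, one has $E_{\ii,2} = T_1(e_2) = e_2 e_1 - t^{-1} e_1 e_2$, so $\iota(E_{\ii,2}) = E_{\ii,2} + (t^{-1}-t)\, e_1 e_2$; but the weight-$(\alpha_1+\alpha_2)$ component of $U_t^+(w)$ is the line spanned by $E_{\ii,2}$, so $\iota(E_{\ii,2})$ is not a $\Z[t^{\pm1}]$-combination of PBW monomials for $\ii$ at all, let alone of smaller ones.

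Because of this, the logical order of your proposal is partly inverted. In Kimura's proof the $\iota'$-stability of $A_t[N(w)]_{\Z[t^{\pm 1}]}$, the $\iota'$-invariance of the elements $B^*_\ii(\bd)$ (including the fact that each dual root vector $E^*_{\ii,j}$ is itself a dual canonical basis element, which already relies on Lusztig's and Saito's nontrivial results on braid symmetries versus the canonical basis), and the unitriangular expansion of $E^*_\ii(\bd)$ with off-diagonal entries in $t\Z[t]$ are all \emph{consequences} of the canonical-basis compatibility with $U_t^+(w)$, proved via crystal-lattice arguments (PBW monomials lie in the crystal lattice and agree with global basis elements modulo $t$ times the lattice) or, for adapted words, via geometry. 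The elementary bar-triangularity of PBW monomials that you are extrapolating from is special to finite type with $w = w_0$, where $U_t^+(w_0) = U_t^+(\fg)$ is bar-stable; in the generality of the statement (arbitrary $w$ in an arbitrary symmetric Kac--Moody algebra) it is unavailable, so the ``hard input'' you defer to Stage 2 is in fact needed already to get Stage 1 off the ground. Once that input is granted, the uniqueness part of Stage 1 and your Stage 2 identification do go through by the standard formal argument.
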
  

In particular, we have $E^*_{\ii,j} = E^*_{\ii}(\bdelta_j) = B^*_\ii(\bdelta_j) \in \bB^*(w)$ for each $j \in J$.

\subsection{Symmetric quiver Hecke algebra} \label{Ssec:qH}
Fix $\beta = \sum_{i \in I} b_i \alpha_i \in \sQ^+$ and set $|\beta|\seq \sum_{i \in I} b_i$.
Consider the finite set
\[ I^\beta \seq \{ \nu = (\nu_1, \ldots, \nu_{|\beta|}) \in I^{ |\beta|} \mid \alpha_{\nu_1} + \cdots + \alpha_{\nu_{|\beta|}} = \beta\}.\]
The symmetric group $\SG_{|\beta|}$ acts on $I^\beta$ by place permutations. 
We write $\sigma_k \in \SG_{|\beta|}$ for the transposition of $k$ and $k+1$ for each $1 \le k < |\beta|$.
Let $\kk$ be a field of characteristic $0$.
For each $i,j \in I$,  we choose a polynomial $Q_{ij}(u,v) \in \kk[u,v]$ of the form
\[
Q_{ij}(u,v) = \pm
(1-\delta_{i,j})(u-v)^{-c_{ij}}
\]
satisfying $Q_{ij}(u,v) = Q_{ji}(v,u)$. 

\begin{Def}
Let $\beta \in \sQ^+$ as above.
The symmetric quiver Hecke algebra $H_\beta$ is the $\Z$-graded $\kk$-algebra presented by the three kinds of generators 
$\{ e( \nu ) \mid \nu \in I^{\beta}\}$,
$\{x_{1}, \ldots, x_{|\beta|} \}$,
$\{\tau_{1}, \ldots, \tau_{|\beta|-1} \}
$
and the following relations:   
\[
e(\nu) e(\nu^{\prime}) = 
\delta_{\nu, \nu^{\prime}} e(\nu), \quad
\sum_{\nu \in I^{\beta}} e(\nu) = 1,
\quad
x_{k} x_{l} = x_{l} x_{k}, \quad
x_k e(\nu) = e(\nu) x_{k},  
\]
\[
\tau_{k} e(\nu) = e(\sigma_k\nu) \tau_k,
\quad
\tau_k \tau_l = \tau_l \tau_k \quad \text{if $|k-l| > 1$}, \quad
\tau_{k}^{2} e(\nu) = 
Q_{\nu_k,\nu_{k+1}}(x_k,x_{k+1})e(\nu),
\]
\[
(\tau_k x_l - x_{\sigma_{k}(l)} \tau_k)e(\nu)
= \delta_{\nu_k, \nu_{k+1}}(\delta_{l,k+1} - \delta_{l,k})  e(\nu),
\]
\[
(\tau_{k+1} \tau_{k} \tau_{k+1} - \tau_{k} \tau_{k+1} \tau_{k}) 
e(\nu)
= \delta_{\nu_k, \nu_{k+2}}
\frac{Q_{\nu_k,\nu_{k+1}}(x_k, x_{k+1}) - Q_{\nu_k,\nu_{k+1}}(x_{k+2},x_{k+1})}{x_k - x_{k+2}}e(\nu).
\]
We endow $H_\beta$ with a $\Z$-grading by 
\[ \deg(e(\nu)) = 0, \quad \deg(x_k) = 2, \quad \deg(\tau_k e(\nu)) = - c_{\nu_k, \nu_{k+1}}. \]
\end{Def}

We denote by $\Mm_\beta$ the category of left $H_\beta$-modules, and by $\Mm_\beta^\bullet$ the category of graded left $H_\beta$-modules (whose morphisms are homogeneous). 
We also denote by $\Mm_{\fdim,\beta} \subset \Mm_\beta$ and $\Mm_{\fdim,\beta}^\bullet \subset \Mm_\beta^\bullet$ the full subcategories of finite-dimensional modules.
 For $M \in \Mm_\beta^\bullet$, its $n$-th graded piece is denoted by $M^n$. 
For $k \in \Z$, the grading shift functor $M \mapsto M\langle k \rangle$ on $\Mm_\beta^\bullet$ is defined by $(M\langle k \rangle)^n = M^{n - k}$ for any $n \in \Z$. 

There is an anti-algebra involution of $H_\beta$  fixing all the generators $e(\nu), x_k$ and $\tau_k$. 
For a (graded) $H_\beta$-module $M$, we equip the (graded) dual vector space $M^\vee$ with the structure of left $H_\beta$-module by twisting the natural right module structure with the above anti-involution. 
We say that $M \in \Mm_{\fdim,\beta}^\bullet$ is self-dual if $M \simeq M^\vee$ as graded $H_\beta$-modules.
Every simple module in $\Mm_{\fdim,\beta}^\bullet$ is known to be self-dual after a grading shift.

For $\beta, \beta \in \sQ^+$, we consider an idempotent
\[ e(\beta, \beta') \seq \sum_{\nu \in I^\beta, \nu' \in I^{\beta'}} e(\nu * \nu') \quad \in H_{\beta + \beta'},\]
where $\nu * \nu' \in I^{\beta + \beta'}$ is the concatenation of the sequences $\nu$ and $\nu'$.   
Similarly, for $\beta_1, \ldots, \beta_n \in \sQ^+$, we define $e(\beta_1, \ldots, \beta_n) \in H_{\beta_1 + \cdots + \beta_n}$.

We regard $H_{\beta + \beta'}e(\beta, \beta')$ as a graded $(H_{\beta + \beta'}, H_\beta \otimes H_{\beta'})$-bimodule in a natural way.
For $M \in \Mm_\beta$ and $N \in \Mm_{\beta'}$, we define the convolution product $M \star N \in \Mm_{\beta + \beta'}$ by 
\[ M \star N \seq H_{\beta +\beta'} e(\beta, \beta')\otimes_{H_\beta \otimes H_{\beta'}} (M \otimes N).\]
It endows the category $\Mm \seq \bigoplus_{\beta \in \sQ^+}\Mm_\beta$ (resp.~$\Mm^\bullet \seq \bigoplus_{\beta \in \sQ^+}\Mm_\beta^\bullet$) with a structure of $\kk$-linear monoidal category (resp.~graded monoidal category). 
The subcategories $\Mm_\fdim \seq  \bigoplus_{\beta \in \sQ^+}\Mm_{\fdim,\beta}$ and $\Mm_{\fdim}^\bullet \seq \bigoplus_{\beta \in \sQ^+} \Mm_{\fdim,\beta}^\bullet$ are closed under these monoidal structures.
For any $M_k \in \Mm_{\beta_k}$, $k= 1, \ldots, n$, we have  a natural isomorphism
\[ M_{1} \star \cdots \star M_{n} \simeq H_{\beta} e(\beta_1, \ldots, \beta_n)\otimes_{H_{\beta_1,\ldots,\beta_n}} (M_1 \otimes \cdots \otimes M_n),\]
where $\beta = \sum_{k=1}^n \beta_k$ and $H_{\beta_1,\ldots,\beta_n} \seq H_{\beta_1} \otimes \cdots \otimes H_{\beta_n}$.

Let $K(\Mm_{\fdim}^\bullet)$ denote the Grothendieck ring of the category $\Mm_{\fdim}^\bullet$.
The following celebrated result is due to Khovanov-Lauda \cite{KL09}, Rouquier \cite{Rou08, Rou12} and Varagnolo-Vasserot \cite{VV11}. 
 
\begin{Thm}[\cite{KL09, Rou08, Rou12, VV11}] \label{Thm:VV}
There is an isomorphism of algebras
\begin{equation} \label{eq:VV} 
K(\Mm_\fdim^\bullet) \simeq A_t[N]_{\Z[t^{\pm 1}]},
\end{equation}
where the multiplication of $t^{\pm1}$ on the right hand side corresponds to the grading shift $\langle \pm 1 \rangle$ on the left hand side.
It induces a bijection between the set of the classes of self-dual simple modules and the dual canonical basis $\bB^*$.
\end{Thm}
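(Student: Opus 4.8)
This is the Khovanov--Lauda--Rouquier categorification theorem, sharpened in the symmetric case by Varagnolo--Vasserot, and the plan is to reconstruct that argument. I would separate it into an ``elementary'' part --- an algebra isomorphism obtained by categorical bookkeeping --- and the identification of the distinguished bases, which relies on geometry and is the substantial part.

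For the elementary part, first introduce, alongside $K(\Mm_\fdim^\bullet)$, the split Grothendieck group $K^{\mathrm p}\seq\bigoplus_\beta K(H_\beta\text{-}\mathrm{proj}^\bullet)$ of finitely generated graded projective modules, which is free over $\Z[t^{\pm1}]$ on the self-dual indecomposable projectives. Since $H_{\beta+\beta'}e(\beta,\beta')$ is free as a right $H_\beta\otimes H_{\beta'}$-module (the PBW-type basis theorem for $H_\beta$), the convolution product $\star$ is exact and descends to ring structures on $K^{\mathrm p}$ and on $K(\Mm_\fdim^\bullet)$, with restriction giving coproducts; the graded $\Hom$-pairing $\langle[P],[M]\rangle\seq\dim_t\Hom_{H_\beta}(P,M)$ then identifies $K(\Mm_\fdim^\bullet)$ with the graded $\Z[t^{\pm1}]$-dual of $K^{\mathrm p}$, matching the identification $A_t[N]_{\Z[t^{\pm1}]}=(U_t^+(\fg)_{\Z[t^{\pm1}]})^*$, with $\star$ on one side adjoint to $\Res$ on the other. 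Next, build the $\Z[t^{\pm1}]$-algebra homomorphism $\Phi\colon U_t^+(\fg)_{\Z[t^{\pm1}]}\to K^{\mathrm p}$ sending $e_i^{(n)}$ to the class of $P(i^{(n)})$, the indecomposable projective $H_{n\alpha_i}$-module (unique up to grading shift), where $H_{n\alpha_i}$ is a matrix algebra over the nilHecke algebra. Well-definedness reduces to checking the quantum Serre relations among these classes, a finite computation inside $H_{(1-c_{ij})\alpha_i+\alpha_j}$ in which both alternating sums decompose as the same Gaussian-binomial multiple of a single indecomposable projective; surjectivity holds because every $H_\beta e(\nu)$ is a direct summand of $H_{\alpha_{\nu_1}}\star\cdots\star H_{\alpha_{\nu_{|\beta|}}}$; and injectivity follows from the compatibility of $\Phi$ with the non-degenerate bilinear forms on the two sides (the Kashiwara--Lusztig pairing on $U_t^+(\fg)$, and the pairing on $K^{\mathrm p}$ recording graded dimensions of weight spaces). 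Dualizing $\Phi$ and matching the product on $A_t[N]_{\Z[t^{\pm1}]}$ (dual to $\mathrm r$) with $\star$ on $K(\Mm_\fdim^\bullet)$ (dual to $\Res$) gives the asserted algebra isomorphism, under which the grading shift $\langle1\rangle$ goes to $t$ and the graded duality $M\mapsto M^\vee$ to $\iota'$.

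It remains to identify $\Phi(\bB)$ with the classes of indecomposable projectives, equivalently the self-dual simple modules with $\bB^*$, and here I would use the geometric realization available because $C$ is symmetric. Fix an orientation of the Dynkin graph, giving a quiver $Q$; for a vector space $V=\bigoplus_iV_i$ with $\dim_\C V_i=b_i$ (so $\beta=\sum_ib_i\alpha_i$), let $E_V$ be the representation variety with its $G_V=\prod_i GL(V_i)$-action, and for $\nu\in I^\beta$ let $\pi_\nu\colon\widetilde{\mathcal F}_\nu\to E_V$ be Lusztig's proper map from the variety of $\nu$-flags, with $\mathcal L_\nu\seq(\pi_\nu)_!\underline\C_{\widetilde{\mathcal F}_\nu}[\dim\widetilde{\mathcal F}_\nu]$. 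By Varagnolo--Vasserot there is a graded algebra isomorphism $H_\beta\simeq\bigoplus_{\nu,\nu'\in I^\beta}\Ext^\bullet_{G_V}(\mathcal L_\nu,\mathcal L_{\nu'})$, which upgrades to an equivalence between $H_\beta\text{-}\mathrm{proj}^\bullet$ and the additive subcategory of $D^{\mathrm b}_{G_V}(E_V)$ generated by the $\mathcal L_\nu$, sending $H_\beta e(\nu)$ to $\mathcal L_\nu$; under it the indecomposable projectives correspond to the simple perverse sheaves occurring in the $\mathcal L_\nu$, which by Lusztig's geometric construction of the canonical basis correspond bijectively to $\bB$, and on Grothendieck groups this equivalence is $\Phi$. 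Verdier self-duality of these $\IC$ sheaves matches the self-duality of the simple $H_\beta$-modules, which fixes the normalization; the dual statement, self-dual simples $\leftrightarrow\bB^*$, then follows because the $\Hom$-pairing is exactly the one with respect to which $\bB^*$ is dual to $\bB$.

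\textbf{The main obstacle.} Everything in the first two paragraphs is essentially formal once the basis theorem for $H_\beta$ is in place. The real content is the last step: establishing the Varagnolo--Vasserot $\Ext$-algebra presentation of $H_\beta$ and invoking Lusztig's purity and decomposition-theorem results, which make the complexes $\mathcal L_\nu$ semisimple and match their simple summands with the canonical basis. This geometric input is precisely what requires the hypothesis that $C$ be symmetric.
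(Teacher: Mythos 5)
This theorem is imported by the paper as a known result with the citations \cite{KL09, Rou08, Rou12, VV11}; the paper contains no proof of it, so there is no internal argument to compare yours against step by step. Your sketch is a faithful reconstruction of the proof in those references, and it is consistent with the machinery the paper itself assembles later when it needs the geometric form of the statement: the categorification map $e_i^{(n)} \mapsto$ (divided-power projective over the nilHecke algebra), the rank-two Serre-relation check, injectivity via compatibility of the $\Hom$-pairing with the Kashiwara--Lusztig form, dualization to $A_t[N]_{\Z[t^{\pm 1}]}$, and then the Varagnolo--Vasserot Ext-algebra presentation of $H_\beta$ over Lusztig's complexes $\cL_\nu$ to match indecomposable projectives with $\bB$ and hence self-dual simples with $\bB^*$; the last two ingredients appear in the paper as Theorem \ref{Thm:Lus} and Theorem \ref{Thm:VVg} (Section \ref{ssec:Lusztig_construction}), again as cited inputs, in the proof of Theorem \ref{Thm:adapted}. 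Your diagnosis of where the real content lies is also correct: the Serre-relation computation and the duality bookkeeping are routine, while the Varagnolo--Vasserot isomorphism together with Lusztig's decomposition-theorem/purity results is the substantive, symmetric-type-only input, and both your sketch and the paper treat it as a black box. The only place I would add a word of care is the bookkeeping of grading shifts: the twisted multiplication $(x_1 \otimes x_2)(y_1\otimes y_2) = t^{-(\beta_2,\gamma_1)}x_1y_1 \otimes x_2y_2$ on $U_t^+(\fg)^{\otimes 2}$ forces specific degree shifts in the definition of induction/restriction (the shifts by $\sum_{j<k}\langle \beta_j,\beta_k\rangle_Q$ and $\dim U$ in the paper's Section \ref{ssec:Lusztig_construction}), and the matching of $\langle e_i,e_i\rangle = 1/(1-t^2)$ with $\dim_t\End(H_{\alpha_i})$ fixes the normalization; these are exactly the conventions one must pin down to make "$\langle 1\rangle \leftrightarrow t$" and "self-dual simples $\leftrightarrow \bB^*$" come out on the nose, but they are standard and do not constitute a gap.
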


Let $\Mm_{\fdim}^{\nilp} \subset \Mm_f$ be the full subcategory of modules on which the element $x_k$ acts nilpotently for all $k$.  
This is identical to the monoidal Serre subcategory generated by the image of the forgetful functor $\Mm_\fdim^\bullet \to \Mm_\fdim$.  
We think of $\Z$ as a $\Z[t^{\pm 1}]$-algebra through the specialization  $\Z[t^{\pm 1}] \to \Z$ at $t = 1$.
Let $A_t[N]|_{t=1} \seq A_t[N]_{\Z[t^{\pm 1}]}\otimes_{\Z[t^{\pm 1}]}\Z$.
This is a commutative ring endowed with the specialized dual canonical basis $\bB^*|_{t=1} \seq \bB^* \otimes 1$.

\begin{Cor} \label{Cor:VV}
There is an isomorphism of algebras
\[ K(\Mm_\fdim^{\nilp}) \simeq A_t[N]|_{t=1}\]
through which the basis formed by the classes of simple modules correspond to the specialized dual canonical basis $\bB^*|_{t=1}$.
\end{Cor}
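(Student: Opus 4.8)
The plan is to derive the statement from Theorem~\ref{Thm:VV} by specializing at $t = 1$ through the grading-forgetting functor.

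\textbf{Step 1: the forgetful functor on Grothendieck rings.} Forgetting the grading defines an exact monoidal functor $\For \colon \Mm_\fdim^\bullet \to \Mm_\fdim$. Since the grading of a finite-dimensional graded $H_\beta$-module is bounded while $x_k$ has positive degree, each $x_k$ acts nilpotently on $\For(M)$; thus $\For$ takes values in $\Mm_\fdim^{\nilp}$, and by hypothesis $\Mm_\fdim^{\nilp}$ is the monoidal Serre subcategory of $\Mm_\fdim$ generated by the essential image of $\For$. Exactness and monoidality yield a ring homomorphism $\bar\For \colon K(\Mm_\fdim^\bullet) \to K(\Mm_\fdim^{\nilp})$, which annihilates every difference $[M\langle 1\rangle] - [M]$ because $\For(M\langle 1\rangle) = \For(M)$. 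Under \eqref{eq:VV}, which turns the shift $\langle 1 \rangle$ into multiplication by $t$, this means $\bar\For$ factors through the specialization at $t = 1$, producing a ring homomorphism
\[
\psi \colon A_t[N]|_{t=1} = A_t[N]_{\Z[t^{\pm 1}]} \otimes_{\Z[t^{\pm 1}]} \Z \longrightarrow K(\Mm_\fdim^{\nilp}).
\]

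\textbf{Step 2: matching the two $\Z$-bases.} Since $A_t[N]_{\Z[t^{\pm 1}]}$ is free over $\Z[t^{\pm 1}]$ with basis $\bB^*$, the ring $A_t[N]|_{t=1}$ is free over $\Z$ on the pairwise distinct elements $\bB^*|_{t=1} = \{ b \otimes 1 \mid b \in \bB^* \}$, while $K(\Mm_\fdim^{\nilp})$ is free over $\Z$ on the classes of its simple objects. The key input is the structural fact that $\For$ carries graded simple modules to simple modules and induces a bijection between the isomorphism classes of graded simple $H_\beta$-modules up to grading shift and the isomorphism classes of simple objects of $\Mm_{\fdim,\beta}^{\nilp}$; in particular every simple object of $\Mm_\fdim^{\nilp}$ is $\For(S)$ for a graded simple $S$, so $\bar\For$, and hence $\psi$, is surjective. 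Now let $M_b$ denote the self-dual graded simple module with $[M_b] = b$, so that (by the last assertion of Theorem~\ref{Thm:VV}) $\{M_b\}_{b \in \bB^*}$ is a set of representatives, one per grading-shift orbit, of all graded simples. Then $\psi(b \otimes 1) = [\For(M_b)]$, and these classes are pairwise distinct: if $\For(M_b) \simeq \For(M_{b'})$ then $M_{b'} \simeq M_b\langle n\rangle$ for some $n \in \Z$; applying the graded duality and using that $M_b$ and $M_{b'}$ are self-dual gives also $M_{b'} \simeq M_b\langle -n\rangle$, whence $M_b \simeq M_b\langle 2n\rangle$ and so $n = 0$ (a nonzero finite-dimensional graded module is not isomorphic to any nontrivial shift of itself), i.e.\ $b = b'$. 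Thus $\psi$ restricts to a bijection between $\bB^*|_{t=1}$ and the basis of simple classes, hence is an isomorphism of $\Z$-algebras carrying $\bB^*|_{t=1}$ onto the classes of the simple modules, which is the claim.

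\textbf{Expected main obstacle.} The only non-formal ingredient is the structural fact invoked in Step 2 — that $\For$ preserves simplicity and that the grading on a simple $H_\beta$-module is unique up to shift. This is where the nonnegative grading of $H_\beta$ with finite-dimensional graded pieces and the characteristic-$0$ hypothesis on $\kk$ enter; it is established in the foundational works on quiver Hecke algebras \cite{KL09}, so in a full write-up one may simply cite it. Given that, everything else — exactness and monoidality of $\For$, flat base change on Grothendieck rings, and the bookkeeping with bases — is routine.
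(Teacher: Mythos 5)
Your proof is correct and it is exactly the route the paper intends: specialize the isomorphism of Theorem~\ref{Thm:VV} at $t=1$ via the grading-forgetting functor, using exactness, monoidality, and the standard correspondence between graded simples up to shift and ungraded simples in $\Mm_\fdim^{\nilp}$ (with self-duality forcing a unique canonical representative in each shift orbit). The only inaccuracy is cosmetic: the characteristic-zero hypothesis is not needed for the graded-to-ungraded bijection on simples (that holds for any non-negatively graded algebra with semisimple degree-zero part and finite-dimensional graded pieces); it is used earlier, in Theorem~\ref{Thm:VV} itself.
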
 

Let $\hH \seq \bigoplus_{\beta \in \sQ^+} \hH_\beta$, where $\hH_\beta$ denotes the completion of $H_\beta$ along the grading. 
The (non-unital) algebra $\hH$ and the $(\hH, \hH \otimes \hH)$-bimodule $\bigoplus_{\beta, \beta'} \hH_{\beta + \beta'}e(\beta, \beta')$ satisfy the assumptions in Section \ref{pbw}. 
Thus the category $\hH \mof$ is a monoidal category.
In addition, we have a natural isomorphism $\hH \mof \simeq \Mm_\fdim^{\nilp}$ of monoidal categories, and hence
\begin{equation} \label{eq:VV2} 
K(\hH \mof) \simeq A_t[N]|_{t=1} 
\end{equation}
through which the basis formed by the classes of simple modules corresponds to the specialized dual canonical basis $\bB^*|_{t=1}$.
In what follows, we identify $ \Mm_\fdim^{\nilp}$ with $\hH \mof$.

\subsection{Category $\Cc_w$ and PBW-theory} 
\label{ssec:pbwqH}
Let us fix an element $w \in \sW$.
We define the category $\Cc_{w}^\bullet$ (resp.~$\Cc_w$) to be the Serre subcategory of $\Mm_\fdim^\bullet$ (resp.~$\hH \mof$) generated by the simple modules corresponding to the elements of $t^\Z \bB^*(w)$ (resp.~$\bB^*(w)|_{t=1}$) under the isomorphism \eqref{eq:VV} (resp.~\eqref{eq:VV2}).
Theorem~\ref{Thm:Kimura} and Theorem~\ref{Thm:VV} (resp.~Corollary~\ref{Thm:VV}) tell us that the category $\Cc^\bullet_{w}$ (resp.~$\Cc_w$) is closed under the monoidal structure $\star$ and that we have the isomorphism 
\[ K(\Cc_{w}^\bullet) \simeq A_{t}[N(w)]_{\Z[t^{\pm 1}]} \qquad (\text{resp. }K(\Cc_w) \simeq A_t[N(w)]|_{t=1}),\]
where $A_t[N(w)]|_{t=1} \seq A_t[N(w)]_{\Z[t^{\pm 1}]}\otimes_{\Z[t^{\pm 1}]} \Z$ is the specialization at $t=1$.   

Now, let us choose a reduced word $\ii = (i_1, \ldots, i_\ell)$ for $w$. 
For each $j \in J$, let $L^\bullet_{\ii, j} \in \Cc_w^\bullet$ (resp.~ $L_{\ii,j}$) be a simple module whose isomorphism class corresponds to the dual root vector $E^*_{\ii,j}$ (resp.~$E^*_{\ii,j}|_{t=1}$) through the above isomorphism \eqref{eq:VV} (resp.~\eqref{eq:VV2}).
The module $L_{\ii,j}$ is obtained from $L^\bullet_{\ii,j}$ by forgetting the grading.
These modules are called
\emph{cuspidal modules}.
We recall the following fundamental result due to Kashiwara-Kim-Oh-Park \cite{KKOP18}.

\begin{Thm}[\cite{KKOP18}] \label{pbwqh}
For any $w \in \sW$ and any reduced word $\ii$ for $w$, the pair $(\{L_{\ii,j}\}_{j \in J}, \preceq)$ gives a PBW-theory of the monoidal category $\Cc_w$ in the sense of Definition~\ref{Def:PBW}, where $\preceq$ is the bi-lexicographic ordering on the set $\N^{\oplus J}$.
\end{Thm}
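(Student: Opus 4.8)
The plan is to transfer the problem to the graded category $\Cc^\bullet_w$, where by Theorems~\ref{Thm:VV} and~\ref{Thm:Kimura} the Grothendieck ring is identified with $A_t[N(w)]_{\Z[t^{\pm1}]}$, invoke the structural results of Kashiwara--Kim--Oh--Park \cite{KKOP18} on cuspidal modules there, and then push everything down along the grading-forgetting functor $\Cc^\bullet_w \to \Cc_w$, which is exact, monoidal, and sends graded simple modules to simple modules (hence heads to heads, since the graded radical of a graded module coincides with its ungraded radical). So it suffices to verify conditions (1)--(3) of Definition~\ref{Def:PBW} for $\{L^\bullet_{\ii,j}\}_{j\in J}$ with the bi-lexicographic order and then specialize $t\to1$.

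First I would record from \cite{KKOP18} the two basic facts about the graded cuspidal modules attached to $\ii$: (a) for every $\bd=(d_j)_{j\in J}\in\N^{\oplus J}$ the $\leftarrow$-ordered convolution product $M^\bullet(\bd)=\mathop{\star}^{\leftarrow}_{j\in J}(L^\bullet_{\ii,j})^{\star d_j}$ has a simple head, a self-dual simple $L^\bullet(\bd)$ up to grading shift; and (b) as $\bd$ ranges over $\N^{\oplus J}$ the $L^\bullet(\bd)$ exhaust, up to grading shift and without repetition, the simple objects of $\Cc^\bullet_w$. Here one must first match the product order $\mathop{\star}^{\leftarrow}_{j\in J}$ in Definition~\ref{Def:PBW} with the order in which \cite{KKOP18} and formula~\eqref{eq:Ea} take their products; if these differ by a reversal one works with $\ii$ read backwards, or equivalently with the contravariant dual, which affects nothing below. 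Forgetting the grading turns (a) and (b) into conditions (1) and (2) for $\{L_{\ii,j}\}_{j\in J}$ in $\Cc_w$.

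For condition (3) I pass to the Grothendieck ring. Under $K(\Cc^\bullet_w)\simeq A_t[N(w)]_{\Z[t^{\pm1}]}$ the class $[L^\bullet_{\ii,j}]$ corresponds to $E^*_{\ii,j}$, so $[M^\bullet(\bd)]$ corresponds to the dual PBW monomial $E^*_\ii(\bd)$ of~\eqref{eq:Ea} and $[L^\bullet(\bd)]$ to $B^*_\ii(\bd)$ of Theorem~\ref{Thm:Kimura}, in each case up to a power of $t$. Theorem~\ref{Thm:Kimura} then gives the unitriangular expansion $E^*_\ii(\bd)=B^*_\ii(\bd)+\sum_{\bd'\prec\bd}c_\ii[\bd;\bd']\,B^*_\ii(\bd')$ with $c_\ii[\bd;\bd']\in t\Z[t]$ for the bi-lexicographic order; specializing $t\to1$, so that all grading-shift powers of $t$ become $1$, yields $[M(\bd)]=[L(\bd)]+\sum_{\bd'\prec\bd}P_{\bd,\bd'}[L(\bd')]$ in $K(\Cc_w)$ with $P_{\bd,\bd'}=c_\ii[\bd;\bd'](1)\in\Z$. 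Since $P_{\bd,\bd'}=[M(\bd):L(\bd')]$ is a Jordan--H\"older multiplicity it lies in $\N$, completing (3).

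The main obstacle, which I treat as a black box quoted from \cite{KKOP18}, is fact (a): the simple-head property of the ordered convolution of cuspidal modules together with the identification of that head with the simple module labelled by the corresponding dual canonical basis element. Its proof relies on the theory of R-matrices between quiver Hecke algebra modules, normal (unmixed) sequences of cuspidal modules, and the unipotency of the PBW-to-canonical transition matrix. The only genuinely new bookkeeping is the orientation compatibility noted above and the exactness, monoidality, and simplicity-preservation of the grading-forgetting functor, both of which are routine.
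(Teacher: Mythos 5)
Your proposal is correct and takes essentially the same approach as the paper: Theorem~\ref{pbwqh} is stated as a direct citation of \cite{KKOP18} with no independent proof, and your argument simply fills in the routine steps that the citation suppresses, namely the transfer from the graded category $\Cc_w^\bullet$ to $\Cc_w$ via the exact monoidal forgetful functor, and the use of Theorem~\ref{Thm:Kimura} for the unitriangularity in condition~(3) after specializing $t\to 1$.
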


In what follows, given a reduced word $\ii$ for $w$, we write $M_\ii(\bd)$ and $L_\ii(\bd)$ respectively for the standard module and its simple head, and write $M_\ii(\bep)$ for the mixed tensor product, associated with the PBW-theory in Theorem \ref{pbwqh}.
Note that the class $[L_\ii(\bd)]$ corresponds to the specialized dual canonical basis element $B^*_\ii(\bd)|_{t=1}$ under the isomorphism \eqref{eq:VV2}.

\subsection{$R$-matrices and deformed PBW-theory}
\label{ssec:dPBWqH}

Let $\beta \in \sQ^+$. We define an element $\varphi_k$ of $H_\beta$ for each $1 \le k < |\beta|$ by
\[ \varphi_k e(\nu) \seq  \delta_{\nu_k, \nu_{k+1}}
(\tau_k x_k - x_k \tau_k)e(\nu) + (1-\delta_{\nu_k,\nu_{k+1}}) \tau_k e(\nu).
\] 
Since $\{\varphi_k\}_{1 \le k <|\beta|}$ satisfy the braid relations, we get a well-defined element $\varphi_{g}$ for each $g \in \mathfrak{S}_{|\beta|}$ by composing them so that we have $\varphi_g = \varphi_{i_1} \cdots \varphi_{i_n}$ if $g = \sigma_{i_1} \cdots \sigma_{i_n}$ is a reduced expression.
For any $M \in \Mm_\beta$ and $M' \in \Mm_{\beta'}$, we have the unique $H_{\beta + \beta'}$-homomorphism 
\[ R_{M,M'} \colon M \star M' \to M' \star M\]
extending the $H_\beta \otimes H_{\beta'}$-homomorphism $M\otimes M' \to M' \star M$ given by $v \otimes v' \mapsto \varphi_{\sigma}e(\beta', \beta)(v' \otimes v)$, where $\sigma \in \mathfrak{S}_{|\beta|+|\beta'|}$ is the permutation defined by $\sigma(k) \seq k + (-1)^{\delta(k > |\beta'|)}|\beta'|$.
Note that $R_{M,M'}$ also yields an $\hH$-homomorphism if $M, M'$ are $\hH$-modules.
By construction, they satisfy the quantum Yang-Baxter equation, that is, we have
\begin{align} \label{eq:qYBR}
&(R_{M', M''} \star \id_M) \circ (\id_{M'} \star R_{M,M''})\circ (R_{M, M' } \star \id_{M''})  \\
&= (\id_{M''} \star R_{M,M'}) \circ (R_{M,M''}\star \id_{M'}) \circ (\id_M \star R_{M',M''})
\end{align}
for any $\hH$-modules $M,M',M''$.

Next, we introduce deformations.
Let $z$ be an indeterminate and set $\Oo = \kk[\![z]\!]$, $\Kk = \kk(\!(z)\!)$ as before.
For $M \in \hH \mof$, we define its deformation $M_{a(z)}$ with $a(z) \in z\Oo$ to be the $\Oo$-module $M \otimes \Oo$ equipped with the $\hH$-action given by 
\begin{align}
e(\nu) \cdot (v \otimes f(z)) & \seq e(\nu) v \otimes f(z), \nonumber \\
x_k \cdot (v \otimes f(z)) & \seq x_k v \otimes f(z) + v \otimes a(z)f(z), \label{eq:affx} \\
\tau_{l} \cdot (v \otimes f(z)) & \seq \tau_l v \otimes f(z) \nonumber 
\end{align} 
for any $v \in M$ and $f(z) \in \Oo$.
Therefore, $M_{a(z)}$ is an $\hH_\Oo$-module such that $(M_{a(z)})_0 \simeq M$.

Although the following result is essentially due to \cite[\S2.3]{KP18}, we shall give a proof for completeness. 
Recall that a simple module $M \in \hH \mof$ is said to be \emph{real} if $M \star M$ is simple.

\begin{Lem} \label{Lem:KP}
Let $M$, $N$ be simple modules in $\hH \mof$, and $a(z), b(z) \in z \Oo$ with $a(z) \neq b(z)$.  
\begin{enumerate}
\item \label{Lem:KP1} We have an isomorphism of $\hH_\Kk$-modules
\[(M_{a(z)} \star_\Oo N_{b(z)})_\Kk \simeq (N_{b(z)} \star_\Oo M_{a(z)})_\Kk, \]
induced from $R_{M_{a(z)}, N_{b(z)}}$.
\item \label{Lem:KP2} Assuming that at least one of $M$ and $N$ is real, we have an isomorphism
\[\End_{\hH_\Kk}((M_{a(z)} \star_\Oo N_{b(z)})_\Kk) \simeq \Kk \id. \]
\end{enumerate}
\end{Lem}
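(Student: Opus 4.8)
For part (1), observe that $R_{M_{a(z)},N_{b(z)}}\colon M_{a(z)}\star_\Oo N_{b(z)}\to N_{b(z)}\star_\Oo M_{a(z)}$ is a well-defined $\hH_\Oo$-homomorphism (the specialisation of the intertwiner used to define $R_{M,M'}$), and, writing $\beta,\gamma$ for the weights of $M,N$, both sides are free of finite rank over $\Oo$. It is enough to show that the composite $R_{N_{b(z)},M_{a(z)}}\circ R_{M_{a(z)},N_{b(z)}}$, an $\hH_\Oo$-endomorphism of $M_{a(z)}\star_\Oo N_{b(z)}$, becomes invertible after $-\otimes_\Oo\Kk$: by symmetry the other composite does as well, so $R_{M_{a(z)},N_{b(z)}}\otimes_\Oo\Kk$ is then injective and surjective. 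Now $M_{a(z)}\star_\Oo N_{b(z)}$ is generated over $\hH_\Oo$ by the ``bottom'' subspace $e(\beta,\gamma)\otimes(M_{a(z)}\otimes_\Oo N_{b(z)})$, and by the standard double-crossing identity for the intertwiners $\varphi$ (using $\varphi_k^2 e(\nu)=e(\nu)$ when $\nu_k=\nu_{k+1}$ and $\varphi_k^2e(\nu)=Q_{\nu_k,\nu_{k+1}}(x_k,x_{k+1})e(\nu)$ otherwise) the above composite acts on that subspace, $e(\nu)$-weight space by $e(\nu)$-weight space, as multiplication by $\prod_{k\le|\beta|<|\beta|+l,\ \nu_k\ne\nu_{|\beta|+l}}Q_{\nu_k,\nu_{|\beta|+l}}(x_k,x_{|\beta|+l})$. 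On this subspace $x_k$ ($k\le|\beta|$) acts through $M_{a(z)}$ as $a(z)\,\id+n_k$ with $n_k$ nilpotent, while $x_{|\beta|+l}$ acts through $N_{b(z)}$ as $b(z)\,\id+n'_l$ with $n'_l$ nilpotent, and the $n_k,n'_l$ commute; hence each factor equals $(a(z)-b(z))^{-c_{\nu_k,\nu_{|\beta|+l}}}\id+(\text{nilpotent})$, which is invertible over $\Kk$ because $a(z)-b(z)\in z\Oo\setminus\{0\}$ is a unit of $\Kk$. So the composite is invertible on a generating subspace, hence $\hH_\Kk$-linearly surjective on the finite-dimensional space $(M_{a(z)}\star_\Oo N_{b(z)})_\Kk$, hence bijective. (No reality hypothesis enters part (1).)

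For part (2), note first that by (1) and the symmetry of the statement under exchanging $M$ and $N$ we may assume $M$ is real. The key reduction is a flat-deformation lemma: if $X_\Oo$ is an $\hH_\Oo$-module, free over $\Oo$, whose special fibre $X_0\seq X_\Oo/zX_\Oo$ is simple over $\hH$, then $X_\Kk\seq X_\Oo\otimes_\Oo\Kk$ is simple over $\hH_\Kk$. Indeed, a nonzero proper submodule $W\subsetneq X_\Kk$ would produce a nonzero proper $\hH_\Oo$-submodule $W\cap X_\Oo\subsetneq X_\Oo$ whose image in $X_0$ is a nonzero submodule, hence all of $X_0$, so $W\cap X_\Oo+zX_\Oo=X_\Oo$ and, by Nakayama, $W\cap X_\Oo=X_\Oo$, a contradiction. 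Applied to $X_\Oo=M_{a(z)}\star_\Oo N_{b(z)}$ this is not yet enough, since the special fibre $M\star N$ need not be simple; but applied to $M_{a(z)}\star_\Oo M_{b(z)}$, whose special fibre is $M\star M$ (recall $a(z),b(z)\in z\Oo$) -- simple because $M$ is real -- it shows that $(M_{a(z)})_\Kk$ is a \emph{real simple} $\hH_\Kk$-module, and likewise that $(N_{b(z)})_\Kk$ is simple.

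Consequently the simplicity-of-heads-and-socles theory for real simple modules (\cite{KKOP18} and the references therein) is available over $\hH_\Kk$ for the pair $\bigl((M_{a(z)})_\Kk,(N_{b(z)})_\Kk\bigr)$. Write $P\seq(M_{a(z)}\star_\Oo N_{b(z)})_\Kk$ and $P'\seq(N_{b(z)}\star_\Oo M_{a(z)})_\Kk$. Then: the head of $P$ is simple and occurs in $P$ with multiplicity one; the socle of $P'$ is simple; and $\operatorname{hd}(P)\cong\operatorname{soc}(P')$, both being the image of the renormalised $R$-matrix $P\to P'$. By part (1), $R_{M_{a(z)},N_{b(z)}}$ induces an isomorphism $P\xrightarrow{\ \sim\ }P'$, so $\operatorname{soc}(P)\cong\operatorname{soc}(P')\cong\operatorname{hd}(P)=:H$. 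Combined with the multiplicity-one of $H$ in $P$, this forces $P\cong H$ to be simple: otherwise the socle copy of $H$ lies inside $\operatorname{rad}(P)$ while $P/\operatorname{rad}(P)\cong H$, giving multiplicity $\ge2$ -- unless $H$ is a direct summand of $P$, in which case $\operatorname{rad}(P)=0$ and again $P\cong H$. Finally, a simple module over a quiver Hecke algebra over an arbitrary field is absolutely irreducible, so $\End_{\hH_\Kk}(P)=\Kk\,\id$.

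The $\Oo$-lattice bookkeeping and the $\varphi$-calculus identity for $R_{N,M}\circ R_{M,N}$ are routine. The main obstacle is the input used in the last paragraph: one must check that the head/socle machinery and the renormalised $R$-matrix formalism of Kashiwara-Kim-Oh-Park apply verbatim over the non-algebraically-closed field $\Kk=\kk(\!(z)\!)$ once $(M_{a(z)})_\Kk$ is known to be real simple. This is exactly where reality of $M$ is genuinely used, and it reproduces \cite[\S2.3]{KP18}.
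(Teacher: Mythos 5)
Part (1) of your proposal is correct and is essentially the paper's argument: both identify the composite $R_{N_{b(z)},M_{a(z)}}\circ R_{M_{a(z)},N_{b(z)}}$ on the generating subspace $e(\beta,\beta')(M_{a(z)}\otimes_\Oo N_{b(z)})$ with multiplication by $\prod Q_{\nu_k,\nu'_l}(x_k,x_{d+l})$ (the paper simply quotes \cite[Proposition~1.10]{KKK} instead of re-deriving it from $\varphi_k^2e(\nu)$), and then check invertibility over $\Kk$. The paper uses that positive-degree central elements kill the simple factors to get the exact scalar $(a(z)-b(z))^N$, while you settle for ``unit plus commuting nilpotent'', which is equally sufficient; no objection there.

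Part (2) is where your route genuinely diverges from the paper's, and it has two problems. First, a repairable slip: to conclude that $(M_{a(z)})_\Kk$ is \emph{real} you apply your flat-deformation lemma to $M_{a(z)}\star_\Oo M_{b(z)}$ with the two distinct parameters, which only yields simplicity of $(M_{a(z)})_\Kk\star_\Kk(M_{b(z)})_\Kk$, not of the square $(M_{a(z)})_\Kk\star_\Kk(M_{a(z)})_\Kk$; you should apply the lemma to $M_{a(z)}\star_\Oo M_{a(z)}$ (same parameter, special fibre still $M\star M$). Second, and seriously: your last two steps rest on the head/socle/multiplicity-one machinery and on ``simple modules over quiver Hecke algebras are absolutely irreducible'' being available for the $\hH_\Kk$-modules $P$, $(M_{a(z)})_\Kk$, $(N_{b(z)})_\Kk$. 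Those results are proved in the graded (equivalently, nilpotent-$x$) setting; here the $x_k$ act with the nonzero eigenvalues $a(z),b(z)$, the modules are not gradable, and $\Kk=\kk(\!(z)\!)$ is not algebraically closed, so even granting simplicity of $P$ its endomorphism ring could a priori be a nontrivial division algebra. You flag this (``one must check \dots applies verbatim''), but that check is exactly the content of the lemma, so the argument is incomplete precisely at its crux. The paper's proof avoids all theory over $\Kk$: it notes $\End_{\hH_\Kk}((M_{a(z)}\star_\Oo N_{b(z)})_\Kk)\simeq\End_{\hH_\Oo}(M_{a(z)}\star_\Oo N_{b(z)})\otimes_\Oo\Kk$ and computes $\End_{\hH_\Oo}$ by successive $z$-adic approximation, using only the special-fibre Schur property $\End_{\hH}(M\star N)=\kk\,\id$ from \cite[Proposition~3.8]{kkko}; neither simplicity of the generic fibre nor any extension of the Kashiwara--Kim--Oh--Park formalism over $\Kk$ is needed. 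I recommend either adopting that integral argument or actually carrying out the over-$\Kk$ extension you invoke, which is a nontrivial task rather than a routine verification.
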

\begin{proof}
By \cite[Proposition 1.10]{KKK}, for any $M' \in H_\beta \Mod$ and $N' \in H_{\beta'} \Mod$, the homomorphism $(R_{N',M'} \circ R_{M',N'})|_{e(\beta, \beta')(M' \otimes N')}$ is given by the multiplication by
\[ X \seq \sum_{\nu \in I^\beta, \nu' \in I^{\beta'}} \left(\prod_{1 \le k \le d, 1 \le l \le d', \nu_k \neq \nu'_l}Q_{\nu_k, \nu'_l}(x_k, x_{d+l})\right) e(\nu * \nu').\]
This element
$X$ is in $Z_+(H_\beta) \otimes H_{\beta'} + H_\beta \otimes Z_+(H_{\beta'})$, where $Z_+(H_\beta)$ denotes the positive degree part of the center of $H_\beta$. 
Consider the case when $M' = M_{a(z)}$ and $N' = N_{b(z)}$.
Since $Z_+(H_\beta)$ acts by zero on a simple module, the action of $X$ on $e(\beta, \beta')(M_{a(z)} \otimes_{\Oo} N_{b(z)})$ becomes the multiplication by  
\[ \pm\sum_{\nu \in I^\beta, \nu' \in I^{\beta'}}\left(\prod_{1 \le k \le d, 1 \le l \le d', \nu_k \neq \nu'_l}\left(a(z)-b(z)\right)^{-c_{\nu_k, \nu'_l}}\right) e(\nu * \nu') = \pm(a(z)-b(z))^N e(\beta, \beta')\]
for some $N \in \N$. 
It is invertible after the localization as $a(z) \neq b(z)$.
Thus, \eqref{Lem:KP1} is proved.

To prove \eqref{Lem:KP2}, it is enough to show $\End_{\hH_\Oo}(M_{a(z)} \star_\Oo N_{b(z)}) = \Oo \id$ as we have 
\[\End_{\hH_\Kk}((M_{a(z)} \star_\Oo N_{b(z)})_\Kk) \simeq \End_{\hH_\Oo}(M_{a(z)} \star_\Oo N_{b(z)})\otimes_\Oo \Kk. \]
For simplicity, put $T \seq M_{a(z)} \star_\Oo N_{b(z)}$.
Then, $T_0 = T/zT \simeq M \star N$.
By the assumption and \cite[Proposition 3.8]{kkko}, we have
\begin{equation} \label{eq:simplicity} 
\End_{\hH}(T_0) \simeq \End_{\hH}(M \star N) \simeq \kk \id. 
\end{equation}
Let $f \in \End_{\hH_\Oo}(T)$ be a non-zero homomorphism.  
There exists a unique integer $s \in \N$ such that $f(T) \subset z^sT$ and $f(T) \not \subset z^{s+1}T$.
By \eqref{eq:simplicity}, there exists a unique $c_s \in \kk^\times$ such that $(f - c_s z^s \id)(T) \subset z^{s+1}T$.
Repeating the same argument, we inductively find for any integer $l \ge s$ a scalar $c_l \in \kk$ such that $(f - \sum_{k=s}^{l}c_kz^k \id)(T) \subset z^{l+1}T$.
Then, we get $f = (\sum_{k \ge s}c_k z^k)\id$ as $\bigcap_{k \in \N} z^k T = \{0\}$, which proves $\End_{\hH_\Oo}(T) = \Oo \id$ as desired.
\end{proof}

Let $(\{L_{\ii,j}\}_{j \in J}, \preceq)$ be the PBW-theory  in Theorem \ref{pbwqh} associated with a reduced word $\ii$ of $w$.
We define a collection $\{\tL_{\ii,j}\}_{j \in J}$ of $\hH_\Oo$-modules by
\begin{equation} \label{eq:tLik} 
\tL_{\ii,j} \seq (L_{\ii, j})_{jz} 
\end{equation}
for each $j \in J$.

\begin{Cor}
The collection $\{\tL_{\ii,j}\}_{j \in J}$ defined in \eqref{eq:tLik} gives a normal, consistent, generically commutative deformation of $\{L_{\ii,j}\}_{k \in J}$ in the sense of Section \ref{secun}.  
\end{Cor}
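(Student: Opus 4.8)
The plan is to check, in turn, the three constituents of the statement: that $\{\tL_{\ii,j}\}_{j\in J}$ is a generically commutative deformation (conditions (D1), (D2) of Definition~\ref{Def:gcdef}), that it is consistent (condition (D3)), and that it is normal (conditions (N1), (N2)). The first is immediate: by the construction of $(-)_{a(z)}$, the $\hH_\Oo$-module $\tL_{\ii,j}=(L_{\ii,j})_{jz}$ is free over $\Oo$ and satisfies $(\tL_{\ii,j})_0\simeq L_{\ii,j}$, which is (D1).

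For (D2), I would first recall from \cite{KKOP18} that every cuspidal module $L_{\ii,j}$ is a \emph{real} simple module. When $i\neq j$ one has $iz\neq jz$ in $z\Oo$, so Lemma~\ref{Lem:KP}, applied with $M=L_{\ii,i}$, $N=L_{\ii,j}$, $a(z)=iz$, $b(z)=jz$, yields both the isomorphism $(\tL_{\ii,i}\star_\Oo\tL_{\ii,j})_\Kk\simeq(\tL_{\ii,j}\star_\Oo\tL_{\ii,i})_\Kk$ of $\hH_\Kk$-modules (part~\eqref{Lem:KP1}) and the equality $\End_{\hH_\Kk}\big((\tL_{\ii,i}\star_\Oo\tL_{\ii,j})_\Kk\big)=\Kk\,\id$ (part~\eqref{Lem:KP2}, using that $L_{\ii,i}$ is real). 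For $i=j$ the required isomorphism is the identity; for the endomorphism statement, observe that $(\tL_{\ii,j}\star_\Oo\tL_{\ii,j})_0\simeq L_{\ii,j}\star L_{\ii,j}$ is simple by reality, so $\End_{\hH}$ of it equals $\kk\,\id$, and the successive-approximation argument in the proof of Lemma~\ref{Lem:KP}\eqref{Lem:KP2}---which uses only this together with $\bigcap_{k}z^{k}T=\{0\}$---upgrades it to $\End_{\hH_\Oo}(\tL_{\ii,j}\star_\Oo\tL_{\ii,j})=\Oo\,\id$, hence to $\Kk\,\id$ after localization.

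For consistency (D3) the plan is to transport the quantum Yang--Baxter relation from the undeformed level. The quiver Hecke $R$-matrices $R_{M,M'}$ satisfy \eqref{eq:qYBR} \emph{on the nose}, and the same identity holds for $\hH_\Oo$-modules by the same construction (through the universal elements $\varphi_{g}$). Writing $\rho_{ij}=R_{\tL_{\ii,i},\tL_{\ii,j}}\colon\tL_{\ii,i}\star_\Oo\tL_{\ii,j}\to\tL_{\ii,j}\star_\Oo\tL_{\ii,i}$ and letting $c_{ij}\in\N$ be its order of vanishing along $z$, Lemma~\ref{Lem:KP}\eqref{Lem:KP1} shows that $z^{-c_{ij}}\rho_{ij}$ is a legitimate choice of renormalized $R$-matrix $R_{i,j}$. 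Substituting $\rho_{ab}=z^{c_{ab}}R_{a,b}$ into the two sides of \eqref{eq:YB} for a triple $i<j<k$, the scalar pulled out is $z^{c_{ij}+c_{ik}+c_{jk}}$ on \emph{both} sides, while the residual compositions agree by \eqref{eq:qYBR}; hence \eqref{eq:YB} holds modulo $\Oo^\times$, which is (D3) (only triples $i<j<k$ are needed there).

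Finally, normality. Condition (N2) is formal once (N1) is known: since $(M\star N)^\vee\simeq N^\vee\star M^\vee$ and every cuspidal module is self-dual after forgetting the grading, $M_\ii(\bd)^\vee\simeq M_\ii^\vee(\bd)$, so $M_\ii^\vee(\bd)$ has simple socle $\simeq L_\ii(\bd)^\vee\simeq L_\ii(\bd)$; by the remark following the definition of normality this gives (N2)$'$, hence (N2). The crux is (N1), i.e.\ $\beta(\bd)=\beta(\bep_c(\bd),\bep_s(\bd))=0$: concretely, the composite renormalized $R$-matrix $R_{\bep_c,\bep_s}\colon\tM_\ii(\bep_s)_\Kk\to\tM_\ii(\bep_c)_\Kk$ must carry the standard lattice $\tM_\ii(\bep_s)$ into $\tM_\ii(\bep_c)$ but not into $z\,\tM_\ii(\bep_c)$; equivalently, the composite of the specialized elementary $R$-matrices $M_\ii(\bep_s)\to\cdots\to M_\ii(\bep_c)$ along a reduced braid word realizing $\bep_s\lesssim\bep_c$ does not vanish at $z=0$. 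This non-degeneracy is where the fine structure of cuspidal modules must be invoked: I expect it to follow from the unmixedness (normal-sequence) properties of the standard ordering of cuspidal modules established in \cite{KKOP18}---the exact analog of \cite[Proposition 5.7(iii)]{KKOP} used in the quantum loop case---possibly through the affinization machinery of \cite{KP18}. This is the step I anticipate to be the main obstacle; once it is secured, the verification of (D1)--(D3), (N1), (N2) is complete.
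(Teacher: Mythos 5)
Your treatment of (D1)--(D3) is correct and coincides with the paper's own argument: (D1) is by construction, (D2) via Lemma~\ref{Lem:KP} together with the reality of cuspidal modules (\cite[Proposition 4.2]{KKOP18}), and (D3) by observing that the renormalizing $z$-powers $z^{-c_{ij}}$ cancel on both sides of the genuine quantum Yang--Baxter relation \eqref{eq:qYBR}. Your side-remark that the case $i=j$ lies outside the hypothesis of Lemma~\ref{Lem:KP}, but that the successive-approximation argument in the proof of part~(2) uses only the simplicity of $L_{\ii,j}\star L_{\ii,j}$ (which holds by reality) and $\bigcap_k z^kT=0$, is also correct and is a detail the paper elides. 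Your reduction of (N2) to (N1) plus the simple-socle property of $M^\vee_\ii(\bd)$, using self-duality and $(M\star N)^\vee\simeq N^\vee\star M^\vee$, matches the general remark following the definition of normality in Section~\ref{bars}.

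The genuine gap is (N1), exactly as you flag. The paper does not argue it from scratch; it cites the normality of this deformed PBW-sequence directly from \cite[Proposition 2.11]{KK19}, the same reference from which the definition of normality was imported (cf.\ \cite[Definition 2.5]{KK19}). Your intuition that this should be the quiver-Hecke analog of \cite[Proposition 5.7(iii)]{KKOP} in the quantum loop case is sound, but the precise statement you need lives in \cite{KK19} rather than in \cite{KKOP18} or \cite{KP18}. Without invoking that result (or independently proving $\beta(\bd)=0$ for all $\bd$), the verification of the corollary is incomplete.
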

\begin{proof}
It is known that the simple module $L_{\ii,j}$ is real (cf.~\cite[Proposition 4.2]{KKOP18}).\
Then, it is clear from Lemma~\ref{Lem:KP} that the collection $\{L_{\ii,j}\}_{j \in J}$ gives a generically commutative deformation.
Since the renormalized $R$-matrix $R_{ij}$ in this case is induced from the homomorphism $z^{s_{ij}}R_{\tL_{\ii,i}, \tL_{\ii,j}}$ with $s_{ij}$ being a uniquely defined integer, the consistency follows from the quantum Yang-Baxter equation~\eqref{eq:qYBR}. 
The normality is proved in \cite[Proposition 2.11]{KK19}.
\end{proof}

\subsection{Quantum unipotent coordinate ring conjecture}
We state the analog of the quantum Grothendieck ring conjecture for the quiver Hecke algebras, which we call the quantum unipotent  coordinate  ring conjecture.
For this purpose, we need to introduce a renormalization of the dual canonical basis.

Let $t^{1/2}$ be a formal square root of the indeterminate $t$, and let 
\[ A_t[N(w)]_{\Z[t^{\pm 1/2}]} \seq A_t[N(w)]_{\Z[t^{\pm 1}]} \otimes_{\Z[t^{\pm 1}]}\Z[t^{\pm 1/2}]. \]
Note that we have $(\beta, \beta) \in 2 \Z$ for any $\beta \in \sQ$.
For a homogeneous element $x \in A_t[N(w)]_{\Z[t^{\pm 1/2}]}$ of degree $\beta \in \sQ^+$, we write $\tilde{x} \seq t^{-(\beta, \beta)/4}x$.
In particular, for any reduced word $\ii$ for $w$ and $\bd = (d_j)_{j \in J}\in \N^{\oplus J}$, we write
\[ \tB^*_\ii(\bd) = t^{-(\beta, \beta)/4}B^*_\ii(\bd), \qquad \tE^*_\ii(\bd) = t^{-(\beta,\beta)/4}E^*_\ii(\bd), \]
where $\beta \seq \sum_{j \in J}  d_j \alpha_{\ii, j}$.
We define the renormalized involution $\ol{(\cdot)}$ of $A_t[N(w)]_{\Z[t^{\pm 1/2}]}$ by $\ol{x} \seq t^{-(\beta, \beta)/2}\iota'(x)$ if $x$ is homogeneous of degree $\beta$, 
so that it fixes each renormalized dual canonical basis element $\tB^*_\ii(\bd)$.
The identity~\eqref{eq:iota*} implies that $\ol{(\cdot)}$ is an anti-involution, i.e., we have $\ol{x \cdot y} = \ol{y} \cdot \ol{x}$
for any $x, y \in A_t[N(w)]_{\Z[t^{\pm 1/2}]}$.

\begin{Rem}
By \cite{GLS13} (and \cite{kkko}), the algebra $A_t[N(w)]_{\Z[t^{\pm 1/2}]}$ has the structure of quantum cluster algebra in the sense of \cite{BZ05}.   
The anti-involution $\ol{(\cdot)}$ coincides with the natural bar-involution of the quantum cluster algebra. 
\end{Rem}

In terms of the renormalized elements, the equation~\eqref{eq:Ea} is rewritten as 
\begin{equation} \label{eq:tE}
\tE_\ii^*(\bd) = t^{-\sum_{1 \le j < k \le \ell} d_j d_k (\alpha_{\ii, j}, \alpha_{\ii, k})/2}\prod_{j \in J}^{\leftarrow}(\tE^*_{\ii,j})^{d_j}
\end{equation}
Comparing with \eqref{eq:*mix}, we define the skew-symmetric bilinear map $\gamma_\ii  \colon \N^{\oplus J} \times \N^{\oplus J} \to \frac{1}{2}\Z$ by
\[ 
\gamma_\ii(\bd, \bd') \seq \frac{1}{2}\sum_{1 \le j < k \le \ell} (d_j d'_k-d_kd'_j) (\alpha_{\ii,j}, \alpha_{\ii,k}).
\]
With the consistent deformation $\{\tL_{\ii,j}\}_{j \in J}$ constructed in Section \ref{ssec:dPBWqH} and $\gamma = \gamma_\ii$ defined as above, we obtain the associated bilinear operation $* = *_{\gamma_\ii}$ on $K(\Cc_{w})_t$.
Be aware that it depends on the choice of reduced word $\ii$.

We define a $\Z[t^{\pm 1/2}]$-linear isomorphism $\phi \colon K(\Cc_w)_t \simeq A_t[N(w)]_{\Z[t^{\pm 1/2}]}$ by $\phi([L_\ii(\bd)]) = \tB^*_\ii(\bd)$ for all $\bd \in \N^{\oplus J}$. 
Note that this isomorphism $\phi$ does not depend on the choice of reduced word $\ii$.
Clearly, we have $\phi \circ \ol{(\cdot)} = \ol{(\cdot)} \circ \phi$.

\begin{conj}[Quantum Unipotent Coordinate Ring Conjecture] 
\label{conj:qH}
With a chosen reduced word $\ii$ for $w$,  
Associativity Conjectures \ref{Conj:assoc} and \ref {Conj:sassoc} hold for $(K(\Cc_{w})_t,*)$, and the linear isomorphism $\phi$ gives a $\Z[t^{\pm 1/2}]$-algebra isomorphism from $(K(\Cc_{w})_t,*)$ to  the  quantum unipotent coordinate ring $A_t[N(w)]_{\Z[t^{\pm 1/2}]}$.
\end{conj}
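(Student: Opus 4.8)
The plan is to prove the \emph{adapted} case of Conjecture~\ref{conj:qH}, i.e.\ Theorem~\ref{Thm:adapted}, by transporting the strategy used for the quantum loop algebra side (Theorem~\ref{simply-laced}) to the quiver-theoretic geometry of Lusztig and Varagnolo--Vasserot, while the general conjecture remains open. So let me first fix a quiver $Q$ whose underlying graph is the Dynkin diagram of $\fg$, and a reduced word $\ii$ for $w$ that is adapted to $Q$. Under this hypothesis, Varagnolo--Vasserot's geometric realization \cite{VV11} identifies $H_\beta$ (and its completion $\hH_\beta$) with an $\Ext$-algebra of Lusztig sheaves on the representation space $E_V$ of the quiver $Q$ with dimension vector corresponding to $\beta$, equivariantly for $G_V = \prod_i GL(V_i)$. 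In this dictionary: the cuspidal modules $L_{\ii,j}$ correspond (up to shift) to the simple perverse sheaves $\IC$ on the orbit closures indexed by the reduced word, the standard modules $M_\ii(\bd)$ correspond to the (shifted) pushforwards $\mathcal{L}_{\bd}$ of constant sheaves along the proper maps $\pi_{\bd}$ from flag-type varieties (the ``Lusztig sheaves''), and the mixed products $M_\ii(\bep)$ correspond to such pushforwards along the maps associated to arbitrary orderings $\bep$.

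The key steps, in order, are as follows. First I would set up the geometric deformation: replace the $\hH_\Oo$-module $\tM_\ii(\bep) = \tL_{\ii,\ep_1}\star_\Oo\cdots\star_\Oo\tL_{\ii,\ep_d}$ by the $\Ext$-complex of a \emph{$\Gm$-equivariant} version of the corresponding Lusztig sheaf, where the extra $\Gm$ acts so that the generator $x_k$ of weight $2$ is given exactly the loop-rotation / cohomological grading; the formal parameter $z$ of Section~\ref{ssec:dPBWqH} is the equivariant parameter of this $\Gm$, matching \eqref{eq:affx}. Then I would show that the renormalized $R$-matrices $R_{ij}$ are geometrically the (suitably normalized) maps between these Lusztig sheaves coming from Fourier--Deligne transform / the braid-group action on $\mathcal{D}^b_{G_V}(E_V)$, so that the integers $\alpha(i,j)$, $\beta(\bep',\bep)$ are read off from hyperbolic localization in the sense of Braden \cite{Braden}. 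The central step is then the ``Fundamental Example'' of Bernstein--Lunts \cite{BL94} together with the hard Lefschetz theorem: the monoidal Jantzen filtration $F_\bullet M_\ii(\bep)$ as defined by \eqref{expfm} is identified, exactly as in the proof of Theorem~\ref{simply-laced}, with the \emph{degree (cohomological / weight) filtration} on the stalk of the hyperbolic localization of the perverse sheaf $\IC(\mathcal{O}_{\bep})$ attached to $\bep$; the intersection of two lattices in \eqref{expfm} becomes the intersection of the images of the two attracting/repelling restrictions, which hard Lefschetz forces to coincide with the weight filtration. Finally, since the Poincar\'e polynomials of these hyperbolic localizations are by construction the structure constants and transition coefficients of $A_t[N(w)]_{\Z[t^{\pm1/2}]}$ in Lusztig's geometric description of the canonical basis (the $t$-grading being the cohomological grading), one gets $\phi([M_\ii(\bep)]_t)=\tE^*_\ii(\bep)$, hence $[L_\ii(\bd)]\mapsto\tB^*_\ii(\bd)$ is compatible with products; this simultaneously yields associativity and the algebra isomorphism, because the right-hand side $A_t[N(w)]_{\Z[t^{\pm1/2}]}$ is already known to be an associative algebra. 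The bar-involution statement follows from Verdier duality on the geometric side exchanging attracting and repelling loci, which is Proposition~\ref{Prop:Kdual} made geometric.

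I expect the main obstacle to be the precise matching of normalizations: namely, checking that the \emph{renormalized} $R$-matrices $R_{ij}$ of the abstract quiver-Hecke setup (defined via $\varphi_g$ and the rescaling by powers of $z$) agree, on the nose up to $\Oo^\times$, with the geometric intertwiners between Lusztig sheaves, including getting the shift conventions so that $\beta(\bd)=0$ (normality) is visible and so that the $\gamma_\ii$ twist in \eqref{eq:tE} exactly cancels the discrepancy between ``concatenation of standard sequences'' and ``product of PBW monomials.'' This is the same bookkeeping that appears in \cite{VV11} and in Kang--Kashiwara--Oh but must be redone with the deformation parameter present; everything else is a formal transport of the argument already carried out for Theorem~\ref{simply-laced}. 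A secondary subtlety is that adaptedness of $\ii$ to $Q$ is genuinely used to ensure that \emph{all} mixed products $M_\ii(\bep)$ (not just the standard one) are realized by Lusztig sheaves on the \emph{same} space $E_V$ with the expected stalks; without adaptedness one would need the more intricate graded-quiver-variety geometry, which is why the proposal is confined to Theorem~\ref{Thm:adapted} and does not settle Conjecture~\ref{conj:qH} in general.
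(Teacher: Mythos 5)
Your outline is correct and follows essentially the same route as the paper's proof of Theorem~\ref{Thm:adapted} in Section~\ref{ssec:PrqH}: realize the deformed mixed products $\tM_\ii(\bep)$ as $\Gm$-equivariant cohomology of hyperbolic localizations of Lusztig sheaves on Lusztig's transversal slice $S(\bd)$, identify the monoidal Jantzen filtration with the degree filtration via the Bernstein--Lunts ``Fundamental Example'' and hard Lefschetz (Theorem~\ref{Thm:main_qH}), and compare Poincar\'e polynomials with the structure constants of $A_t[N(w)]$ coming from Lusztig's geometric canonical basis (Proposition~\ref{Prop:Mtg_slice}). One small point: the paper realizes the renormalized $R$-matrices not via Fourier--Deligne transform or the braid-group action but directly from the canonical adjunction $i^! \to i^*$ on $S(\bd)$ (Proposition~\ref{Prop:Rmatg_qH}), which is the more elementary mechanism already used in the quantum loop algebra case.
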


\begin{Rem}If Conjecture \ref{conj:qH} is true for any reduced word $\ii$ for $w$, it implies that the ring structure $(K(\Cc_{w})_t,*)$ does not depend on the choice of $\ii$.
\end{Rem} 

Assume that Conjecture \ref{conj:qH} is true for a chosen reduced word $\ii$ for a while.
Then,  we have
\[ \phi([M_\ii(\bd)]_t) = \tE^*_{\ii}(\bd)\]
for any $\bd \in \N^{\oplus J}$.     
More generally, for any $d \in \N$ and sequence $\bep = (\ep_1, \ldots, \ep_{d}) \in J^d$, 
letting 
\begin{equation} \label{eq:Mit}
\tE^*_{\ii}(\bep) \seq t^{\sum_{1 \le k < l \le  d}\gamma_{\ii}(\bdelta_{\ep_k}, \bdelta_{\ep_l})}  \tE^*_{\ii,\ep_1}  \cdots  \tE^*_{\ii, \ep_{d}},
\end{equation}
we obtain the equality 
\begin{equation} \label{eq:mixqH}
\phi([M_\ii(\bep)]_t) = \tE^*_\ii(\bep). 
\end{equation}
The converse statement is true.
Namely, Conjecture \ref{conj:qH} is equivalent to the following.

\begin{conj}\label{qHmix}
The equality \eqref{eq:mixqH} holds for any $d \in \N$ and  $\bep \in J^d$. 
\end{conj}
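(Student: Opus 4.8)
The plan is to prove the equality \eqref{eq:mixqH}, which by the discussion preceding Conjecture~\ref{qHmix} automatically upgrades to Conjecture~\ref{conj:qH} (in particular yields the associativity of $*$), by a geometric argument parallel to the proof of Theorem~\ref{simply-laced}. The argument will be unconditional when the reduced word $\ii$ is adapted to a quiver $Q$ (this is the content of Theorem~\ref{Intro:adapted}); in that case one has at hand the geometric model of $H_\beta$ due to Varagnolo--Vasserot \cite{VV11}, realizing $H_\beta$ as the $\Z$-graded $\Ext$-algebra of Lusztig's induction complex $\mathcal{L} = \bigoplus_\lambda (\pi_\lambda)_!\underline{\kk}_{\mathcal{F}_\lambda}[\dim\mathcal{F}_\lambda]$ on the representation space $E_\beta$ of $Q$. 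Under this equivalence, objects of $\hH_\beta\mof$ correspond to $\Z$-graded data built from the simple perverse sheaves occurring in Lusztig's construction of the canonical basis \cite{LusB}: the cuspidal module $L_{\ii,j}$ corresponds to the $\IC$-sheaf of the orbit closure attached to the indecomposable representation of dimension vector $\alpha_{\ii,j}$, and the standard module $M_\ii(\bd)$, more generally the mixed product $M_\ii(\bep)$, corresponds to a $\Hom$-space out of the pushforward $(\pi_\bep)_!\underline{\kk}_{\mathcal{F}_\bep}$ along the proper map $\pi_\bep\colon \mathcal{F}_\bep\to E_\beta$ from the variety of flags of type $\bep$.

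First I would set up the dictionary between the algebraic deformation of Section~\ref{ssec:dPBWqH} and geometry. The one-parameter family $\tL_{\ii,j} = (L_{\ii,j})_{jz}$, in which $x_k$ acts with the extra scalar $jz$, should correspond under the Varagnolo--Vasserot equivalence to an equivariant (monodromic) twist, so that $\tM_\ii(\bep)$ is modelled by the associated $z$-monodromic family of $(\pi_\bep)_!\underline{\kk}_{\mathcal{F}_\bep}$, while each renormalized $R$-matrix $R_{i,j}$ goes to the ``crossing'' morphism interchanging the two orderings of a two-step flag fibration. Granting this, the formula \eqref{expfm} defining $F_\bullet M_\ii(\bep)$ — intersections of the images of the lattices $R_{\bep,\bep_s}\tM_\ii(\bep_s)$ and $R^{-1}_{\bep_c,\bep}\tM_\ii(\bep_c)$ inside the generic fibre — translates into the standard recipe producing the perverse (equivalently weight) filtration on $(\pi_\bep)_!\underline{\kk}_{\mathcal{F}_\bep}$. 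The essential input here is the hard Lefschetz theorem in the guise of the ``Fundamental Example'' of Bernstein--Lunts \cite{BL94} exploited in \cite{Soe08, Kub12}: it guarantees that the two lattices are in general position with respect to the $z$-adic filtration, so that the interplay in \eqref{expfm} does not degenerate and reproduces exactly the filtration by perverse degree.

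Next I would compute the associated graded. By the decomposition theorem $(\pi_\bep)_!\underline{\kk}_{\mathcal{F}_\bep}[\dim\mathcal{F}_\bep]$ decomposes into shifted $\IC$-sheaves $\IC_{\bd'}$, and the multiplicity of $\IC_{\bd'}$ in perverse degree $n$ equals, on the one hand, $[\Gr^F_n M_\ii(\bep):L_\ii(\bd')]$ by the previous step, and, on the other hand, the coefficient of $t^n\tB^*_\ii(\bd')$ in the expansion of $\tE^*_\ii(\bep)$ in the dual canonical basis — since this is precisely Lusztig's geometric computation of the product $\tE^*_{\ii,\ep_1}\cdots\tE^*_{\ii,\ep_d}$, the $t$-grading being the cohomological shift and the normalizing power of $t$ in \eqref{eq:Mit} matching the $\gamma_\ii$-twist by the very definition of $\gamma_\ii$. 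Summing over $n$ and $\bd'$ then gives $\phi([M_\ii(\bep)]_t) = \tE^*_\ii(\bep)$, as desired.

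The main obstacle is twofold. Even for adapted $\ii$, the technical heart is the compatibility claimed in the second paragraph: one must verify that the Varagnolo--Vasserot equivalence genuinely intertwines the algebraic deformation $(\,\cdot\,)_{jz}$ with the geometric monodromic deformation and carries the renormalized $R$-matrices to the crossing morphisms, a statement of the same nature as the identification of Jantzen filtrations with weight filtrations in \cite{BB, Soe08, Kub12}, requiring care about normalizations and about the passage from the graded algebra $H_\beta$ to its completion $\hH_\beta$. For a general, non-adapted reduced word there is at present no geometric model of $\hH_\beta$ in which the cuspidal modules and mixed products are visible, so the hard Lefschetz input is unavailable; an unconditional proof in that generality would require either such a model or a purely algebraic substitute controlling the self-duality and non-degeneracy of the monoidal Jantzen filtration — which appears to be exactly what is also needed to settle the associativity of $*$ (Conjectures~\ref{Conj:assoc}, \ref{Conj:sassoc}) and the Duality Conjecture~\ref{Conj:duality} in general.
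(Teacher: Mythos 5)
Your sketch follows essentially the same route as the paper's partial proof (Section~\ref{ssec:PrqH}): you correctly reduce to the adapted case, invoke the Varagnolo--Vasserot realization of $H_\beta$ as the equivariant Ext-algebra of Lusztig's complex $\cL_\beta$, identify the algebraic deformation $(\,\cdot\,)_{jz}$ with $\Gm$-equivariant cohomology so that the renormalized $R$-matrices become the canonical morphisms between $!$- and $*$-restrictions, and then use the Bernstein--Lunts ``Fundamental Example'' together with the decomposition theorem to read off the monoidal Jantzen filtration as the cohomological degree filtration, matching Lusztig's expansion of $\tE^*_\ii(\bep)$ in the dual canonical basis. You also correctly flag the two genuine obstacles the paper has to overcome --- making the deformation/$R$-matrix dictionary precise (done via Lemma~\ref{Lem:Lg}, Propositions~\ref{Prop:Mig}, \ref{Prop:M=H}, \ref{Prop:Rmatg_qH} and Lemma~\ref{Lem:ddqH}, using the transversal slice $S(\bd)$ and the hyperbolic localization formalism, details your sketch glosses over) and the absence of a geometric model for non-adapted $\ii$, so the conjecture remains open in general.
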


\begin{Rem}
Note that Conjecture \ref{qHmix} also implies  Duality Conjecture \ref{Conj:duality} in this case. Indeed, for each $\bep\in J^d$, we have 
$\overline{\phi([M_\ii(\bep)]_t)} = \ol{\tE^*_\ii(\bep)} = \tE^*_\ii(\bep^{\op}) = \phi([M_\ii(\bep^\op)]_t)$, and hence $\ol{[M_\ii(\bep)]_t} = [M_\ii(\bep^\op)]_t$. 
\end{Rem}

\begin{Def} \label{Def:adapted}
Let $Q$ be a quiver.
We understand it as a quadruple $Q = (Q_0, Q_1, \mathrm{s}, \mathrm{t})$, where $Q_0$ is the set of vertices, $Q_1$ is the set of arrows and $\mathrm{s}$ (resp.~$\mathrm{t}$) is the map $Q_1 \to Q_0$ assigning an arrow with its source (resp.~target).    
We say that a quiver $Q$ without edge loops is of type $\fg$ if $Q_0 = I$ and, for any $i, j \in I$ with $i \neq j$, we have
\[ -c_{ij} = -c_{ji}=\#\{ a \in Q_1 \mid \{\mathrm{s}(a), \mathrm{t}(a)\} = \{i,j\}\}.\] 
A vertex $i$ is called a \emph{source} (resp.~\emph{sink}) of the quiver $Q$ if there is no arrow $a \in Q_1$ with $i = \mathrm{t}(a)$ (resp.~$i = \mathrm{s}(a)$).
A sequence $\ii = (i_1, i_2, \ldots, i_\ell)$ in $I$ is said to be \emph{adapted to $Q$} if the vertex $i_k$ is a source of the quiver $s_{i_{k-1}} \cdots s_{i_2}s_{i_1}Q$ for any $1 \le k \le \ell$, where $s_i Q$ denotes the quiver obtained from $Q$ by inverting the orientations of all the arrows incident to $i$.
\end{Def}

When our reduced word $\ii$ for $w$ is adapted to a quiver $Q$ of type $\fg$, we have a geometric realization of the deformed PBW theory $\{\tL_{\ii,j}\}_{j \in J}$ and their mixed products (see Section \ref{ssec:PrqH} below).    
In this case, we establish that our conjectures are true with the help of geometry. 

\begin{Thm} \label{Thm:adapted}
When our reduced word $\ii$ for $w$ is adapted to a quiver $Q$ of type $\fg$, Conjecture \ref{qHmix} holds, and hence Conjecture \ref{conj:qH} holds.
\end{Thm}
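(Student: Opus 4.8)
By the discussion preceding Conjecture~\ref{qHmix}, it suffices to establish the equality \eqref{eq:mixqH}, that is $\phi([M_\ii(\bep)]_t)=\tE^*_\ii(\bep)$ for every $d\in\N$ and every $\bep\in J^d$; the Associativity Conjectures~\ref{Conj:assoc} and \ref{Conj:sassoc} and the algebra isomorphism onto $A_t[N(w)]_{\Z[t^{\pm1/2}]}$ then follow formally. The plan is to transport the whole picture to the geometry of quiver representations and run the argument used for Theorem~\ref{simply-laced} in Section~\ref{ssec:Prqaa}.

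First I would recall, following Lusztig~\cite{LusB} and Varagnolo--Vasserot~\cite{VV11}, the geometric model attached to a quiver $Q$ of type $\fg$: for $\beta\in\sQ^+$ one has the representation space $E_\beta$ with its $G_\beta=\prod_{i\in I}\mathrm{GL}(V_i)$-action, Lusztig's category of $G_\beta$-equivariant perverse sheaves on $E_\beta$ whose simple objects (up to shift) are indexed by the dual canonical basis $\bB^*$, and a Lusztig sheaf $\mathcal{L}_\beta$ whose $\Ext$-algebra is identified with the completion $\hH_\beta$. Under the resulting equivalence, $\hH\mof$ matches a category of perverse-sheaf modules so that simple $\hH$-modules correspond to $\IC$ sheaves of orbit closures and, because $\ii$ is adapted to $Q$, the cuspidal modules $L_{\ii,j}$ correspond to the $\IC$ sheaves supported on the orbit closures indexed by the positive roots $\alpha_{\ii,j}$, while the dual PBW vectors $E^*_\ii(\bd)$ and, more generally, the classes of the mixed products $M_\ii(\bep)$ are realized by iterated Lusztig induction of constant sheaves taken in the order prescribed by $\bep$. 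The grading on $K(\Cc_w^\bullet)\simeq A_t[N(w)]_{\Z[t^{\pm1}]}$ corresponds to the cohomological grading, so the renormalized elements $\tB^*_\ii(\bd),\tE^*_\ii(\bep)$ track graded stalk-cohomology Poincar\'e polynomials inside $\mathcal{L}_\beta$.

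Next I would realize the deformation geometrically. The affinization $L_{\ii,j}\mapsto\tL_{\ii,j}=(L_{\ii,j})_{jz}$, in which $x_k$ acts with an extra summand $jz$ (see \eqref{eq:affx}, \eqref{eq:tLik}), corresponds to switching on an auxiliary $\Gm$-action scaling the quiver data, with equivariant parameter $z$ and with the $j$-th cuspidal factor twisted by weight $j$; the renormalized $R$-matrices $R_{ij}$, which implement the reorderings of $\bep$, correspond to the comparison isomorphisms between the various iterated inductions, well defined up to $\Oo^\times$ exactly as on the algebraic side. The crucial step, in the spirit of Grojnowski~\cite{Groj}, is to identify the two $\Oo$-lattices $R_{\bep,\bep_s}\tM_\ii(\bep_s)$ and $R^{-1}_{\bep_c,\bep}\tM_\ii(\bep_c)$ bounding $\tM_\ii(\bep)_\Kk$ with the attracting and repelling filtrations attached to the cocharacter encoded by $\bep$, so that the monoidal Jantzen filtration $F_\bullet M_\ii(\bep)$ of \eqref{expfm} becomes, after $\ev_{z=0}$, the degree (equivalently weight) filtration on the hyperbolic localization, in the sense of Braden~\cite{Braden}, of the relevant semisimple complex on $E_\beta$.

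Finally I would invoke purity and hard Lefschetz. By the Decomposition Theorem the complexes involved are pure, Braden's theorem shows that the hyperbolic localization stays pure and identifies its attracting and repelling descriptions, and the hard Lefschetz property --- in the form of the ``Fundamental Example'' of Bernstein--Lunts~\cite{BL94}, exactly as in Section~\ref{ssec:Prqaa} --- forces the Poincar\'e polynomial of $\Gr^F_\bullet M_\ii(\bep)$ to coincide with the expansion of $\tE^*_\ii(\bep)$ in the renormalized dual canonical basis $\{\tB^*_\ii(\bd)\}$, since the latter is, by Lusztig and Kashiwara, precisely the graded multiplicity of $\IC$ sheaves inside $\mathcal{L}_\beta$. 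This yields $[M_\ii(\bep)]_t=\phi^{-1}(\tE^*_\ii(\bep))$, i.e.\ \eqref{eq:mixqH}, and the theorem follows. The main obstacle is exactly the identification of the previous paragraph: matching the purely algebraic $z$-adic Jantzen filtration attached to $\hH$-modules with Braden's attracting/repelling filtrations on the quiver side, including the compatibility of the affinization parameter with the $\Gm$-equivariant parameter and of the renormalized $R$-matrices with the geometric comparison maps. Once this dictionary is in place, the remainder of the argument runs parallel to the quantum loop algebra case treated in Section~\ref{ssec:Prqaa}.
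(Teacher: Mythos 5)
Your plan follows essentially the same route as the paper's proof: reduce to the equality \eqref{eq:mixqH}, transport everything to Lusztig's quiver geometry via the Varagnolo--Vasserot realization of $\hH_\beta$, realize the deformed mixed products and the renormalized $R$-matrices geometrically, identify the monoidal Jantzen filtration with the degree filtration of a hyperbolic localization, and conclude by the hard Lefschetz / ``Fundamental Example'' argument already used for Theorem~\ref{simply-laced}. So the strategy is the right one.

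However, what you defer as ``the main obstacle'' is not a routine dictionary check; it is the bulk of the actual argument, and one specific ingredient you omit would make your plan, as stated, not quite run. You propose taking the hyperbolic localization of the semisimple complex directly on the representation space $E_\beta=X(\beta)$, but the point $x(\bd)$ is not an attractive fixed point of a linear torus action there, so neither Braden's theorem in the form needed nor the Bernstein--Lunts Fundamental Example applies on the nose. The paper resolves this by passing to Lusztig's transversal slice $S(\bd)=x(\bd)+E(\bd)$ with $E(\bd)\simeq\Ext^1_Q(x(\bd),x(\bd))$ (Lemma~\ref{Lem:slice}, Propositions~\ref{Prop:ICa}, \ref{Prop:Mtg_slice}), where the cocharacter $\rho^\vee$ acts attractively precisely because $\dext(\alpha_{\ii,j},\alpha_{\ii,k})=0$ for $j\ge k$ (Lemma~\ref{Lem:dhom}), i.e.\ exactly where adaptedness of $\ii$ enters. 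The remaining pieces of the dictionary are also genuinely nontrivial and are proved, not assumed, in the paper: the affinized cuspidals and deformed mixed products are identified with completed $\Gm$-equivariant cohomology of $i_{\bep,x(\bd)}^*i_\bep^!\ocA_\bd$ (Lemma~\ref{Lem:Lg}, Propositions~\ref{Prop:Mig}, \ref{Prop:M=H}); the geometric comparison maps are matched with the renormalized $R$-matrices, including the computation $\alpha(j,k)=\dext(\alpha_{\ii,j},\alpha_{\ii,k})+\dext(\alpha_{\ii,k},\alpha_{\ii,j})$ (Lemma~\ref{Lem:ddqH}, Proposition~\ref{Prop:Rmatg_qH}); and on the algebraic side one needs the exact expansion of $\tE^*_\ii(\bep)$ in the dual canonical basis with the correct powers of $t$, which requires the bookkeeping of the twist $c(\bep)$ and the identity relating $\gamma_\ii$ to $\langle-,-\rangle_Q$ and hom/ext dimensions (Proposition~\ref{Prop:Mit_geom}, Lemma~\ref{Lem:gamma-ext}). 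Without these steps your outline is a correct roadmap but not yet a proof.
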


A proof will be given in Section \ref{ssec:PrqH} below.

\section{Preliminaries for geometric proofs}
\label{sec:preliminary}

In the remaining part of this paper, we prove our main Theorems \ref{simply-laced} and \ref{Thm:adapted} with the help of geometry.
In this section, before going into individual discussions, we recall some preliminary facts commonly used in the proofs. 
They are based on Grojnowski's unpublished note~\cite{Groj}, ``Fundamental Example'' of Bernstein-Lunts \cite{BL94} (also outlined in \cite{Groj}), and the hyperbolic localization theorem due to Braden \cite{Braden}.  

\subsection{Hard Lefschetz property}
\label{ssec:Groj1}

Let $\kk$ be a field and $z$ an indeterminate.
For a $\kk[z]$-module $M$, we often write $z_M \colon M \to M$ for the endomorphism given by the action of $z$. 
We endow the polynomial ring $\kk[z]$ with a $\Z$-grading by setting $\deg z \seq 2$.
Let $\kk[z] \gMod$ be the category of $\Z$-graded $\kk[z]$-modules. 
For $M \in \kk[z] \gMod$, its $n$-th graded piece is denoted by $M^n$. 
For $k \in \Z$, the grading shift functor $M \mapsto M\langle k \rangle$  is defined by  $(M\langle k \rangle)^n = M^{n - k}$  for any $n \in \Z$.
For each $n \in \Z$, we set $M^{\ge n} \seq \bigoplus_{k \ge n} M^k$, which is a graded $\kk[z]$-submodule of $M$.

\begin{Def}
We say that a module $M \in \kk[z] \gMod$ satisfies the \emph{hard Lefschetz property} if the endomorphism $z_M^n$ restricts to a $\kk$-linear isomorphism $M^{-n} \xrightarrow{\sim} M^n$ for any $n \in \N$.
\end{Def}

\begin{Lem} \label{Lem:hL}
Let $M$ be a finitely generated $\Z$-graded $\kk[z]$-module satisfying the hard Lefschetz property.
For any $n \in \Z$, we have
\[ M^{\ge n} = \sum_{k,l \in \N \colon k  - l = n} \Image (z_M^k) \cap \Ker(z_M^{l+1}). \] 
\end{Lem}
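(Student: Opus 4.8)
The plan is to use the structure theorem for finitely generated graded modules over $\kk[z]$. Since $\kk[z]$ is a graded PID, $M$ decomposes as a finite direct sum of graded cyclic modules: free pieces $\kk[z]\langle a\rangle$ and torsion pieces $\bigl(\kk[z]/(z^{m})\bigr)\langle b\rangle$. Both sides of the asserted identity are additive in $M$ (the left side $M^{\ge n}$ obviously is, and the right side is too since $\Image(z_M^k)$ and $\Ker(z_M^{l+1})$ respect direct sum decompositions, and the sum of the intersections does as well — one only needs that a sum over a fixed index set of summands of a direct sum is the direct sum of the summands). So it suffices to check the identity on each indecomposable graded cyclic module. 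However, the hard Lefschetz hypothesis is essential and is not inherited by individual summands in an arbitrary decomposition, so I first need to observe that $M$ satisfying hard Lefschetz forces a very restricted shape.

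First I would show: if $M$ satisfies the hard Lefschetz property, then $M$ has no free part (a free summand $\kk[z]\langle a\rangle$ has $z_M^n$ injective but never surjective onto the relevant graded piece for large $n$, contradicting hard Lefschetz for the whole module — more carefully, hard Lefschetz is a statement about the whole module, so I would argue directly that $M^{-n}\to M^n$ being an isomorphism for all $n$ is incompatible with $M$ being infinite-dimensional in any fixed parity of degree while... ). Actually the cleanest route: hard Lefschetz says $\dim_\kk M^{-n} = \dim_\kk M^n$ for all $n$ and that the maps are isomorphisms; together with $M$ finitely generated this forces $M$ to be finite-dimensional over $\kk$ (a free summand would make $\dim M^n$ grow without bound as $n\to\infty$ while $\dim M^{-n}=0$ for $n$ large). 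Hence $M$ is a finite direct sum of torsion modules $T_i = \bigl(\kk[z]/(z^{m_i})\bigr)\langle b_i\rangle$. Then I would show that hard Lefschetz for $T = \bigoplus_i T_i$ is equivalent to each $T_i$ being "centered," i.e. $T_i$ is symmetric about degree $0$: concretely $\bigl(\kk[z]/(z^{m})\bigr)\langle b\rangle$ satisfies hard Lefschetz iff $b = m-1$, so that its graded pieces sit in degrees $-(m-1), -(m-1)+2, \ldots, (m-1)$. The point is that $z_M^n\colon M^{-n}\to M^n$ is block-diagonal with respect to the decomposition, so it is an isomorphism for all $n$ iff each block is, which pins down each shift $b_i$.

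Then the identity is checked on a single centered torsion module $C = \bigl(\kk[z]/(z^{m})\bigr)\langle m-1\rangle$, which is an entirely explicit computation: $C^j \neq 0$ iff $j \in \{-(m-1), -(m-1)+2,\dots,m-1\}$ and each such $C^j$ is $1$-dimensional, $C^{\ge n}$ is the span of the top graded pieces, while $\Image(z_C^k)\cap\Ker(z_C^{l+1})$ is also a span of consecutive top graded pieces whose starting degree I can read off ($\Image(z_C^k)$ is concentrated in degrees $\ge -(m-1)+2k$ and $\Ker(z_C^{l+1})$ is concentrated in degrees $\le (m-1) - 2(l+1) + 2 = (m-1)-2l$... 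I'd track the exact indices carefully), and taking the union over $k-l=n$ recovers exactly $C^{\ge n}$. The containment $\supseteq$ is clear since every $\Image(z_M^k)\cap\Ker(z_M^{l+1})$ with $k-l=n$ lands in degrees $\ge$ something $\ge n$; the reverse containment $\subseteq$ is where I use hard Lefschetz: given a homogeneous element $x \in M^j$ with $j \ge n$, I use the isomorphism $z_M^{\,?}$ to write $x$ as $z_M^k y$ with $y$ of minimal degree, which forces $y$ to be killed by an appropriate power of $z$, and then choose $l = k - n \ge 0$.

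The main obstacle I expect is bookkeeping the degree indices in the explicit torsion-module computation (getting the $\pm1$, $\pm2$ shifts right and making sure the union over $k - l = n$ with $k,l \ge 0$ genuinely exhausts $C^{\ge n}$ rather than stopping one step short), together with the mild subtlety that the hard Lefschetz hypothesis must be invoked at the level of the whole module before passing to summands — one cannot blithely reduce to "indecomposables satisfy hard Lefschetz." Once the structure theorem is set up correctly and the centered torsion modules are identified as the only building blocks, the rest is a finite explicit check, so I do not anticipate conceptual difficulty beyond that.
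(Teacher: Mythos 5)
Your proposal is correct and takes essentially the same route as the paper: both prove that the hard Lefschetz property forces $M$ to be a finite direct sum of the centered torsion modules $M_p = (\kk[z]/z^{p+1})\langle p\rangle$, reduce the identity to a single such summand, and verify it there by an explicit computation with $\Image(z_{M_p}^k)=z^kM_p$ and $\Ker(z_{M_p}^{l+1})=z^{p-l}M_p$. The paper is merely terser, asserting the decomposition into $M_p$'s in one line where you spell out the structure-theorem argument that the free part vanishes and each shift is pinned down.
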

\begin{proof}
Note that a finitely generated $\Z$-graded $\kk[z]$-module is bounded from below with all its graded pieces being finite-dimensional.
Since $M$ satisfies the hard Lefschetz property, it is finite-dimensional and decomposes into a finite direct sum of the modules of the form $M_p \seq (\C[z]/z^{p+1}\C[z])\langle  -p  \rangle$ for various $p \in \N$. 
Thus, it suffices to prove the assertion when $M = M_p$.
From the definition of $M_p$, we have $\Image(z_{M_p}^k) = z^k M_p$ and $\Ker(z_{M_p}^{l+1}) = z^{p-l}M_p$. 
Therefore, we have
\[\sum_{k-l=n} \Image(z_{M_p}^k) \cap \Ker(z_{M_p}^{l+1}) = \sum_{0 \le k \le p+n} z^{\max(k, p+n-k)}M_p = z^{\lceil (p+n)/2\rceil}M_p.\]
Observe that $z^k M_p = M_p^{\ge -p+2k}$ for any $k \in \N$. 
If $p+n$ is even, we have $2\lceil (p+n)/2\rceil = p+n$ and hence $z^{\lceil (p+n)/2\rceil}M_p = M_p^{\ge n}$, which implies the assertion.  
If $p+n$ is odd, we have  $2\lceil (p+n)/2\rceil = p+n+1$ and hence $z^{\lceil (p+n)/2\rceil}M_p =M_p^{\ge n+1}$, which also implies the assertion as $M_p^{n} = 0$ in this case.
\end{proof}

Assume that there is a short exact sequence 
\[ 0 \to \sM \to \coM \to N\langle  -1  \rangle \to 0 \]
in $\kk[z]\gMod$ satisfying the following three conditions:
\begin{itemize}
\item[(i)] The modules $\sM$ and $\coM$ are free of finite rank over $\kk[z]$;
\item[(ii)] Setting $\sbM \seq \sM/ z \sM$ and $\cbM \seq \coM/z \coM$, we have $(\sbM)^{-n} = 0$ and $(\cbM)^n = 0$ for any $n > 0$; 
\item[(iii)] The module $N$ satisfies the hard Lefschetz property.
\end{itemize}
In what follows, we regard $\sM$ as a $\kk[z]$-submodule of $\coM$ through the given injection. 

\begin{Lem}[\cite{Groj}] \label{Lem:Lef}
With the above assumption, the graded $\kk[z]$-module $L \seq \coM / z \sM$ satisfies the hard Lefschetz property.
\end{Lem}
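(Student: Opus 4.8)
The plan is to prove that $L$ is a finite-dimensional graded $\kk[z]$-module enjoying the two properties that together amount to the hard Lefschetz property: (a) $\dim_\kk L^{-n} = \dim_\kk L^n$ for all $n \in \Z$, and (b) $z_L^n \colon L^{-n} \to L^n$ is injective for every $n \ge 0$ (indeed, since a finitely generated graded $\kk[z]$-module decomposes into cyclic pieces $(\kk[z]/z^{p+1})\langle c \rangle$ and hard Lefschetz amounts to each such piece being balanced, (a)$+$(b) force exactly this for a finite-dimensional module, because an injection between $\kk$-spaces of equal finite dimension is an isomorphism). First I would record the two short exact sequences carried by $L = \coM/z\sM$. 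Since $z\sM \subseteq \sM$, there is
$$0 \to \sbM \to L \to N\langle 1 \rangle \to 0$$
with $z$ acting by $0$ on $\sbM$ (this already exhibits $L$ as an extension of finite-dimensional modules, hence finite-dimensional); and since $z\sM \subseteq z\coM \subseteq \coM$ and multiplication by $z$ identifies $\coM/\sM$ with $z\coM/z\sM$ up to the shift $\langle -2 \rangle$, there is
$$0 \to N\langle -1 \rangle \to L \to \cbM \to 0$$
with $z$ acting by $0$ on $\cbM$.

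For property (a) I would pass to graded Poincar\'e series. The two sequences give $P_L = P_{\sbM} + v^{-1}P_N$ and $P_L = vP_N + P_{\cbM}$, hence $P_{\sbM} - P_{\cbM} = (v - v^{-1})P_N$. Since $N$ satisfies hard Lefschetz, $P_N$ is palindromic, so the right-hand side is anti-palindromic; combining this with hypothesis (ii) (so that $P_{\sbM}$ involves only non-negative and $P_{\cbM}$ only non-positive powers of $v$) one extracts $P_{\cbM}(v) = P_{\sbM}(v^{-1})$, and then $P_L(v^{-1}) = P_{\sbM}(v^{-1}) + vP_N(v) = P_{\cbM}(v) + vP_N(v) = P_L(v)$, which is (a).

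For property (b) — which I expect to be the crux, since it is the only point where hard Lefschetz for $N$, rather than mere palindromy of its dimensions, is used — I would argue as follows. Fix $n \ge 1$ (the case $n=0$ being trivial) and let $x \in L^{-n}$ with $z^n x = 0$; lift $x$ to a homogeneous $\tilde x \in \coM^{-n}$ through the surjection $\coM \twoheadrightarrow L$. Then $z^n \tilde x$ lies in $z\sM$ in degree $n$, i.e.\ $z^n\tilde x \in z\cdot\sM^{n-2}$, and torsion-freeness of $\coM$ promotes this to $z^{n-1}\tilde x \in \sM$. Projecting to $\coM/\sM = N\langle 1 \rangle$, the class of $\tilde x$ lies in $(N\langle 1\rangle)^{-n} = N^{-(n-1)}$ and is killed by $z^{n-1}$; hard Lefschetz for $N$ says $z^{n-1}\colon N^{-(n-1)} \to N^{n-1}$ is injective, so this class vanishes, i.e.\ $\tilde x \in \sM$. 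But $\tilde x$ has negative degree $-n$ while $\sM$ is concentrated in degrees $\ge 0$ by (ii), whence $\tilde x = 0$ and $x = 0$; this proves (b), and with (a) the hard Lefschetz property for $L$ follows. (An alternative, more computational route that also avoids the Poincar\'e-series step: put the inclusion $\sM \hookrightarrow \coM$ in graded Smith normal form over the PID $\kk[z]$, observing that hard Lefschetz for $N = \coM/\sM$ forces each block to take the balanced shape $z^{e_k}\colon \kk[z]\langle -e_k \rangle \hookrightarrow \kk[z]\langle e_k \rangle$; then $L = \bigoplus_k (\kk[z]/z^{e_k+1})\langle -e_k \rangle$ visibly satisfies hard Lefschetz summand by summand.)
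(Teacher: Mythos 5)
Your main argument is correct and builds on exactly the same two short exact sequences that the paper extracts from $L = \coM/z\sM$. Where the paper reads off the isomorphism at once from the factorization $z_L^n = b_n\circ z_N^{n-1}\circ a_{-n}$ in a single commutative diagram, you split hard Lefschetz into palindromy of dimensions (a) and injectivity of $z_L^n$ (b). Your (a) is a Poincar\'e-series repackaging of the degree constraints that (ii) puts on $\sbM$ and $\cbM$ together with palindromy of $N$, and your (b) is a hands-on unwinding of the same factorization through $z_N^{n-1}$, with the role of the paper's map $b$ replaced by the observation $\sM^{-n}=0$ for $n>0$ (freeness of $\sM$ plus the $\sbM$-half of (ii)). So the ingredients and the structural idea are the same; the paper's version is a little shorter because it produces the bijection directly instead of establishing injectivity and equality of dimensions separately.

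Your parenthetical alternative via graded Smith normal form is, however, incomplete as stated. Hard Lefschetz for $N$ only constrains the diagonal blocks whose entry is a strictly positive power of $z$: an isomorphism block $\id\colon \kk[z]\langle -a\rangle \to \kk[z]\langle -a\rangle$ contributes nothing to $N=\coM/\sM\langle -1\rangle$, so (iii) says nothing about $a$, yet such a block contributes the unbalanced one-dimensional summand $\kk\langle -a\rangle$ to $L$. It is hypothesis (ii) that eliminates these rogue blocks (forcing $a\ge 0$ via $\sbM$ and $a\le 0$ via $\cbM$, hence $a=0$), and your alternative never invokes (ii). Concretely, the inclusion $\sM=\kk[z]\langle -4\rangle^{\oplus 2}\hookrightarrow \coM=\kk[z]\langle -4\rangle\oplus\kk[z]\langle 4\rangle$ given by $\text{diag}(1,z^4)$ satisfies (i) and (iii) but not the $\cbM$-half of (ii), and the resulting $L$ has $\dim L^4 = 2 \ne 1 = \dim L^{-4}$, so it fails hard Lefschetz. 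Your main route, which does use (ii), is unaffected; only the aside would need to be supplemented.
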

\begin{proof}
From the condition~(i), the endomorphism $z_{\coM}$ is injective and hence we have $\sbM \simeq \Ker( z_L)$. 
In addition, we have the natural isomorphisms $\cbM \simeq \Cok (z_{L})$ and $N \langle  -1  \rangle \simeq L/\sbM$. 
These isomorphisms give the exact sequences
\[ 0 \to \sbM \to L \xrightarrow{a} N \langle   -1  \rangle \to 0 \quad \text{and} \quad 
0 \to N \langle  1  \rangle \xrightarrow{b} L \to \cbM  \to 0\]
in $\kk[z] \gMod$ satisfying $b \langle  -2  \rangle \circ a = z_L \colon L \to L \langle  -2  \rangle$. By the condition~(ii), for any $n > 0$, the homomorphisms $a$ and $b$ induce the $\kk$-linear isomorphisms $a_{-n} \colon L^{-n} \xrightarrow{\sim} N^{-n+1}$ and $b_n \colon N^{n-1} \xrightarrow{\sim} L^n$ respectively.
Now, for each $n > 0$, we have the commutative diagram 
\[\vcenter{
\xymatrix{
L^{-n} \ar[r]^-{z_{L}^{n-1}} \ar[d]_-{a_{-n}}& L^{n-2} \ar[r]^-{z_{L}} \ar[d]_-{a_{n-2}}& L^n \\
N^{-n+1} \ar[r]^-{z_{N}^{n-1}} & N^{n-1} \ar[ru]_-{b_n}&
}}
\]
with the bottom arrow being an isomorphism by the condition~(iii).
Therefore, the $\kk$-linear map $z_L^n = z_L \circ z_L^{n-1}$ gives an isomorphism $L^{-n} \xrightarrow{\sim} L^n$ for any $n > 0$.
\end{proof}

\subsection{Notation around equivariant sheaves}
\label{ssec:Groj2}
In this subsection, we assume that $\kk$ is a field of characteristic zero.  
Let $G$ be a complex linear algebraic group.
By a $G$-variety, we mean a complex algebraic variety endowed with an algebraic action of $G$.
For a $G$-variety $X$, let $D_G^b(X, \kk)$ denote the $G$-equivariant bounded derived category of constructible complexes of sheaves of $\kk$-vector spaces on $X$ in the sense of Bernstein-Lunts \cite{BL94}.
It is a $\kk$-linear triangulated category, whose shift is denoted by $[1]$.  
It is endowed with the perverse $t$-structure, whose heart $\Perv_G(X, \kk)$ is the category of the $G$-equivariant perverse sheaves. 
When $G$ is a trivial group $G =\{1\}$, we simply write $D^b(X, \kk)$ and $\Perv(X, \kk)$ dropping the symbol $G$.

For $\mathcal{F, G} \in D^b_G(X,\kk)$, we abbreviate $\Hom_{D_G^b(X, \kk)}(\mathcal{F,G})$ as $\Hom_{G}(\mathcal{F,G})$, and for $n \in \Z$, we set $\Hom_G^n(\mathcal{F,G}) \seq \Hom_G(\mathcal{F,G}[n])$.
Letting $\ul{\kk}_X$ be the constant $\kk$-sheaf on $X$, we set $\rH_G^n(\cF) \seq  \Hom_G^n(\ul{\kk}_X, \cF)$.
The $\Z$-graded $\kk$-vector spaces 
$\Hom_G^\bullet(\mathcal{F,G}) \seq \bigoplus_{n \in \Z} \Hom^n_G(\mathcal{F,G})$ and $\rH^\bullet_G(\mathcal{F}) \seq \bigoplus_{n \in \Z} \rH^n_G(\mathcal{F})$
are graded modules over $\rH_G^\bullet(\pt,\kk) = \rH_G^\bullet(\ul{\kk}_\pt)$ (the $G$-equivariant cohomology ring of a point).
 
The Verdier duality of $D_G^b(X, \kk)$ is denoted by $\bD_X$, or simply by $\bD$. 
For an equivariant morphism $f$ of $G$-varieties, we use the symbols $f^*, f_*, f^!, f_!$ for the associated functors of the $G$-equivariant derived categories.  
Given a homomorphism of algebraic groups $\varphi \colon G' \to G$, we regard $X$ as an $G'$-variety through $\varphi$. 
Then, we have a natural functor $\Res_\varphi \colon D^b_G(X, \kk) \to D^b_{G'}(X, \kk)$, which commutes with the Verdier duality and all the functors $f^*, f_*, f^!, f_!$ above, see~\cite[Proposition 7.2]{BL94}.  
When $\varphi$ is the trivial inclusion $\{1\} \hookrightarrow G$, the functor $\Res_\varphi$ is identical to the forgetful functor $\For \colon D^b_G(X, \kk) \to D^b(X, \kk)$.
When $G$ is connected, $\For$ induces a full embedding $\Perv_G(X, \kk) \hookrightarrow \Perv(X, \kk)$, see~\cite[Proposition 6.2.15]{AcharBook}, through which we think of $\Perv_G(X, \kk)$ as a full subcategory of $\Perv(X, \kk)$.

We denote by $\IC(X, \kk)$ the intersection cohomology complex of $X$.
This is a simple object of $\Perv_G(X, \kk)$.
We set $\IH^\bullet_G(X, \kk) \seq \rH^\bullet_G(\IC(X, \kk))$.
  
\subsection{``Fundamental Example" of Bernstein-Lunts} 
In the reminder of this section, we consider the following situation.
Let $E$ be a finite dimensional $\C$-vector space endowed with a linear action of a complex algebraic torus $T$. 
Let $X^*(T)$ (resp.~$X_*(T)$) denote the character (resp.~cocharacter) lattice of $T$. 
We have the weight space decomposition 
$E=\bigoplus_{\lambda \in X^*(T)}E_\lambda$.
We assume that the $T$-action on $E$ is \emph{attractive}, that is,   
\begin{equation} \label{eq:attractive}
\text{there exists $\rho^\vee \in X_*(T)$ such that $\langle \rho^\vee, \lambda \rangle > 0$ for any $\lambda \in \wt(E)$,}
\end{equation} 
where $\wt(E) \seq \{ \lambda \in X^*(T) \mid E_\lambda \neq \{ 0 \}\}$ and $\langle -, -\rangle \colon X_*(T) \times X^*(T) \to \Z$ denotes the natural pairing.
It particularly implies that the $T$-fixed locus $E^T$ consists of a single point $0 \in E$ and $\lim_{s \to 0}\rho^\vee(s) \cdot x =0$ for all $x\in E$.

Let $i \colon \{0\} \to E$ and $p \colon E \to \{0\}$ be the obvious morphisms.  
Applying $p_*$ and $p_!$ respectively to the adjunction morphisms $\id \to i_* i^*$ and $i_!i^! \to \id$, we get the natural morphisms 
\begin{equation} \label{eq:attrmor}
p_* \to i^* \quad \text{and} \quad i^! \to p_!
\end{equation}
of functors from $D^b_T(E, \kk)$ to $D^b_T(\{0\}, \kk)$.

\begin{Prop}[{\cite[Proposition 2.3]{FW14}}]
\label{Prop:attr}
The morphisms in \eqref{eq:attrmor} are isomorphisms.
\end{Prop}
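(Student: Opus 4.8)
The plan is to prove that the two natural morphisms $p_* \to i^*$ and $i^! \to p_!$ are isomorphisms after evaluating on any object $\cF \in D^b_T(E,\kk)$, by reducing to a stalk computation and invoking the contracting $\Gm$-action obtained from the cocharacter $\rho^\vee$ in \eqref{eq:attractive}. First I would observe that, since $\Res_\varphi$ for $\varphi \colon \{1\} \hookrightarrow T$ commutes with all four functors $f^*, f_*, f^!, f_!$ and is conservative (it detects isomorphisms), it suffices to check the statement after applying the forgetful functor $\For \colon D^b_T(E,\kk) \to D^b(E,\kk)$; thus we may assume $T = \{1\}$ and work with a merely constructible complex $\cF$ on $E$ equipped with the contracting $\Gm$-action $\lambda \colon \Gm \to \Aut(E)$, $\lambda(s) = \rho^\vee(s)$, satisfying $\lim_{s\to 0}\lambda(s)\cdot x = 0$ for all $x$. (Here $\cF$ need not be $\Gm$-equivariant, but the action map $a \colon \Gm \times E \to E$ and its properties are all we need.)

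Next I would set up the standard contraction argument. Let $a \colon \Gm \times E \to E$ be the action, $\mathrm{pr} \colon \Gm \times E \to E$ the projection, $j_s \colon E \hookrightarrow \Gm \times E$ the inclusion of the slice $\{s\} \times E$ for $s \in \Gm$, and $j_0 \colon E = \{0\} \times E \hookrightarrow \mathbb{A}^1 \times E$ the inclusion over $0 \in \mathbb{A}^1$, using that $a$ extends to $\bar a \colon \mathbb{A}^1 \times E \to E$ precisely because the action is contracting. The key step is the homotopy argument: $j_1^* \bar a^* \cF \simeq a^* \cF|_{\{1\}} = \cF$ while $j_0^* \bar a^* \cF \simeq p^* i^* \cF$ (the pullback along $E \to \{0\} \to E$ of the stalk at $0$), and using proper base change / the fact that pushing forward along $\mathbb{A}^1$ is a ``homotopy'' one identifies the cohomology of $E$ with that of the stalk; concretely, applying $p_*$ to both and using that $q_* q^*$ on $\mathbb{A}^1$ (with $q \colon \mathbb{A}^1 \to \pt$) is the identity functor gives $p_* \cF \xrightarrow{\sim} i^* \cF$, and this identification is compatible with the adjunction morphism in \eqref{eq:attrmor}. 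The statement for $i^! \to p_!$ then follows by applying Verdier duality $\bD$ on $E$: $\bD$ interchanges $p_*$ with $p_!$ and $i^*$ with $i^!$ (up to the relevant shift, which here is a point so is trivial) and sends the first morphism to the second.

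The main obstacle — really the only subtle point — is making the homotopy argument rigorous: one must verify that the canonical morphism $p^* i^* \cF \to \bar a^* \cF$ over $\mathbb{A}^1 \times E$, restricted over $0$, together with properness of the relevant map, produces the claimed isomorphism $p_* \cF \simeq i^* \cF$ \emph{and} that this isomorphism is exactly the adjunction map of \eqref{eq:attrmor} and not merely \emph{some} isomorphism. I would handle this by working functorially: the adjunction unit $\cF \to i_* i^* \cF$ pulls back along $\bar a$ to a morphism that restricts appropriately over each slice, and one checks on slices $\{s\} \times E$ with $s \neq 0$ it is (pullback of) the identity while the limit $s \to 0$ computes $p_* \to i^*$; the homotopy invariance of $q_* q^*$ on the affine line $\mathbb{A}^1$ forces the map over $s = 0$ to agree with the one over $s = 1$. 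Since Proposition~\ref{Prop:attr} is quoted from \cite[Proposition 2.3]{FW14}, I would at this point simply cite that reference for the detailed verification, having indicated the contracting-$\Gm$ mechanism and the Verdier-duality symmetry that reduce the two assertions to a single homotopy computation.
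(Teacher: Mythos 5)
The paper offers no argument of its own here: Proposition~\ref{Prop:attr} is simply quoted from \cite{FW14}*{Proposition 2.3}, so ending with that citation is consistent with the text. Your overall route (contraction along the $\Gm$-action from $\rho^\vee$, then Verdier duality to pass from $p_* \to i^*$ to $i^! \to p_!$) is indeed the standard mechanism behind the cited result, and the duality step is fine since $\bD$ exchanges $p_*$ with $p_!$ and $i^*$ with $i^!$ and matches the two adjunction morphisms.

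However, one step as written is false: the parenthetical claim that ``$\cF$ need not be $\Gm$-equivariant, but the action map and its properties are all we need.'' The contraction lemma fails for an arbitrary constructible complex on $E$: take $\cF$ the skyscraper sheaf at a point $x \neq 0$; then $p_*\cF \neq 0$ while $i^*\cF = 0$, even though the $\Gm$-action is contracting. Concretely, your homotopy step needs $\mathrm{pr}_{1*}\,\bar a^*\cF$ to be locally constant on all of $\mathbb{A}^1$ (including at $0$), and that is exactly where equivariance (or at least constructibility with respect to a $\Gm$-stable, conical stratification) enters; for the skyscraper it fails. So reducing via the full forgetful functor $\For$ to a bare object of $D^b(E,\kk)$ discards precisely the structure the argument uses. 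The repair is easy: instead of forgetting everything, apply $\Res_{\rho^\vee} \colon D^b_T(E,\kk) \to D^b_{\Gm}(E,\kk)$ (or remember that the underlying complex of a $T$-equivariant object is constructible for a $T$-stable, hence conical, stratification). Since $\Res_{\rho^\vee}$ and $\For$ commute with $f^*, f_*, f^!, f_!$ and detect isomorphisms, the conservativity reduction still works, and the contraction/homotopy argument then applies to $\Gm$-equivariant (monodromic) complexes, which is the setting of \cite{FW14}.
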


In what follows, we fix a cocharacter $\rho^\vee$ satisfying \eqref{eq:attractive} and regard $E$ as a $\Gm$-variety through $\rho^\vee \colon \Gm \to T$. 
We make an identification $\rH^\bullet_{\Gm}(\pt, \kk) = \kk[z]$ with $\deg z = 2$. 
Note that the condition~\eqref{eq:attractive} particularly implies that the stabilizer in $\Gm$ of a point $x \in E \setminus \{0\}$ is always finite.
For any closed $\Gm$-subvariety $X \subset E$, we consider the quotient $\bP_{\rho^\vee}X \seq (X \setminus \{0\})/\Gm$, which is projective as a closed subvariety of the weighted projective space $\bP_{\rho^\vee}E$. 

\begin{Prop}[\cite{BL94}]
\label{Prop:BL}
For any $\Gm$-stable closed variety $X$ of $E$, we have an isomorphism 
\[ \IH_{\Gm}^\bullet(X\setminus\{0\}, \kk) \simeq \IH^\bullet(\bP_{\rho^\vee}X, \kk)\langle  -1  \rangle \]
of finite-dimensional $\Z$-graded $\kk$-vector spaces, under which the action of $z \in \kk[z] = \rH^\bullet_{\Gm}(\pt, \kk)$ on the LHS corresponds the Lefschetz operator (i.e., multiplication by the first Chern class of an ample line bundle) on the RHS up to multiples in $\kk^\times$.
\end{Prop}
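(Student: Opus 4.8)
The plan is to realise $\bP_{\rho^\vee}X$ as the geometric quotient of $X \setminus \{0\}$ by a locally free $\Gm$-action, to transport the computation of $\IH^\bullet_\Gm(X \setminus \{0\}, \kk)$ to the ordinary intersection cohomology of that quotient through the Borel construction, and then to recognise the action of $z \in \rH^\bullet_\Gm(\pt, \kk) = \kk[z]$ as cup product with an ample class.

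First I would record the geometry. The cocharacter $\rho^\vee$ chosen in \eqref{eq:attractive} satisfies $\lim_{s \to 0}\rho^\vee(s) \cdot x = 0$ for all $x \in E$, so no point of $E \setminus \{0\}$ is $\Gm$-fixed, and every $\Gm$-orbit there is one-dimensional with finite stabiliser. Hence $E \setminus \{0\}$ carries a locally free $\Gm$-action whose geometric quotient is the weighted projective space $\bP_{\rho^\vee}E$; since $X$ is $\Gm$-stable, $\bP_{\rho^\vee}X = (X \setminus \{0\})/\Gm$ is a closed, hence projective, subvariety, and finite-dimensionality of both sides of the assertion is then automatic. I would write $q \colon X \setminus \{0\} \to \bP_{\rho^\vee}X$ for the quotient map, which is a principal $\Gm$-bundle of relative complex dimension $1$ away from the finitely many strata carrying a nontrivial finite cyclic stabiliser.

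Next I would prove the isomorphism of graded vector spaces. Since $X \setminus \{0\}$ is open in $X$ we have $\IC(X, \kk)|_{X \setminus \{0\}} = \IC(X \setminus \{0\}, \kk)$, so $\IH^\bullet_\Gm(X \setminus \{0\}, \kk)$ is the hypercohomology of this complex on the homotopy quotient $(X \setminus \{0\}) \times_\Gm E\Gm$. The projection of this homotopy quotient onto the geometric quotient $\bP_{\rho^\vee}X$ has fibres of the form $B\mu_n$ for various finite cyclic groups $\mu_n$, which are $\kk$-acyclic because $\mathrm{char}\,\kk = 0$; combined with the fact that, up to the shift $\langle 1 \rangle$ recording the relative dimension of $q$, the complex $\IC(X \setminus \{0\}, \kk)$ descends from $\IC(\bP_{\rho^\vee}X, \kk)$, the projection formula yields $\IH^\bullet_\Gm(X \setminus \{0\}, \kk) \simeq \IH^\bullet(\bP_{\rho^\vee}X, \kk)\langle 1 \rangle$. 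Finally I would identify the $\kk[z]$-action: under this isomorphism $z$ is pulled back from $\bP_{\rho^\vee}X$ and equals, up to a scalar in $\kk^\times$, the Euler class of the bundle $E \setminus \{0\} \to \bP_{\rho^\vee}E$ restricted to $\bP_{\rho^\vee}X$, i.e. a positive multiple of $c_1$ of the tautological sheaf $\mathcal O_{\bP_{\rho^\vee}E}(1)$; since $\bP_{\rho^\vee}E$ is projective this restricts to an ample class on $\bP_{\rho^\vee}X$, and cup product with it is by definition the Lefschetz operator on $\IH^\bullet(\bP_{\rho^\vee}X, \kk)$.

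The hard part will be the descent used in the previous paragraph. The quotient $\bP_{\rho^\vee}X$ is in general a subvariety of a weighted projective space, with orbifold singularities coming from the finite stabilisers, and one must check carefully that the equivariant $\IC$-sheaf of $X \setminus \{0\}$ really corresponds to that of $\bP_{\rho^\vee}X$ up to the shift $\langle 1 \rangle$, so that equivariant and ordinary intersection cohomology match through the identification. With $\kk$ of characteristic zero this holds—one can either restrict to the open free locus where $q$ is an honest $\Gm$-bundle and use normality together with the codimension of the stacky strata, or pass to the quotient stack $[(X \setminus \{0\})/\Gm]$, whose coarse-space morphism induces an isomorphism on $\kk$-coefficient intersection cohomology—but this is the step that genuinely uses $\mathrm{char}\,\kk = 0$, and it is where I would spend most of the care.
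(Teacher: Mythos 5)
Your proposal reproduces, in outline, the argument of \cite[Sections 9 and 14]{BL94}, to which the paper's proof simply refers (Theorem~9.1 of that reference for the isomorphism, Lemma~14.5 for the identification of $z$ with the Lefschetz operator). You correctly locate the role of characteristic zero---the $\kk$-acyclicity of the $B\mu_n$ fibres needed to descend the $\IC$-complex across the weighted-projective quotient---which the paper also flags in its proof.
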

\begin{proof}
The existence of the isomorphism follows from \cite[Theorem 9.1]{BL94} (here, we need the assumption that $\kk$ is of characteristic zero).
The latter assertion is \cite[Lemma 14.5]{BL94}.
\end{proof}

Let $j \colon E \setminus \{0\} \hookrightarrow E$ be the open inclusion of the complement of $\{0\}$.
Let $X \subset E$ be a closed $T$-subvariety.
Applying $i^*$ to the standard exact triangle
\[ i_! i^! \IC(X,\kk) \to \IC(X,\kk) \to j_*j^* \IC(X,\kk) \xrightarrow{+1}, \]
we get the exact triangle
\begin{equation} \label{eq:fexd}
i^! \IC(X,\kk) \to i^* \IC(X,\kk) \to i^*j_*j^* \IC(X,\kk) \xrightarrow{+1} 
\end{equation}
in $D^b_T( \{0\}, \kk)$.
By applying $\rH^\bullet_{\Gm}(-) \circ \Res_{\rho^\vee}$ to the third term and using Propositions~\ref{Prop:attr} and \ref{Prop:BL}, we obtain the isomorphisms
\[ \rH^\bullet_{\Gm}(i^*j_*j^* \IC(X,\kk)) \simeq \rH_{\Gm}^\bullet((p\circ j)_*\IC(X \setminus \{0\}, \kk)) \simeq \IH^\bullet(\bP_{\rho^\vee}X, \kk)\langle  -1  \rangle. \]

\begin{Thm}[``Fundamental Example'' {\cite{BL94}}] \label{Thm:Fex}
Applying $\rH^\bullet_{\Gm}(-) \circ \Res_{\rho^\vee}$ to the exact triangle \eqref{eq:fexd} yields a short exact sequence
\begin{equation} \label{eq:fex}
0 \to \rH_{\Gm}^\bullet(i^! \IC(X,\kk)) \to \rH_{\Gm}^\bullet(i^* \IC(X,\kk)) \to \IH^\bullet(\bP_{\rho^\vee}X, \kk)\langle  -1  \rangle \to 0
\end{equation}
in $\kk[z] \gMod$ satisfying the conditions {\rm(i), (ii), (iii)} in Section \ref{ssec:Groj1} above. 
\end{Thm}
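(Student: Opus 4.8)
\textbf{Proof proposal for Theorem~\ref{Thm:Fex}.}

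The plan is to verify the three conditions (i), (ii), (iii) one at a time, after first establishing that the sequence \eqref{eq:fex} is indeed short exact. For exactness, I would argue that the long exact sequence obtained by applying $\rH^\bullet_{\Gm}(-) \circ \Res_{\rho^\vee}$ to the triangle \eqref{eq:fexd} degenerates into short exact pieces. The key observation is a degree/parity consideration: $\rH^\bullet_{\Gm}(i^! \IC(X,\kk))$ is supported in non-negative cohomological degrees (it is the costalk, hence Verdier dual to the stalk, which by Proposition~\ref{Prop:attr} identifies with $p_*$ and thus with the compactly supported global cohomology on the attracting space), while by Proposition~\ref{Prop:BL} applied to the last term, $\IH^\bullet(\bP_{\rho^\vee}X,\kk)\langle 1\rangle$ lives in degrees controlled by $\IC$-normalization. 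One should check that the connecting map $\IH^\bullet(\bP_{\rho^\vee}X, \kk)\langle 1\rangle \to \rH^{\bullet+1}_{\Gm}(i^!\IC(X,\kk))$ vanishes; this follows because the source sits in a range of degrees strictly below (or the map is forced to be zero by the grading constraints coming from the perverse normalization of $\IC$, exactly as in the usual parity arguments for pure complexes). Equivalently, one can invoke that $\IC(X,\kk)$ is a pure complex and the hyperbolic localization functors preserve purity (Braden \cite{Braden}), so the whole long exact sequence splits into short exact sequences term by term.

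Now for the three conditions. Condition (i): freeness of $\sM \seq \rH_{\Gm}^\bullet(i^!\IC(X,\kk))$ and $\coM \seq \rH_{\Gm}^\bullet(i^*\IC(X,\kk))$ over $\kk[z] = \rH^\bullet_{\Gm}(\pt,\kk)$. The stalk $i^*\IC(X,\kk)$ and costalk $i^!\IC(X,\kk)$ at the attractive fixed point are, by Proposition~\ref{Prop:attr}, $p_*\IC$ and $p_!\IC$ respectively; their $\Gm$-equivariant cohomologies are the $\Gm$-equivariant (ordinary, resp.\ compactly supported) cohomologies of $X$ with coefficients in $\IC(X,\kk)$, and freeness over $\kk[z]$ is the statement that the relevant equivariant-to-ordinary spectral sequence degenerates — this is again a manifestation of purity of $\IC(X,\kk)$ (and the fact that $z$ acts as a Lefschetz-type operator), so $H^\bullet_{\Gm} = H^\bullet \otimes_\kk \kk[z]$ as graded modules. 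Condition (ii): the coboundedness $(\sbM)^{-n}=0$ and $(\cbM)^n = 0$ for $n>0$, where $\sbM = \sM/z\sM = \rH^\bullet(i^!\IC(X,\kk))$ and $\cbM = \coM/z\coM = \rH^\bullet(i^*\IC(X,\kk))$. Here $\rH^\bullet(i^*\IC(X,\kk)) = \mathcal{H}^\bullet(i^*\IC(X,\kk))$ is the stalk cohomology at a point of a perverse sheaf; by the support condition in the definition of perversity, this vanishes in positive degrees (with the standard normalization where $\IC$ of a point is in degree $0$), giving $(\cbM)^n = 0$ for $n>0$. Dually, the costalk cohomology $\rH^\bullet(i^!\IC(X,\kk))$ vanishes in negative degrees by the cosupport condition, giving $(\sbM)^{-n}=0$ for $n>0$.

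Condition (iii): that $N \langle 1 \rangle^{-1} \simeq \IH^\bullet(\bP_{\rho^\vee}X,\kk)\langle 1\rangle$, i.e., that $N = \IH^\bullet(\bP_{\rho^\vee}X,\kk)$, satisfies the hard Lefschetz property as a graded $\kk[z]$-module. This is precisely the hard Lefschetz theorem for intersection cohomology of the projective variety $\bP_{\rho^\vee}X$: by Proposition~\ref{Prop:BL} the $\kk[z]$-module structure is, up to a nonzero scalar, multiplication by the first Chern class of an ample line bundle on $\bP_{\rho^\vee}X$, and the hard Lefschetz theorem for $\IH^\bullet$ of a projective variety (Beilinson–Bernstein–Deligne–Gabber, valid since $\kk$ has characteristic zero) states exactly that the $n$-fold power of this operator is an isomorphism from degree $-n$ to degree $n$ after the appropriate shift. \textbf{The main obstacle} I anticipate is the exactness claim — pinning down the degree bookkeeping so that the connecting homomorphism in the long exact sequence genuinely vanishes, and doing so in the equivariant setting where one must be careful that the $\kk[z]$-module structures are compatible. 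The cleanest route is to reduce everything to purity of $\IC(X,\kk)$ and invoke the decomposition/purity formalism (including Braden's hyperbolic localization theorem, which identifies $i^*\IC$ restricted along $\rho^\vee$ with the hyperbolic localization and guarantees it is again pure), from which all three conditions and the exactness follow formally; the remaining work is to match conventions and normalizations with those of Bernstein–Lunts \cite{BL94}.
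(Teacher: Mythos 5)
Your proposal is correct and follows essentially the same route as the paper: the paper also obtains exactness from purity (reducing to $\Q$-coefficients, using that $\IC(X,\kk)$ underlies a pure mixed Hodge module and that $i^*\simeq p_*$, $i^!\simeq p_!$ preserve purity by Proposition~\ref{Prop:attr}, with details delegated to \cite{BBDVW22}), verifies (i)--(ii) via Corollary~\ref{Cor:hr} and Example~\ref{Ex:hr} (equivariant formality plus the stalk/costalk perversity conditions), and gets (iii) from the latter assertion of Proposition~\ref{Prop:BL} together with hard Lefschetz for $\IH^\bullet(\bP_{\rho^\vee}X,\kk)$. Only note that your preliminary degree-counting argument for the vanishing of the connecting maps would not suffice by itself (the term $\IH^\bullet(\bP_{\rho^\vee}X,\kk)\langle 1\rangle$ does occupy positive degrees), so the purity argument you fall back on, made precise as above, is the one actually needed.
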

\begin{proof}
When $\kk$ is the field of real numbers, the assertion is proved in \cite[Section 14]{BL94}.
To deal with the general case, it is enough to consider the case when $\kk$ is the field of rational numbers.
For this case, we may employ the fact that $\IC(X,\kk)$ underlies a simple $\Gm$-equivariant mixed Hodge module of pure weight $0$, and both functors $i^*$ and $i^!$ preserve the purity thanks to Proposition~\ref{Prop:attr}. 
Then, it follows that the connecting homomorphisms in the long exact sequence obtained by applying $\rH_{\Gm}^0(-) \circ \Res_{\rho^\vee}$ to \eqref{eq:fexd} are all zero.
See \cite[Proof of Proposition 4.4]{BBDVW22} for more details. 
Together with Proposition~\ref{Prop:BL}, it leads to the desired short exact sequence~\eqref{eq:fex}. 
The conditions (i) and (ii) can be verified as a special case of Corollary~\ref{Cor:hr} below (see also Example~\ref{Ex:hr}).
The condition (iii) follows from the latter assertion of Proposition~\ref{Prop:BL} and the hard Lefschetz theorem for $\IH^\bullet(\bP_{\rho^\vee}X, \kk)$. 
\end{proof}

\subsection{Hyperbolic localization}
We finish this section by recalling an equivariant version of the \emph{hyperbolic localization theorem} due to Braden \cite{Braden}.
We keep the assumption from the previous subsection.
Let $\tau \in X_*(T)$ be a cocharacter of $T$.
We have the associated decomposition 
\begin{equation} \label{eq:wtdecomp}
E =E^+_\tau \oplus E^0_\tau \oplus E^-_\tau,
\end{equation} 
where the component $E^{\pm}_\tau$ (resp.~$E^0_\tau$) is the sum of weight spaces $E_\lambda$ satisfying $\pm\langle\tau, \lambda\rangle >0$ (resp.~$\langle \tau, \lambda \rangle = 0$).
Let $i^\pm_\tau \colon E_\tau^\pm \oplus E_\tau^0 \hookrightarrow E$ and $i^\pm_{\tau,0} \colon E^0_\tau \hookrightarrow E^\pm_\tau \oplus E^0_\tau$ be the inclusions.

\begin{Thm}[\cite{Braden}]
\label{Thm:hr}
For any cocharacter $\tau \in X_*(T)$, the followings hold.
\begin{enumerate}
\item There is a natural isomorphism $(i_{\tau,0}^+)^*(i_\tau^+)^! \simeq (i_{\tau,0}^-)^! (i_\tau^-)^*$ of functors from $D^b_T(E, \kk)$ to $D^b_T(E_\tau^0,\kk)$. 
\item \label{Thm:hr2} For any simple perverse sheaf $\cF \in \Perv_T(E, \kk)$, its image $(i_{\tau,0}^+)^*(i_\tau^+)^!\cF$ is a finite direct sum of shifted simple perverse sheaves on $E^0_\tau$.  
\end{enumerate}
\end{Thm}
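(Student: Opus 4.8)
The proof naturally splits according to the two assertions. \emph{For part (1)}, the plan is to deduce it from Braden's hyperbolic localization theorem in the non-equivariant setting \cite{Braden}. Since the cocharacter $\tau$ factors through $T$, the $\Gm$-action defining the decomposition \eqref{eq:wtdecomp} is the restriction of the $T$-action; in particular the fixed space $E^0_\tau$, the attracting space $E^+_\tau\oplus E^0_\tau$ and the repelling space $E^-_\tau\oplus E^0_\tau$ are $T$-stable, and the inclusions $i^\pm_\tau,i^\pm_{\tau,0}$ are $T$-equivariant. Braden constructs the comparison morphism $(i^+_{\tau,0})^*(i^+_\tau)^!\to(i^-_{\tau,0})^!(i^-_\tau)^*$ purely out of the units and counits of the adjunctions $((i^\pm_\tau)^*,(i^\pm_\tau)_*)$, $((i^\pm_\tau)_!,(i^\pm_\tau)^!)$ together with (co)base-change isomorphisms; all of these exist in the $T$-equivariant derived categories and are compatible with the forgetful functors by \cite[Proposition 7.2]{BL94}. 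Hence the same recipe defines a morphism of functors $D^b_T(E,\kk)\to D^b_T(E^0_\tau,\kk)$, and it is an isomorphism because its image under the conservative functor $\For\colon D^b_T(E^0_\tau,\kk)\to D^b(E^0_\tau,\kk)$ is Braden's isomorphism. This step is routine.

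\emph{For part (2)}, write $\Pi_\tau\seq(i^+_{\tau,0})^*(i^+_\tau)^!$. The strategy is to show that $\Pi_\tau(\cF)$ is a \emph{pure} complex, so that the decomposition theorem forces it to be a finite direct sum of shifted simple perverse sheaves on $E^0_\tau$. To this end I would first observe that in the geometric applications of this paper the relevant simple equivariant perverse sheaves $\cF$ are Lusztig's sheaves on representation varieties of quivers and Nakajima's $\IC$ sheaves on graded quiver varieties, both of geometric origin, hence defined over $\Q$ and underlying pure Hodge modules; since extension of scalars commutes with the six operations and with the perverse $t$-structure and preserves semisimplicity, one reduces to $\kk=\Q$. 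The key input is then the \emph{weight-exactness} of hyperbolic localization: $\Pi_\tau$ sends a pure complex of weight $w$ to a pure complex of weight $w$. Via the isomorphism of part (1) one may compute $\Pi_\tau$ either as $(i^+_{\tau,0})^*(i^+_\tau)^!$, which does not increase weights, or as $(i^-_{\tau,0})^!(i^-_\tau)^*$, which does not decrease them; applied to a pure $\cF$ this pins $\Pi_\tau(\cF)$ down to be pure. A pure complex is semisimple, i.e.\ isomorphic to the direct sum of its shifted perverse cohomology sheaves with each of these a semisimple perverse sheaf, and assertion (2) follows.

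The main obstacle is the weight-exactness statement invoked in part (2). Its justification requires that the functors $(i^\pm_\tau)^*$, $(i^\pm_\tau)^!$ and the resulting hyperbolic localization interact well with Deligne's (or Saito's) weight filtration; in the algebraic context over $\C$ this can be extracted from Braden's original argument combined with the stability of purity under the morphisms \eqref{eq:attrmor} (compare the treatment in \cite[Proof of Proposition 4.4]{BBDVW22}), appealing in the end to the mixed Hodge module formalism. Establishing this carefully, together with verifying in each concrete geometric situation that the simple perverse sheaves one feeds into $\Pi_\tau$ are indeed pure, is where the genuine work lies; part (1) and its formal consequences are comparatively straightforward.
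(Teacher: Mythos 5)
Your treatment of part (1) is exactly the paper's: the proof in the text consists of the single remark that Braden's construction and main theorems lift to the equivariant setting by the argument of \cite[\S 2.6]{FW14}, using compatibility of the six operations with $\Res_\varphi$/$\For$ as in \cite[Proposition 7.2]{BL94}. So for (1) you are on the same route and there is nothing to add.

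For part (2), however, the paper does \emph{not} reprove anything: assertion (2) is itself one of Braden's main theorems, and the only content of the paper's proof is again the equivariant lifting. You instead sketch a re-derivation via weights, and as a standalone argument it has a genuine gap. First, the step you yourself flag as ``where the genuine work lies'' is precisely the content of the theorem you were asked to prove; deferring it means the proof is not complete. Moreover, your weight bookkeeping is off: the composite $(i_{\tau,0}^+)^*(i_\tau^+)^!$ is a $*$-pullback after a $!$-pullback and carries no a priori weight bound in either direction. Braden's actual argument first replaces the restriction to the fixed locus by a pushforward along the attracting (resp.\ repelling) set using the contraction isomorphism --- the $\tau$-analogue of Proposition \ref{Prop:attr} --- so that one composite becomes $\pi_{\tau*}^{+}(i^+_\tau)^!$ (weights $\ge w$) and the other $\pi^{-}_{\tau!}(i^-_\tau)^*$ (weights $\le w$); only then does the isomorphism of (1) force purity. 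Without this intermediate step (and with your two bounds attached to the wrong composites) the purity claim does not follow. Second, your reduction ``the relevant $\cF$ are Lusztig/Nakajima sheaves, hence of geometric origin and pure'' proves strictly less than the statement, which concerns an \emph{arbitrary} simple object of $\Perv_T(E,\kk)$; such an object is an IC sheaf of a $T$-stable irreducible subvariety with coefficients in a possibly nonconstant irreducible equivariant local system, for which ``defined over $\Q$ and underlying a pure Hodge module'' is not automatic and would need a separate argument (finite monodromy of equivariant local systems for the connected torus, or a reduction to the mixed setting as in \cite[Proof of Proposition 4.4]{BBDVW22}). The clean fix is the paper's: cite Braden's semisimplicity theorem for hyperbolic localization as a black box and confine your proof to the equivariant enhancement, noting that since $T$ is connected, $\Perv_T(E^0_\tau,\kk)$ is a full subcategory of $\Perv(E^0_\tau,\kk)$ stable under summands, so the non-equivariant decomposition into shifted simples can be transported back to the equivariant category.
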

We call the functor $(i_{\tau,0}^+)^*(i_\tau^+)^!$ the \emph{hyperbolic localization} associated with $\tau$.
\begin{proof}
By the similar argument as in \cite[\S2.6]{FW14}, one can easily lift the main theorems in \cite{Braden} to the equivariant setting, which proves the assertions.   
\end{proof}

We say that a cocharacter $\tau \in X_*(T)$ is \emph{generic} if $E_\tau^0 = \{ 0 \}$.

\begin{Ex}
\label{Ex:hr}
For example, $\tau = \pm \rho^\vee$ is a generic cocharacter.
In this case, we have $E_{\pm \rho^\vee}^{\pm} =E$ and $E_{\pm \rho^\vee}^\mp = \{0\}$. 
Therefore, we have $i^+_{\rho^\vee,0} = i^+_{- \rho^\vee} = i$, $i_{\rho^\vee}^+ = \id_E$ and $i_{-\rho^\vee,0}^+ = \id_{\{0\}}$. 
Thus, the functors $i^* = (i_{\rho^\vee,0}^+)^*(i^+_{\rho^\vee})^!$ and $i^! = (i_{-\rho^\vee,0}^+)^*(i^+_{-\rho^\vee})^!$ are  special cases of hyperbolic localization.
\end{Ex}

\begin{Cor}
\label{Cor:hr}
Let $\tau \in X_*(T)$ be generic. 
For any simple perverse sheaf $\cF \in \Perv_T(E)$, we have an isomorphism of $\Z$-graded $\kk[z]$-modules
\[ \rH_{\Gm}^\bullet((i_{\tau,0}^+)^*(i_\tau^+)^!\Res_{\rho^\vee}(\cF)) \simeq \rH^\bullet((i_{\tau,0}^+)^*(i_\tau^+)^!\For(\cF))\otimes \kk[z]. \]
\end{Cor}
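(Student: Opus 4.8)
The plan is to reduce the corollary to Braden's hyperbolic localization theorem (Theorem~\ref{Thm:hr}\eqref{Thm:hr2}) together with the compatibility of $\Res$ and $\For$ with the standard functors recorded in Section~\ref{ssec:Groj2}. First I would use that $\tau$ is generic, so $E^0_\tau = \{0\}$ is a single point and the hyperbolic localization $(i^+_{\tau,0})^*(i^+_\tau)^!$ takes values in $D^b_T(\{0\},\kk)$. Writing $\cG \seq (i^+_{\tau,0})^*(i^+_\tau)^!\cF$, the fact that $\Res_{\rho^\vee}$ and $\For$ commute with $f^*$ and $f^!$ for any equivariant morphism $f$ (see \cite[Proposition 7.2]{BL94}, recalled in Section~\ref{ssec:Groj2}) gives canonical isomorphisms
\[ (i^+_{\tau,0})^*(i^+_\tau)^!\Res_{\rho^\vee}(\cF) \simeq \Res_{\rho^\vee}(\cG), \qquad (i^+_{\tau,0})^*(i^+_\tau)^!\For(\cF) \simeq \For(\cG), \]
so that the claim reduces to $\rH^\bullet_{\Gm}(\Res_{\rho^\vee}(\cG)) \simeq \rH^\bullet(\For(\cG))\otimes\kk[z]$.

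Next I would invoke Theorem~\ref{Thm:hr}\eqref{Thm:hr2}: since $\cF$ is simple perverse, $\cG$ is a finite direct sum of shifted simple perverse sheaves on the point $\{0\}$. As the perverse $t$-structure on a point is the standard one, the only simple perverse sheaf there is the constant sheaf $\ul{\kk}_{\{0\}}$, so
\[ \cG \simeq \bigoplus_{k \in \Z} \ul{\kk}_{\{0\}}[k]^{\oplus m_k} \quad \text{in } D^b_T(\{0\},\kk) \]
for some $m_k \in \N$ vanishing for $|k|$ large. Both $\Res_{\rho^\vee}$ and $\For$ carry $\ul{\kk}_{\{0\}}$ to $\ul{\kk}_{\{0\}}$ and commute with shifts and finite direct sums, so the same decomposition persists after applying either of them. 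Using $\rH^\bullet_{\Gm}(\ul{\kk}_{\{0\}}[k]) = \kk[z]\langle k\rangle$ and $\rH^\bullet(\ul{\kk}_{\{0\}}[k]) = \kk\langle k\rangle$ (recall the identification $\rH^\bullet_{\Gm}(\pt,\kk) = \kk[z]$), I would then compute
\[ \rH^\bullet_{\Gm}(\Res_{\rho^\vee}(\cG)) \simeq \bigoplus_{k} \kk[z]\langle k\rangle^{\oplus m_k}, \qquad \rH^\bullet(\For(\cG)) \simeq \bigoplus_{k} \kk\langle k\rangle^{\oplus m_k}, \]
and conclude, since $\langle k\rangle$ commutes with $-\otimes\kk[z]$ and $\kk\otimes\kk[z] \simeq \kk[z]$,
\[ \rH^\bullet(\For(\cG))\otimes\kk[z] \simeq \bigoplus_{k} \kk[z]\langle k\rangle^{\oplus m_k} \simeq \rH^\bullet_{\Gm}(\Res_{\rho^\vee}(\cG)), \]
which is the desired isomorphism of graded $\kk[z]$-modules.

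I do not expect a serious obstacle here: the whole content of the corollary is carried by Theorem~\ref{Thm:hr}\eqref{Thm:hr2} (itself a lift of Braden's theorem to the equivariant setting), after which the argument is pure bookkeeping. The only points requiring a little care are the tracking of the grading shifts $\langle k\rangle$ and the observation that the restriction-of-equivariance functor $\Res_{\rho^\vee}\colon D^b_T(\pt,\kk)\to D^b_{\Gm}(\pt,\kk)$ sends the simple perverse sheaf $\ul{\kk}_{\{0\}}$ to $\ul{\kk}_{\{0\}}$, which is immediate because $\Res_{\rho^\vee}$ leaves the underlying constructible complex unchanged.
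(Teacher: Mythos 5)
Your argument is correct and matches the paper's proof in all essentials: both apply Theorem~\ref{Thm:hr}\eqref{Thm:hr2} to decompose the $T$-equivariant hyperbolic localization as a finite sum of shifted constant sheaves on the point, identify the multiplicities with the non-equivariant cohomology via the compatibility of $\For$ with $f^*$, $f^!$, and then apply $\rH^\bullet_{\Gm}(-)\circ\Res_{\rho^\vee}$. You simply spell out the grading bookkeeping and the $\Res/\For$ compatibilities that the paper leaves implicit.
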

\begin{proof}
Since $\cF$ is a simple perverse sheaf, Theorem~\ref{Thm:hr}~\eqref{Thm:hr2} enables us to find an isomorphism
\[ (i_{\tau,0}^+)^*(i_\tau^+)^!\cF \simeq \bigoplus_{k \in \Z} \rH^k((i_{\tau,0}^+)^*(i_\tau^+)^!\For(\cF))\otimes \ul{\kk}_{\{0\}}[-k]\] 
in $D^b_{T}(\{0\},\kk)$.
Applying $\rH_{\Gm}^\bullet(-) \circ \Res_{\rho^\vee}$, we obtain the desired isomorphism. 
\end{proof}

\section{Proof of Theorem \ref{simply-laced}}
\label{ssec:Prqaa}
In this section, we give a proof of Theorem \ref{simply-laced} using the geometric construction of $U_q(L\fg)$-modules due to Nakajima \cite{Nak01, Naktns}.
We retain the notation from Section \ref{fqla} above.
Throughout this section, we assume that our Lie algebra $\fg$ is of simply-laced type.

\subsection{Geometric construction of mixed tensor products}
Fix $\bd =(d_{i,p}) \in \N^{\oplus \hI}$. 
It determines a dominant monomial $Y^{\bd} = \prod_{(i,p) \in \hI}Y_{i,p}^{d_{i,p}} \in \cM^+$. 
Let $\qv(\bd)$ and $\qv_0(\bd)$ be the graded quiver varieties, smooth and affine respectively, associated with a $\hI$-graded $\C$-vector space $D = \bigoplus_{(i,p) \in \hI}D_{i,p}$ such that $\dim_\C D_{i,p} = d_{i,p}$. See \cite[Section 4]{Nak04} (and also \cite[Section 4.4]{Fuj}) for the definition.
They come with natural actions of the group $G_\bd := \prod_{(i,p) \in \hI} GL(D_{i,p})$ and there is a canonical $G_\bd$-equivariant proper morphism of varieties $\pi_\bd \colon \qv(\bd) \to \qv_0(\bd)$.  
Let $\qvs(\bd) := \qv(\bd) \times_{\qv_0(\bd)} \qv(\bd)$ be the Steinberg type variety.
The equivariant algebraic $K$-theory $K^{G_\bd}(\qvs(\bd))$ is an associative algebra with respect to the convolution. 
By Nakajima~\cite{Nak01}, for any group homomorphism $\varphi \colon G \to G_\bd$, there is a $\kk$-algebra homomorphism 
\begin{equation}\label{Nakhom}
U_q(L\fg) \to \widehat{K}^{G}\left(\qvs(\bd)\right)_\kk, 
\end{equation}
where $\widehat{K}^G(-)_\kk$ denote the completion of the equivariant $K$-theory $K^{G}(-)\otimes_\Z \kk$ with respect to the ideal of the representation ring 
$R(G) = K^G(\pt)$ formed by virtual $G$-representations of dimension $0$.
For the completion, see also \cite[Section 4.6]{Fuj}.
By the equivariant Riemann-Roch theorem, we have a homomorphism of $\kk$-algebras
\[\wh{K}^G\left(\qvs(\bd)\right)_\kk \to \wh{\rH}^G_\bullet\left(\qvs(\bd), \kk\right),\]
where the RHS is the convolution algebra of the completed $G$-equivariant Borel-Moore homology.
It is an algebra over the completion $\wh{\rH}^\bullet_G(\mathrm{pt}, \kk)$. 
Composed with the homomorphism \eqref{Nakhom}, we get a $\kk$-algebra homomorphism
\begin{equation}\label{NakhomH}
\Psi_{\bd, \varphi} \colon U_q(L\fg) \to \wh{\rH}^G_\bullet\left(\qvs(\bd), \kk\right).
\end{equation}

We consider an action of $\Gm$ on the vector space $D$ such that the $(i,p)$-component $D_{i,p}$ is of weight $-e(i,p)$ for each $(i,p) \in \hI$. 
It defines a group homomorphism $\rho^\vee \colon \Gm \to G_\bd$. 
In what follows, we consider the case $G = \Gm$ and $\varphi = \rho^\vee$ in \eqref{NakhomH}.
We identify the ring $\rH_{\Gm}^\bullet(\pt, \kk)$ with the polynomial ring $\kk[z]$ so that the indeterminate $z$ corresponds to the \emph{negative} fundamental weight of $\Gm$.
In particular, we have the identification $\Oo = \kk [\![z]\!]= \wh{\rH}_{\Gm}^\bullet(\pt, \kk)$.

From now on, through the bijection $e \colon \hI \to J \subset \Z$, we identify $\bd$ with an element $\bd \in \N^{\oplus J}$.
Namely, we set $d_j = d_{i,p}$ if $j = e(i,p)$. 
In the similar way, we identify $D$ with a $J$-graded vector space. 
Consider the action of the symmetric group $\mathfrak{S}_{d}$ on the set $J^\bd$ by place permutations, where $d \seq \sum_{j \in J} d_j$. 
Let $\bep_c = (j_1, \ldots, j_{d})$ denote the unique  costandard  sequence in $J^\bd$.
We fix a basis $\{v_1, \ldots, v_{d} \}$ of $D$ such that $v_k \in D_{j_k}$. 
This yields a maximal torus $T_\bd$ of $G_\bd$ consisting of diagonal matrices with respect to the basis. 
Note that the homomorphism $\rho^\vee \colon \Gm \to G_\bd$ factors through $T_\bd$ and hence $\rho^\vee \in X_*(T_\bd)$. 
For each sequence $\bep \in J^\bd$, let $\sigma_\bep \in \mathfrak{S}_{d}$ denote the element of the smallest length such that 
$\bep  = (j_{\sigma_\bep(1)}, \ldots, j_{\sigma_\bep(d)}).$
Then we define $\tau_\bep \in X_*(T_\bd)$ by $\tau_\bep(t)\cdot v_{\sigma_\bep(k)} = t^{ -k } v_{\sigma_\bep(k)}$ for $1 \le k \le d$. 
Following Nakajima~\cite{Naktns}, we consider the closed subvariety $\tv(\bep)$ of $\qv(\bd)$ defined by
\[ \tv(\bep) := \{x\in\qv(\bd)\mid\lim_{s\to0}\tau_\bep(s)\cdot\pi_\bd(x)=0\in\qv_0(\bd)\}.\]
The equivariant Borel-Moore homology $\rH^{\Gm}_\bullet(\tv(\bep), \kk)$ is a (left) module over the algebra $\rH^{\Gm}_\bullet\left(\qvs(\bd), \kk\right)$ by the convolution, where the $\Gm$-action is given by $\rho^\vee$.
Through the Nakajima homomorphism $\Psi_{\bd, \rho^\vee}$, we regard the completed $\Gm$-equivariant Borel-Moore homology $\wh{\rH}^{\Gm}_\bullet(\tv(\bep), \kk)$ as a $U_q(L\fg)_\Oo$-module.

\begin{Thm}[\cite{Naktns}]\label{Naktns}
For any $\bep \in J^\bd$, we have an isomorphism of $U_q(L\fg)_\Oo$-modules
\[\wh{\rH}^{\Gm}_\bullet(\tv(\bep), \kk) \simeq \tM(\bep), \]
which specialize to 
\[\rH_\bullet(\tv(\bep), \kk) \simeq M(\bep).\]
\end{Thm}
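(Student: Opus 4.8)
The plan is to deduce Theorem~\ref{Naktns} from Nakajima's work \cite{Naktns} on tensor product varieties, tracing through how the geometric side matches the algebraic side. First I would recall that in \cite{Naktns} Nakajima constructs, for a fixed $\bd$ and a cocharacter $\tau\in X_*(T_\bd)$ of the form $\tau_\bep$, the "tensor product variety" as the attracting set $\tv(\bep)=\{x\in\qv(\bd)\mid \lim_{s\to 0}\tau_\bep(s)\cdot\pi_\bd(x)=0\}$ inside the graded quiver variety, and shows that its (completed, $\Gm$-equivariant) Borel--Moore homology carries a $U_q(L\fg)_\Oo$-module structure via the Nakajima homomorphism $\Psi_{\bd,\rho^\vee}$ of \eqref{NakhomH}. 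The first step is therefore to set up the dictionary: the ordered basis $\{v_1,\dots,v_d\}$ with $v_k\in D_{j_k}$ determines, for each $\bep\in J^\bd$, the minimal-length permutation $\sigma_\bep$ and the cocharacter $\tau_\bep$, and the resulting flag-type filtration of $D$ is exactly the one encoding the ordered tensor factor structure $V_{\ep_1}\otimes\cdots\otimes V_{\ep_d}$. I would emphasize that the $\Gm=\rho^\vee(\Gm)$-action (assigning $D_{i,p}$ weight $-p$) is precisely the one producing the deformation parameter $z$, with $\Oo=\kk[\![z]\!]=\wh{\rH}_{\Gm}^\bullet(\pt,\kk)$.

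Second, I would invoke Nakajima's identification of $\wh{\rH}^{\Gm}_\bullet(\tv(\bep),\kk)$ with the tensor product module. In the convention of \cite{Naktns}, when $\bep$ is the \emph{standard} sequence $\bep_s$, the variety $\tv(\bep_s)$ is the Lagrangian-type fiber computing the standard module $M(\bd)$, matching the original construction in \cite{Nak01}; for general $\bep$ one gets the reordered tensor product $V_{\ep_1}\otimes\cdots\otimes V_{\ep_d}$, which is our mixed product $M(\bep)$. The key point is that the $\Gm$-equivariant enhancement, rather than just being a formal deformation, realizes the \emph{deformed} mixed product $\tM(\bep)=\tL_{\ep_1}\star_\Oo\cdots\star_\Oo\tL_{\ep_d}$ of Section~\ref{defocons}: the shift $\tilde S_k=(S_k)_{\exp(kz)}$ corresponds exactly to placing the $k$-th factor in graded degree governed by the $\Gm$-weight $-p$ on the relevant summand of $D$. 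I would make this precise by comparing the $R$-matrix action on the geometric side (given by a correspondence / specialization of a rational operator, as in \cite{Naktns}) with the renormalized $R$-matrices $R_{k,k'}=R_{S_k,S_{k'}}(\exp((k-k')z))$ used to define $\tM(\bep)$; both satisfy the quantum Yang--Baxter equation (Theorem~\ref{ext} on the algebraic side) and agree on highest weight vectors, hence coincide up to $\Oo^\times$.

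Third, having matched $\tM(\bep)$ for all $\bep$, the specialized statement $\rH^{\Gm}_\bullet(\tv(\bep),\kk)\simeq M(\bep)$ follows by applying $\ev_{z=0}$, i.e.\ $(-)\otimes_\Oo\kk$; on the geometric side this is the specialization of equivariant Borel--Moore homology to ordinary Borel--Moore homology (fiber over $z=0$), and on the algebraic side it is $\tM(\bep)_0=M(\bep)$, which holds by construction since $(\tL_k)_0\simeq S_k$ and $\star_\Oo$ specializes to $\star$. Here I would cite the flatness of $\wh{\rH}^{\Gm}_\bullet(\tv(\bep),\kk)$ over $\Oo$ (equivalently, freeness, which holds because $\tv(\bep)$ has an affine paving / the relevant purity holds in the simply-laced case), so that specialization commutes with the identifications.

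The main obstacle I expect is the precise comparison of normalizations in the second step: verifying that Nakajima's geometric tensor product module, with its $\Gm$-grading, is \emph{literally} isomorphic as a $U_q(L\fg)_\Oo$-module to $\tM(\bep)$ with \emph{our} chosen spectral-parameter shifts $\exp(kz)$ and \emph{our} chosen renormalized $R$-matrices — rather than merely up to a global rescaling or a reindexing of the parameter. This requires carefully matching (a) the cocharacter $\tau_\bep$ and its weights against the tensor factor ordering, (b) the $\Gm$-weight conventions (the "negative fundamental weight" choice fixing the sign of $z$), and (c) the normalization of the $R$-matrix on highest weight vectors. Once these bookkeeping matches are pinned down, the isomorphism is essentially a restatement of \cite{Naktns}; the content is entirely in the dictionary, not in any new geometric input.
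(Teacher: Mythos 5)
The paper offers no independent argument for this statement: it is imported directly from Nakajima's tensor-product-variety paper, the module structure coming from the homomorphism $\Psi_{\bd,\rho^\vee}$, so your plan of deducing it from \cite{Naktns} by setting up the dictionary (cocharacter $\tau_\bep$ versus the ordering, $\Gm$-weights versus the shifts $\exp(kz)$, freeness over $\Oo$ for the specialization) is exactly the paper's treatment. One small adjustment: $\tM(\bep)$ is by definition just the $\star_\Oo$-product of the deformed fundamental modules, so no comparison of renormalized $R$-matrices is required for this theorem — that comparison is the content of the later Proposition \ref{tau}, not of Theorem \ref{Naktns} itself.
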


\subsection{Sheaf theoretic interpretation}
Recall that the affine graded quiver variety $\qv_0(\bd)$ has a canonical stratification whose stratum $\qvr(\bv,\bd)$ is labelled by $\bv = (v_{i,p}) \in \N^{\oplus \hI'}$, where $\hI' \seq (I \times \Z)\setminus \hI$.
See \cite[Section 4]{Nak04}.
Write $A^{-\bv} := \prod_{(i,p) \in \hI'}A_{i,p}^{-v_{i,p}} \in \cM$. 
A stratum $\qvr(\bv,\bd)$ is non-empty if and only if $Y^\bd A^{-\bv} \in \cM^+$ and the simple module $L(Y^\bd A^{-\bv})$ contributes as a composition factor of the standard module $M(\bd) = M(Y^\bd)$. 
In particular, we have only finitely many non-empty strata, including $\qvr(0,\bd) = \{ 0 \}.$

Let $\cA_\bd \seq (\pi_{\bd})_*\ul{\kk}_{\qv(\bd)}$ denote the (derived) push-forward of the constant sheaf on $\qv(\bd)$ along the proper morphism $\pi_{\bd}$. 
Then, by an equivariant version of \cite[Section 8.6]{CG}, we have an isomorphism of $\kk$-algebras 
\begin{equation}\label{Ginzburg}
\rH_\bullet^\Gm(\qvs(\bd), \kk) \simeq \Hom^\bullet_\Gm(\cA_\bd, \cA_\bd), 
\end{equation}
where the algebra structure on the RHS is given by the Yoneda product.
By \cite[Theorem 14.3.2]{Nak01}, we have a decomposition in $D^b_{\Gm}(\qv_0(\bd),\kk)$:
\begin{equation}\label{DT} 
\cA_\bd \simeq \bigoplus_{\bv} \mathrm{IC}(\bv,\bd) \otimes_\kk L^\bullet(\bv,\bd),
\end{equation}
where $\bv$ runs over all the elements of $\N^{\oplus \hI'}$ satisfying $\qvr(\bv,\bd) \neq \varnothing$, $\mathrm{IC}(\bv,\bd)$ denotes the intersection cohomology complex of the closure of the stratum $\qvr(\bv, \bd)$ (with coefficients in $\kk$), and $L^\bullet(\bv, \bd) \in D^{b}(\pt, \kk)$ is a non-zero object, which we regard as a non-zero finite-dimensional $\Z$-graded $\kk$-vector space. 
We consider the total perverse cohomology
\[ \ocA_\bd := \bigoplus_{k \in \Z} {}^p\mathcal{H}^k(\cA_\bd) = \bigoplus_{\bv} \mathrm{IC}(\bv,\bd) \otimes_\kk L(\bv, \bd),\]
where $L(\bv,\bd)$ denotes the underlying ungraded $\kk$-vector space of $L^\bullet(\bv, \bd)$. 
Since $\ocA_\bd$ is a semisimple perverse sheaf, its Yoneda algebra 
\[ A_\bd \seq \Hom^\bullet_\Gm(\ocA_\bd,\ocA_\bd)\]
is a non-negatively graded $\kk$-algebra whose degree zero component $A^0$ is isomorphic to the semisimple algebra $\bigoplus_{\bv} \End_{\kk}(L(\bv, \bd))$.
Let $\wh{A}_\bd \seq \prod_{n \ge 0}A_\bd^n$ denote the completion of $A_\bd$ along the grading. 
The set $\{L(\bv, \bd)\}_\bv$ gives a complete system of simple $\wh{A}_\bd$-modules.

From \eqref{Ginzburg} and \eqref{DT}, we obtain an isomorphism of $\kk$-algebras
\begin{equation}\label{isomHA}
\wh{\rH}_\bullet^\Gm(\qvs(\bd), \kk) \simeq \wh{A}_\bd.
\end{equation}
Composed with the Nakajima homomorphism~\eqref{Nakhom}, we get a $\kk$-algebra homomorphism $U_q(L\fg) \to \wh{A}_\bd$, through which we regard an $\wh{A}_\bd$-module as a $U_q(L\fg)$-module.

\begin{Thm}[{\cite[Theorem 14.3.2]{Nak01}}] \label{NakS14}
The simple $\wh{A}_\bd$-module $L(\bv, \bd)$ is isomorphic to the simple $U_q(L\fg)$-module $L(Y^\bd A^{-\bv})$.
\end{Thm}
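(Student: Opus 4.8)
This is essentially \cite[Theorem 14.3.2]{Nak01}; we outline the argument, since several of its geometric ingredients recur below. The strategy is: attach to each stratum $\qvr(\bv,\bd)$ a ``standard'' module over $\wh{A}_\bd$ with simple head $L(\bv,\bd)$; observe that, the image of $U_q(L\fg)$ in $\wh{A}_\bd$ being large enough, $L(\bv,\bd)$ is already simple as a $U_q(L\fg)$-module; identify the standard module, as a $U_q(L\fg)$-module, with $M(Y^\bd A^{-\bv})$ by means of Nakajima's transversal slices; and then match simple heads. Concretely, for each $\bv$ with $\qvr(\bv,\bd)\neq\varnothing$ fix a point $x_\bv\in\qvr(\bv,\bd)$ (with $x_0 = 0$) and set $M_\bv \seq \wh{\rH}^{\Gm}_\bullet(\pi_\bd^{-1}(x_\bv),\kk)$, the completed equivariant Borel--Moore homology of the fibre; by \eqref{isomHA} this is a module over $\wh{A}_\bd\simeq\wh{\rH}^{\Gm}_\bullet(\qvs(\bd),\kk)$, hence a $U_q(L\fg)$-module through \eqref{Nakhom}. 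The equivariant form of the convolution-algebra formalism of \cite[Section 8.6]{CG}, applied to the decomposition \eqref{DT}, shows that $M_\bv$ has a unique simple quotient, namely $L(\bv,\bd)$, and that $[M_\bv] = [L(\bv,\bd)] + \sum_{\bv'}c_{\bv,\bv'}[L(\bv',\bd)]$ with $c_{\bv,\bv'}\in\N$ supported on those $\bv'$ with $\qvr(\bv,\bd)\subset\ol{\qvr(\bv',\bd)}$ --- equivalently, by Nakajima's identification of the geometric stratification order with the Nakajima order, on those $\bv'$ with $Y^\bd A^{-\bv'}\preceq Y^\bd A^{-\bv}$.

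The key geometric input, due to Nakajima, is the \emph{transversal slice} to a stratum. A normal slice $S_\bv$ to $\qvr(\bv,\bd)$ through $x_\bv$ is identified --- compatibly with the maps $\pi$, with the complexes $\cA$, and with the Nakajima homomorphism $\Psi_{\bd,\rho^\vee}$ of \eqref{NakhomH} --- with a neighbourhood of $0$ in an affine graded quiver variety $\qv_0(\bd')$ whose associated dominant monomial is $Y^{\bd'} = Y^\bd A^{-\bv}$; under this identification $\pi_\bd^{-1}(x_\bv)$ becomes $\pi_{\bd'}^{-1}(0)$ (up to an affine-space factor). Since $\wh{\rH}^{\Gm}_\bullet(\pi_{\bd'}^{-1}(0),\kk)$ is, by \cite{Nak01, Nak04}, the standard $U_q(L\fg)$-module $M(Y^{\bd'})$, we obtain an isomorphism $M_\bv\simeq M(Y^\bd A^{-\bv})$ of $U_q(L\fg)$-modules.

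Putting the two descriptions of $M_\bv$ together, the unique simple quotient of $M(Y^\bd A^{-\bv})$ is $L(Y^\bd A^{-\bv})$ (standard modules have simple heads), while the unique simple quotient of $M_\bv$ is $L(\bv,\bd)$, which we have seen is $U_q(L\fg)$-simple; hence $L(\bv,\bd)\simeq L(Y^\bd A^{-\bv})$, as asserted. As a consistency check, for $\bv = 0$ the slice is $\qv_0(\bd)$ itself, $M_0\simeq M(Y^\bd)$, and the statement reads $L(0,\bd)\simeq L(Y^\bd)$.

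The main obstacle is the transversal slice statement used in the second paragraph: realizing a normal slice to a quiver-variety stratum as a smaller affine graded quiver variety \emph{compatibly with the convolution algebra} --- equivalently with $\Psi_{\bd,\rho^\vee}$ --- and with the intersection cohomology complexes $\IC(\bv,\bd)$, together with the largeness of the image of $U_q(L\fg)$ in $\wh{A}_\bd$, is the genuinely geometric heart of the proof; it rests on the $\Gm$-fixed-point and framing combinatorics of graded quiver varieties. As elsewhere in this section, we import it --- along with the order comparison and the geometric realization of standard modules --- from \cite{Nak01, Nak04, Naktns}.
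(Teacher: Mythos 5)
The statement is taken over verbatim from \cite[Theorem 14.3.2]{Nak01}; the paper does not supply a proof of its own, so there is nothing to compare against directly. Your sketch of Nakajima's argument (standard modules on stratum fibres, the transversal slice theorem identifying the slice through $x_\bv$ with a smaller graded quiver variety, simplicity of $L(\bv,\bd)$ as a $U_q(L\fg)$-module, matching simple heads) is faithful to the route actually taken in \cite{Nak01}, and the order comparison you invoke is the correct one.

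One point deserves a correction: you set $M_\bv \seq \wh{\rH}^{\Gm}_\bullet(\pi_\bd^{-1}(x_\bv),\kk)$, but for $\bv \neq 0$ a point $x_\bv$ in $\qvr(\bv,\bd)$ is not fixed by the $\Gm$-action coming from $\rho^\vee$ (which contracts $\qv_0(\bd)$ to the origin), so $\pi_\bd^{-1}(x_\bv)$ is not a $\Gm$-variety and the completed $\Gm$-equivariant Borel--Moore homology is not defined as written. The correct object in Nakajima's proof is the ordinary (non-equivariant) Borel--Moore homology $\rH_\bullet(\pi_\bd^{-1}(x_\bv),\kk)$, made into an $\wh{A}_\bd$-module through the convolution-algebra map and the projection $\wh{A}_\bd \twoheadrightarrow A^0_\bd \oplus (\text{positive degree})$; alternatively one re-centers the torus so that $x_\bv$ is a fixed point. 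Your consistency check at $\bv = 0$ does not detect this because $x_0 = 0$ is the one $\Gm$-fixed point. The rest of the outline --- in particular the dependence on the slice theorem's compatibility with $\Psi_{\bd,\rho^\vee}$ and the $\IC$-sheaves, which you rightly flag as the geometric heart --- is correctly attributed and can be imported from \cite{Nak01, Nak04, Naktns} exactly as the paper does by citation.
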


Let $\Gamma$ be an infinite quiver whose set of vertices is $\hI$ and whose set of arrows $\Gamma_1$ is given by the following rule: the number of arrows
from $(i,p)$ to $(j,r)$ is equal to the pole order $\fo(V_{j,r}, V_{i,p})$ of the normalized $R$-matrix (cf.\ Section \ref{Rmat}).   
We define
\[ E(\bd) \seq \bigoplus_{x \in \Gamma_1} \Hom_\C(D_{\mathrm{s}(x)}, D_{\mathrm{t}(x)}), \]
where $\mathrm{s}(x)$ (resp.~$\mathrm{t}(x)$) denotes the source (resp.~target) of an arrow $x$.
The group $G_\bd$ acts on $E(\bd)$ by conjugation.
Note that $\Gm$ acts on $E(\bd)$ through $\rho^\vee \colon \Gm \to T_\bd$ with strictly positive weights because $\fo(V_{j,r}, V_{i,p})>0$ implies $r<p$. 
In other words, $\rho^\vee \in X_*(T_\bd)$ satisfies the condition~\eqref{eq:attractive}, and hence the $T_\bd$-action on $E(\bd)$ is attractive.

\begin{Thm}[\cite{KS, Fuj}]\label{KS}
The affine graded quiver variety $\qv_0(\bd)$ is $G_\bd$-equivariantly isomorphic to a $G_\bd$-stable closed subvariety of the affine space $E(\bd)$. 
\end{Thm}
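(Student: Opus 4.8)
The plan is to use the explicit GIT description of the affine graded quiver variety and to reduce the assertion to an invariant-theoretic statement, which turns out to be especially clean because the grading forbids oriented cycles among the auxiliary vertices. Recall that $\qv_0(\bd)$ is constructed as an affine GIT quotient $\mu^{-1}(0)/\!\!/G_V$: one fixes an auxiliary $\hI$-graded space $V = \bigoplus_{(i,p)\in\hI}V_{i,p}$, sets $G_V \seq \prod_{(i,p)\in\hI}GL(V_{i,p})$, and takes $\mu^{-1}(0)$ inside the space of tuples $(B,a,b)$ --- where $B$ is a collection of maps between the $V_{i,p}$'s shifting the spectral parameter by $\pm 1$, $a$ a degree-shifting map $D \to V$, and $b$ a degree-shifting map $V \to D$ --- cut out by the ADHM-type moment relations $\mu(B,a,b)=0$. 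The $G_\bd$-action on $\qv_0(\bd)$ is induced by the commuting action of $G_\bd = \prod_{(i,p)\in\hI}GL(D_{i,p})$ on the framing $D$, so that $\C[\qv_0(\bd)] = \C[\mu^{-1}(0)]^{G_V}$ as a $G_\bd$-algebra.

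First I would construct the embedding. For each oriented path $\gamma$ through the $V$-vertices from $(i,p)$ to $(j,r)$, the composite $(B,a,b) \mapsto b_{j,r}\circ B_\gamma\circ a_{i,p} \in \Hom_\C(D_{i,p},D_{j,r})$ is $G_V$-invariant, hence descends to $\qv_0(\bd)$, and it is manifestly $G_\bd$-equivariant. Since each factor of $B_\gamma$ strictly decreases the spectral parameter, such a path can only occur when $r<p$, and there are finitely many of them. Moreover, the graded double quiver on the $V$-vertices contains no oriented cycle, so --- by Le Bruyn--Procesi's theorem on generators of invariant rings of quiver representations, applied after Crawley--Boevey's device of replacing the framing $D$ by an extra vertex, together with the moment relations --- the algebra $\C[\qv_0(\bd)]$ is generated by the matrix entries of these composites $b\circ B_\gamma\circ a$. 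Organising a minimal such family of composites, one obtains a $G_\bd$-equivariant morphism $\Phi \colon \qv_0(\bd) \to E(\bd)$; as both sides are affine and $\Phi^*$ is then surjective on coordinate rings, $\Phi$ is a closed immersion onto a $G_\bd$-stable closed subvariety.

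The hard part, and the step I expect to be the main obstacle, is to identify the target precisely: one must show that a minimal generating family of composites from $(i,p)$ to $(j,r)$ has exactly $\fo(V_{j,r},V_{i,p})$ members, so that the ambient affine space is $\bigoplus_{x\in\Gamma_1}\Hom_\C(D_{\mathrm{s}(x)},D_{\mathrm{t}(x)}) = E(\bd)$. On the geometric side, the number of independent composites from $(i,p)$ to $(j,r)$ is controlled by the stratification of $\qv_0(\bd)$ by the $\qvr(\bv,\bd)$ and, through Nakajima's dimension formulas for graded quiver varieties, by the entries of the inverse quantum Cartan matrix; on the representation-theoretic side, the pole order $\fo(V_{j,r},V_{i,p})$ of the normalized $R$-matrix for fundamental modules is given by the known denominator formula, which is expressed through the same entries. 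Matching the two counts yields the desired identification and hence Theorem~\ref{KS}. Alternatively, one may first invoke Nakajima's own result realising $\qv_0(\bd)$ as a closed subvariety of $\bigoplus_{(i,p),(j,r)}\Hom_\C(D_{i,p},D_{j,r})^{\oplus m}$ for a combinatorially defined multiplicity $m$, after which only the equality $m = \fo(V_{j,r},V_{i,p})$ remains to be checked.
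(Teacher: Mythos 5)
The paper does not prove this statement at all: it is imported verbatim from \cite{KS, Fuj}, so there is no internal argument to compare against. Your sketch is, in outline, a faithful reconstruction of how those references proceed. The first half is sound: in the graded setting all maps $B, a, b$ strictly decrease the spectral parameter, so there are no oriented cycles among the $V$-vertices, and the Lusztig/Le Bruyn--Procesi generation theorem (in its framed form, via the Crawley--Boevey trick) does give that $\C[\qv_0(\bd)]=\C[\mu^{-1}(0)]^{G_V}$ is generated by matrix entries of composites $b\circ B_\gamma\circ a$; since $\qv_0(\bd)$ is affine, choosing finitely many such composites and getting surjectivity on coordinate rings yields a $G_\bd$-equivariant closed immersion into a direct sum of $\Hom_\C(D_{i,p},D_{j,r})$'s with $r<p$.

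The genuine issue is the step you yourself flag as the hard part: identifying the multiplicities with the pole orders $\fo(V_{j,r},V_{i,p})$. ``Matching the two counts'' via Nakajima's dimension formulas and the denominator formulas is not a routine bookkeeping exercise; it is precisely the content of \cite{Fuj} (building on the module-variety description of $\qv_0(\bd)$ in \cite{KS}, where the minimal multiplicities are realized homologically as dimensions of $\Ext^1$ between simples over an explicit algebra, and then compared with the known singularities of normalized $R$-matrices through the inverse quantum Cartan matrix). As written, your proposal replaces this by a plausibility argument, so it does not constitute a proof of the key equality. Two smaller remarks: (i) for the theorem as stated you only need the \emph{upper bound} --- that $\fo(V_{j,r},V_{i,p})$ composites suffice to generate --- since any such family already gives a closed embedding into $E(\bd)$; the exact equality is the stronger statement of \cite{Fuj}; (ii) your alternative route (quote Nakajima's embedding with combinatorial multiplicity $m$ and check $m=\fo$) is essentially what the cited papers do, so it is a legitimate reduction but again outsources the same nontrivial comparison.
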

In what follows, we identify $\qv_0(\bd)$ with the $G_\bd$-stable closed subvariety of $E(\bd)$.    

Recall the cocharacter $\tau_\bep \in X_*(T)$ for each $\bep \in J^\bd$.
Let 
\[E(\bep) \seq \{ x \in E(\bd) \mid\lim_{s\to0} \tau_\bep(s)\cdot x=0\}= E(\bd)_{\tau_\bep}^+\]
in the notation of \eqref{eq:wtdecomp}.
By definition, the variety $\tv(\bep)$ is identical to the fiber product $\qv(\bd) \times_{E(\bd)} E(\bep)$ arising from the canonical proper morphism $\pi_\bd \colon \qv(\bd) \to \qv_0(\bd) \subset E(\bd)$ and the inclusion $i_{\bep} \colon E(\bep) \hookrightarrow E(\bd)$.
Therefore, similarly to the isomorphism  in \eqref{Ginzburg}, we have
\[ \rH^\Gm_\bullet(\tv(\bep), \kk) = \rH^\Gm_\bullet(\qv(\bd) \times_{E(\bd)} E(\bep)) \simeq \Hom_\Gm^\bullet(i_{\bep*}\ul{\kk}_{E(\bep)}, \cA_\bd) \simeq \rH^\bullet_\Gm(p_{\bep*} i_\bep^! \cA_\bd)\]
as $A_\bd$-modules, where $p_\bep \colon E(\bep) \to \{0\}$ is the obvious morphism. 
Here the $\Z$-gradings are disregarded.
Let $i_{\bep, 0} \colon \{0\} \hookrightarrow E(\bep)$ be the inclusion.
By Proposition~\ref{Prop:attr}, we have an isomorphism $p_{\bep*} \simeq i_{\bep,0}^*$ of functors from $D^b_
\Gm(E(\bep), \kk)$ to $D^b_\Gm(\{0\}, \kk)$.
After the completion, we get 
\[\wh{\rH}^\Gm_\bullet(\tv(\bep),  \kk ) \simeq \wh{\rH}^\bullet_\Gm(p_{\bep*} i_\bep^! \ocA_\bd) \simeq \wh{\rH}^\bullet_\Gm(i_{\bep,0}^* i_\bep^! \ocA_\bd)\] 
as $\wh{A}_\bd$-modules. 
Combined with Theorem~\ref{Naktns}, we obtain the following.

\begin{Prop}\label{Nsheaf}
For each $\bep \in J^\bd$, we have an isomorphism of $U_q(L\fg)_\Oo$-modules
\[\wh{\rH}^\bullet_\Gm(i_{\bep,0}^* i_\bep^! \ocA_\bd) \simeq \tM(\bep), \]
which specialize to 
\[ \rH^\bullet(i_{\bep,0}^* i_\bep^! \ocA_\bd) \simeq M(\bep). \]
\end{Prop}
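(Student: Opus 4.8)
The plan is to concatenate the chain of isomorphisms assembled in the paragraphs just above and then feed in Theorem~\ref{Naktns}. Recall first that $\tv(\bep) = \qv(\bd) \times_{E(\bd)} E(\bep)$ by definition, with $\pi_\bd \colon \qv(\bd) \to \qv_0(\bd) \subset E(\bd)$ proper and $i_\bep \colon E(\bep) \hookrightarrow E(\bd)$ the closed inclusion. Running the standard convolution computation of equivariant Borel--Moore homology (the equivariant analogue of \cite[Section~8.6]{CG} already used for \eqref{Ginzburg}) together with the adjunction $(i_{\bep*}, i_\bep^!)$, one obtains
$$
\rH^{\Gm}_\bullet(\tv(\bep),\kk) \;\simeq\; \Hom^\bullet_\Gm\bigl(i_{\bep*}\ul{\kk}_{E(\bep)},\, \cA_\bd\bigr) \;\simeq\; \rH^\bullet_\Gm\bigl(p_{\bep*}\, i_\bep^! \cA_\bd\bigr)
$$
as modules over $\rH^{\Gm}_\bullet(\qvs(\bd),\kk) \simeq \Hom^\bullet_\Gm(\cA_\bd, \cA_\bd)$, where $p_\bep \colon E(\bep) \to \{0\}$ is the obvious morphism.

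Next I would observe that $E(\bep) = E(\bd)^{+}_{\tau_\bep}$ is a $\rho^\vee$-stable closed subvariety of $E(\bd)$ on which $\rho^\vee$ acts with strictly positive weights (as recorded above for $E(\bd)$, since $\fo(V_{j,r},V_{i,p})>0$ forces $r<p$), so the $\Gm$-action on $E(\bep)$ through $\rho^\vee$ is attractive with unique fixed point $\{0\}$. Proposition~\ref{Prop:attr} then provides a natural isomorphism $p_{\bep*} \simeq i_{\bep,0}^{*}$ of functors $D^b_\Gm(E(\bep),\kk) \to D^b_\Gm(\{0\},\kk)$, whence $\rH^{\Gm}_\bullet(\tv(\bep),\kk) \simeq \rH^\bullet_\Gm(i_{\bep,0}^{*} i_\bep^! \cA_\bd)$ as modules over the Yoneda algebra of $\cA_\bd$.

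The one place needing a genuine argument is the replacement of $\cA_\bd$ by its total perverse cohomology $\ocA_\bd$. By the decomposition~\eqref{DT}, the complexes $\cA_\bd$ and $\ocA_\bd$ carry the same $\IC$-summands and differ only by cohomological shifts on the multiplicity spaces ($L^\bullet(\bv,\bd)$ versus its ungraded version $L(\bv,\bd)$). I would check that these shifts disappear after passing to the completion along the grading --- equivalently, the completion with respect to the augmentation ideal of $\rH^\bullet_\Gm(\pt,\kk) = \kk[z] = \Oo$, which is exactly the completion built into \eqref{NakhomH} and \eqref{isomHA} --- so that $\wh{\rH}^{\Gm}_\bullet(\tv(\bep),\kk) \simeq \wh{\rH}^\bullet_\Gm(i_{\bep,0}^{*} i_\bep^! \ocA_\bd)$ as modules over $\wh{A}_\bd$, and hence, via the Nakajima homomorphism $U_q(L\fg) \to \wh{A}_\bd$, as $U_q(L\fg)_\Oo$-modules. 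Combining this with Theorem~\ref{Naktns}, which identifies $\wh{\rH}^{\Gm}_\bullet(\tv(\bep),\kk) \simeq \tM(\bep)$, yields the first isomorphism; running the same chain of identifications without the completion gives the asserted specialization $\rH^\bullet_\Gm(i_{\bep,0}^{*} i_\bep^! \ocA_\bd) \simeq M(\bep)$. The main (indeed, essentially the only) delicate point is this completion step: one must make sure that the passage from $\cA_\bd$ to $\ocA_\bd$ is compatible simultaneously with the convolution algebra structure, with its completion, and with the $U_q(L\fg)$-action pulled back along the Nakajima homomorphism; once that is in place, the proposition follows formally from the results recalled above.
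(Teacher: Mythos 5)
Your proposal is correct and follows essentially the same route as the paper: the fiber-product description $\tv(\bep) = \qv(\bd)\times_{E(\bd)}E(\bep)$, the equivariant Chriss--Ginzburg convolution isomorphism plus adjunction, Proposition~\ref{Prop:attr} to replace $p_{\bep*}$ by $i_{\bep,0}^*$, the passage from $\cA_\bd$ to $\ocA_\bd$, and then Theorem~\ref{Naktns}. The only cosmetic difference is at the $\cA_\bd$-versus-$\ocA_\bd$ step: the shifts on the multiplicity spaces are absorbed already by disregarding the $\Z$-grading (as the paper does), the completion being needed only to match $\wh{A}_\bd$ and the Nakajima homomorphism, not to remove the shifts.
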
  

\begin{Rem} \label{Rem:duality_g}
By construction, we have $E(\bd)^{+}_{\tau_{\bep}} = E(\bd)^{-}_{\tau_{\bep^\op}}$, where $\bep^\op$ is the opposite sequence of $\bep$.
By Theorem \ref{Thm:hr}, it implies an isomorphism $\bD(i_{\bep,0}^* i_\bep^! \ocA_\bd) \simeq i_{\bep^\op,0}^* i_{\bep^\op}^! \ocA_\bd$.
\end{Rem}

\subsection{Geometric interpretation of $R$-matrices}
Recall the preorder $\lesssim$ of the set $J^\bd$ from Section \ref{mjf}.
The following lemma is clear from the definition.

\begin{Lem}
For $\bep, \bep' \in J^\bd$, we have $E(\bep) \subset E(\bep')$ if and only if $\bep \lesssim \bep'$. 
For the standard (resp.~costandard) sequence $\bep_s$ (resp.~$\bep_c$), we have $E(\bep_s) = \{0\}$ (resp.~$E(\bep_c) = E(\bd)$).
\end{Lem}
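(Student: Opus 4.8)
The plan is to reduce both assertions to an explicit weight‑space description of $E(\bep)$ and then a short bubble‑sort argument. By definition $E(\bep) = E(\bd)^+_{\tau_\bep}$ is the sum of the $T_\bd$‑weight spaces of $E(\bd)$ on which $\tau_\bep$ acts with positive weight. Writing $\varepsilon_1, \dots, \varepsilon_d$ for the characters of $T_\bd$ on the fixed basis vectors $v_1, \dots, v_d$ (with $v_k \in D_{j_k}$), the space $E(\bd) = \bigoplus_{x \in \Gamma_1}\Hom_\C(D_{\mathrm{s}(x)}, D_{\mathrm{t}(x)})$ has the $T_\bd$‑eigenbasis of the matrix units $E^{(x)}_{lk}$ (the map $v_k \mapsto v_l$ inside the $x$‑summand, for $j_k = \mathrm{s}(x)$ and $j_l = \mathrm{t}(x)$), and $E^{(x)}_{lk}$ has weight $\varepsilon_l - \varepsilon_k$. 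The defining relation $\tau_\bep(t)\cdot v_{\sigma_\bep(k)} = t^k v_{\sigma_\bep(k)}$ says exactly that $\langle \tau_\bep, \varepsilon_m\rangle$ equals the position of $v_m$ in the sequence $\bep$; the minimality of $\sigma_\bep$ makes this unambiguous when values repeat, as $\sigma_\bep$ preserves the left‑to‑right order of equal values, so that the entries of $\bep$ are canonically identified with $v_1, \dots, v_d$ compatibly for all $\bep \in J^\bd$. Hence $E^{(x)}_{lk} \in E(\bep)$ if and only if $v_l$ stands to the right of $v_k$ in $\bep$, and therefore $E(\bep)$ is completely determined by, and inclusion‑monotone in, the set $\mathrm{Inv}(\bep)$ of pairs $(v_k, v_l)$ with $j_k, j_l$ joined by an arrow of $\Gamma$ (necessarily directed $j_k \to j_l$) and $v_l$ to the right of $v_k$ in $\bep$. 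In particular $E(\bep) \subset E(\bep') \iff \mathrm{Inv}(\bep) \subset \mathrm{Inv}(\bep')$.

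Since $\bep_s$ is weakly decreasing and $\bep_c$ weakly increasing, the weight formula of the first paragraph shows that $\mathrm{Inv}(\bep_s) = \varnothing$ and that $\mathrm{Inv}(\bep_c)$ is the entire index set (in a monotone sequence the relative order of two entries of distinct value depends only on those values, and for every arrow the direction relative to the order on $J$ is fixed by the defining rule via $\fo$); hence $E(\bep_s) = \{0\}$ and $E(\bep_c) = E(\bd)$. Alternatively, the second equality follows from the first via the identity $E(\bd)^+_{\tau_{\bep_c}} = E(\bd)^-_{\tau_{\bep_s}}$ noted in Remark~\ref{Rem:duality_g}, together with the observation that $\tau_{\bep_s}$ has no zero weight on $E(\bd)$, since $\mathrm{s}(x) \neq \mathrm{t}(x)$ for every arrow $x$.

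It then remains to show $\mathrm{Inv}(\bep) \subset \mathrm{Inv}(\bep') \iff \bep \lesssim \bep'$. For ``$\Leftarrow$'' one reduces to a single elementary move, which replaces a consecutive pair $(i,j)$ in $\bep$ by $(j,i)$ and changes the relative order of exactly one pair of differently‑valued entries: in case~(i) ($\alpha(i,j) = 0$) these entries are not arrow‑joined and $\mathrm{Inv}$ is unchanged, while in case~(ii) ($\alpha(i,j) > 0$, $i > j$) the move adds exactly one element to $\mathrm{Inv}$ — the forbidden case~(iii) ($\alpha(i,j) > 0$, $i < j$) is precisely the one that would delete an element. For ``$\Rightarrow$'' I would induct on the number of differently‑valued pairs occurring in opposite order in $\bep$ and $\bep'$: if this number is positive, a bubble‑sort observation yields such a pair that is moreover consecutive in $\bep$; writing $(i,j)$ for this consecutive pair of $\bep$, the hypothesis $\mathrm{Inv}(\bep) \subset \mathrm{Inv}(\bep')$ forbids case~(iii) (it would force that very pair to be concordant in $\bep'$ as well), so swapping it is an admissible move $\bep \lesssim \bep_1$ with $\mathrm{Inv}(\bep_1) \subset \mathrm{Inv}(\bep')$ and $\bep_1$ strictly closer to $\bep'$, and the induction continues. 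The weight computation of the first paragraph is what makes the statement essentially a matter of unwinding definitions; the one point that needs attention is keeping track of repeated values in the converse direction, but this is routine.
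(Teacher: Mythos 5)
Your overall strategy is the natural one, and since the paper itself gives no argument for this lemma (it is declared clear from the definition), the only question is whether your unwinding of the definitions is internally correct. The reduction of $E(\bep)=E(\bd)^+_{\tau_\bep}$ to the span of matrix units of positive weight, the bookkeeping of repeated labels via the minimal-length choice of $\sigma_\bep$, the monotonicity in the inversion set, and the two bubble-sort inductions are all fine. The gap is exactly at the point you wave through: you never determine the orientation of the arrows of $\Gamma$ relative to the order on $J$, and the rule as stated in the paper gives the opposite of what you assert. The number of arrows from $(i,p)$ to $(j,r)$ is $\fo(V_{j,r},V_{i,p})$, which is nonzero only when $e(j,r)<e(i,p)$ (this is the constraint imposed on the embedding $e$, and it is visible in the $\mathfrak{sl}_2$ picture, where all arrows point towards decreasing $p$). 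Hence every arrow has source of \emph{larger} $J$-index and target of \emph{smaller} $J$-index, and the corresponding summand of $E(\bd)$ is $\Hom(D_{\mathrm{large}},D_{\mathrm{small}})$. In the standard (weakly decreasing) sequence $\bep_s$ the smaller label --- the target --- sits to the right of the larger one, so by your own weight formula every nonzero $\tau_{\bep_s}$-weight on $E(\bd)$ is \emph{positive}: one gets $\mathrm{Inv}(\bep_s)$ equal to the full index set and $\mathrm{Inv}(\bep_c)=\varnothing$, i.e.\ $E(\bep_s)=E(\bd)$ and $E(\bep_c)=\{0\}$, the reverse of the lemma. Concretely, for $\fg=\mathfrak{sl}_2$ and $\bep_s=(3,3,1)$ one has $E(\bd)=\Hom(D_3,D_1)$, spanned by $v_1\mapsto v_3$ and $v_2\mapsto v_3$, whose $\tau_{\bep_s}$-weights are $2$ and $1$.

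The same orientation issue propagates to your elementary-move analysis: with the stated arrow rule, swapping a consecutive pair $(i,j)$ with $i>j$ and $\alpha(i,j)>0$ (your case (ii)) \emph{removes} an inversion rather than adds one. So your proof is correct precisely for the opposite orientation (arrows from smaller to larger $J$-index, i.e.\ counted by $\fo(V_{i,p},V_{j,r})$, or equivalently after replacing $\tau_\bep$ by $-\tau_\bep$), which is evidently what the authors intend: the statement $E(\bep_s)=\{0\}$ is what the examples of Section \ref{ssec:Prqaa} use (they need $i^*_{\bep_s,0}i^!_{\bep_s}=i_0^!$ so as to reproduce $[M(\bep_s)]_t=[L]+t^2[S_3]$ from Section \ref{exfil2_1}), and the parallel computation in the quiver Hecke section does come out consistently under its stated conventions. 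To close the gap you should actually compute the arrow direction from the definition, note that the lemma as literally stated requires reversing one of the conventions (and say which), and then match your cases (ii)/(iii) to that orientation; as written, the parenthetical claim that ``the direction is fixed by the defining rule via $\fo$'' is precisely the point at issue, and unwinding that rule yields the statement with $\bep_s$ and $\bep_c$ interchanged.
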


For $\bep, \bep' \in J^\bd$ satisfying $\bep \lesssim \bep'$, let $i_{\bep', \bep} \colon E(\bep) \hookrightarrow E(\bep')$ denote the inclusion.
Note that we have $i_{\bep, 0} = i_{\bep, \bep_s}$ and $i_\bep = i_{\bep_c, \bep}$ for any $\bep \in J^\bd$.
We have the following diagram of inclusions:
$$
\xymatrix{
& E(\bep_c) = E(\bd) & \\
E(\bep)
\ar@{->}[ur]^-{i_\bep} \ar@{->}[rr]^-{i_{\bep', \bep}}
&& E(\bep'). \ar@{->}[ul]_-{i_{\bep'}} \\
& E(\bep_s) = \{0\}
\ar@{->}[ur]_-{i_{\bep',0}} \ar@{->}[ul]^-{i_{\bep,0}}& 
}
$$
The canonical morphism of functors $i_{\bep', \bep}^! \to i_{\bep', \bep}^*$ induces a morphism
\begin{equation} \label{eq:ibep}
i_{\bep,0}^* i_\bep^! \ocA_\bd = i_{\bep,0}^* i_{\bep', \bep}^! i_{\bep'}^! \ocA_\bd \to i_{\bep,0}^* i_{\bep', \bep}^* i_{\bep'}^! \ocA_\bd = i_{\bep',0}^* i_{\bep'}^! \ocA_\bd.
\end{equation}
Taking the cohomology, we obtain a homomorphism 
\[ \cR_{\bep', \bep} \colon \rH^\bullet_\Gm(i_{\bep,0}^* i_\bep^! \ocA_\bd) \to \rH^\bullet_\Gm(i_{\bep',0}^* i_{\bep'}^! \ocA_\bd) \]
of graded $A_\bd$-modules. From the construction, it satisfies 
\begin{enumerate}
\item $\cR_{\bep'', \bep'} \circ \cR_{\bep', \bep} = \cR_{\bep'', \bep}$ if $\bep \lesssim \bep' \lesssim \bep''$;
\item $\cR_{\bep, \bep'} \circ \cR_{\bep', \bep} = \id$ if $\bep \sim \bep'$. 
\end{enumerate}

\begin{Prop}\label{tau}
Let $\bep, \bep' \in J^\bd$ satisfying $\bep \lesssim \bep'$.
Through the isomorphism in Proposition~\ref{Nsheaf}, the completion $\wh{\cR}_{\bep', \bep}$ of the homomorphism $\cR_{\bep', \bep}$ is identified with the intertwiner $R_{\bep',\bep}$, i.e., the following diagram commutes up to multiples in $\Oo^\times$: 
\begin{equation} \label{eq:Rgeom}
\vcenter{
\xymatrix{
\wh{\rH}^\bullet_\Gm(i_{\bep,0}^* i_\bep^! \ocA_\bd) \ar@{->}[d]_-{\simeq} \ar@{->}[r]^-{\wh{\cR}_{\bep', \bep}}
& \wh{\rH}^\bullet_\Gm(i_{\bep',0}^* i_{\bep'}^! \ocA_\bd)  \ar@{->}[d]^-{\simeq}
\\
\tM(\bep)  \ar@{->}[r]^-{R_{\bep', \bep}}
& \tM(\bep'),
}}
\end{equation}
where the vertical arrows are the isomorphisms in Proposition~\ref{Nsheaf}.
\end{Prop}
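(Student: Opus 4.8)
The plan is to show that, after inverting $z$, the space of $U_q(L\fg)_\Kk$-module maps $\tM(\bep)_\Kk\to\tM(\bep')_\Kk$ is one-dimensional and spanned by $R_{\bep',\bep}$, and then to pin down the remaining scalar by a lattice comparison. First I would observe that $\cR_{\bep',\bep}$ is a morphism of graded $A_\bd$-modules, hence $\kk[z]$-linear; after completion it becomes an $\Oo$-linear morphism, and via the isomorphisms of Proposition~\ref{Nsheaf} (which are isomorphisms of $U_q(L\fg)_\Oo$-modules, so respect the $\Oo$-lattices) it is identified with a genuine map $\wh\cR_{\bep',\bep}\colon\tM(\bep)\to\tM(\bep')$ of $U_q(L\fg)_\Oo$-modules. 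Next I would use the genericity of the deformation: the tensor factors of $\tM(\bep)_\Kk$ are the deformed fundamental modules $(S_j)_{\exp(jz)}$, whose spectral parameters are in pairwise general position over $\Kk$, so $\tM(\bep)_\Kk$ is a simple $U_q(L\fg)_\Kk$-module with $\End_{U_q(L\fg)_\Kk}(\tM(\bep)_\Kk)=\Kk\,\id$, and $R_{\bep',\bep}$ realizes an isomorphism $\tM(\bep)_\Kk\simeq\tM(\bep')_\Kk$; hence $\Hom_{U_q(L\fg)_\Kk}(\tM(\bep)_\Kk,\tM(\bep')_\Kk)=\Kk\,R_{\bep',\bep}$.

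The crux is to show that $\cR_{\bep',\bep}$ does not vanish after specialization at $z=0$, i.e.\ as a map $M(\bep)\to M(\bep')$. For this I would isolate, inside the decomposition \eqref{DT} of $\ocA_\bd$, the direct summand corresponding to $\bv=0$, namely $\IC(0,\bd)\otimes_\kk L(0,\bd)$. Since $\qvr(0,\bd)=\{0\}$, we have $\IC(0,\bd)=\ul\kk_{\{0\}}$, the skyscraper at $0\in E(\bd)$, while by Theorem~\ref{NakS14} the (nonzero) multiplicity space $L(0,\bd)$ is the simple module $L(Y^\bd)=L(\bd)$, the simple head of $M(\bd)$. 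The functors $i_\bep^!$, $i_{\bep,0}^*$, $\rH^\bullet_\Gm(-)$ and the comparison morphism \eqref{eq:ibep} are additive and hence preserve this summand; and since $\{0\}$ lies in both $E(\bep)$ and $E(\bep')$, the natural map $i_{\bep',\bep}^!\to i_{\bep',\bep}^*$ applied to $\ul\kk_{\{0\}}$ is an isomorphism (it is the identity on the essential image of $(i_{\bep',\bep})_*$). Therefore the restriction of $\cR_{\bep',\bep}$ to the corresponding summand, which after completion is $\wh\rH^\bullet_\Gm(\pt)\otimes_\kk L(0,\bd)\simeq\Oo\otimes_\kk L(\bd)$, is the identity map; in particular $\cR_{\bep',\bep}$ is nonzero modulo $z$, and a fortiori $\wh\cR_{\bep',\bep}\otimes_\Oo\Kk\neq 0$.

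Combining the two steps gives $\wh\cR_{\bep',\bep}\otimes_\Oo\Kk=c\,R_{\bep',\bep}$ for a unique $c\in\Kk^\times$, and it remains to see $c\in\Oo^\times$. I would obtain this by comparing $\Oo$-lattices: $\wh\cR_{\bep',\bep}$ maps $\tM(\bep)$ into $\tM(\bep')$ and, by the nonvanishing modulo $z$ just proved, not into $z\tM(\bep')$; on the algebraic side, by the Remark following Theorem~\ref{normt} we have $\beta(\bep',\bep)=0$ for $\bep\lesssim\bep'$, so $R_{\bep',\bep}$ likewise maps $\tM(\bep)$ into $\tM(\bep')$ but not into $z\tM(\bep')$. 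Hence the $z$-adic valuation of $c$ is $0$, i.e.\ $c\in\Oo^\times$, which is exactly the commutativity of \eqref{eq:Rgeom} up to $\Oo^\times$. (If one prefers, the argument can first be reduced to the case where $\bep'$ differs from $\bep$ by a single adjacent transposition, using the transitivity $\cR_{\bep'',\bep'}\circ\cR_{\bep',\bep}=\cR_{\bep'',\bep}$ together with $R_{\bep'',\bep'}\circ R_{\bep',\bep}\equiv R_{\bep'',\bep}\bmod\Oo^\times$, but this is not strictly necessary since $\cR_{\bep',\bep}$ is defined directly.)

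I expect the one point requiring genuine care to be the input used in the first step: that $\tM(\bep)_\Kk$ is (absolutely) simple with $\End=\Kk\,\id$ and that $R_{\bep',\bep}$ spans the Hom-line. This rests on the classical irreducibility of tensor products of fundamental modules with spectral parameters in general position, together with the construction of the renormalized $R$-matrices recalled in Section~\ref{Rmat}; once this is granted, everything else is formal bookkeeping and the elementary skyscraper computation of the second paragraph.
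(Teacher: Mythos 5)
Your argument is correct and follows essentially the same route as the paper's own proof: you characterize $R_{\bep',\bep}$ as the unique (up to $\Oo^\times$) $U_q(L\fg)_\Oo$-homomorphism $\tM(\bep)\to\tM(\bep')$ with nonzero specialization at $z=0$ (using simplicity and the one-dimensionality of the endomorphism ring over $\Kk$, together with normality $\beta(\bep',\bep)=0$), observe that $\wh{\cR}_{\bep',\bep}$ is $U_q(L\fg)_\Oo$-linear by construction, and then establish nonvanishing mod $z$ via the skyscraper summand $\ul{\kk}_{\{0\}}\otimes L(0,\bd)$ of $\ocA_\bd$. Your skyscraper computation and the $\Kk^\times$-scalar bookkeeping spell out details the paper compresses into a single sentence, but the underlying argument (including the implicit use of Corollary~\ref{Cor:hr} to identify the specialization of the completed map with the nonequivariant restriction map) is the same.
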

\begin{proof}
Since the $U_q(L\fg)_\Kk$-module $\tM(\bep)_\Kk \simeq \tM(\bep')_\Kk$ is simple and we have $\beta(\bep', \bep) = 0$ by the normality, the intertwiner $R_{\bep', \bep}$ is characterized as a unique (up to multiples in $\Oo^\times$) $U_q(L\fg)_\Oo$-homomorphism $\tM(\bep) \to \tM(\bep')$ whose specialization at $z=0$ is non-zero. 
By construction, $\wh{\cR}_{\bep',\bep}$ is a $U_q(L\fg)_\Oo$-homomorphism.
Thus, it suffices to show that the specialization at $z=0$ of $\cR_{\bep',\bep}$ is non-zero. 
By Corollary~\ref{Cor:hr}, the specialization of $\cR_{\bep', \bep}$ is the homomorphism $\rH^\bullet(i_{\bep,0}^* i_\bep^! \ocA_\bd) \to \rH^\bullet(i_{\bep',0}^* i_{\bep'}^! \ocA_\bd)$ arising from \eqref{eq:ibep}.  
Since $\ocA_\bd$ contains $\ul{\kk}_{\{0\}}$ as a summand, it is non-zero.
\end{proof}

\begin{Cor} 
\label{Cor:Rinj}
For any $\bep, \bep' \in J^\bd$ satisfying $\bep \lesssim \bep'$, the homomorphism $\cR_{\bep', \bep}$ is injective.
\end{Cor}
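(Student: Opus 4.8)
The plan is to deduce the injectivity of $\cR_{\bep',\bep}$ from that of its completion $\wh{\cR}_{\bep',\bep}$, using that the source is a finitely generated graded $\kk[z]$-module and hence embeds into its completion. First I would check that $\wh{\cR}_{\bep',\bep}$ is injective. By Proposition~\ref{tau}, under the isomorphisms of Proposition~\ref{Nsheaf} the completed map $\wh{\cR}_{\bep',\bep}$ is identified, up to a unit of $\Oo^\times$, with the intertwiner $R_{\bep',\bep}\colon\tM(\bep)\to\tM(\bep')$. Since $\bep\lesssim\bep'$, this intertwiner is a composite of neutral and positive renormalized $R$-matrices, each of which extends to an isomorphism of $A_\Kk$-modules; hence $R_{\bep',\bep}$ extends to an isomorphism $\tM(\bep)_\Kk\xrightarrow{\sim}\tM(\bep')_\Kk$. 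As $\tM(\bep)$ is free of finite rank over $\Oo$ (an iterated $\star_\Oo$-product of the free $\Oo$-modules $\tL_{\ep_k}$), it is $\Oo$-torsion-free and embeds into $\tM(\bep)_\Kk$, so the restriction $R_{\bep',\bep}$ is injective. Thus $\wh{\cR}_{\bep',\bep}$ is injective.

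Next I would transfer this back to $\cR_{\bep',\bep}$. Set $M\seq\rH^\bullet_\Gm(i_{\bep,0}^*i_\bep^!\ocA_\bd)$ and $M'\seq\rH^\bullet_\Gm(i_{\bep',0}^*i_{\bep'}^!\ocA_\bd)$, regarded as graded modules over $\kk[z]=\rH^\bullet_\Gm(\pt,\kk)$ with $\deg z=2$. Since $\ocA_\bd$ is a bounded complex with finite-dimensional non-equivariant cohomology, the standard spectral sequence shows that $M$ is a finitely generated graded $\kk[z]$-module, hence bounded below; consequently $\bigcap_{k\ge0}z^kM=0$, so the canonical map $M\to\wh{M}$ to the $z$-adic completion is injective. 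By construction $\wh{M}\cong\tM(\bep)$ and $\wh{\cR}_{\bep',\bep}$ is the completion of $\cR_{\bep',\bep}$, so the square formed by $\cR_{\bep',\bep}$, $\wh{\cR}_{\bep',\bep}$ and the two completion maps commutes tautologically. A one-line chase then finishes: if $x\in\ker\cR_{\bep',\bep}$, its image in $\wh{M}$ lies in $\ker\wh{\cR}_{\bep',\bep}=0$, whence $x=0$ by injectivity of $M\to\wh{M}$.

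I do not expect a genuine obstacle here: once Propositions~\ref{Nsheaf} and~\ref{tau} are in hand, the statement is essentially formal. The only points needing a little care are verifying that the relevant $\rH^\bullet_\Gm(\,\cdot\,)$ are finitely generated (hence bounded below) graded $\kk[z]$-modules, so that completion along the grading is injective, and being explicit that the identification of $\wh{\cR}_{\bep',\bep}$ with $R_{\bep',\bep}$ is compatible with the completion maps used in the chase.
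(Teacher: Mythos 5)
Your argument is correct and matches the paper's approach: reduce to the injectivity of the completion $\wh{\cR}_{\bep',\bep}$, which by Proposition~\ref{tau} is identified with the injective intertwiner $R_{\bep',\bep}$, and use that the source $\rH^\bullet_\Gm(i_{\bep,0}^*i_\bep^!\ocA_\bd)$ injects into its $z$-adic completion. The only cosmetic difference is that the paper gets this last point directly from Corollary~\ref{Cor:hr}, which says this graded $\kk[z]$-module is \emph{free} of finite rank, whereas you rederive the weaker fact that it is finitely generated (hence bounded below) via a spectral sequence; either suffices.
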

\begin{proof}
Since $\rH_\Gm^\bullet(i_{\bep,0}^* i_\bep^! \mathcal{A})$ is a free $\kk[z]$-module of finite rank by Corollary~\ref{Cor:hr}, it suffices to show that the completion $\wh{\cR}_{\bep', \bep}$ is injective.
This latter assertion follows from Proposition~\ref{tau} and  the injectivity of the intertwiner $R_{\bep', \bep}$.
\end{proof}

\subsection{Proof of Theorem \ref{simply-laced}}  
\label{Ssec:Jantzeng}
Now we shall prove the following crucial result using the facts from Section \ref{sec:preliminary}.

\begin{Thm} \label{Amain}
For any $\bd \in \N^{\oplus \hI}$, $\bep \in J^\bd$ and $n \in \Z$, the isomorphism $M(\bep) \simeq \rH^\bullet(i_{\bep,0}^* i_\bep^! \ocA_\bd)$ in Proposition~\ref{Nsheaf} induces the isomorphism
\[ F_nM(\bep) \simeq \rH^{\ge n}(i_{\bep,0}^* i_\bep^! \ocA_\bd).\]
In particular, we have the following equality in $K(\Cc_{\Z})_t$:
\begin{equation} \label{KL}
[M(\bep)]_t = \sum_{\bv} \left(\sum_{n \in \Z} t^n\dim_\kk \rH^n(i_{\bep,0}^* i_\bep^! \mathrm{IC}(\bv, \bd))\right) [L(Y^\bd A^{-\bv})]. 
\end{equation}
\end{Thm}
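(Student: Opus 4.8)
The plan is to identify the monoidal Jantzen filtration \eqref{expfm} of $M(\bep)$ with the degree filtration on a hyperbolic localization of $\ocA_\bd$, using the geometric dictionary of this section together with the ``Fundamental Example''.

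\emph{Step 1 (geometric set-up and reduction).} Fix $\bd$ and $\bep\in J^\bd$. By Proposition~\ref{Nsheaf} we may take $\tM(\bep_s)=\wh{\rH}^\bullet_\Gm(i^!\ocA_\bd)$, $\tM(\bep)=\wh{\rH}^\bullet_\Gm(i_{\bep,0}^*i_\bep^!\ocA_\bd)$ and $\tM(\bep_c)=\wh{\rH}^\bullet_\Gm(i^*\ocA_\bd)$; by Proposition~\ref{tau} the intertwiners $R_{\bep,\bep_s}$, $R_{\bep_c,\bep}$, $R_{\bep_c,\bep_s}$ become, up to $\Oo^\times$, the maps $\cR_{\bep,\bep_s}$, $\cR_{\bep_c,\bep}$, $\cR_{\bep_c,\bep_s}$ of \eqref{eq:ibep}, which are injective (Corollary~\ref{Cor:Rinj}) and compose as $\cR_{\bep_c,\bep}\circ\cR_{\bep,\bep_s}=\cR_{\bep_c,\bep_s}$. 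Transporting everything along $R_{\bep_c,\bep}$ into the common fraction space, I obtain a chain of full $\Oo$-lattices $\sM\subseteq P\subseteq\coM$ equal to the images of $\wh{\rH}^\bullet_\Gm(i^!\ocA_\bd)$, $\wh{\rH}^\bullet_\Gm(i_{\bep,0}^*i_\bep^!\ocA_\bd)$, $\wh{\rH}^\bullet_\Gm(i^*\ocA_\bd)$. Since $\tau_\bep$ is generic (its weights on $E(\bd)$, being differences of the pairwise distinct weights $1,\dots,d$ of the chosen basis of $D$, are nonzero), Corollary~\ref{Cor:hr} applies to the three relevant hyperbolic localizations, so $\sM$, $P$, $\coM$ are free over $\Oo$ and in particular $\bar P\seq P/zP\cong\rH^\bullet(i_{\bep,0}^*i_\bep^!\ocA_\bd)$ as graded vector spaces. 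Moreover the inclusion $\sM\hookrightarrow\coM$ is exactly the injection obtained by applying the Fundamental Example \eqref{eq:fex} to each $\IC(\bv,\bd)$ on the attractive space $E(\bd)$ (Theorems~\ref{KS},~\ref{Thm:Fex}) and summing over $\bv$ after tensoring with the multiplicity spaces $L(\bv,\bd)$, so by Lemma~\ref{Lem:Lef} the graded $\kk[z]$-module $L\seq\coM/z\sM$ has the hard Lefschetz property. Transporting \eqref{expfm} and using $\cR_{\bep_c,\bep}\circ\cR_{\bep,\bep_s}=\cR_{\bep_c,\bep_s}$, the statement of the theorem reduces to the purely algebraic identity
\[ \ev_{z=0}\!\left(P\cap\sum_{k\in\Z}\bigl(z^k\sM\cap z^{n-k}\coM\bigr)\right)=\bar P^{\ge n}. \]

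\emph{Step 2 (the algebraic identity; the crux).} The chain $\sM\subseteq P\subseteq\coM$ of full lattices over the discrete valuation ring $\Oo$ admits a common adapted homogeneous $\Oo$-basis $\{f_i\}$ of $\coM$, with $\sM=\bigoplus_i\Oo z^{a_i}f_i$ and $P=\bigoplus_i\Oo z^{b_i}f_i$ for integers $a_i\ge b_i\ge 0$; put $d_i=\deg f_i$. The hard Lefschetz property of $L=\bigoplus_i(\Oo/z^{a_i+1})f_i$ forces each degree interval $\{d_i,d_i+2,\dots,d_i+2a_i\}$ to be symmetric about $0$, i.e.\ $d_i=-a_i$. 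Computing the minimum over $k\in\Z$ of the piecewise-linear function $\max(k+a_i,n-k)$ gives $\sum_k(z^k\sM\cap z^{n-k}\coM)=\bigoplus_i\Oo z^{\lceil(n+a_i)/2\rceil}f_i$, hence $P\cap\sum_k(z^k\sM\cap z^{n-k}\coM)=\bigoplus_i\Oo z^{\max(b_i,\,\lceil(n+a_i)/2\rceil)}f_i$, whose image in $\bar P=\bigoplus_i\kk z^{b_i}f_i$ is spanned by those $z^{b_i}f_i$ with $\lceil(n+a_i)/2\rceil\le b_i$, equivalently $2b_i-a_i\ge n$. Since $z^{b_i}f_i$ has degree $d_i+2b_i=2b_i-a_i$, this is exactly $\bar P^{\ge n}$, proving $F_nM(\bep)\simeq\rH^{\ge n}(i_{\bep,0}^*i_\bep^!\ocA_\bd)$. (Taking $b_i=a_i$ or $b_i=0$ recovers the standard and costandard cases of Example~\ref{Ex:standard}.) This is the step I expect to be the main obstacle: the reduction of the three-fold intersection in \eqref{expfm} to a plain degree filtration, which hinges on recognizing the outer pair $\sM\subseteq\coM$ as the Fundamental Example pair and then doing the elementary-divisor bookkeeping for the chain; the auxiliary freeness of $P$ over $\Oo$ and the identification $P/zP\cong\rH^\bullet(i_{\bep,0}^*i_\bep^!\ocA_\bd)$ are already supplied by Corollary~\ref{Cor:hr} once $\tau_\bep$ is generic.

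\emph{Step 3 (decategorification).} The $U_q(L\fg)$-action on $M(\bep)=\rH^\bullet(i_{\bep,0}^*i_\bep^!\ocA_\bd)$ factors through $\wh A_\bd=\prod_{k\ge 0}A_\bd^k$ with $A_\bd^k$ raising cohomological degree by $k$, so each $\rH^{\ge n}$ is a submodule and $\Gr_n^F M(\bep)\cong\rH^n(i_{\bep,0}^*i_\bep^!\ocA_\bd)$ as a module over $A_\bd^0=\bigoplus_\bv\End_\kk(L(\bv,\bd))$. By the decomposition \eqref{DT} this module is $\bigoplus_\bv\rH^n(i_{\bep,0}^*i_\bep^!\IC(\bv,\bd))\otimes_\kk L(\bv,\bd)$, and $L(\bv,\bd)\cong L(Y^\bd A^{-\bv})$ by Theorem~\ref{NakS14}; hence $[\Gr_n^F M(\bep)]=\sum_\bv\dim_\kk\rH^n(i_{\bep,0}^*i_\bep^!\IC(\bv,\bd))\,[L(Y^\bd A^{-\bv})]$. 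Multiplying by $t^n$ and summing over $n$ yields \eqref{KL}.
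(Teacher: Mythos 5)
Your Steps~1 and~3 match the paper's proof, and you correctly isolate the crux: reducing the triple intersection in~\eqref{expfm} to a degree filtration on $\bar P$. The gap is in Step~2, where you assert that the chain $\sM\subseteq P\subseteq \coM$ admits a simultaneous adapted homogeneous basis of $\coM$. That assertion is false in general, even under all the hypotheses in force. Take $\coM=\kk[z]e_1\oplus\kk[z]e_2$ with $\deg e_1=-3$, $\deg e_2=-1$, $\sM=z^3\kk[z]e_1\oplus z\kk[z]e_2$, and $P=\kk[z](ze_1+e_2)\oplus\kk[z](ze_2)$. One checks $\sM\subset P\subset\coM$ and that all conditions (i)--(iii) of Section~\ref{ssec:Groj1} hold (so $N$ and $L=\coM/z\sM$ are both hard Lefschetz, and the freeness/degree constraints are met). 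But $\coM/\sM$ has elementary divisors $(z^3,z)$ and $\coM/P$ has elementary divisors $(z^2,1)$, so any adapted basis would force $P/\sM\cong\kk[z]/z\oplus\kk[z]/z$; whereas writing $p_1=ze_1+e_2$, $p_2=ze_2$ one finds $\sM=z^2\kk[z]p_1\oplus\kk[z]p_2$ inside $P=\kk[z]p_1\oplus\kk[z]p_2$, so $P/\sM\cong\kk[z]/z^2$ is indecomposable. Hence no common adapted basis exists. (One can check directly that the identity $\ev_{z=0}\bigl(P\cap\sum_k(z^k\sM\cap z^{n-k}\coM)\bigr)=\bar P^{\ge n}$ nevertheless holds here, so the theorem is true; it is your method that is not available.)

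The paper sidesteps this by first proving the intrinsic Lemma~\ref{Lem:hL}, $L^{\ge n}=\sum_{k-l=n}\Image(z_L^k)\cap\Ker(z_L^{l+1})$, using only a Jordan decomposition of $L$ by itself (which always exists), with no compatibility with the subspace $P/z\sM$ required; one then identifies the image of $F_n\tM(\bep)$ in $L$ with $L^{\ge n}\cap(P/z\sM)=(P/z\sM)^{\ge n}$ and passes to $\bar P$ by the graded surjection $P/z\sM\twoheadrightarrow P/zP$. You should replace the elementary-divisor bookkeeping with this (or another) argument that does not presume a decomposition of $L$ compatible with $P/z\sM$.
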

\begin{proof}
In this proof, to lighten the notation, for each $\bep \in J^\bd$, we set 
\[\rH(\bep) :=  \rH^\bullet_\Gm(i_{\bep,0}^* i_\bep^! \ocA_\bd) \]
and regard it as a graded $A_\bd$-submodule of $\rH(\bep_c)$ through the injective homomorphism $\cR_{\bep_c, \bep} \colon \rH(\bep) \to \rH(\bep_c)$ (cf.\ Corollary \ref{Cor:Rinj}).
Thus, we have the inclusions of graded $A_\bd$-submodules $\rH(\bep_s) \subset \rH(\bep) \subset \rH(\bep_c)$ for any $\bep \in J^\bd$.
By Corollary~\ref{Cor:hr}, $\rH(\bep)$ is a graded free $\kk[z]$-module of finite rank and the quotient $\rH(\bep) / z \rH(\bep)$ is identical to the non-equivariant cohomology $\rH^\bullet(i_{\bep,0}^* i_\bep^! \ocA_\bd)$ as a graded $\kk$-vector space.
By Theorem~\ref{Thm:Fex} and Lemma~\ref{Lem:Lef}, the graded $\kk[z]$-module $L \seq \rH(\bep_c)/z \rH(\bep_s)$ satisfies the hard Lefschetz property.

By the definition \eqref{eq:Filt}, the filter submodule $F_nM(\bep)$ is the image of 
\[ F_n \tM(\bep) \seq \tM(\bep) \cap \sum_{k \in \Z} \left( z^k R_{\bep, \bep_s}\tM(\bep_s) \cap z^{n-k}R^{-1}_{\bep_c, \bep} \tM(\bep_c)\right) \]
under the evaluation map $\ev_{z=0} \colon \tM(\bep) \to M(\bep) = \tM(\bep)/z \tM(\bep)$.
Consider a quotient map 
\[ f \colon \tM(\bep) \to \tM(\bep)/zR_{\bep, \bep_s}\tM(\bep_s) \simeq \wh{\rH}(\bep)/z\wh{\rH}(\bep_s) \simeq \rH(\bep)/z\rH(\bep_s) \subset \rH(\bep_c)/z\rH(\bep_s) = L,\]
where the first isomorphism is induced by the one in Proposition~\ref{Nsheaf}.
We have 
\begin{align} 
f (F_n \tM(\bep)) &= \left(\sum_{k  - l = n} \Ker(z_L^{l+1}) \cap \Image (z_L^k) \right)\cap (\rH(\bep)/z\rH(\bep_s)) \\ 
&= L^{\ge n} \cap (\rH(\bep)/z\rH(\bep_s)) \\
&= \left(\rH(\bep)/z\rH(\bep_s)\right)^{\ge n}, 
\end{align} 
where the second equality is due to Lemma~\ref{Lem:hL}.  
Letting $g \colon L = \rH(\bep_c)/z\rH(\bep_s) \to \rH(\bep_c)/z\rH(\bep)$ be the quotient map, we obtain
\[ F_nM(\bep) = \ev_{z=0}(F_n\tM(\bep)) \simeq g\left(f(F_n \tM(\bep))\right) = g\left(\left(\rH(\bep)/z\rH(\bep_s)\right)^{\ge n}\right) = \rH^\bullet(i_{\bep,0}^* i_\bep^! \ocA_\bd)^{\ge n},\]
which proves the former assertion.
The other assertion \eqref{KL} follows from the former one together with the definition of $\ocA_\bd$ and Theorem~\ref{NakS14}.
\end{proof}

\begin{Cor} \label{Cor:ss}
For any $\bep$ and $n \in \Z$, the filtration layer $\Gr_n^FM(\bep) = F_{n}M(\bep)/F_{n+1}M(\bep)$ is a semisimple $U_q(L\fg)$-module.
\end{Cor}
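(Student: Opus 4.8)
The plan is to read off the semisimplicity of $\Gr_n^F M(\bep)$ directly from the geometric description of the filtration established in Theorem~\ref{Amain}, combined with the grading on the Yoneda algebra $A_\bd = \Hom^\bullet_\Gm(\ocA_\bd, \ocA_\bd)$. Recall that, via Proposition~\ref{Nsheaf} and the isomorphism \eqref{isomHA}, the module $M(\bep)$ is identified with $\rH^\bullet(i_{\bep,0}^* i_\bep^! \ocA_\bd)$ as a module over $A_\bd$ --- an identification refining the $U_q(L\fg)$-action through the homomorphism $U_q(L\fg) \to \wh{A}_\bd$ --- and that Theorem~\ref{Amain} identifies $F_n M(\bep)$ with $\rH^{\ge n}(i_{\bep,0}^* i_\bep^! \ocA_\bd)$.

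First I would use that $A_\bd$ is non-negatively graded, so its action cannot decrease cohomological degree; hence every $\rH^{\ge k}(i_{\bep,0}^* i_\bep^! \ocA_\bd)$ is an $A_\bd$-submodule, and the layer
\[ \Gr_n^F M(\bep) \simeq \rH^{\ge n}(i_{\bep,0}^* i_\bep^! \ocA_\bd) \big/ \rH^{\ge n+1}(i_{\bep,0}^* i_\bep^! \ocA_\bd) \simeq \rH^n(i_{\bep,0}^* i_\bep^! \ocA_\bd) \]
is a module on which the positive-degree ideal $A_\bd^{>0}$ acts by zero. Therefore the $A_\bd$-action, and a fortiori the $U_q(L\fg)$-action, on $\Gr_n^F M(\bep)$ factors through the degree-zero part $A_\bd^0 \simeq \bigoplus_\bv \End_\kk(L(\bv,\bd))$, which is a semisimple algebra; every module over it is semisimple, with simple constituents among the $L(\bv,\bd)$, and by Theorem~\ref{NakS14} these are exactly the simple $U_q(L\fg)$-modules $L(Y^\bd A^{-\bv})$. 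This yields the claim.

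As a cross-check one can argue purely sheaf-theoretically: $\tau_\bep$ is a generic cocharacter and $\ocA_\bd$ is a semisimple perverse sheaf, so by Theorem~\ref{Thm:hr}(2) the hyperbolic localization $i_{\bep,0}^* i_\bep^! \ocA_\bd$ is a finite direct sum of shifted skyscraper sheaves $\ul{\kk}_{\{0\}}[m]$ tensored with multiplicity spaces, whence each of its cohomology groups inherits a manifestly semisimple $A_\bd^0$-module structure. I do not expect a genuine obstacle; the only point requiring a word of care is that the identification of Theorem~\ref{Amain} is compatible with the full $\wh{A}_\bd$-module structure (not merely with the $U_q(L\fg)$-action), and that $\wh{A}_\bd$ and $A_\bd$ have the same simple modules because $\wh{A}_\bd^{>0}$ lies in the radical --- both of which are immediate from the constructions recalled above.
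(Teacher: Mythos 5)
Your proof is correct and is essentially the paper's argument, spelled out in a bit more detail: the paper's one-line proof also deduces semisimplicity from the fact that, by Theorem~\ref{Amain}, the $U_q(L\fg)$-action on $\Gr_n^F M(\bep)$ factors through the degree-zero component $A_\bd^0 \simeq \bigoplus_\bv \End_\kk(L(\bv,\bd))$, which is semisimple.
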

\begin{proof}
This is because the $U_q(L\fg)$-action on $\Gr_n^FM(\bep)$ factors through the action of the semisimple algebra $A_{ \bd }^0  = \Hom^0_\Gm(\ocA_\bd,\ocA_\bd) $ by the above Theorem~\ref{Amain}.
\end{proof}

On the other hand, the geometric construction of the quantum Grothendieck ring $K_t(\Cc_\Z)$ due to Varagnolo-Vasserot \cite{VV} implies the following. 

\begin{Thm}[{\cite{VV03}}] \label{Thm:VVqgr}
For any $\bd \in \N^{\oplus \hI}$ and $\bep \in J^\bd$, we have the following equality in $K_t(\Cc_\Z)$:
\begin{equation} \label{VV}
E_t(\bep) = \sum_{\bv} \left(\sum_{n \in \Z} t^n \dim_\kk \rH^n(i_{\bep,0}^* i_\bep^! \mathrm{IC}(\bv, \bd))\right) L_t(Y^\bd A^{-\bv}). 
\end{equation}
\end{Thm}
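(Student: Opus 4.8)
The statement to prove, Theorem~\ref{Thm:VVqgr}, asserts that the geometric formula \eqref{VV} for the $(q,t)$-character $E_t(\bep)$ of a mixed product matches the Poincar\'e polynomials of the stalks/costalks of the hyperbolic localizations $i_{\bep,0}^* i_\bep^! \mathrm{IC}(\bv,\bd)$. Combined with Theorem~\ref{Amain} this immediately yields $\phi([M(\bep)]_t) = E_t(\bep)$, i.e.\ Conjecture~\ref{qgrmix}, hence Theorem~\ref{simply-laced}.

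\medskip

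\textbf{Plan of proof.} The plan is to reduce \eqref{VV} to the case $\bep = \bep_c$ (the costandard sequence, where $E(\bep_c) = E(\bd)$ and $i_{\bep_c} = \id$) and to the case $\bep = \bep_s$ (the standard sequence, where $E(\bep_s) = \{0\}$ and $i_{\bep_s} = i$), and then to interpolate. First I would recall that, by definition, $E_t(\bep)$ is the $t$-ordered product $t^{\sum \mathcal{N}/2} F_t(\tY_{i_1,p_1}) \cdots F_t(\tY_{i_d,p_d})$ of the $(q,t)$-characters of fundamental modules. On the geometric side, the key input is Varagnolo-Vasserot's description \cite{VV03} of the quantum torus $\mathcal{Y}_t$ and of $F_t(\tY_{i,p})$, $E_t(m)$ in terms of equivariant (co)homology of graded quiver varieties: the product in $\mathcal{Y}_t$ corresponds to a convolution/fusion operation on the varieties $E(\bd)$, with the power of $t$ tracking the relative dimensions, and $E_t(\bep)$ is realized as $\sum_n t^n \dim \rH^n_{\Gm}$ of an appropriate complex built from $\ocA_\bd$. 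Concretely, Nakajima's decomposition \eqref{DT} and Theorem~\ref{NakS14} identify the $\mathrm{IC}$-summands with simple modules $L(Y^\bd A^{-\bv})$, while the $K$-theoretic/cohomological computation of the standard module class gives \eqref{VV} for $\bep = \bep_s$ directly (this is essentially the original Nakajima-Varagnolo-Vasserot statement, \cite[Theorem 8.1]{Nak04}, \cite{VV03}).

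\medskip

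\textbf{Key steps, in order.} (1) Recall from \cite{VV03} the geometric model: the quantum torus $\mathcal{Y}_t$ acts on $\bigoplus_\bd \wh{\rH}^\bullet_\Gm(\qv_0(\bd))$-type objects, $F_t(\tY_{i,p})$ is the class of a fundamental module realized via a one-point graded quiver variety, and the multiplication $F_t(\tY_{i_1,p_1})\cdots F_t(\tY_{i_d,p_d})$ is computed by iterated hyperbolic localization along the cocharacters $\tau$ that split off the factors one at a time. (2) Match the combinatorial normalization: verify that the power $t^{\sum_{k<l}\mathcal{N}(i_k,p_k;i_l,p_l)/2}$ appearing in $E_t(\bep)$ equals the shift coming from the relative dimension of $E(\bep) = E(\bd)^+_{\tau_\bep}$ inside $E(\bd)$ (i.e.\ the grading shift in $i_\bep^!$ versus $i_\bep^*$). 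This is a bookkeeping computation comparing the formula \eqref{defcN} for $\mathcal{N}$ with the weights of $T_\bd$ on $E(\bd)$, using that the number of arrows from $(i,p)$ to $(j,r)$ in $\Gamma$ is $\fo(V_{j,r},V_{i,p})$. (3) Identify $E_t(\bep)$, as an element of $\mathcal{Y}_t$ expanded in commutative monomials $\ul{m}$, with $\sum_\bv (\sum_n t^n \dim \rH^n(i_{\bep,0}^* i_\bep^! \mathrm{IC}(\bv,\bd))) L_t(Y^\bd A^{-\bv})$: the coefficient of $L_t(Y^\bd A^{-\bv})$ in $E_t(\bep)$ is, by Varagnolo-Vasserot, precisely the graded dimension of the costalk of $\mathrm{IC}(\bv,\bd)$ restricted to the attracting set $E(\bep)$ and then to the origin — this is exactly the "transition coefficient" between the PBW-type basis (mixed products) and the canonical basis $\{L_t(m)\}$ in the geometric picture. (4) Conclude \eqref{VV} for all $\bep$.

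\medskip

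\textbf{Main obstacle.} The hard part will be step (3): precisely matching the algebraically-defined quantity — the coefficient of $L_t(Y^\bd A^{-\bv})$ in the $t$-ordered product $E_t(\bep) \in \mathcal{Y}_t$ — with the geometrically-defined Poincar\'e polynomial of $i_{\bep,0}^* i_\bep^! \mathrm{IC}(\bv,\bd)$. For $\bep = \bep_s$ this is the classical theorem of Nakajima (\cite[Theorem 8.1, \S14]{Nak01}) identifying $(q,t)$-characters of standard modules with the cohomology of quiver varieties; for general $\bep$ one needs the multiplicativity of this identification under the convolution product and the fact that hyperbolic localization along $\tau_\bep$ computes exactly the iterated fusion product, which is the content of Varagnolo-Vasserot's geometric construction of $K_t(\Cc_\Z)$ in \cite{VV03} together with the hyperbolic localization theorem (Theorem~\ref{Thm:hr}) and purity of $\mathrm{IC}$ sheaves (used via the "Fundamental Example", Theorem~\ref{Thm:Fex}, to control the relevant exact triangles). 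Assembling these — especially keeping the $t$-powers consistent between the three normalizations ($\mathcal{N}$, the quantum torus relations, and the cohomological degree shifts from $i_\bep^!$) — is the technical crux; the rest is citation and bookkeeping.
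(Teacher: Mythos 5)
Your proposal identifies the right source and the right mechanism: the formula \eqref{VV} is the geometric definition of the $t$-deformed product in Varagnolo--Vasserot's construction of $K_t(\Cc_\Z)$, with hyperbolic localization along $\tau_\bep$ realizing the iterated $t$-ordered product of the $F_t(\tY_{i,p})$'s, and the $t$-shifts matching the relative-dimension bookkeeping tied to $\mathcal{N}$. This is precisely how the paper handles the statement: its proof is a one-line citation of \cite{VV03} for the geometric definition together with \cite[Section 5.5]{HL15} for the translation between the geometric quantum torus and the algebraic one of Section~\ref{Ssec:qGr}. Your step (2) (the $\mathcal{N}$-versus-weights normalization match) is exactly what \cite[Section 5.5]{HL15} supplies; your step (3) is what \cite{VV03} supplies. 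One small quibble: your framing of "reduce to $\bep_s$ and $\bep_c$ and interpolate" is unnecessary and not really how it goes — the Varagnolo--Vasserot geometric definition gives the formula for arbitrary $\bep$ directly, as the $t$-ordered product of fundamental classes corresponds at once to the hyperbolic localization along $\tau_\bep$; there is no interpolation step. But the essential content of your plan matches the paper's (very terse) proof.
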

\begin{proof}
This is a direct consequence of the geometric definition of $K_t(\Cc_\Z)$ in \cite{VV03}.
See \cite[Section 5.5]{HL15} for a comparison with our algebraic definition in Section \ref{Ssec:qGr}. 
\end{proof}

Comparing \eqref{KL} with \eqref{VV}, we arrive at the desired equality \eqref{mixedet}. Thus, we have proved Theorem \ref{simply-laced}.

\subsection{Examples}
We give some explicit computations  of the hyperbolic localizations $i_{\bep,0}^*i_\bep^! \ocA_\bd$ in the simplest examples (and we check that we recover examples that we computed in the previous sections). 
Let $\fg = \mathfrak{sl}_2$. 
In this case, $\hI = \{ 1 \} \times 2\Z$, and $J=e(\hI)$ is the set of all odd integers. 
The quiver $\Gamma$ is depicted as 
 $$\raisebox{5mm}{
\scalebox{0.9}{
\xymatrix@!C=10mm@R=0mm{
&{(1,-4)}&(1,-2)&(1,0)&(1,2)&(1,4)& \\
\cdots \ar@{<-}[r] & \circ \ar@{<-}[r]  & \circ  \ar@{<-}[r] & \circ \ar@{<-}[r]  & \circ \ar@{<-}[r] & \circ \ar@{<-}[r] & \cdots
}}}.
$$ 
We have $S_j = L(Y_{1,j-1})$ for any $j \in J$.

\subsubsection{}
We consider the case of Section \ref{exfil2_1}, where $\bd \in \N^{\oplus\hI}$ is given by $d_{1,2} = 2$, $d_{1,0} = 1$ and $d_{1,2k} = 0$ if $k \not \in \{ 0,1\}$.
The affine graded quiver variety $\qv_0(\bd)$ coincides with the $2$-dimensional linear space $E \seq E(\bd) = \Hom_\C(\C^2, \C)$ and the stratification is given by $E = (E \setminus \{ 0 \}) \sqcup \{0\}$.
We have 
\[ \ocA_\bd \simeq (\ul{\kk}_{\{0\}} \otimes L) \oplus (\ul{\kk}_{E}[2] \otimes S_3), \]
where $L=L(Y_{1,2}^2 Y_{1,0})$.
The set $J^\bd$ consists of $3$ elements $\bep_s = (3,3,1) < \bep = (3,1,3) < \bep_c = (1,3,3)$
and $E(\bep)$ is a $1$-dimensional linear subspace of $E$.
We have 
\begin{align*}
i_{\bep_s,0}^* i_{\bep_s}^! \ocA_\bd =  i_{0}^! \ocA_\bd &= (\ul{\kk}_{\{0\}} \otimes L) \oplus (\ul{\kk}_{\{0\}} [-2] \otimes S_3), \\
i_{\bep,0}^* i_{\bep}^! \ocA_\bd &= (\ul{\kk}_{\{0\}} \otimes L) \oplus (\ul{\kk}_{\{0\}} \otimes S_3), \\
i_{\bep_c,0}^* i_{\bep_c}^! \ocA_\bd =  i_{0}^* \ocA_\bd &= (\ul{\kk}_{\{0\}} \otimes L) \oplus (\ul{\kk}_{\{0\}} [2] \otimes S_3),
\end{align*}
where (and hereafter) $i_0$ denotes the inclusion of the origin. 

\subsubsection{}
We consider the case of Section \ref{exfil2_2}, where $\bd \in \N^{\oplus\hI}$ is given by $d_{1,0} = d_{1,2} = d_{1,4} = 1$ and $d_{1,2k} = 0$ if $k \not \in \{ 0,1, 2\}$.
The affine graded quiver variety $\qv_0(\bd)$ coincides with the closed subvariety $X$ of the $2$-dimensional linear space $E(\bd) = \Hom_\C(\C, \C)^{\oplus 2} = \{ (a,b) \mid a,b \in \C \}$ defined by the equation $ab=0$. 
The stratification is given by 
$\qv_0(\bd) = X = X_a \sqcup X_b \sqcup \{0\}$, where $X_a \seq \{ (a,b) \in \C^2 \mid a \neq 0, b=0 \}$ and $X_b \seq \{ (a,b) \in \C^2 \mid a = 0, b\neq 0 \}$.
We have 
\[ \ocA_\bd \simeq (\ul{\kk}_{\{0\}} \otimes L) \oplus (\ul{\kk}_{\bar{X}_a}[1] \otimes S_5) \oplus (\ul{\kk}_{\bar{X}_b}[1] \otimes S_1), \]
where $L=L(Y_{1,0}Y_{1,2}Y_{1,4})$.
The set $J^\bd / \sim$ consists of $4$ equivalence classes represented by $\bep_s = (5,3,1), \bep_1 = (3,5,1), \bep_2 = (5,1,3)$, and $\bep_c = (1,3,5)$. 
We have $E(\bep_1) = \bar{X}_{b}$ and $E(\bep_2) = \bar{X}_{a}$.
Therefore, we can compute as:
\begin{align*}
i_{\bep_s,0}^* i_{\bep_s}^! \ocA_\bd =  i_{0}^! \ocA_\bd &= (\ul{\kk}_{\{0\}} \otimes L) \oplus (\ul{\kk}_{\{0\}} [-1] \otimes S_5) \oplus (\ul{\kk}_{\{0\}}[-1] \otimes S_1), \\
i_{\bep_1,0}^* i_{\bep_1}^! \ocA_\bd &= (\ul{\kk}_{\{0\}} \otimes L) \oplus (\ul{\kk}_{\{0\}} [-1] \otimes S_5) \oplus (\ul{\kk}_{\{0\}}[1] \otimes S_1),  \\
i_{\bep_2,0}^* i_{\bep_2}^! \ocA_\bd &= (\ul{\kk}_{\{0\}} \otimes L) \oplus (\ul{\kk}_{\{0\}} [1] \otimes S_5) \oplus (\ul{\kk}_{\{0\}}[-1] \otimes S_1),  \\
i_{\bep_c,0}^* i_{\bep_c}^! \ocA_\bd =  i_{0}^* \ocA_\bd &= (\ul{\kk}_{\{0\}} \otimes L) \oplus (\ul{\kk}_{\{0\}} [1] \otimes S_5) \oplus (\ul{\kk}_{\{0\}}[1] \otimes S_1).
\end{align*}

\subsubsection{}
We consider the case of Section \ref{exfil2_3}, where $\bd \in \N^{\oplus\hI}$ is given by $d_{1,0} = d_{1,2} = 2$ and $d_{1,2k} = 0$ if $k \not \in \{ 0,1\}$.
The affine graded quiver variety $\qv_0(\bd)$ coincides with the $4$-dimensional linear space 
$E \seq E(\bd) = \Hom_\C(\C^2, \C^2) \simeq \mathrm{Mat}_{2}(\C)$
 and the stratification is given by $E = (E \setminus X) \sqcup (X \setminus \{ 0 \}) \sqcup \{0\}$, where $X \seq \{ \begin{pmatrix} a & b \\ c & d \end{pmatrix} \in \mathrm{Mat}_{2}(\C) \mid ad = bc \}$.
We have 
\[ \ocA_\bd \simeq (\ul{\kk}_{\{0\}} \otimes L) \oplus (\IC(X, \kk) \otimes K) \oplus (\ul{\kk}_E[4] \otimes \mathbf{1}), \]
where $L=L(Y_{1,0}^2 Y_{1,2}^2)$, $K = L(Y_{1,0} Y_{1,2})$, and $\mathbf{1} = L(1)$.
Note that $\IC(X, \kk)$ fits into an exact triangle  (cf.\ \cite[Exercise 3.10.6]{AcharBook}): $\ul{\kk}_X[3] \to \IC(X, \kk) \to \ul{\kk}_{\{0\}}[1] \xrightarrow{+1}$.
The set $J^\bd$ consists of $6$ elements $\bep_s = (3,3,1,1)$, $\bep_1 = (3,1,3,1)$, $\bep_2 = (3,1,1,3)$, $\bep_3 = (1,3,3,1)$, $\bep_4 = (1,3,1,3)$, and $\bep_c = (1,1,3,3)$. 
We have the following commutative diagram of inclusions:
\[
\xymatrix@!C=30mm{ &E(\bep_c) = E& \\
& E(\bep_4) = \{ c = 0\}\ar[u]& \\
E(\bep_2) = \{ a= c = 0 \} \ar[ur] && E(\bep_3)= \{c = d = 0 \}. \ar[ul] \\
& E(\bep_1) = \{ a = c =d = 0 \}\ar[ur] \ar[ul]  &\\ 
& E(\bep_s) = \{ 0 \} \ar[u]& \\}
\]
By the fact that $E(\bep_k) \subset X$ for $k \in \{1,2,3 \}$ and Remark \ref{Rem:duality_g}, we can compute as:
\begin{align*}
i_{\bep_s,0}^* i_{\bep_s}^! \ocA_\bd =  i_{0}^! \ocA_\bd &= (\ul{\kk}_{\{0\}} \otimes L) \oplus (\ul{\kk}_{\{0\}} [-1] \oplus \ul{\kk}_{\{0\}}[-3]) \otimes K \oplus ( \ul{\kk}_{\{0\}} [-4]  \otimes \mathbf{1}), \\
i_{\bep_1,0}^* i_{\bep_1}^! \ocA_\bd &= (\ul{\kk}_{\{0\}} \otimes L) \oplus (\ul{\kk}_{\{0\}} [-1] \oplus \ul{\kk}_{\{0\}}[-1]) \otimes K \oplus ( \ul{\kk}_{\{0\}} [-2]  \otimes \mathbf{1}), \\
i_{\bep_2,0}^* i_{\bep_2}^! \ocA_\bd &= (\ul{\kk}_{\{0\}} \otimes L) \oplus (\ul{\kk}_{\{0\}} [-1] \oplus \ul{\kk}_{\{0\}}[1]) \otimes K \oplus ( \ul{\kk}_{\{0\}}[0]  \otimes \mathbf{1}), \\
i_{\bep_3,0}^* i_{\bep_3}^! \ocA_\bd &= (\ul{\kk}_{\{0\}} \otimes L) \oplus (\ul{\kk}_{\{0\}} [-1] \oplus \ul{\kk}_{\{0\}}[1]) \otimes K \oplus ( \ul{\kk}_{\{0\}} [0]  \otimes \mathbf{1}), \\
i_{\bep_4,0}^* i_{\bep_4}^! \ocA_\bd &= (\ul{\kk}_{\{0\}} \otimes L) \oplus (\ul{\kk}_{\{0\}} [1] \oplus \ul{\kk}_{\{0\}}[1]) \otimes K \oplus ( \ul{\kk}_{\{0\}} [2]  \otimes \mathbf{1}), \\
i_{\bep_c,0}^* i_{\bep_c}^! \ocA_\bd = i_{0}^* \ocA_\bd &= (\ul{\kk}_{\{0\}} \otimes L) \oplus (\ul{\kk}_{\{0\}} [1] \oplus \ul{\kk}_{\{0\}}[3]) \otimes K \oplus ( \ul{\kk}_{\{0\}} [4]  \otimes \mathbf{1}).
\end{align*}

\section{Proof of Theorem \ref{Thm:adapted}}
\label{ssec:PrqH}

In this section, we give a proof of Theorem \ref{Thm:adapted} using the geometric construction of the canonical bases due to Lusztig \cite{LusB} and the symmetric quiver Hecke algebras due to Varagnolo-Vasserot \cite{VV11}. 
We retain the notation from Section \ref{mjqha} above.
Throughout this section, we assume that $\ii = (i_1, \ldots, i_\ell) \in I^\ell$ is a reduced word for an element $w \in \sW$ adapted to a fixed quiver $Q$ of type $\fg$ (recall Definition \ref{Def:adapted}). 
We set $J \seq \{ k \in \Z \mid 1 \le k \le \ell\}$ as before.

\subsection{Lusztig's construction of canonical bases}
\label{ssec:Lusztig_construction}
First, we review the geometric construction of the canonical basis $\bB$ of $U_t^+(\fg)_{\Z[t^{\pm 1}]}$  due to Lusztig~\cite{LusB}.
For an $I$-graded $\C$-vector space $V = \bigoplus_{i \in I} V_i$, we set $\vdim V \seq \sum_{i \in I} (\dim_\C V_i) \alpha_i \in \sQ^+$.  
For each $\beta \in \sQ^+$, we fix an $I$-graded vector space $V^\beta= \bigoplus_{i \in I} V^\beta_{i}$ satisfying $\vdim V^\beta = \beta$.  
For $\beta, \beta' \in \sQ^+$, we set
\[
L(\beta, \beta') \seq \bigoplus_{i \in I} \Hom_\C(V^\beta_{i}, V^{\beta'}_{i}), \quad 
E(\beta, \beta') \seq \bigoplus_{h \in Q_1} \Hom_\C(V^\beta_{\mathrm{s}(h)}, V^{\beta'}_{\mathrm{t}(h)}).\]
The space $X(\beta) \seq E(\beta, \beta)$ consists of representations of the quiver $Q$ of dimension vector $\beta$, on which we have the natural conjugation action of the group 
$G(\beta) \seq \prod_{i \in I} GL(V^\beta_i).$
Note that the Lie algebra of $G(\beta)$ is identical to $L(\beta, \beta)$.
It is convenient to introduce the bilinear form $\langle-,-\rangle_Q \colon \sQ^+ \times \sQ^+ \to \Z$ defined by
\[
\langle \beta, \beta' \rangle_Q \seq \dim_\C L(\beta, \beta') - \dim_\C E(\beta, \beta').
\]
For any $\beta, \beta' \in \sQ$, we have 
\begin{equation} \label{eq:inner}
(\beta, \beta') = \langle \beta, \beta' \rangle_Q + \langle \beta', \beta \rangle_Q.
\end{equation}

For any finite sequence $\beta_1, \ldots, \beta_n \in \sQ^+$, we write $X(\beta_1, \ldots, \beta_n) \seq X(\beta_1) \times \cdots \times X(\beta_n)$ and $G(\beta_1, \ldots, \beta_n) \seq G(\beta_1) \times \cdots \times G(\beta_n)$.
Assume $\beta = \sum_{k=1}^n \beta_k$.
By \cite[Section 9]{LusB}, we have an adjoint pair of functors
\[ 
\xymatrix@C=20mm{ D^b_{ G(\beta_1, \ldots, \beta_n)}(X(\beta_1,\ldots,\beta_n), \kk) \ar@<3pt>[r]^-{\Ind_{\beta_1, \ldots, \beta_n}} & D_{G(\beta)}^b(X(\beta), \kk)  \ar@<3pt>@{->}[l]^-{\Res_{\beta_1, \ldots, \beta_n}}}, \]
with $\Ind_{\beta_1, \ldots, \beta_n}$ being left adjoint to $\Res_{\beta_1, \ldots, \beta_n}$.
We shall recall their construction.
Choose an identification $V^\beta = V^{\beta_1} \oplus \cdots \oplus V^{\beta_n}$ of $I$-graded vector spaces. (The resulting functors do not depend on this choice up to isomorphism.)
We define an $I$-graded flag $V^\beta = F_0 \supset F_1 \supset \cdots \supset F_n = \{0\}$ given by $F_k = \bigoplus_{l > k}V^{\beta_l}$.
Let $F(\beta_1, \ldots, \beta_n)$ be the subvariety of $X(\beta)$ consisting of representations $x$ satisfying $x(F_k) \subset F_k$ for $1 \le k \le n$.
Consider the following diagram 
\begin{equation} \label{eq:resdiag}
\xymatrix{
X(\beta_1, \ldots, \beta_n)& \ar@{->}[l]_-{\kappa} F(\beta_1, \ldots, \beta_n) \ar@{->}[r]^-{\iota} & X(\beta)
},
\end{equation}
where $\iota$ is the inclusion, and $\kappa$ is given by $x \mapsto (x |_{F_{k-1}} \mod F_k)_{1 \le k \le n}$.
On the other hand, let $P \subset G(\beta)$ denote the stabilizer of the fixed flag $F_\bullet$, and $U$ its unipotent radical so that $P/U \simeq G(\beta_1, \ldots, \beta_n)$. We consider the following diagram
\[ 
\xymatrix{
X(\beta_1, \ldots, \beta_n)& \ar[l]_-{p_1} G(\beta) \times^{U} F(\beta_1, \ldots, \beta_n) \ar[r]^-{p_2} & \tF(\beta_1, \ldots, \beta_n) \ar[r]^-{p_3} & X(\beta)
},
\] 
where $\tF(\beta_1, \ldots, \beta_n) = G(\beta) \times^{P} F(\beta_1, \ldots, \beta_n)$, $p_1(g,x) = \kappa(x)$, $p_2(g,x) = (g,x)$, $p_3(g,x) = g\cdot\iota(x)$ for $g \in G(\beta)$ and $x \in F(\beta_1, \ldots, \beta_n)$.
Note that $p_1$ is smooth, $p_2$ is a $G(\beta_1, \ldots, \beta_n)$-torsor, $p_3$ is proper. 
Then, we define
\[
\Ind_{\beta_1, \ldots, \beta_n}\cF \seq p_{3*}\cF' [ 2 \dim U -c ] \quad \text{ and } \quad 
\Res_{\beta_1, \ldots, \beta_n}\cG \seq \kappa_* \iota^! \cG [c],
\]
where $\cF'$ is a unique $G(\beta)$-equivariant complex on $G(\beta) \times^{U} F(\beta_1, \ldots, \beta_n)$ satisfying $p_2^*\cF' \simeq p_1^* \cF$, and $c \seq \sum_{1 \le j < k \le n}\langle \beta_j, \beta_k \rangle_Q$.
When $n=2$, we write 
\[\cF_1 \star \cF_2 \seq \Ind_{\beta_1, \beta_2} (\cF_1 \boxtimes \cF_2)\] for $\cF_k \in D^b_{G(\beta_k)}(X(\beta_k), \kk)$, $k \in \{1,2\}$.
Then, we have the (strong) associativity 
\[ \Ind_{\beta_1, \ldots, \beta_n}(\cF \boxtimes \cG) \simeq \Ind_{\beta_1, \ldots, \beta_k}(\cF) \star \Ind_{\beta_{k+1},\ldots, \beta_n}(\cG).  \]

Let $\cC_\beta \seq \ul{\kk}_{X(\beta)}[\dim X(\beta)]$ be the constant perverse sheaf on $X(\beta)$.
Recall the notation $I^\beta$ from Section \ref{Ssec:qH}.
We set
\begin{equation} \label{eq:Lbeta} 
\cL_\beta \seq \bigoplus_{\nu \in I^\beta} \cL_{\nu}, \qquad \cL_\nu \seq \cC_{\alpha_{\nu_1}} \star \cC_{\alpha_{\nu_2}} \star  \cdots \star \cC_{\alpha_{\nu_{|\beta|}}} = (p_3)_* \ul{\kk}_{\tF_\nu}[\dim \tF_\nu]
\end{equation}
where $\tF_\nu \seq \tF(\alpha_{\nu_1},\ldots,\alpha_{\nu_{|\beta|}})$.
By the decomposition theorem, the complex $\cL_\beta$ is a finite direst sum of shifts of simple perverse sheaves on $X(\beta)$.
Let $\scrQ_\beta$ be the smallest additive, strictly full subcategory of $D^b_{G(\beta)}(X(\beta), \kk)$ that contains $\cL_\beta$ and is closed under taking shifts and direct summands.  
One can show that the category $\scrQ \seq \bigoplus_{\beta \in \cQ^+} \scrQ_\beta$ is stable under the functors $\Ind_{\beta_1, \ldots, \beta_n}$ and $\Res_{\beta_1,\ldots,\beta_n}$, and hence the operation $\star$ defines the structure of $\Z[t^{\pm 1}]$-algebra on the Grothendieck group
$K(\scrQ) = \bigoplus_{\beta \in \sQ^+}K(\scrQ_\beta)$, where the action of $t^{\pm 1}$ corresponds to the cohomological degree shift $[\mp 1]$.

\begin{Thm}[\cite{LusB}] \label{Thm:Lus}
There is a unique isomorphism of $\Z[t^{\pm 1}]$-algebras
\[\chi \colon U^+_t(\fg)_{\Z[t^{\pm 1}]}  \simeq  K(\scrQ)\qquad \text{given by $\chi(e_i) = [\cC_{\alpha_i}]$ for any $i \in I$},\]
through which the homomorphism $\mathrm{r}$ corresponds to $\bigoplus_{\beta, \beta' \in \sQ^+} [\Res_{\beta, 
\beta'}]$, and the involution $\iota$ corresponds to the Verdier duality $\bigoplus_{\beta \in \sQ^+} [\bD_{X(\beta)}]$.
\end{Thm}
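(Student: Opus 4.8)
The statement is due to Lusztig \cite{LusB}; the plan is to recall the structure of his argument. Uniqueness of $\chi$ is clear, since $U^+_t(\fg)_{\Z[t^{\pm 1}]}$ is generated by the $e_i$ (and their divided powers). There remain two essentially independent tasks: (i) produce the algebra homomorphism $\chi$, and (ii) show it is bijective and compatible with $\mathrm{r}$ and $\iota$.

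\emph{Constructing $\chi$.} One checks that $e_i \mapsto [\cC_{\alpha_i}]$ respects the defining relations of $U^+_t(\fg)_{\Z[t^{\pm 1}]}$. The $\sQ^+$-grading matches the decomposition $K(\scrQ) = \bigoplus_{\beta \in \sQ^+} K(\scrQ_\beta)$. For the divided powers one uses that $X(n\alpha_i)$ is a single point with stabilizer $GL_n$, so $\cC_{n\alpha_i} = \ul{\kk}_{X(n\alpha_i)}$, and the identity $[\cC_{\alpha_i} \star \cdots \star \cC_{\alpha_i}] = [n]_t!\,[\cC_{n\alpha_i}]$ ($n$ factors on the left, a point-count over the $GL_n$-flag variety) gives $e_i^{(n)} \mapsto [\cC_{n\alpha_i}] \in K(\scrQ)$. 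The only genuinely non-trivial point is the quantum Serre relation for $i \neq j$: it reduces to a rank-two computation, in which one analyses the semisimple complexes $\cL_\nu$ for sequences $\nu$ built from $\alpha_i$ and $\alpha_j$ inside $X(\beta)$ with $\beta = (1-c_{ij})\alpha_i + \alpha_j$ — the relevant geometry being that of explicit partial flag varieties — and shows that the alternating sum of the monomials $e_i^{(k)}e_j e_i^{(1-c_{ij}-k)}$ maps to $0$. I expect this rank-two Serre computation to be the main obstacle; once it is settled, everything else is formal.

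\emph{Bijectivity and compatibilities.} Surjectivity of $\chi$ is immediate, since $K(\scrQ_\beta)$ is spanned over $\Z[t^{\pm 1}]$ by the classes $[\cL_\nu]$, $\nu \in I^\beta$ (these are related to the classes of the simple summands by a triangular change of basis), and $[\cL_\nu] = [\cC_{\alpha_{\nu_1}}] \star \cdots \star [\cC_{\alpha_{\nu_{|\beta|}}}] = \chi(e_{\nu_1}\cdots e_{\nu_{|\beta|}})$ by the associativity of $\Ind$. Injectivity is proved together with the compatibility with $\mathrm{r}$: from the adjunction $(\Ind_{\beta,\beta'}, \Res_{\beta,\beta'})$ and a base-change (Mackey-type) argument comparing the two flag-variety diagrams, one shows that $\bigoplus_{\beta,\beta'}\Res_{\beta,\beta'}$ categorifies $\mathrm{r}$; iterating $\Res$ down to the point then produces a geometric bilinear pairing on $K(\scrQ)$ matching the non-degenerate pairing $\langle -,- \rangle$ on $U^+_t(\fg)$, which forces $\chi$ to be injective. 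The cohomological shift normalizations in $\Ind$ and $\Res$ — the $[c + 2\dim U]$ and $[c]$ with $c = \sum_{j<k}\langle \beta_j, \beta_k \rangle_Q$ — are designed precisely so that $t$-powers track correctly through \eqref{eq:inner}, making $\star$ on $K(\scrQ)$ carry the (twisted) structure constants of the multiplication on $U^+_t(\fg)$. Finally, compatibility with $\iota$ holds because Verdier duality commutes with $\Ind$: in the defining diagram $p_3$ is proper so $p_{3*} \simeq p_{3!}$, while $p_1$ is smooth and $p_2$ is a torsor, so $p_1^*,p_1^!$ and $p_2^*,p_2^!$ agree up to the shifts already incorporated; on generators $\bD_{X(\alpha_i)}\cC_{\alpha_i} \simeq \cC_{\alpha_i}$, matching $\iota(e_i) = e_i$.
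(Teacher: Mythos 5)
First, a remark on the ground truth: the paper gives no proof of this statement at all — it is quoted from \cite{LusB} — so your sketch can only be measured against Lusztig's own argument. Its overall architecture (define $\chi$ on generators, check relations, prove bijectivity by matching a geometric bilinear form with $\langle-,-\rangle$ and invoking non-degeneracy, deduce the $\mathrm{r}$- and $\iota$-compatibilities from adjunction/base change and from properness of $p_3$ together with the shift normalization) is sound, and your injectivity-via-the-form step is exactly Lusztig's. One route difference worth noting: where you make the quantum Serre relation a direct rank-two computation with the complexes $\cL_\nu$, Lusztig never verifies the Serre relations geometrically; he defines $\mathbf{f}$ as the free algebra modulo the radical of the bilinear form, shows the geometric form agrees with the algebraic one, and the identification of that radical with the Serre ideal is a separate, purely algebraic theorem. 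Your route is viable (it is essentially how the categorified literature proceeds), but for $-c_{ij}\ge 2$ it is a genuine decomposition-theorem computation rather than a routine check, and you should either carry it out or fall back on Lusztig's form-theoretic detour.

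The concrete gap is your surjectivity argument. It is false that the classes $[\cL_\nu]$ span $K(\scrQ_\beta)$ over $\Z[t^{\pm 1}]$: already for $\beta = n\alpha_i$ one has $[\cL_{(i,\ldots,i)}] = [n]_t!\,[\cC_{n\alpha_i}]$, so the simple class $[\cC_{n\alpha_i}]$ does not lie in their $\Z[t^{\pm 1}]$-span; and the parenthetical ``triangular change of basis'' does not exist in general — the multiplicity of a simple summand of $\cL_\nu$ is the graded Poincar\'e polynomial of a fiber of $p_3$ (typically not $1$), and the monomials $\nu$ and the simples in $\scrP_\beta$ are not even indexed by sets of the same size. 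What surjectivity of $\chi$ actually requires is that every simple perverse sheaf class in $\scrP$ be a $\Z[t^{\pm 1}]$-combination of products of the divided-power classes $[\cC_{n\alpha_i}]$; this is a nontrivial part of Lusztig's theorem (he proves it by an induction using restriction to a sink and the almost-orthonormality of simple classes under the geometric form), not an ``immediate'' spanning statement, and injectivity does not substitute for it since $K(\scrQ)$ is by definition spanned by all simples occurring in the $\cL_\beta$. As written, your proposal leaves this step unproved.
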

\begin{Rem}
The functor $\Res_{\beta, \beta'}$ is isomorphic to $\bD_{X(\beta, \beta')} \circ \Res_{\mathbf{T}, \mathbf{W}}^{\mathbf{V}} \circ \bD_{X(\beta + \beta')}$ in Lusztig's notation \cite[9.2.10]{LusB} with $\mathbf{T} = V^{\beta}, \mathbf{W} = V^{\beta'}, \mathbf{V} = V^{\beta + \beta'}$. 
\end{Rem}

By construction, the algebra $K(\scrQ)$ has a basis $\scrP$ over $\Z[t^{\pm1}]$ consisting of the classes of simple perverse sheaves. 
The canonical basis $\bB$ of $U_t^+(\fg)_{\Z[t^{\pm 1}]}$ is defined by $\bB \seq \chi^{-1}(\scrP)$.
Recall the dual canonical basis $\bB^*$ and its subset $\bB^*(w) = \{ B^*_\ii(\bd) \mid \bd \in \N^{\oplus J} \}$ from Theorem~\ref{Thm:Kimura}.  
For each $\bd \in \N^{\oplus J}$, let $B_\ii(\bd)$ denote the element of $\bB$ dual to $B^*_\ii(\bd)$.
We write $\IC(\bd)$ for a unique simple perverse sheaf in $\scrP$ satisfying $\chi(B_\ii(\bd)) = [\IC(\bd)]$.

\subsection{IC-sheaves corresponding to real positive roots}
Let $\Rep(Q)$ be the category of representations of the quiver $Q$ over $\C$.
We abbreviate $\Hom_{\Rep(Q)}(x,y)$ as $\Hom_Q(x,y)$.

For each real positive root $\alpha \in \sR^+$, there exists an indecomposable representation $x(\alpha)$ of the quiver $Q$, uniquely up to isomorphism, by Kac's theorem.
In what follows, we fix such a representation $x(\alpha)$ for each $\alpha \in \sR^+$. 
We often regard $x(\alpha)$ as a geometric point of the affine space $X(\alpha)$. 
The orbit $O(\alpha) \seq G(\alpha) \cdot x(\alpha)$ is dense in $X(\alpha)$.
Since $\Stab_{G(\alpha)}x(\alpha) = \End_{Q}(x(\alpha))^\times \simeq \C^\times$ is connected, every simple $G(\alpha)$-equivariant perverse sheaf whose support contains $O(\alpha)$ is isomorphic to the constant one $\cC_\alpha$.   
For simplicity, we will use the abbreviation
\begin{align}
\dhom(\alpha, \beta) &\seq \dim_\C \Hom_Q(x(\alpha), x(\beta)), \\
\dext(\alpha, \beta) &\seq \dim_\C \Ext^1_Q(x(\alpha), x(\beta)).
\end{align} 
With these notations, we have
\begin{equation} \label{eq:Eu}
\langle \alpha, \beta \rangle_ Q = \dhom(\alpha, \beta) - \dext(\alpha, \beta)
\end{equation}
for any $\alpha, \beta \in \sR^+$. 

Recall that we defined the positive root $\alpha_{\ii, j} \seq s_{i_1} s_{i_2} \cdots s_{i_{j-1}} (\alpha_{i_j})$ for each $j \in J$.
The next lemma is standard.

\begin{Lem} \label{Lem:dhom}
The followings hold.
\begin{enumerate}
\item
For $j,k \in J$, we have
\begin{align}
\dhom(\alpha_{\ii, j}, \alpha_{\ii, k}) & = 0 \quad \text{if $j < k$}, \\
\dext(\alpha_{\ii,j}, \alpha_{\ii, k}) &= 0 \quad \text{if $j \ge k$}.
\end{align}
\item
The additive full subcategory $\mathrm{add} \{ x(\alpha_{\ii,j})\}_{j \in J}$ of $\Rep (Q)$ consisting of representations isomorphic to finite direct sums of the indecomposable representations $\{ x(\alpha_{\ii, j})\}_{j \in J}$ is closed under extensions.
\end{enumerate}
\end{Lem}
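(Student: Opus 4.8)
The plan is to argue by induction on $\ell = \ell(w)$, peeling off the first letter $i_1$ of $\ii$ by means of the Bernstein--Gel'fand--Ponomarev reflection functors; the base case $\ell = 0$ is vacuous. Write $\beta_j \seq \alpha_{\ii,j}$ for $j \in J = \{1,\dots,\ell\}$, set $\ii' \seq (i_2,\dots,i_\ell)$, $Q' \seq s_{i_1}Q$, and write $\beta'_k \seq \alpha_{\ii',k}$ for $1 \le k \le \ell-1$. Since $\ii$ is reduced for $w$ and adapted to $Q$, the word $\ii'$ is a reduced word, of length $\ell-1$, for $w' \seq s_{i_1}w$, and is adapted to $Q'$ (in which $i_1$ is now a \emph{sink}); so the inductive hypothesis applies to $(\ii', Q')$. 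One has $\beta_1 = \alpha_{i_1}$, $x(\beta_1) = S$ (the simple $\Rep(Q)$-module at $i_1$, which is \emph{injective} because $i_1$ is a source of $Q$), $\beta_{k+1} = s_{i_1}\beta'_k$ for $1 \le k \le \ell-1$, and, the $\beta_j$ being pairwise distinct, $x(\beta_j) \not\simeq S$ for $j \ge 2$.

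Let $\Sigma \colon \Rep(Q') \to \Rep(Q)$ be the reflection functor at the sink $i_1$ of $Q'$, with quasi-inverse $\Sigma'$: it annihilates the simple $S' \in \Rep(Q')$ at $i_1$ and restricts to an equivalence between the full subcategories of $S'$-summand-free and $S$-summand-free modules, under which $\vdim$ transforms by $s_{i_1}$ and $\Hom$, $\Ext^1$ are preserved. The step requiring care — and the main (if mild) obstacle — is to see that $x'(\beta'_k) \not\simeq S'$ for all $k$, so that $\Sigma$ does not kill it: since $\{\beta'_k\}_{1 \le k \le \ell-1} = \sR^+ \cap w'(-\sR^+)$, one has $\alpha_{i_1} \in \{\beta'_k\}_k$ iff $w'^{-1}\alpha_{i_1} = -\,w^{-1}\alpha_{i_1}$ is a negative root, i.e.\ iff $\ell(s_{i_1}w) > \ell(w)$, which is false as $\ii$ is reduced. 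Hence $x'(\beta'_k)$, being indecomposable, has no $S'$-summand, so $\Sigma(x'(\beta'_k))$ is indecomposable of dimension vector $s_{i_1}\beta'_k = \beta_{k+1}$; by uniqueness of the indecomposable attached to a real root (Kac) we get $\Sigma(x'(\beta'_k)) \simeq x(\beta_{k+1})$, and dually $\Sigma'(x(\beta_{k+1})) \simeq x'(\beta'_k)$.

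Part (1) then follows by a case check. For $j,k \ge 2$, the equivalence yields $\Hom_Q(x(\beta_j),x(\beta_k)) \simeq \Hom_{Q'}(x'(\beta'_{j-1}),x'(\beta'_{k-1}))$ and the same for $\Ext^1$, so the claim for $(\beta_j,\beta_k)$ is exactly the inductive hypothesis for $(\beta'_{j-1},\beta'_{k-1})$. For $k = 1$ and any $j$: $\Ext^1_Q(x(\beta_j),x(\beta_1)) = \Ext^1_Q(x(\beta_j),S) = 0$ since $S$ is injective, which handles the case $j \ge k$, and there is nothing to prove when $j < k = 1$. For $j = 1 < k$: a nonzero map $S = x(\beta_1) \to x(\beta_k)$ is injective and would split $S$ off as a summand of the indecomposable $x(\beta_k) \not\simeq S$, which is impossible; so $\Hom_Q(x(\beta_1),x(\beta_k)) = 0$. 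The case $j = k = 1$ is trivial.

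For part (2), let $0 \to A \to E \to B \to 0$ be exact with $A,B \in \mathrm{add}\{x(\beta_j)\}_{j}$. Splitting off the $S$-isotypic summands of $A$ and of $B$ (these are injective, hence also split off $E$), one reduces — after checking that the remaining middle term is still $S$-summand-free — to the case $A, B \in \mathrm{add}\{x(\beta_j)\}_{j \ge 2}$. Then $\Sigma'A,\Sigma'B \in \mathrm{add}\{x'(\beta'_k)\}_k$, and by the inductive hypothesis this subcategory is extension-closed, so the extension of $\Sigma'B$ by $\Sigma'A$ corresponding (under the $\Ext^1$-isomorphism) to our sequence has middle term $E' \in \mathrm{add}\{x'(\beta'_k)\}_k$; applying $\Sigma$ back and using $\Sigma\Sigma' \simeq \mathrm{id}$ together with $\Sigma(\mathrm{add}\{x'(\beta'_k)\}_k) = \mathrm{add}\{x(\beta_j)\}_{j \ge 2}$ gives $E \simeq \Sigma E' \in \mathrm{add}\{x(\beta_j)\}_j$, completing the induction. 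Apart from the non-degeneracy check in the second paragraph, all of the above is the standard calculus of reflection functors.
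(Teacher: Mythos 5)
Your part (1) is correct and follows essentially the paper's route: peel off $i_1$ via the Bernstein--Gelfand--Ponomarev reflection at that source, using the (derived) equivalence between $S$-free and $S'$-free representations to transport $\Hom$ and $\Ext^1$; your length-theoretic check that $\alpha_{i_1}$ never appears among the roots associated with $(i_2,\dots,i_\ell)$ is also sound.

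In part (2), however, there is a genuine gap at the reduction step. You assert that the $S$-isotypic summands of $A$ \emph{and} of $B$ ``are injective, hence also split off $E$.'' For $A$ this is fine, since an injective submodule of $E$ splits off; but $B$ is a \emph{quotient} of $E$, and injectivity of $S$ gives no way to lift an $S$-summand of $B$ to a summand of $E$. It genuinely fails to lift: take $Q = (1 \to 2 \to 3)$ and $\ii = (1,2,3,1,2,1)$ (adapted, $w = w_0$). Then $S = x(\alpha_1)$, and the non-split short exact sequence
\[
0 \longrightarrow x(\alpha_2+\alpha_3) \longrightarrow x(\alpha_1+\alpha_2+\alpha_3) \longrightarrow S \longrightarrow 0
\]
has $A = x(\alpha_2+\alpha_3)$ already $S$-free and $B = S$, yet the middle term $x(\alpha_1+\alpha_2+\alpha_3)$ is indecomposable with no $S$-summand, so your reduction collapses (the conclusion $E \in \mathrm{add}\{x(\alpha_{\ii,j})\}_{j}$ is of course still true here, just not for your reason). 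The paper avoids this entirely by identifying $\mathrm{add}\{x(\alpha_{\ii,j})\}_{j \in J}$ with the kernel of the right-exact functor $\Sigma^*_{i_\ell}\cdots\Sigma^*_{i_1}$; the kernel of a right-exact functor between abelian categories is automatically closed under extensions, which gives (2) at once. If you want to keep your inductive setup, after removing the $S$-summand of $A$ you should apply the right-exact $\Sigma^*_{i_1}$ to the sequence and examine the resulting exact sequence of left-derived functors (using $L_1\Sigma^*_{i_1}(A_0)=0$); but the connecting map $L_1\Sigma^*_{i_1}(B) \to \Sigma^*_{i_1}(A_0)$ is nonzero in the example above, so a pure ``equivalence of additive subcategories'' argument is not enough.
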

\begin{proof}
For a source $i \in I$ of a quiver $Q$, we have Bernstein-Gelfand-Ponomarev's reflection functors
$\Sigma_i \colon \Rep (s_i Q) \to \Rep (Q)$
and $\Sigma_i^* \colon \Rep(Q) \to \Rep(s_i Q)$.
If $x$ is an indecomposable representation in $\Rep(s_iQ)$ (resp.~$\Rep(Q)$) of dimension vector $\alpha \in \sQ^+$, its reflection $\Sigma_i x$ (resp.~$\Sigma_i^*x$) is indecomposable of dimension vector $s_i \alpha$ if $\alpha \neq \alpha_i$, and zero otherwise (cf.~\cite[Theorem 4.3.9]{DW}). 
In particular, we have
\[ x(\alpha_{\ii, j}) \simeq \Sigma_{i_1} \Sigma_{i_2} \cdots \Sigma_{i_{j-1}} x(\alpha_{i_j})\]
for each $j \in J$. 
Therefore, the assertion (1) follows from the adjunction isomorphism $\Hom_{Q}(x, \Sigma_i y) \simeq \Hom_{s_i Q}(\Sigma_i^*x,y)$ and its Auslander-Reiten dual $\Ext^1_Q(\Sigma_i y, x) \simeq \Ext^1_{s_i Q}(y, \Sigma_i^* x)$ (cf.~\cite[Exercise 4.3.6]{DW}).
Moreover, we see that the category $\mathrm{add} \{ x(\alpha_{\ii,j})\}_{j \in J}$ coincides with the kernel of the right exact functor $\Sigma_{i_\ell}^* \cdots \Sigma_{i_2}^* \Sigma_{i_1}^* \colon \Rep(Q) \to \Rep(s_{i_\ell} \cdots s_{i_2} s_{i_1}Q)$, which implies the assertion (2). 
\end{proof}

The following result is due to Lusztig.

\begin{Prop}[\cite{Lus97}] \label{Prop:Lus}
For each $j \in J$, we have $\IC(\bdelta_j) = \cC_{\alpha_{\ii, j}}$.
\end{Prop}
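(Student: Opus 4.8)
The plan is to identify $\IC(\bdelta_j)$ with the intersection cohomology complex of the closure of the $G(\alpha_{\ii,j})$-orbit of the indecomposable representation $x(\alpha_{\ii,j})$, and then to note that this closure is all of the affine space $X(\alpha_{\ii,j})$. Since $\alpha:=\alpha_{\ii,j}$ is a real positive root, $x(\alpha)$ is the unique representation of dimension vector $\alpha$ up to degeneration, so the orbit $O(\alpha)=G(\alpha)\cdot x(\alpha)$ is dense in $X(\alpha)$; being a real Schur root, $x(\alpha)$ also satisfies $\End_Q(x(\alpha))=\C$ and $\Ext^1_Q(x(\alpha),x(\alpha))=0$, so $O(\alpha)$ is open with connected stabiliser. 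As $X(\alpha)$ is a smooth affine space this gives $\IC(X(\alpha),\kk)=\cC_\alpha$, and $\cC_\alpha$ is the \emph{unique} simple object of $\Perv_{G(\alpha)}(X(\alpha),\kk)$ with full support, every other simple equivariant perverse sheaf on $X(\alpha)$ being supported on a proper orbit closure. Because $B_\ii(\bdelta_j)\in U_t^+(\fg)_{\alpha}$ (Theorem~\ref{Thm:Kimura}), the sheaf $\IC(\bdelta_j)$ --- defined by $\chi(B_\ii(\bdelta_j))=[\IC(\bdelta_j)]$ --- lives on $X(\alpha)$, so the whole assertion reduces to showing that $\IC(\bdelta_j)$ has full support.

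To prove this support statement I would induct on $j$ using Lusztig's geometric realisation of the braid symmetry $T_{i_1}$. For $j=1$ one has $B_\ii(\bdelta_1)=e_{i_1}$ and $\chi(e_{i_1})=[\cC_{\alpha_{i_1}}]$, so $\IC(\bdelta_1)=\cC_{\alpha_{i_1}}$ has full support. For the step, adaptedness means $i_1$ is a source of $Q$ and $\ii'=(i_2,\dots,i_\ell)$ is a reduced word for $s_{i_1}w$ adapted to $Q':=s_{i_1}Q$, with $\alpha_{\ii,j}=s_{i_1}(\alpha_{\ii',j-1})$ and $\alpha_{\ii',j-1}\ne\alpha_{i_1}$ (as $j\ge 2$). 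Decorating the $Q'$-analogues with a prime: on the algebraic side $E^*_{\ii,j}$ is $T_{i_1}$ applied to $E^*_{\ii',j-1}$, and $B_\ii(\bdelta_j)$ is $T_{i_1}$ applied to $B_{\ii'}(\bdelta_{j-1})$ --- here we use that $\bdelta_j$ is $\preceq$-minimal in its weight, so $E^*_{\ii,j}=B^*_\ii(\bdelta_j)$ with no lower-order terms, by Theorem~\ref{Thm:Kimura}. On the geometric side, Lusztig's lift of $T_{i_1}$ --- a functor $\Phi_{i_1}$ between the categories $\scrQ$ attached to $Q'$ and to $Q$, built from the Bernstein--Gelfand--Ponomarev reflection functor $\Sigma_{i_1}$ together with a Fourier--Deligne transform --- restricts to an equivalence between the full subcategories generated by the simple perverse sheaves \emph{not} supported on the locus of representations having a summand isomorphic to $x(\alpha_{i_1})$, intertwines $[\,\cdot\,]$ with $\chi\circ T_{i_1}\circ(\chi')^{-1}$, and carries the dense orbit of $X'(\alpha_{\ii',j-1})$ to the dense orbit of $X(\alpha_{\ii,j})$ since $\Sigma_{i_1}x'(\alpha_{\ii',j-1})\simeq x(\alpha_{\ii,j})$ by (the proof of) Lemma~\ref{Lem:dhom}. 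Combined with the induction hypothesis that $\IC'(\bdelta_{j-1})=\cC'_{\alpha_{\ii',j-1}}$, this forces $\IC(\bdelta_j)$ to be a simple perverse sheaf with full support on $X(\alpha_{\ii,j})$, so $\IC(\bdelta_j)=\cC_{\alpha_{\ii,j}}$; the cohomological shift is automatically right because both $\cC_{\alpha_{\ii,j}}$ and canonical basis elements are self-dual (under $\bD_{X(\alpha_{\ii,j})}$, resp.\ under $\iota$). Alternatively, one may simply quote \cite{Lus97} for the identification.

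I expect the geometric input in the previous paragraph to be the main obstacle: constructing Lusztig's braid functor $\Phi_{i_1}$, verifying it preserves perversity on the relevant subcategory, that it categorifies $T_{i_1}$, and --- the point that matters for us --- that it transports the orbit stratification so that ``full support'' passes through the induction. This is the core of Lusztig's quiver-theoretic construction of the canonical basis (cf.\ \cite[\S 37, \S 41]{LusB} and \cite{Lus97}), and in the final write-up we will either cite it outright or extract the minimal statements about $\Sigma_{i_1}$ and the Fourier transform that are needed here. A comparatively minor, already-settled point is the bookkeeping of indices: that the relevant canonical basis element is $B_\ii(\bdelta_j)$ (and not another weight-$\alpha_{\ii,j}$ element) follows from the $\preceq$-minimality of $\bdelta_j$ via Theorem~\ref{Thm:Kimura}, using Lemma~\ref{Lem:dhom} on the exceptional sequence $(x(\alpha_{\ii,1}),\dots,x(\alpha_{\ii,\ell}))$.
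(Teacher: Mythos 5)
Your proposal is correct and rests on the same crux as the paper's own proof: that Lusztig's geometric lift of the braid symmetry $T_{i_1}$ (BGP reflection plus Fourier--Deligne transform) categorifies $T_{i_1}$ on the canonical basis, which is precisely what \cite[9.4]{Lus97} provides and what the paper cites (together with \cite{Kato20, XZ17}). Your inductive sketch merely unwinds that citation into a reduction to full support plus a step-by-step transport along the reflection functors, so the approaches are essentially identical.
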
  
\begin{proof}
This follows from \cite[9.4]{Lus97}, which shows that the correspondence among $\mathscr{P}$ induced from the reflection functor $\Sigma_i$  (considered in the proof of Lemma \ref{Lem:dhom})  coincides through $\chi$ with the one among $\mathsf{B}$ induced from the braid symmetry $T_i$.
See \cite[Section 3]{Kato20} and \cite[Theorem 3.5]{XZ17} for some more details.
\end{proof}
For each $\beta \in \sQ^+$, we set
\[\mathrm{KP}_{\ii}(\beta) \seq  \{ \bd = (d_j)_{j \in J} \in \N^{\oplus J} \mid \textstyle \sum_{j \in J} d_j \alpha_{\ii, j} = \beta\}.\] 
The perverse sheaf $\IC(\bd)$ lives in $X(\beta)$ (i.e., belongs to $\scrQ_\beta$) if and only if $\bd \in \mathrm{KP}_\ii(\beta)$.

\subsection{Geometric interpretation of some structure constants}
Recall the the mixed product $\tE^*_\ii(\bep)$ in the quantum unipotent coordinate ring defined in \eqref{eq:Mit}.
The purpose of this and next subsections is to describe $\tE^*_\ii(\bep)$ in terms of the intersection cohomology.
Main results are Propositions \ref{Prop:Mit_geom} and \ref{Prop:Mtg_slice}.
They are analogous to Theorem \ref{Thm:VVqgr} above for the quantum Grothendieck rings.

Let $\beta \in \sQ^+$ and 
fix $\bd = (d_j)_{j \in J}\in \mathrm{KP}_\ii(\beta)$.
We choose an identification
\[V^\beta = (V^{\alpha_{\ii, 1}})^{\oplus d_1} \oplus \cdots \oplus (V^{\alpha_{\ii, \ell}})^{\oplus d_\ell} =
\bigoplus_{j \in J} V^{\alpha_{\ii, j}}\otimes D_j,\] 
where $D_j$ is a $\C$-vector space of dimension $d_j$ (``space of multiplicity'').
Let $G_\bd  \seq \prod_{j \in J} GL(D_j)$.
We have an injective homomorphism $G_\bd  \hookrightarrow G(\beta)$ given by $(g_j)_{j \in J} \mapsto (\id_{V^{\alpha_{\ii, j}}} \otimes g_j)_{j \in J}$, through which we regard $G_\bd$ as a subgroup of $G(\beta)$.
Thus, the group $G_\bd$ acts on  $V^\beta$.

Let $\bep_c = (j_1, \ldots, j_{d})$ denote the unique costandard sequence in $J^\bd$.
We fix a basis $\{v_1, \ldots, v_{d} \}$ of the vector space $\bigoplus_{j \in J}D_j$ such that $v_k \in D_{j_k}$. 
This yields a maximal torus $T_\bd$ of $G_\bd$ consisting of diagonal matrices with respect to the basis. 
The fixed locus $X(\beta)^{T_\bd}$ is identical to the space
\[ X(\bd) \seq X(\alpha_{\ii, 1})^{d_1} \times X(\alpha_{\ii, 2})^{d_2} \times \cdots \times X(\alpha_{\ii, \ell})^{d_\ell}. \]
The quiver representation
\[x(\bd) \seq x(\alpha_{\ii, 1})^{\oplus d_1} \oplus x(\alpha_{\ii, 2})^{\oplus d_2} \oplus\cdots \oplus x(\alpha_{\ii, \ell})^{\oplus d_\ell}\] 
is regarded as a geometric point of $X(\bd) = X(\beta)^{T_\bd}\subset X(\beta)$. 
Let 
\[ i_{x(\bd)} \colon \{x(\bd)\} \hookrightarrow X(\bd) \] denote the inclusion.

Let $d \seq \sum_{j \in J} d_j$. 
The symmetric group $\mathfrak{S}_{d}$ acts on the set $J^\bd$ by place permutations.
For each sequence $\bep \in J^\bd$, let $\sigma_\bep \in \mathfrak{S}_{d}$ denote the element of the smallest length such that $\bep  = (j_{\sigma_\bep(1)}, \ldots, j_{\sigma_\bep(d)}).$
Then, we define a cocharacter $\tau_\bep \in X_*(T_\bd)$ by $\tau_\bep(s)\cdot v_{\sigma_\bep(k)} = s^k v_{\sigma_\bep(k)}$ for $1 \le k \le d$. 
In the notation of \eqref{eq:wtdecomp}, we have
\begin{align}
X(\beta)_{\tau_\bep}^0 &= \bigoplus_{1 \le k \le d} \bigoplus_{h \in Q_1} \Hom_\C(V_{\mathrm{s}(h)}^{\alpha_{\ii, \ep_k}}\otimes \C v_{\sigma_{\bep}(k)}, V_{\mathrm{t}(h)}^{\alpha_{\ii, \ep_k}}\otimes \C v_{\sigma_{\bep}(k)}) = X(\bd), \\
X(\beta)_{\tau_\bep}^+ &= \bigoplus_{1 \le k < l\le d} \bigoplus_{h \in Q_1} \Hom_\C(V_{\mathrm{s}(h)}^{\alpha_{\ii, \ep_k}}\otimes \C v_{\sigma_{\bep}(k)}, V_{\mathrm{t}(h)}^{\alpha_{\ii, \ep_l}}\otimes \C v_{\sigma_{\bep}(l)}).
\end{align}
In particular, we have an isomorphism
\[ 
F(\bep) \seq X(\beta)_{\tau_\bep}^0 \oplus X(\beta)_{\tau_\bep}^+ \simeq F(\alpha_{\ii, \ep_1}, \alpha_{\ii, \ep_2}, \cdots, \alpha_{\ii, \ep_{d}}).
\]
Let
\[
\xymatrix{
X(\bd)& \ar@{->}[l]_-{\kappa_\bep} F(\bep) \ar@{->}[r]^-{\iota_\bep} & X(\beta)
}
\]
be the diagram defined as in \eqref{eq:resdiag}. 

\begin{Prop} \label{Prop:Mit_geom}
For any $\bep = (\ep_1, \ldots, \ep_d)\in J^\bd$, we have
\[
\tE^*_{\ii}(\bep) = t^{c(\bep)} \sum_{\bd' \in \mathrm{KP}_\ii(\beta)} \left(\sum_{n \in \Z} t^n \dim_\kk \rH^n(i_{x(\bd)}^! \kappa_{\bep*} \iota_\bep^! \IC(\bd')) \right) \tB^*_\ii(\bd')
\]
in the quantum unipotent coordinate ring $A_t[N(w)]_{\Z[t^{\pm 1/2}]}$, where
\[ c(\bep) = - \dim X(\bd) - \sum_{1 \le k < l \le d}(\dhom(\alpha_{\ii, \ep_k}, \alpha_{\ii,\ep_l}) - \dhom(\alpha_{\ii, \ep_l}, \alpha_{\ii, \ep_k}) ).\]
\end{Prop}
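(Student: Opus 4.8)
The plan is to identify the mixed product $\tE^*_\ii(\bep)$ with a specific Euler characteristic of a restriction functor and then expand in the dual canonical basis. First I would use the algebra isomorphism $\chi \colon U_t^+(\fg)_{\Z[t^{\pm1}]} \simeq K(\scrQ)$ of Theorem \ref{Thm:Lus}, together with its compatibility with $\Res_{\beta,\beta'}$ and the (strong) associativity of $\Ind$, to translate the definition \eqref{eq:Mit} of $\tE^*_\ii(\bep)$. Concretely, $E^*_{\ii, \ep_k} = (1-t^2)E_{\ii, \ep_k}$ is dual to $B_\ii(\bdelta_{\ep_k})$, which by Proposition \ref{Prop:Lus} corresponds to the simple perverse sheaf $\cC_{\alpha_{\ii, \ep_k}}$; and the ordered product $\prod_k E^*_{\ii, \ep_k}$ corresponds, under duality, to the iterated convolution $\cL_\nu = \cC_{\alpha_{\ii, \ep_1}} \star \cdots \star \cC_{\alpha_{\ii, \ep_d}}$ along the sequence $\nu = (\alpha_{\ii, \ep_1}, \ldots, \alpha_{\ii, \ep_d})$. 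The point is that pairing against $\tB^*_\ii(\bd')$ extracts the multiplicity of the simple perverse sheaf $\IC(\bd')$ in $\cL_\nu$ (up to shift), and that multiplicity is computed by applying $\Res_{\beta_1,\ldots,\beta_d}$ (for $\beta_k = \alpha_{\ii,\ep_k}$) to $\IC(\bd')$ and then taking the stalk-type cohomology at the point $x(\bd)$.

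The second step is the bookkeeping of cohomological shifts. I would carefully track the three sources of shifts: the factor $(1-t^2)$ attached to each root vector and the $t$-power prefactor $t^{\sum_{k<l}\gamma_\ii(\bdelta_{\ep_k},\bdelta_{\ep_l})}$ in \eqref{eq:Mit}; the shift $[c]$ with $c = \sum_{1\le j<k\le d}\langle \beta_j,\beta_k\rangle_Q$ built into the definition of $\Res_{\beta_1,\ldots,\beta_d}$; and the normalization $[\dim X(\beta)]$ in the constant perverse sheaf $\cC_\beta$ versus the normalization of the $\IC$-sheaves. Using the identity \eqref{eq:inner} relating $(-,-)$ to $\langle -,-\rangle_Q + \langle -,-\rangle_Q^{\mathrm{op}}$, together with \eqref{eq:Eu} and Lemma \ref{Lem:dhom} (which says $\dhom(\alpha_{\ii,j},\alpha_{\ii,k})=0$ for $j<k$ and $\dext(\alpha_{\ii,j},\alpha_{\ii,k})=0$ for $j\ge k$, so $\langle\alpha_{\ii,j},\alpha_{\ii,k}\rangle_Q = \dhom$ in one direction and $=-\dext$ in the other), I expect the combination of all these shifts to collapse exactly into the claimed exponent $c(\bep) = -\dim X(\bd) - \sum_{k<l}(\dhom(\alpha_{\ii,\ep_k},\alpha_{\ii,\ep_l}) - \dhom(\alpha_{\ii,\ep_l},\alpha_{\ii,\ep_k}))$. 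This is the part where the skew-symmetric form $\gamma_\ii$, which was designed in \eqref{eq:tE}--\eqref{eq:mixqH} precisely to make the powers come out right, does its job; the antisymmetric $\dhom$-difference is the ``geometric'' incarnation of $\gamma_\ii$ modulo the self-pairing correction absorbed into $\dim X(\bd)$.

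The third step is to justify replacing the abstract restriction $\Res_{\beta_1,\ldots,\beta_d}\IC(\bd') = \kappa_* \iota^! \IC(\bd')[c]$ followed by evaluation of cohomology, with the concrete formula $\rH^\bullet(i_{x(\bd)}^! \kappa_{\bep*}\iota_\bep^!\IC(\bd'))$. Since $x(\bd) = x(\alpha_{\ii,1})^{\oplus d_1}\oplus\cdots\oplus x(\alpha_{\ii,\ell})^{\oplus d_\ell}$ is a $T_\bd$-fixed point lying in the dense orbit of each factor $X(\alpha_{\ii,j})^{d_j}$ of $X(\bd)$, taking the $!$-stalk at $x(\bd)$ of a $G_\bd$-equivariant complex on $X(\bd)$ recovers all the multiplicity data: the constant sheaf summands $\cC_{\alpha_{\ii,j}}^{\star d_j}$ on each factor have $!$-stalk at $x(\bd)$ a one-dimensional shifted vector space, and the pairing $\langle \cdot, \tB^*_\ii(\bd')\rangle$ is read off from the graded dimension of this stalk complex. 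I would invoke here the fact that $\{B_\ii(\bd)\}$ and $\{B^*_\ii(\bd)\}$ are dual bases (Theorem \ref{Thm:Kimura}) and that $\chi$ intertwines the pairing $\langle -,-\rangle$ with a geometric pairing on $K(\scrQ)$ given by taking $\rH^\bullet \circ i_{x(\bd)}^!$ of the relevant restriction, which is standard in Lusztig's setup.

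The main obstacle I anticipate is getting the shift normalizations exactly consistent between the three conventions in play simultaneously: Lusztig's $\scrQ$-category convention (perverse normalization with $[\dim X(\beta)]$ shifts, and $\Res$ carrying a $[c]$ with $c$ a sum of Euler forms), Varagnolo--Vasserot's quiver-Hecke convention, and the algebraic normalization of $\tE^*_\ii(\bep)$ built from the $t^{-(\beta,\beta)/4}$-twisted elements. The identity \eqref{eq:inner} and Lemma \ref{Lem:dhom} are exactly the tools to force the cancellations, but verifying that the residual term is $-\dim X(\bd)$ and not, say, $-\dim X(\bd) + (\text{something involving }\dim\Hom)$ requires a careful and slightly tedious reconciliation; a sign error or an off-by-$2$ in any one place propagates through the final formula. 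I would isolate this as a lemma computing, for $\bd' = \bd$ (the top term), that the coefficient is $t^{c(\bep) + n_0}$ with $n_0 = \dim X(\bd)$ coming from the stalk of $\IC(\bd)$ at its own generic point, so that the $\tB^*_\ii(\bd)$-coefficient of $\tE^*_\ii(\bep)$ is $1$ as it must be; this sanity check pins down the constant.
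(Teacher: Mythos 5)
Your outline follows essentially the same route as the paper's proof: expand $\tE^*_{\ii}(\bep)$ in the renormalized dual canonical basis, use Theorem \ref{Thm:Lus} (the compatibility of the pairing and $\mathrm{r}$ with $\Res$) together with Proposition \ref{Prop:Lus} to identify each coefficient with the graded multiplicity of $\cC_{\alpha_{\ii,\ep_1}}\boxtimes\cdots\boxtimes\cC_{\alpha_{\ii,\ep_d}}\simeq\ul{\kk}_{X(\bd)}[\dim X(\bd)]$ in $\Res_{\alpha_{\ii,\ep_1},\ldots,\alpha_{\ii,\ep_d}}\IC(\bd')$, compute that multiplicity as the Poincar\'e polynomial of $i_{x(\bd)}^!\kappa_{\bep*}\iota_\bep^!\IC(\bd')$ using that the constant sheaf is the unique simple equivariant perverse sheaf on $X(\bd)$ with nonzero costalk at the generic point $x(\bd)$, and then reconcile the shifts via \eqref{eq:inner}, \eqref{eq:Eu}, Lemma \ref{Lem:dhom} (the paper packages the $\gamma_\ii$-versus-$\dhom$ comparison you describe as Lemma \ref{Lem:gamma-ext}). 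The only part you leave open is exactly the bookkeeping computation $c_2-c_3=c(\bep)$ that the paper carries out explicitly, so the proposal is correct in approach and matches the paper's argument.
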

\begin{proof}
By definition, we have
\[ \tE^*_{\ii}(\bep) = t^{c_1} E^*_{\ii, \ep_1} \cdots E^*_{\ii, \ep_{d}} = t^{c_2} \sum_{\bd' \in \mathrm{KP}_\ii(\beta)} \langle E^*_{\ii, \ep_1} \cdots E^*_{\ii, \ep_d}, B_\ii(\bd') \rangle \tB^*_\ii(\bd'), \]
where
\begin{align}
c_1 &= \sum_{1 \le k < l \le d}\gamma_\ii(\bdelta_{\ep_k}, \bdelta_{\ep_l}) - \frac{1}{4} \sum_{1 \le k \le d} (\alpha_{\ii, \ep_k}, \alpha_{\ii, \ep_k}), \\
c_2 &= c_1 + \frac{1}{4}(\beta, \beta) = \sum_{1 \le k < l \le d} \left(\gamma_\ii(\bdelta_{\ep_k}, \bdelta_{\ep_l}) + \frac{1}{2}(\alpha_{\ii, \ep_k}, \alpha_{\ii, \ep_l})\right).
\end{align}
By Theorem~\ref{Thm:Lus}, for each $\bd' \in \mathrm{KP}_\ii(\beta)$, we have
\[
\langle E^*_{\ii, \ep_1} \cdots E^*_{\ii, \ep_{d}}, B_\ii(\bd') \rangle
= \langle E^*_{\ii, \ep_1} \otimes \cdots \otimes E^*_{\ii, \ep_{d}}, \chi^{-1}[\Res_{\alpha_{\ii, \ep_1}, \ldots, \alpha_{\ii, \ep_{d}}} \IC(\bd')] \rangle.
\]
This is equal to the graded multiplicity of the constant perverse sheaf 
\[\cC_{\alpha_{\ii, \ep_1}}\boxtimes \cdots \boxtimes \cC_{\alpha_{\ii, \ep_{d}}} \simeq \ul{\kk}_{X(\bd)}[\dim X(\bd)]\] 
in $\Res_{\alpha_{\ii, \ep_1}, \ldots, \alpha_{\ii, \ep_d}} \IC(\bd')$ by Proposition~\ref{Prop:Lus}.   
Since we have 
\[i_{x(\bd)}^!( \ul{\kk}_{X(\bd)}[\dim X(\bd)]) = \ul{\kk}_{\{x(\bd)\}} [-\dim X(\bd)]\] 
and $\ul{\kk}_{X(\bd)}[\dim X(\bd)]$ is the unique simple $G_\bd$-equivariant perverse sheaf on $X(\bd)$ with a non-trivial (co)stalk at $x(\bd)$, the graded multiplicity in question can be computed as the Poincar\'e polynomial of 
\[i_{x(\bd)}^! \Res_{\alpha_{\ii, \ep_1}, \ldots, \alpha_{\ii, \ep_{d}}} \IC(\bd') [\dim X(\bd)] = i_{x(\bd)}^! \kappa_{\bep*} \iota_{\bep}^! \IC(\bd')[c_3], \]
where
\[c_3 = \dim X(\bd) + \sum_{1 \le k < l \le d} \langle \alpha_{\ii, \ep_k}, \alpha_{\ii, \ep_l} \rangle_Q.\]
Therefore, we get
\[\tE^*_{\ii}(\bep) = t^{c_2 - c_3} \sum_{\bd' \in \mathrm{KP}_\ii(\beta)} \left(\sum_{n \in \Z} t^n \dim_\kk \rH^n(i_{x(\bd)}^! \kappa_{\bep*} \iota_\bep^! \IC(\bd')) \right) \tB^*_\ii(\bd').\]
It remains to observe
\begin{align}
 c_2 - c_3 &= - \dim X(\bd) +   \sum_{1 \le k < l \le d} \left(\gamma_\ii(\bdelta_{\ep_k}, \bdelta_{\ep_l}) - \frac{1}{2}(\langle \alpha_{\ii, \ep_k}, \alpha_{\ii, \ep_l}\rangle_Q -  \langle \alpha_{\ii, \ep_l}, \alpha_{\ii, \ep_k} \rangle_Q )\right) \\
&= - \dim X(\bd) +   \sum_{1 \le k < l \le d} \left(
-\langle \alpha_{\ii, \ep_k}, \alpha_{\ii,\ep_l} \rangle_{Q} + \langle \alpha_{\ii, \ep_l}, \alpha_{\ii,\ep_k} \rangle_{Q}  - \dext(\alpha_{\ii, \ep_k}, \alpha_{\ii, \ep_l}) +\dext(\alpha_{\ii,\ep_l}, \alpha_{\ii, \ep_k})\right) \\
&= - \dim X(\bd) - \sum_{1 \le k < l \le d}(\dhom(\alpha_{\ii, \ep_k}, \alpha_{\ii,\ep_l}) - \dhom(\alpha_{\ii, \ep_l}, \alpha_{\ii, \ep_k}) ),
\end{align}
where the first equality follows from \eqref{eq:inner}, the second one follows from Lemma~\ref{Lem:gamma-ext} below, and the last one follows from \eqref{eq:Eu}.
\end{proof}

\begin{Lem} \label{Lem:gamma-ext}
For any $j,k \in J$, we have
\[ \gamma_{\ii}(\bdelta_j, \bdelta_k) = -\frac{1}{2}(\langle \alpha_{\ii, j}, \alpha_{\ii, k} \rangle_{Q} - \langle \alpha_{\ii, k}, \alpha_{\ii, j} \rangle_{Q} ) - \dext(\alpha_{\ii, j}, \alpha_{\ii, k}) +\dext(\alpha_{\ii, k}, \alpha_{\ii, j}). \]
\end{Lem}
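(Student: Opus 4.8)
The plan is to unwind both sides of the claimed identity explicitly and to match them according to the relative order of $j$ and $k$ in $J\subset\Z$. First I would record, directly from the definition of $\gamma_\ii$, that $\gamma_\ii(\bdelta_j,\bdelta_k)$ equals $\frac{1}{2}(\alpha_{\ii,j},\alpha_{\ii,k})$ when $j<k$, equals $-\frac{1}{2}(\alpha_{\ii,j},\alpha_{\ii,k})$ when $j>k$, and vanishes when $j=k$; indeed only the single summand with indices $\{j,k\}$ survives in the defining sum, with a sign governed by whether $j<k$ or $j>k$.

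Next I would treat the case $j<k$. By Lemma~\ref{Lem:dhom}(1) we have $\dhom(\alpha_{\ii,j},\alpha_{\ii,k})=0$ and $\dext(\alpha_{\ii,k},\alpha_{\ii,j})=0$, so \eqref{eq:Eu} gives $\langle\alpha_{\ii,j},\alpha_{\ii,k}\rangle_Q=-\dext(\alpha_{\ii,j},\alpha_{\ii,k})$ and $\langle\alpha_{\ii,k},\alpha_{\ii,j}\rangle_Q=\dhom(\alpha_{\ii,k},\alpha_{\ii,j})$. Substituting these into the right-hand side of the asserted formula and simplifying, while using $(\alpha_{\ii,j},\alpha_{\ii,k})=\langle\alpha_{\ii,j},\alpha_{\ii,k}\rangle_Q+\langle\alpha_{\ii,k},\alpha_{\ii,j}\rangle_Q$ from \eqref{eq:inner}, the right-hand side collapses to $\frac{1}{2}(\alpha_{\ii,j},\alpha_{\ii,k})$, in agreement with the first step. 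The case $j>k$ is entirely symmetric: now Lemma~\ref{Lem:dhom}(1) yields $\dext(\alpha_{\ii,j},\alpha_{\ii,k})=0$ and $\dhom(\alpha_{\ii,k},\alpha_{\ii,j})=0$, and the same manipulation produces $-\frac{1}{2}(\alpha_{\ii,j},\alpha_{\ii,k})$. Finally, the case $j=k$ is immediate, since on the right-hand side the two $\langle-,-\rangle_Q$ terms and the two $\dext$ terms each cancel in pairs, while the left-hand side is zero by definition.

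Having verified the equality in all three cases, the lemma follows. There is no real obstacle here: the statement is a bookkeeping exercise combining the explicit form of $\gamma_\ii$, the Euler-form identity \eqref{eq:inner}, the relation \eqref{eq:Eu}, and the homological vanishing of Lemma~\ref{Lem:dhom}(1); the only point requiring attention is keeping the sign conventions consistent across the case split.
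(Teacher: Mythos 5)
Your proof is correct and follows essentially the same route as the paper: both rest on the explicit value of $\gamma_\ii(\bdelta_j,\bdelta_k)$, the identities \eqref{eq:inner} and \eqref{eq:Eu}, and the vanishing statements of Lemma~\ref{Lem:dhom}(1). The only cosmetic difference is that the paper first notes both sides are skew-symmetric in $(j,k)$ to reduce to the single case $j<k$, whereas you verify the three cases $j<k$, $j>k$, $j=k$ directly.
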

\begin{proof}
Since both sides of the desired equality are skew-symmetric, we may assume that $j < k$. 
Then, we have $\gamma_{\ii}(\bdelta_j, \bdelta_k) = (\alpha_{\ii, j}, \alpha_{\ii, k})/2$ by definition.
Now, the result follows from the formulas \eqref{eq:inner}, \eqref{eq:Eu}, and Lemma~\ref{Lem:dhom}. 
\end{proof}

\subsection{Lusztig's transversal slice} 
\label{Ssec:slice}
In this subsection, we restrict the above geometric setting to a certain transversal slice $S(\bd)$ in $X(\beta)$ considered by Lusztig \cite[Section 10]{Lus90}.
This is an important step to apply the facts from Section \ref{sec:preliminary}. 

To define the  transversal  slice $S(\bd)$, first we recall the following general fact about quiver representations. 
Let $x \in X(\beta), x' \in X(\beta')$ be two representations of $Q$.
We have an exact sequence of $\C$-vector spaces
\[ 0 \to \Hom_{Q}(x, x') 
\to L(\beta, \beta')
\to E(\beta, \beta')
\to \Ext^1_Q(x, x') 
\to 0,\] 
where the middle map is given by 
\[
L(\beta, \beta') \ni \varphi = (\varphi_i)_{i \in I} \mapsto  (\varphi_{\mathrm{t}(h)} x_{h} - x'_{h} \varphi_{\mathrm{s}(h)} )_{h \in Q_1} \in E(\beta, \beta').
\] 
Note that the equality \eqref{eq:Eu} follows from this.

Now, we retain the notation from the previous subsection and consider the special case when $x = x' = x(\bd)$ and $\beta = \beta' = \sum_{j \in J}d_j \alpha_{\ii, j}$ to get the exact sequence
\begin{equation} \label{eq:HLXE}
0 \to \Hom_{Q}(x(\bd), x(\bd)) 
\to L(\beta, \beta)
\xrightarrow{\xi} X(\beta)
\to \Ext^1_Q(x(\bd), x(\bd)) 
\to 0.
\end{equation} 
Note that  the middle map $\xi$ is $\Stab_{G(\beta)} x(\bd)$-equivariant, and hence, as $G_\bd \subset \Stab_{G(\beta)} x(\bd)$, it is $G_\bd$-equivariant.
Since $G_\bd$ is a reductive group, we can find a $G_\bd$-stable linear subspace $E(\bd)$ of $X(\beta)$ such that \[X(\beta) = \Image \xi \oplus E(\bd)\] 
as $G_\bd$-representation.
By \eqref{eq:HLXE}, we have $E(\bd) \simeq \Ext^1_{Q}(x(\bd), x(\bd))$ as $\C$-vector spaces. 
Let 
\[S(\bd) \seq x(\bd) + E(\bd)\]
be the affine subspace of $X(\beta)$. 
Note that each geometric point of $S(\bd)$ is a quiver representation obtained as an extension of indecomposable ones $\{x(\alpha_{\ii, j})\}_{j \in J}$.   
Lemma~\ref{Lem:dhom}~(2) implies that we have a finite stratification 
\begin{equation} \label{eq:strS}
S(\bd) = \bigsqcup_{\bd' \in \mathrm{KP}_\ii(\beta)} O(\bd') \cap S(\bd), \quad \text{where $O(\bd') \seq G(\beta) \cdot x(\bd')$}. 
\end{equation} 
The variety $S(\bd)$ is a transversal slice through $x(\bd)$, meaning that it intersects transversally with each orbit $O(\bd')$, $\bd' \in \mathrm{KP}_\ii(\beta)$.

For each sequence $\bep \in J^\bd$, we define
\begin{align}
X(\bep) &\seq x(\bd) + X(\beta)^+_{\tau_\bep} = \kappa_\bep^{-1}(x(\bd)), \\
S(\bep) &\seq x(\bd) + E(\bd)^+_{\tau_\bep} = S(\bd) \cap F(\bep). 
\end{align}
We have a commutative diagram:
\begin{equation} \label{eq:diagS}
\vcenter{
\xymatrix{
X(\bd)  
& \ar@{->}[l]_-{\kappa_\bep} F(\bep) \ar@{->}[r]^-{\iota_\bep}
& X(\beta) 
\\
\{x(\bd)\} \ar@{->}[u]_-{i_{x(\bd)}} \ar@<-3pt>@{->}[r]_-{i_2}
& \ar@<-3pt>@{->}[l]_-{p} X(\bep) \ar@{->}[u]_-{i_1} 
& 
\\
\{x(\bd)\} \ar@{=}[u] \ar@{->}[r]^-{i_{\bep, x(\bd)}}
&  S(\bep)\ar@{->}[r]^-{i_\bep} \ar@{->}[u]_-{i_3}
& S(\bd). \ar@{->}[uu]_{i_{S(\bd)}} 
}}
\end{equation}
Here the arrow $p$ is the obvious map, and the arrows $i_1, i_2, i_3, i_{S(\bd)}, i_{\bep, x(\bd)}, i_{\bep}$ are the inclusions.
Note that the upper left square and the right square are both cartesian.
All the varieties in the diagram~\eqref{eq:diagS} are stable under the action of the maximal torus $T_\bd \subset G_\bd$ and all the morphisms in the diagram~\eqref{eq:diagS} are $T_\bd$-equivariant.

\begin{Lem} \label{Lem:slice}
We have a natural isomorphism
\begin{equation} \label{eq:iki=}
i_{x(\bd)}^! \kappa_{\bep *} \iota_\bep^! \simeq i_{\bep, x(\bd)}^* i_{\bep}^!  i_{S(\bd)}^! [\dim X(\beta) - \dim S(\bd) + c(\bep)] 
\end{equation}
of functors from $D^b_{G(\beta)}(X(\beta), \kk)$ to $D^b_{T_\bd} (\{x(\bd)\}, \kk)$.
\end{Lem}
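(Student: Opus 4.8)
The plan is to establish the isomorphism \eqref{eq:iki=} by tracking how the various functors interact with the commutative diagram \eqref{eq:diagS}, using only the cartesian squares, base change, and the transversal-slice dimension bookkeeping. The essential observation is that the upper-left square and the right square in \eqref{eq:diagS} are cartesian, and that $S(\bd) \hookrightarrow X(\beta)$ is a transversal slice, so that $i_{S(\bd)}^!$ differs from $i_{S(\bd)}^*$ only by a cohomological shift by the codimension $\dim X(\beta) - \dim S(\bd)$ (this uses that $S(\bd)$ meets each stratum $O(\bd')$ transversally, so for the $G(\beta)$-equivariant perverse sheaves in $\scrQ_\beta$, which are smooth along this stratification, restriction to $S(\bd)$ is a smooth pullback up to shift).

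First I would rewrite the left-hand side $i_{x(\bd)}^! \kappa_{\bep *} \iota_\bep^!$ by factoring through $X(\bep)$. Since $\kappa_\bep^{-1}(x(\bd)) = X(\bep)$ and the upper-left square of \eqref{eq:diagS} is cartesian, proper base change for $\kappa_{\bep*}$ (note $\kappa_\bep$ is smooth but the relevant base change here is the clean one coming from the cartesian square, using that $p$ is the restriction of $\kappa_\bep$) gives $i_{x(\bd)}^! \kappa_{\bep*} \simeq p_* i_1^!$ after correcting $i_1^!$ versus $i_1^*$ by the relative dimension of the smooth morphism $\kappa_\bep$ restricted over the point; more precisely since $\kappa_\bep$ is smooth of relative dimension $\dim F(\bep) - \dim X(\bd)$, we have $\kappa_\bep^! \simeq \kappa_\bep^*[2(\dim F(\bep) - \dim X(\bd))]$ and $p$ inherits the analogous relation, so the shift is absorbed. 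Thus $i_{x(\bd)}^!\kappa_{\bep*}\iota_\bep^! \simeq p_* i_1^! \iota_\bep^! = p_* i_2'^!$ where $i_2' = \iota_\bep \circ i_1 \colon X(\bep) \hookrightarrow X(\beta)$ is the inclusion. Next I would push $i_2'^!$ through the slice: since $X(\bep) = S(\bep) + (\text{tangent directions transverse to }S(\bd))$, i.e. $X(\bep)$ is transverse to $S(\bd)\cap F(\bep) = S(\bep)$ inside $F(\bep)$, and since $X(\bep) \cap S(\bd) = S(\bep)$ as the right square and its refinement show, one factors $i_2'^! \simeq i_3^! i_{S(\bd)}^!$ up to the appropriate shift coming from $\dim X(\bep) - \dim S(\bep) = \dim X(\beta) - \dim S(\bd)$ plus the contribution counted by $c(\bep)$, which is exactly the discrepancy between $X(\beta)^+_{\tau_\bep}$ and $E(\bd)^+_{\tau_\bep}$ on the nose (here $c(\bep)$ records $-\dim X(\bd)$ together with the $\dhom$ terms measuring the failure of the weight-space dimensions to match). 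Finally, applying $p_*$ over $\{x(\bd)\}$ and using that $p$ restricted to $S(\bep)$ is precisely the retraction whose fiber over $x(\bd)$ contracts, Proposition~\ref{Prop:attr} (or rather the elementary fact $p_* i_3 = i_{\bep, x(\bd)}^* $ for the attractive/affine-bundle situation) converts $p_* i_3^!$ into $i_{\bep, x(\bd)}^*$, yielding $i_{\bep, x(\bd)}^* i_\bep^! i_{S(\bd)}^!$ with total shift $\dim X(\beta) - \dim S(\bd) + c(\bep)$.

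The bookkeeping of the shift is where I expect the main difficulty to lie: one must carefully accumulate the relative-dimension shifts from (a) $\kappa_\bep$ smooth, (b) the $[c]$ built into the definition of $\Res_{\beta_1,\dots,\beta_d}$ (which is hidden in the notation $\kappa_{\bep*}\iota_\bep^!$ if one uses the normalized version, but here it appears to be the unnormalized $\kappa_{\bep*}\iota_\bep^!$, so the $c(\bep)$ must be produced entirely from transversality), (c) the codimension of $S(\bd)$ in $X(\beta)$, and (d) the mismatch between $X(\beta)^+_{\tau_\bep}$ and $E(\bd)^+_{\tau_\bep}$. The key identity to verify is
\[
\dim X(\beta)^+_{\tau_\bep} - \dim E(\bd)^+_{\tau_\bep} = \sum_{1 \le k < l \le d}\bigl(\dhom(\alpha_{\ii,\ep_k},\alpha_{\ii,\ep_l}) - \dhom(\alpha_{\ii,\ep_l},\alpha_{\ii,\ep_k})\bigr) + (\text{terms recombining into }\dim X(\bd)),
\]
which follows from decomposing $X(\beta) = \Image\xi \oplus E(\bd)$ equivariantly with respect to $T_\bd$ and reading off weights using that $\Image\xi \simeq L(\beta,\beta)/\Hom_Q(x(\bd),x(\bd))$, then comparing with the definition of $c(\bep)$. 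Once all four contributions are summed, they should telescope to exactly $\dim X(\beta) - \dim S(\bd) + c(\bep)$; checking this is a direct but slightly delicate computation with the Euler form $\langle-,-\rangle_Q$ and the formulas \eqref{eq:inner}, \eqref{eq:Eu}, and Lemma~\ref{Lem:dhom}, entirely parallel to the shift computation already carried out in the proof of Proposition~\ref{Prop:Mit_geom}.

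With \eqref{eq:iki=} in hand, the role of this lemma in the sequel is clear: it rewrites the graded multiplicities appearing in Proposition~\ref{Prop:Mit_geom} as Poincaré polynomials of hyperbolic localizations $i_{\bep,x(\bd)}^* i_\bep^! (i_{S(\bd)}^!\IC(\bd'))$ of perverse sheaves living on the \emph{finite-dimensional} slice $S(\bd)$, where $T_\bd$ acts with the attractive cocharacter $\rho^\vee$ (playing the role of Section~\ref{sec:preliminary}'s setup with $E = E(\bd)$), thereby bringing Theorem~\ref{Thm:Fex}, Lemma~\ref{Lem:Lef}, and Corollary~\ref{Cor:hr} to bear — exactly mirroring the structure of the proof of Theorem~\ref{simply-laced} in Section~\ref{ssec:Prqaa}.
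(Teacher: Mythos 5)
Your outline follows the paper's diagram chase up through the base-change reduction, but the central step is replaced by an appeal to transversality that does not actually do the required work, and there is a confusion of sources and targets in the middle of the argument that would need to be repaired before the approach could be evaluated.

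First, a factual point: the functor identity you write as ``$i_2'^! \simeq i_3^! i_{S(\bd)}^!$ up to shift'' cannot be right as stated, since $i_2'^!$ lands in $D^b(X(\bep),\kk)$ while $i_3^! i_{S(\bd)}^!$ lands in $D^b(S(\bep),\kk)$. From the commutativity $\iota_\bep \circ i_1 \circ i_3 = i_{S(\bd)} \circ i_\bep$ one has $i_3^! i_1^! \iota_\bep^! = i_\bep^! i_{S(\bd)}^!$ on the nose, with no shift. The real content of the lemma is therefore not hidden in any transversality-type comparison between these two composites; it is hidden in passing from $i_3^!$ to $i_3^*$ in the chain $i_{x(\bd)}^! \kappa_{\bep*} \iota_\bep^! \simeq p_* i_1^! \iota_\bep^! \simeq i_2^* i_1^! \iota_\bep^! \simeq i_{\bep,x(\bd)}^* i_3^* i_1^! \iota_\bep^!$ (the contraction by $\tau_\bep$, via Proposition~\ref{Prop:attr}, naturally produces a $*$-restriction here). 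So the statement one must prove is an isomorphism $i_3^* \simeq i_3^! [2(\dim X(\bep) - \dim S(\bep))]$, and this is not a transversality statement.

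Second, and more seriously, the appeal to ``transversal slice $\Rightarrow i_{S(\bd)}^!$ and $i_{S(\bd)}^*$ differ by a shift'' is the wrong tool at this point. That kind of statement is true for objects that are smooth (lisse) along a stratification transverse to the slice, and it is exactly how the paper uses the slice in Proposition~\ref{Prop:Mtg_slice} when restricting $\IC(\ol{O(\bd')})$; but Lemma~\ref{Lem:slice} is stated as an isomorphism of functors on all of $D^b_{G(\beta)}(X(\beta),\kk)$, not just on $\scrQ_\beta$. For a general object there is no such comparison, and even for $\scrQ_\beta$ one would need a different and more delicate bookkeeping. The paper's actual mechanism is group-theoretic: $X(\bep)$ carries an action of the unipotent group $U_\bep$ with Lie algebra $L(\beta,\beta)^+_{\tau_\bep}$, and $i_3$ factors as $S(\bep) \xrightarrow{s_3} U_\bep \times S(\bep) \xrightarrow{\pi_3} X(\bep)$, where $\pi_3(g,x) = g \cdot x$ is checked to be a locally trivial smooth fibration (its differential at $(1,x(\bd))$ is onto by the exact sequence \eqref{eq:HLXE}, and the contracting $\Gm$-action propagates surjectivity everywhere), and $s_3$ is the inclusion giving an induction equivalence. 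Combining $\pi_3^* \simeq \pi_3^![\cdot]$ and $s_3^* \simeq s_3^![\cdot]$ yields $i_3^* \simeq i_3^![2(\dim X(\bep) - \dim S(\bep))]$ on $D^b_{U_\bep \rtimes T_\bd}(X(\bep),\kk)$. You would need something equivalent to this equivariant factorization; transversality alone does not supply it. Once you have it, your dimension bookkeeping is in the right spirit and matches the paper's final computation using \eqref{eq:HLXE}.

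Finally, your claim that $i_{x(\bd)}^! \kappa_{\bep*} \simeq p_* i_1^!$ needs a ``correction'' by the relative dimension of $\kappa_\bep$ is not correct: the $!$-$*$ base change in this cartesian square is clean (no extra shift) for the affine-bundle map $\kappa_\bep$, and the paper simply cites base change.
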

\begin{proof}
By the base change and Proposition~\ref{Prop:attr}, we have
\begin{equation} \label{eq:basechange}
i_{x(\bd)}^! \kappa_{\bep *} \iota_\bep^! 
\simeq  p_* i_1^! \iota_\bep ^!
\simeq i_2^* i_1^! \iota_\bep ^!
\simeq i_{\bep, x(\bd)}^* i_3^* i_1^! \iota_\bep ^!
\end{equation}
in the notation from the  diagram  \eqref{eq:diagS}. 
Let $U_\bep$ be the  unipotent subgroup of $G(\beta)$ whose Lie algebra is $L(\beta, \beta)^+_{\tau_\bep}$.
The varieties $F(\bep)$ and $X(\bep)$  are  stable under the action of $U_\bep$, and hence they are $(U_\bep \rtimes T_\bd)$-varieties.
In particular, for any $\mathcal{F} \in D^b_{G(\beta)}(X(\beta), \kk)$, the $!$-restriction $i_1^! \iota_\bep^! \mathcal{F}$ can be seen as an object of $D^b_{U_\bep \rtimes T_\bd}(X(\bep), \kk)$.
We shall show a natural isomorphism
\begin{equation} \label{eq:i3}
i_3^*  \simeq i_3^!  [2 (\dim X(\bep) - \dim S(\bep))] 
\end{equation}
as functors from $D^b_{U_\bep \rtimes T_\bd}(X(\bep), \kk)$ to $D^b_{T_\bd}(S(\bep), \kk)$.
Consider the factorization $i_3 = \pi_3 \circ s_3$:
\[
\xymatrix{
 S(\bep) \ar[rd]_-{s_3} \ar[rr]^-{i_3} & & X(\bep), \\
 & U_\bep \times S(\bep) \ar[ur]_-{\pi_3} &
}
\]
where $s_3$ and $\pi_3$ are $G_\bd$-equivariant morphisms defined by $s_3 (x) \seq (1, x)$ and $\pi_3(g, x) \seq g\cdot x$.
The morphism $\pi_3$ is a locally trivial fibration. 
Indeed, its differential at the point $(1, x(\bd))$ is naturally identified with the linear map 
\[ L(\beta, \beta)^+_{\tau_\bep} \oplus E(\bd)^+_{\tau_\bep} \to X(\beta)^+_{\tau_\bep} \quad \text{given by $(u,v) \mapsto \xi(u) + v$} \]
in the notation of \eqref{eq:HLXE}.
This is surjective thanks to the exactness of the sequence obtained from \eqref{eq:HLXE} by taking $(-)^+_{\tau_\bep}$-parts. 
Since the action of $\Gm$ given by $\tau_\bep$ contracts the variety $U_\bep \times S(\bep)$ (resp.~$X(\bep)$) to the single point $(1, x(\bd))$ (resp.~$x(\bd)$), it follows that the morphism $\pi_3$ is surjective and its differential is surjective at any points.
Thus, $\pi_3$ is a locally trivial fibration with smooth fibers, and hence we have 
\[ \pi_3^* \simeq \pi_3^![2(\dim X(\bep) - \dim (U_\bep \times S(\bep)))]\]
as functors from $D^b_{U_\bep \rtimes T_\bd}(X(\bep), \kk)$ to $D^b_{U_\bep \rtimes T_\bd}(U_\bep \times S(\bep), \kk)$.
On the other hand, we have the induction equivalence 
\[ s_3^* \simeq s_3^! [2 (\dim (U_\bep \times S(\bep)) - \dim S(\bep))] \colon D^b_{U_\bep \rtimes T_\bd}(U_\bep \times S(\bep), \kk) \xrightarrow{\sim} D^b_{T_\bd}(S(\bep), \kk).\]
Combining the above isomorphisms with the natural isomorphisms $i_3^* \simeq s_3^* \pi_3^*$ and $i_3^! \simeq s_3^! \pi_3^!$, we arrive at the isomorphism \eqref{eq:i3}.

Now, the isomorphisms \eqref{eq:basechange} and \eqref{eq:i3} yield an isomorphism
\[ i^!_{x(\bd)} \kappa_\bep \iota^!_\bep \simeq i^*_{\bep, x(\bd)} i_3^! i_1^! \iota_\bep^! [2(\dim X(\bep) - \dim S(\bep))] \simeq i^*_{\bep, x(\bd)} i_\bep^! i_{S(\bd)}^! [2(\dim X(\bep) - \dim S(\bep))]. \]
It remains to check that the number $2(\dim X(\bep) - \dim S(\bep))$ coincides with $\dim X(\beta) - \dim S(\bd) + c(\bep)$.
This is done by noting the equalities
\begin{align}
\dim X(\bep) &= \dim X(\beta)^+_{\tau_\bep}, 
& \dim X(\beta) &= \dim X(\beta)^+_{\tau_\bep} + \dim X(\bd) + \dim X(\beta)^-_{\tau_\bep}, \\
\dim S(\bep) & = \dim E(\bd)^+_{\tau_\bep}, 
& \dim S(\bd) &= \dim E(\bd) = \dim E(\bd)^+_{\tau_\bep} + \dim E(\bd)^-_{\tau_\bep},
\end{align}
and computing as follows:
\begin{align} 
& 2(\dim X(\bep) - \dim S(\bep)) - (\dim X(\beta) - \dim S(\bd)) \\
&=  - \dim X(\bd) + (\dim X(\beta)^+_{\tau_\bep} - \dim E(\bd)^+_{\tau_\bep}) - (\dim X(\beta)^-_{\tau_\bep} - \dim E(\bd)^-_{\tau_\bep}) \\
&= - \dim X(\bd) + (\dim L(\beta, \beta)^+_{\tau_\bep} - \dim H(\bd)^+_{\tau_\bep}) - (\dim L(\beta, \beta)^-_{\tau_\bep} - \dim H(\bd)^-_{\tau_\bep}) \\
&= - \dim X(\bd) - \dim H(\bd)^+_{\tau_\bep} +  \dim H(\bd)^-_{\tau_\bep} = c(\bep), \end{align}
where we put $H(\bd) \seq \Hom_Q(x(\bd), x(\bd))$.
Here, the second equality follows from the exactness of the sequence \eqref{eq:HLXE}, and the third one is due to an obvious equality $\dim L(\beta, \beta)^+_{\tau_\bep} = \dim L(\beta, \beta)^-_{\tau_\bep}$.   
Thus, we obtain the desired isomorphism \eqref{eq:iki=}.
\end{proof}

\begin{Prop} \label{Prop:ICa}
For any $\bd \in \N^{\oplus J}$, we have
\[ \IC(\bd) = \IC(\ol{O(\bd)}, \kk).  \]
\end{Prop}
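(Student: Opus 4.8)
The plan is to prove the statement by induction on the length $\ell$ of the adapted reduced word $\ii$, reducing it to the two elementary moves from which an adapted word is built: passing a source vertex through a reflection functor, and multiplying by a divided power of a source-vertex generator. Throughout I use Theorem~\ref{Thm:Lus} to pass freely between the canonical basis $\bB$ of $U_t^+(\fg)$ and the set $\scrP$ of simple perverse sheaves, and I use the characterization of $B_\ii(\bd)$ coming from Theorem~\ref{Thm:Kimura} (transported to the $U^+$-side by duality): setting $b(\bd) \seq \chi^{-1}([\IC(\ol{O(\bd)},\kk)])$, it suffices to check that $b(\bd)$ is bar-invariant and is unitriangular to the PBW basis $\{E_\ii(\bd')\}$ with respect to $\preceq$ with off-diagonal coefficients in $t\Z[t]$. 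Bar-invariance is automatic, since $\IC(\ol{O(\bd)},\kk)$ is Verdier self-dual and $\iota$ corresponds to $\bD$ under $\chi$.

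The base case $\bd = \bdelta_j$ is Proposition~\ref{Prop:Lus} together with the density of the indecomposable orbit $O(\bdelta_j) = G(\alpha_{\ii,j})\cdot x(\alpha_{\ii,j})$ in $X(\alpha_{\ii,j})$, which gives $\cC_{\alpha_{\ii,j}} = \ul{\kk}_{X(\alpha_{\ii,j})}[\dim X(\alpha_{\ii,j})] = \IC(\ol{O(\bdelta_j)},\kk)$. For the inductive step write $\ii = (i_1, \ii')$ with $\ii'=(i_2,\ldots,i_\ell)$ an adapted reduced word for $w' = s_{i_1}w$ relative to $Q' = s_{i_1}Q$, and $\bd = (d_1,d_2,\ldots,d_\ell)$. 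If $d_1 = 0$, then $\alpha_{\ii,j} = s_{i_1}\alpha_{\ii',j-1}$ for $j \ge 2$, and the Bernstein--Gelfand--Ponomarev reflection functor $\Sigma_{i_1} \colon \Rep(Q') \to \Rep(Q)$ restricts to an isomorphism of the open loci on which $\Sigma_{i_1}$ and $\Sigma_{i_1}^*$ are mutually inverse, matching the orbit $O((d_2,\ldots,d_\ell))$ for $Q'$ and its closure with $O(\bd)$ for $Q$ and its closure; on the algebraic side $B_\ii(\bd) = T_{i_1}(B_{\ii'}((d_2,\ldots,d_\ell)))$, and, as in the proof of Proposition~\ref{Prop:Lus} (Lusztig \cite{Lus97}), $T_{i_1}$ corresponds under $\chi$ to the perverse-sheaf operation induced by $\Sigma_{i_1}$. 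Hence the inductive hypothesis for $(\ii', Q')$ yields $\IC(\bd) = \IC(\ol{O(\bd)},\kk)$.

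If $d_1 > 0$, then $\alpha_{\ii,1} = \alpha_{i_1}$ and the relevant operation is multiplication by the divided power $e_{i_1}^{(d_1)}$, which corresponds under $\chi$ to the induction product with $\cC_{d_1\alpha_{i_1}}$. Since $i_1$ is a source of $Q$ (no loops), $X(d_1\alpha_{i_1})$ is a point, so $\cC_{d_1\alpha_{i_1}}$ is a point sheaf; moreover $S_{i_1}$ is an injective $Q$-representation, so adjoining $S_{i_1}^{\oplus d_1}$ as a subrepresentation produces only split extensions, and one checks that the corresponding induction has image $\ol{O(\bd)}$ and that its fibers over $O(\bd)$ are affine spaces, whence $\IC(\ol{O(\bd)},\kk)$ occurs as its top summand with multiplicity one and all other summands are supported on strictly smaller orbit closures. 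As these smaller orbits correspond to $\bd' \prec \bd$ --- the degeneration order on $\mathrm{add}\{x(\alpha_{\ii,j})\}_{j\in J}$ refining $\preceq$ by Lemma~\ref{Lem:dhom} --- and the correction coefficients lie in $t\Z[t]$ by the defining degree bounds on the $*$- and $!$-restrictions of the intersection cohomology sheaf, $b(\bd)$ has the properties characterizing $B_\ii(\bd)$; combined with the $d_1 = 0$ case applied to $(0,d_2,\ldots,d_\ell)$, this gives $\IC(\bd) = \IC(\ol{O(\bd)},\kk)$. The main obstacle is precisely this $d_1>0$ step: one must show that the induction product with the source-simple divided power preserves ``being the intersection cohomology sheaf of an orbit closure'' and controls the correction terms in the $t\Z[t]$ range, which requires identifying $b(\bd)$ with $e_{i_1}^{(d_1)}$ applied to the canonical basis element with vanishing Kashiwara function $\varepsilon^*_{i_1}$ (via Lusztig's compatibility of $\bB$ with such multiplications) and a fiberwise analysis of the resolution $p_3$ over each stratum $O(\bd') \subset \ol{O(\bd)}$. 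Alternatively, one can avoid the recursion and argue directly that, for the costandard sequence $\bep_c(\bd)$, the induction product $\cC_{\alpha_{\ii,\ep_1}}\star\cdots\star\cC_{\alpha_{\ii,\ep_d}}$, suitably regrouped into divided powers, is birational onto $\ol{O(\bd)}$ and an isomorphism over $O(\bd)$, using the vanishing statements in Lemma~\ref{Lem:dhom}; this is where the real work sits in either approach.
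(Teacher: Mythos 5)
There is a genuine gap: your argument is an outline whose pivotal step is deferred rather than proved. By your own account the $d_1>0$ step (compatibility of $\bB$ with right multiplication by $e_{i_1}^{(d_1)}$ when $\varepsilon^*_{i_1}$ vanishes, plus a fiberwise analysis of $p_3$ over every stratum to control the correction terms in $t\Z[t]$), or alternatively the direct ``birationality'' argument, is ``the main obstacle'' / ``where the real work sits'' --- but that is exactly the content of the proposition, so nothing is actually established. Two further points are problematic. First, you set $b(\bd)\seq \chi^{-1}([\IC(\ol{O(\bd)},\kk)])$ at the outset, but this presupposes $\IC(\ol{O(\bd)},\kk)\in\scrP$, which is not known a priori and is itself half of the paper's proof; in your scheme it would have to be carried along in the induction, and the $d_1=0$ step via $\Sigma_{i_1}$ only identifies perverse sheaves away from the locus where the maps at the vertex $i_1$ fail to be injective/surjective, so matching \emph{closures} of orbits and membership in $\scrP$ across the reflection needs the full strength of Lusztig's 9.4 (and a Saito-type statement $B_\ii(\bd)=T_{i_1}B_{\ii'}(\bd')$ for arbitrary $\bd$ with $d_1=0$), which you invoke without justification. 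Second, your alternative route uses the \emph{costandard} sequence: with the paper's convention for $\star$ (the right-hand factor is the subrepresentation), the splitting argument via Lemma~\ref{Lem:dhom}(1) works only for the \emph{standard} ordering; for the costandard ordering the filtered representations include non-split extensions, the image of $p_3$ is in general strictly larger than $\ol{O(\bd)}$, and the claimed birationality onto $\ol{O(\bd)}$ fails.

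For comparison, the paper avoids all of this: it first shows $\IC(\ol{O(\bd)},\kk)\in\scrP$ by a pure support argument --- the proper map $p_3$ for the standard-ordered product $\cC_{\alpha_{\ii,j_d}}\star\cdots\star\cC_{\alpha_{\ii,j_1}}$ has image exactly $\ol{O(\bd)}$ because $\dext(\alpha_{\ii,j_l},\alpha_{\ii,j_k})=0$ for $k\le l$ forces every filtered representation to split --- and then pins down \emph{which} canonical basis element this simple perverse sheaf represents by computing the single pairing $\langle E^*_\ii(\bd),\chi^{-1}[\IC(\ol{O(\bd)},\kk)]\rangle=1$, via the slice Lemma~\ref{Lem:slice} and the fact that the $!$-restriction of the IC sheaf at the point $x(\bd)$ of the open orbit is one-dimensional in degree $0$; the unitriangularity of Theorem~\ref{Thm:Kimura} then forces the element to be $B_\ii(\bd)$. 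This sidesteps any induction on the word, any $t\Z[t]$ degree estimates, and any birationality or divided-power compatibility statements. If you want to salvage your approach, the honest route is to carry out Lusztig-style stalk/costalk degree bounds for all $\bd'$, but the paper's single-pairing trick is strictly less work.
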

\begin{proof}
First we show $\IC(\ol{O(\bd)}, \kk) \in \scrP$.
Let $\bep_c = (j_1, \ldots, j_{d})$ be the unique costandard sequence in $J^\bd$ as before.
Then $\bep_s = \bep_c^\op = (j_{d}, \ldots, j_1)$ is the unique standard sequence in $J^\bd$.
The image of the proper map 
\[ p_3 \colon \tF(\bep_s) \seq  \tF(\alpha_{\ii, j_{d}}, \ldots, \alpha_{\ii, j_1}) \to X(\beta),\]
which appeared in the definition of the induction functor $\Ind_{\alpha_{\ii, j_{d}}, \ldots, \alpha_{\ii, j_1}}$, contains a dense subset consisting of quiver representations $x \in X(\beta)$ which respects an $I$-graded flag $V^\beta = F^{d} \supset F^{d-1} \supset \cdots \supset F^1 \supset F^0 = 0$ and satisfies $x |_{F^k/F^{k-1}} \simeq x(\alpha_{\ii, j_k})$ for $1 \le k \le d$.  
Since $\dext(\alpha_{\ii, j_l}, \alpha_{\ii, j_k}) = 0$ for $1 \le k \le l \le d$ by Lemma~\ref{Lem:dhom} (1), such a representation $x$ is always isomorphic to $x(\bd)$, and hence $p_3(\tF(\bep_s)) = \ol{O(\bd)}$.
Thus, the support of the object
\[ \cC_{\alpha_{\ii, j_{d}}} \star \cdots \star \cC_{\alpha_{\ii, j_1}} =  (p_3)_* \ul{\kk}_{\tF(\bep_s)} [\dim \tF(\bep_s)]\]
coincides with the orbit closure $\ol{O(\bd)}$.
Since the complex $\IC(\ol{O(\bd)}, \kk)$ is the unique simple $G(\beta)$-equivariant perverse sheaf on $X(\beta)$ whose support coincides with $\ol{O(\bd)}$, some of its shifts must contribute to $\cC_{\alpha_{\ii, j_{d}}} \star \cdots \star \cC_{\alpha_{\ii, j_1}}$ as direct summands. 
By Theorem~\ref{Thm:Lus} and Proposition~\ref{Prop:Lus}, the object $\cC_{\alpha_{\ii, j_{d}}} \star \cdots \star \cC_{\alpha_{\ii, j_1}}$ belongs to the category $\scrQ$. 
Therefore, $\IC(\ol{O(\bd)}, \kk)$ belongs to $\scrP$.    

Now, in order to verify $\IC(\bd) = \IC(\ol{O(\bd)}, \kk)$, it suffice to show the equality
\begin{equation} \label{eq:EvsIC}
\langle E^*_\ii(\bd), \chi^{-1}[\IC(\ol{O(\bd)}, \kk)]\rangle = 1
\end{equation} 
by the characterization of $B_\ii^*(\bd)$ in Theorem~\ref{Thm:Kimura}.  
By the same computation as in the proof of Proposition~\ref{Prop:Mit_geom}, the LHS of \eqref{eq:EvsIC} is equal to 
\begin{align}
& t^{c(\bep_s)} \sum_{n \in \Z} t^n \dim_\kk \rH^n(i_{x(\bd)}^! \kappa_{\bep_s *} \iota_{\bep_s}^! \IC(\ol{O(\bd)}, \kk))\\
&= \sum_{n \in \Z} t^n \dim_\kk \rH^n(i_{\bep_s}^! i_{S(\bd)}^! \IC(\ol{O(\bd)}, \kk)[\dim X(\beta) - \dim S(\bd)]) \\
&=  \sum_{n \in \Z} t^n \dim_\kk \rH^n((i_{S(\bd)} \circ i_{\bep_s})^! \IC(\ol{O(\bd)}, \kk)[\dim O(\bd)]),
\end{align}
where the first equality is due to Lemma~\ref{Lem:slice}.
Note that $S({\bep_s}) = \{x(\bd)\}$ and $i_{S(\bd)} \circ i_{\bep_s}$ is the inclusion $\{ x(\bd)\} \hookrightarrow X(\beta)$.
In particular, we have 
\[(i_{S(\bd)} \circ i_{\bep_s})^! \IC(\ol{O(\bd)}, \kk)[\dim O(\bd)] \simeq \ul{\kk}_{\{x(\bd)\}}\] 
and hence the desired equality \eqref{eq:EvsIC} follows.
\end{proof}

\begin{Prop} 
\label{Prop:Mtg_slice}
For any $\bd \in \mathrm{KP}_\ii(\beta)$ and $\bep \in J^\bd$, we have the equality
\begin{equation} \label{eq:Mtg_slice}
\tE_{\ii}^*(\bep) = \sum_{\bd' \in \mathrm{KP}_\ii(\beta)} \left(\sum_{n \in \Z} t^n \dim_\kk \rH^n(i_{\bep, x(\bd)}^* i_{\bep}^! \IC(\ol{O(\bd')}\cap S(\bd), \kk)) \right) \tB^*_\ii(\bd')
\end{equation}
 in the quantum unipotent coordinate ring $A_t[N(w)]_{\Z[t^{\pm 1/2}]}$.
\end{Prop}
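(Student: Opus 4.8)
The plan is to obtain \eqref{eq:Mtg_slice} by transporting the already-established formula of Proposition~\ref{Prop:Mit_geom} to the transversal slice $S(\bd)$ by means of Lemma~\ref{Lem:slice} and Proposition~\ref{Prop:ICa}. The only input not yet recorded is the behaviour of the intersection cohomology complexes $\IC(\ol{O(\bd')},\kk)$, $\bd'\in\mathrm{KP}_\ii(\beta)$, under $!$-restriction to $S(\bd)$, and once this is in place the proposition is a matter of bookkeeping cohomological shifts.

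Concretely, I would first establish the slice-restriction formula. Since $S(\bd)$ is a transversal slice through $x(\bd)$ meeting every orbit $O(\bd')$ transversally (Section~\ref{Ssec:slice}), the closed inclusion $i_{S(\bd)}\colon S(\bd)\hookrightarrow X(\beta)$ is normally nonsingular of codimension $c_S\seq\dim X(\beta)-\dim S(\bd)$ with respect to the $G(\beta)$-orbit stratification. Hence $i_{S(\bd)}^!\simeq i_{S(\bd)}^*[-2c_S]$ on complexes constructible along this stratification, and transversality forces $i_{S(\bd)}^*\IC(\ol{O(\bd')},\kk)\simeq\IC(\ol{O(\bd')}\cap S(\bd),\kk)[c_S]$; combining the two gives
\[ i_{S(\bd)}^!\IC(\ol{O(\bd')},\kk)\simeq\IC(\ol{O(\bd')}\cap S(\bd),\kk)[-c_S]. \]
Now Proposition~\ref{Prop:ICa} identifies $\IC(\bd')=\IC(\ol{O(\bd')},\kk)$, so Lemma~\ref{Lem:slice} followed by this displayed formula yields
\[ i_{x(\bd)}^!\kappa_{\bep*}\iota_\bep^!\IC(\bd')\simeq i_{\bep,x(\bd)}^*i_\bep^!\IC(\ol{O(\bd')}\cap S(\bd),\kk)[c(\bep)], \]
the shift $[c_S+c(\bep)]$ coming from Lemma~\ref{Lem:slice} being cancelled by the $[-c_S]$ above. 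Taking $n$-th cohomology sends $[c(\bep)]$ to a shift of the internal grading by $c(\bep)$, whence
\[ \sum_{n\in\Z}t^n\dim_\kk\rH^n(i_{x(\bd)}^!\kappa_{\bep*}\iota_\bep^!\IC(\bd'))=t^{-c(\bep)}\sum_{n\in\Z}t^n\dim_\kk\rH^n(i_{\bep,x(\bd)}^*i_\bep^!\IC(\ol{O(\bd')}\cap S(\bd),\kk)). \]
Substituting this into the formula of Proposition~\ref{Prop:Mit_geom}, the prefactor $t^{c(\bep)}$ cancels $t^{-c(\bep)}$ and \eqref{eq:Mtg_slice} drops out.

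The main obstacle is the first step: one must know not merely that $S(\bd)$ is set-theoretically transversal to the orbits but that $i_{S(\bd)}$ is normally nonsingular for the whole stratification, so that $!$- and $*$-restrictions differ by exactly $[-2c_S]$ and simple perverse sheaves restrict to simple perverse sheaves (up to shift). This is where the explicit construction of $S(\bd)$ from the exact sequence~\eqref{eq:HLXE} enters: the $\Gm$-contraction argument used in the proof of Lemma~\ref{Lem:slice} to show that $\pi_3$ is a locally trivial fibration simultaneously produces a slice chart near $x(\bd)$ compatible with all orbit closures, which is precisely normal nonsingularity. (Alternatively one could invoke Lusztig's results that restriction to such slices preserves the category $\scrP$.) The remainder is the shift bookkeeping indicated above.
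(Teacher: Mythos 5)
Your proposal is correct and takes essentially the same approach as the paper: the paper likewise deduces \eqref{eq:Mtg_slice} from Proposition~\ref{Prop:Mit_geom}, Lemma~\ref{Lem:slice} and Proposition~\ref{Prop:ICa} together with the transversal-slice restriction isomorphism $i_{S(\bd)}^! \IC(\ol{O(\bd')}, \kk) [\dim X(\beta) - \dim S(\bd)] \simeq \IC(\ol{O(\bd')} \cap S(\bd), \kk)$, which it simply cites from Goresky--MacPherson (Theorem 5.4.1) rather than rederiving it via normal nonsingularity as you do. Your shift bookkeeping, with the prefactor $t^{c(\bep)}$ cancelling the shift $[c(\bep)]$, matches the computation implicit in the paper's proof.
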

\begin{proof}
Since $S(\bd)$ is a transversal slice, we have
\[ i_{S(\bd)}^! \IC(\ol{O(\bd')}, \kk) [\dim X(\beta) - \dim S(\bd)] \simeq \IC(\ol{O(\bd')} \cap S(\bd), \kk) \]
for any $\bd' \in \mathrm{KP}_\ii(\beta)$ (cf.~\cite[Theorem 5.4.1]{GM2}).
Therefore, the assertion follows from Proposition~\ref{Prop:Mit_geom} together with Lemma~\ref{Lem:slice} and Proposition~\ref{Prop:ICa}. 
\end{proof}

\subsection{Geometric realization of symmetric quiver Hecke algebras}
Now, we briefly review the geometric interpretation of the symmetric quiver Hecke algebras  due  to Varagnolo-Vasserot \cite{VV11}.
Let $\beta \in \sQ^+$.
Recall the complex $\cL_\beta$ defined in \eqref{eq:Lbeta}.

\begin{Thm}[{\cite{VV11}}] \label{Thm:VVg}
There is an isomorphism of graded $\kk$-algebras
\begin{equation} \label{eq:VVg}
H_{\beta} \simeq \Hom^\bullet_{G_\beta}(\cL_\beta, \cL_\beta). \end{equation}
\end{Thm}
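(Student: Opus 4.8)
The plan is to realize $H_\beta$ as the convolution algebra of a Steinberg-type variety built from the maps in \eqref{eq:Lbeta}, and then to match generators and relations with $\Hom^\bullet_{G(\beta)}(\cL_\beta,\cL_\beta)$. First I would use the standard adjunction together with proper base change to rewrite the right-hand side in terms of equivariant Borel--Moore homology. Writing $\pi_\nu \colon \tF_\nu \to X(\beta)$ for the proper map with $\cL_\nu = (\pi_\nu)_*\ul{\kk}_{\tF_\nu}[\dim\tF_\nu]$, one obtains, for $\nu,\nu'\in I^\beta$, a natural isomorphism of graded vector spaces
\[ \Hom^\bullet_{G(\beta)}(\cL_\nu,\cL_{\nu'}) \simeq \rH^{G(\beta)}_\bullet\!\left(\tF_\nu \times_{X(\beta)}\tF_{\nu'},\kk\right) \]
(up to an overall shift determined by $\dim\tF_\nu$ and $\dim\tF_{\nu'}$), under which the Yoneda product corresponds to the convolution product on the Borel--Moore homology of the Steinberg variety $Z_\beta \seq \bigsqcup_{\nu,\nu'}\tF_\nu\times_{X(\beta)}\tF_{\nu'}$. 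It then suffices to produce a graded algebra isomorphism $H_\beta \xrightarrow{\sim} \rH^{G(\beta)}_\bullet(Z_\beta,\kk)$.

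Next I would construct the homomorphism $H_\beta \to \rH^{G(\beta)}_\bullet(Z_\beta,\kk)$ by specifying the images of the generators: $e(\nu)$ goes to the fundamental class of the diagonal component indexed by $(\nu,\nu)$; each $x_k$ goes to the equivariant first Chern class of the $k$-th tautological line bundle on $\tF_\nu$ (the $k$-th step of the universal flag modulo the previous one); and $\tau_k e(\nu)$ goes to the class of the correspondence consisting of pairs of flags agreeing in every step but possibly the $k$-th. The heart of the argument is verifying that these classes satisfy the defining relations of the symmetric quiver Hecke algebra. The idempotent and commuting relations are immediate; the Leibniz-type relation between $\tau_k$ and the $x_l$ reduces to a rank-one Demazure operator identity. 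The quadratic relation $\tau_k^2 e(\nu)=Q_{\nu_k,\nu_{k+1}}(x_k,x_{k+1})e(\nu)$ is the crucial one: the self-intersection of the flag-changing correspondence is governed by the equivariant Euler class of the bundle assembled from the arrows of $Q$ joining the vertices $\nu_k$ and $\nu_{k+1}$, and this Euler class is exactly $(x_k-x_{k+1})^{-c_{\nu_k,\nu_{k+1}}}$ when $\nu_k\neq\nu_{k+1}$ and vanishes otherwise, i.e. it is precisely $Q_{\nu_k,\nu_{k+1}}(x_k,x_{k+1})$. The two braid relations for the $\tau$'s follow by localizing the relevant triple-flag correspondences to torus-fixed points, the ``divided difference'' right-hand side of the mixed braid relation emerging from an interpolation identity.

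Finally I would show the resulting map is bijective. For surjectivity I would stratify each component $\tF_\nu\times_{X(\beta)}\tF_{\nu'}$ by the relative position of the two flags and argue by induction along a Bruhat-type partial order that the fundamental class of every stratum closure is obtained from a monomial in the $\tau$'s and $x$'s modulo classes supported on smaller strata. For injectivity I would exhibit a faithful polynomial representation on both sides: $\rH^{G(\beta)}_\bullet(Z_\beta,\kk)$ acts on $P \seq \bigoplus_\nu \rH^\bullet_{G(\beta)}(\tF_\nu,\kk)$, each summand being the cohomology of an iterated flag bundle over a point, hence a free module of the expected graded rank over $\rH^\bullet_{G(\beta)}(\pt,\kk)$; the constructed map intertwines this with the standard polynomial representation of $H_\beta$, which is faithful by Khovanov--Lauda--Rouquier, while faithfulness of the geometric action follows from a torus-localization argument. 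Alternatively, one compares directly the graded $\rH^\bullet_{G(\beta)}(\pt,\kk)$-ranks (equivalently, the Poincar\'e series) of the two sides.

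\textbf{Main obstacle.} The principal difficulty is the relation-checking step, and above all identifying the equivariant Euler classes of the normal bundles of the flag-changing correspondences with the polynomials $Q_{ij}$, together with deriving the divided-difference form of the mixed braid relation from a triple-flag localization computation; a secondary subtlety is establishing faithfulness of the geometric polynomial representation, which ultimately rests on Lusztig's analysis of the category $\scrQ_\beta$ (or on a careful torus-localization argument).
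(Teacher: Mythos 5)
Your proposal follows the same route as the cited reference \cite{VV11}, which the paper also recalls when preparing the proof of Lemma~\ref{Lem:Lg} in Section~\ref{Ssec:pfLem}: factor through the convolution algebra on the Steinberg variety $Z_\beta=\bigoplus_{\nu,\nu'}\rH^{G(\beta)}_\bullet(\tF_\nu\times_{X(\beta)}\tF_{\nu'},\kk)$, send $e(\nu)$ to the diagonal class, $x_n e(\nu)$ to $\Delta_*c_1^{G(\beta)}(\mathcal{O}_\nu(n))$ (the tautological line bundle with fiber $F^{n-1}/F^n$), and $\tau_k e(\nu)$ to the class of the flag-changing correspondence inside $\tF_{\sigma_k\nu}\times\tF_\nu$. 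Your outline of the relation check (Euler classes matching $Q_{ij}$, the Demazure identity, triple-flag localization for the braid relations) and of bijectivity (faithful polynomial representation, or comparison of Poincar\'e series) is the strategy carried out in \cite{VV11}.
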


For each $b \in \bB$, we fix a representative $\IC_b \in \scrQ$ of the class $\chi(b)$.
Proposition~\ref{Prop:ICa} implies $\IC_b \simeq \IC(\ol{O(\bd)}, \kk)$ if $b = B_\ii(\bd)$ for some $\bd \in \N^{\oplus J}$.
By the decomposition theorem, we have 
\begin{equation} \label{eq:DTL}
\cL_\beta \simeq \bigoplus_{b \in \bB_\beta} \IC_b \otimes_\kk L_b^\bullet
\end{equation}
for some finite-dimensional self-dual  $\Z$- graded vector space $L_b^\bullet$,  where $\bB_\beta \seq \bB \cap U_t^+(\fg)_\beta$. 
Through the isomorphism \eqref{eq:VVg} in Theorem~\ref{Thm:VVg}, we can regard $L_b$ as a graded simple $H_\beta$-module. 
The set $\{ L_b^\bullet \mid b \in \bB_\beta \}$ gives a complete system of representatives of the self-dual simple isomorphism classes of the category $\Mm_{\fdim,\beta}^\bullet$.   
Under the isomorphism \eqref{eq:VV} in Theorem~\ref{Thm:VV}, the class $[L_b^\bullet]$ corresponds to the dual element $b^* \in \bB^*$.
Taking the total perverse cohomology, we define 
\[ \bar{\cL}_\beta := \bigoplus_{k \in \Z} {}^p\mathcal{H}^k(\cL_\beta) = \bigoplus_{b \in \bB_\beta} \IC_b \otimes_\kk L_b,\]
where $L_b$ denotes the ungraded finite-dimensional $\C$-vector space obtained from $L_b^\bullet$ by forgetting the grading.  
Since $\bar{\cL}_\beta$ is a semisimple perverse sheaf, its Yoneda algebra 
\[ \Hom^\bullet_{G_\beta}(\bar{\cL}_\beta,\bar{\cL}_\beta)\]
is non-negatively graded, whose degree zero part is isomorphic to the semisimple algebra $\bigoplus_{b \in \bB_\beta} \End_\kk(L_b)$. 
Let $\Hom^\bullet_{G_\beta}(\bar{\cL}_\beta, \bar{\cL}_\beta)^\wedge$ denote its completion along the grading.

\begin{Cor}
There is an isomorphism of $\kk$-algebras
\[ \hH_{\beta} \simeq \Hom^\bullet_{G_\beta}(\bar{\cL}_\beta, \bar{\cL}_\beta)^\wedge. \]
The set $\{ L_b \mid b \in \bB \}$ gives a complete system of representative of the simple isomorphism classes of the category $\Mm^{\nilp}_{\fdim} = \hH \mof$.
Through the isomorphism in Corollary~\ref{Cor:VV}, the class $[L_b]$ corresponds to the specialized element $b^*|_{t=1}$. 
\end{Cor}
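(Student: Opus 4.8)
The plan is to deduce all three assertions from the graded result of Varagnolo--Vasserot (Theorem~\ref{Thm:VVg}), namely $H_\beta \simeq \Hom^\bullet_{G_\beta}(\cL_\beta, \cL_\beta)$, by passing to completions along the grading. On the left-hand side this produces $\hH_\beta$ by the very definition of $\hH_\beta$ (the completion of the $\Z$-graded, bounded-below algebra $H_\beta$ being understood as $\prod_n H_\beta^n$, which is a well-defined algebra because each $e(\nu) H_\beta e(\nu')$ is finite-dimensional in every degree). So the main point will be to identify the completion $\Hom^\bullet_{G_\beta}(\cL_\beta, \cL_\beta)^\wedge$ with $\Hom^\bullet_{G_\beta}(\bar{\cL}_\beta, \bar{\cL}_\beta)^\wedge$.

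For this, the key observation is that completing along the grading washes out internal grading shifts on multiplicity spaces. Starting from the decomposition~\eqref{eq:DTL}, $\cL_\beta \simeq \bigoplus_{b \in \bB_\beta} \IC_b \otimes_\kk L_b^\bullet$, and from the corresponding decomposition of the total perverse cohomology, $\bar{\cL}_\beta = \bigoplus_k {}^p\mathcal{H}^k(\cL_\beta) \simeq \bigoplus_{b \in \bB_\beta} \IC_b \otimes_\kk L_b$, we obtain
\[ \Hom^\bullet_{G_\beta}(\cL_\beta, \cL_\beta) \simeq \bigoplus_{b, b'} \Hom^\bullet_{G_\beta}(\IC_b, \IC_{b'}) \otimes_\kk \underline{\Hom}_\kk(L_b^\bullet, L_{b'}^\bullet), \]
together with the analogous formula for $\bar{\cL}_\beta$, where the graded $\underline{\Hom}_\kk(L_b^\bullet, L_{b'}^\bullet)$ is replaced by the ungraded $\Hom_\kk(L_b, L_{b'})$ placed in degree $0$. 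Since the $\IC_b$ are pairwise non-isomorphic simple perverse sheaves, the vanishing of negative $\Hom$'s in the heart of the perverse $t$-structure gives $\Hom^{<0}_{G_\beta}(\IC_b, \IC_{b'}) = 0$ and $\Hom^0_{G_\beta}(\IC_b, \IC_{b'}) = \delta_{b,b'}\kk$, while every $\Hom^n_{G_\beta}(\IC_b, \IC_{b'})$ is finite-dimensional; moreover each multiplicity space $L_b^\bullet$ is finite-dimensional, hence supported in a bounded range of degrees. Choosing a homogeneous basis of each $L_b^\bullet$ then yields a (non-graded) algebra isomorphism $\Hom^\bullet_{G_\beta}(\cL_\beta, \cL_\beta) \simeq \Hom^\bullet_{G_\beta}(\bar{\cL}_\beta, \bar{\cL}_\beta)$, since composition of morphisms between the $\IC_b$ is insensitive to an overall cohomological shift, and because all the shifts occurring are bounded this isomorphism carries the completion along the grading of one side onto that of the other. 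I expect this bookkeeping to be the most delicate point, although it is essentially formal; the only genuine geometric input is the vanishing $\Hom^{<0}_{G_\beta}(\IC_b, \IC_{b'}) = 0$. Combined with Theorem~\ref{Thm:VVg}, this proves $\hH_\beta \simeq \Hom^\bullet_{G_\beta}(\bar{\cL}_\beta, \bar{\cL}_\beta)^\wedge$.

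It remains to extract the module-theoretic statements. The algebra $\hat{B}_\beta \seq \Hom^\bullet_{G_\beta}(\bar{\cL}_\beta, \bar{\cL}_\beta)^\wedge$ is complete with respect to its positive-degree part, which is a two-sided ideal because $\Hom^\bullet_{G_\beta}(\bar{\cL}_\beta, \bar{\cL}_\beta)$ is non-negatively graded, and whose quotient is the finite-dimensional semisimple algebra $\Hom^0_{G_\beta}(\bar{\cL}_\beta, \bar{\cL}_\beta) \simeq \bigoplus_b \End_\kk(L_b)$. Hence this positive-degree part is the Jacobson radical of $\hat{B}_\beta$, so the finite-dimensional simple $\hat{B}_\beta$-modules are exactly the $L_b$; transporting these through the isomorphism of the previous step identifies them with the forgetful images of the graded simple $H_\beta$-modules $L_b^\bullet$, so they form a complete system of representatives of the simple objects of $\hH \mof = \Mm_{\fdim}^{\nilp}$. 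Finally, the last assertion follows by specialization: through the isomorphism~\eqref{eq:VV} of Theorem~\ref{Thm:VV}, $[L_b^\bullet]$ corresponds to $b^* \in \bB^*$, and the forgetful functor $\Mm_{\fdim}^\bullet \to \Mm_{\fdim}^{\nilp}$ induces on Grothendieck rings the specialization $t \mapsto 1$ appearing in Corollary~\ref{Cor:VV}, sending $[L_b^\bullet]$ to $[L_b]$ and $b^*$ to $b^*|_{t=1}$; hence $[L_b]$ corresponds to $b^*|_{t=1}$.
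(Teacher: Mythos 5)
Your argument is correct and is exactly the route the paper intends: the paper states this corollary without proof, treating it as immediate from Theorem~\ref{Thm:VVg} and the decomposition~\eqref{eq:DTL} (just as \eqref{isomHA} is deduced from \eqref{Ginzburg} and \eqref{DT} in the quantum affine case), and your completion bookkeeping -- boundedness of the internal shifts, $\Hom^{<0}_{G_\beta}(\IC_b,\IC_{b'})=0$, $\Hom^0=\delta_{b,b'}\kk$ -- together with the Jacobson-radical argument for the simples is precisely the implicit content. No gaps.
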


When $b = B_\ii(\bd)$ for some $\bd \in \N^{\oplus J}$, we write $L_\ii(\bd)$ for $L_b$.
Note that this notation is compatible with the previous one in Section \ref{ssec:pbwqH}.

\subsection{Geometric realization of mixed convolution products}
Let $\beta \in \sQ^+$ and $\bd \in \mathrm{KP}_\ii(\beta)$.
In this subsection, we establish a geometric realization of the mixed products $M_\ii(\bep)$ and their deformations $\tM_\ii(\bep)$ for $\bep \in J^\bd$.

Let $M^\bullet$ be a graded $H_\beta$-module and $z$ an indeterminate of degree $2$. 
Endow the graded $\kk$-vector space $M^\bullet[z] \seq M^\bullet \otimes \kk[z]$ with an $H_\beta$-module structure by the same formulas as \eqref{eq:affx} with $a(z)=z$.
The resulting graded $H_\beta$-module $M^\bullet[z]$ is called the \emph{affinization} of $M^\bullet$.
Note that, for any $j \in J$, we have an isomorphism
\[ \tL_{\ii,j} = (L_{\ii,j})_{jz} \simeq L^\bullet_{\ii,j}[z] \otimes_{\kk[z]} \Oo \]
of $\hH_{\alpha_{\ii, j}}$-modules, where $\kk[z] \to \Oo = \kk[\![z]\!]$ is given by $z \mapsto jz$.
A proof of the following lemma is given later in Section~\ref{Ssec:pfLem}.

\begin{Lem} \label{Lem:Lg}
For each $j \in J$, we have an isomorphism of graded $H_{\alpha_{\ii, j}}$-modules
\[ L^\bullet_{\ii,j}[z] \simeq \Hom^\bullet_{G(\alpha_{\ii, j})}((i_{O(\alpha_{\ii, j})})_! \ul{\kk}_{O(\alpha_{\ii, j})}, \cL_{\alpha_{\ii, j}})\langle \dim X(\alpha_{\ii, j}) \rangle, \]
where $i_{O(\alpha_{\ii, j})} \colon O(\alpha_{\ii, j}) \hookrightarrow X(\alpha_{\ii, j})$ denotes the inclusion.
\end{Lem}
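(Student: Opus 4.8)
The plan is to realise the cuspidal module $L^\bullet_{\ii,j}$ as the restriction of the semisimple complex $\cL_{\alpha_{\ii,j}}$ to the open dense orbit, and then to read the affinization off the $\Gm$-equivariance of that orbit. Write $\beta \seq \alpha_{\ii,j}$, $d \seq \dim X(\beta)$, let $O \seq O(\alpha_{\ii,j}) = G(\beta)\cdot x(\alpha_{\ii,j})$ be the orbit of the indecomposable, and $i_O \colon O \hookrightarrow X(\beta)$ the (open) inclusion. Since $\beta$ is a real positive root, $x(\alpha_{\ii,j})$ is the unique indecomposable of dimension vector $\beta$, so $O$ is the unique dense $G(\beta)$-orbit in $X(\beta)$, and $\End_Q(x(\alpha_{\ii,j})) = \C$ forces $\Stab_{G(\beta)}(x(\alpha_{\ii,j}))$ to be the connected subgroup $\Gm \subset G(\beta)$ acting by scalars on $V^\beta$. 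Hence the only $\IC_b$, $b \in \bB_\beta$, with full support is the constant one $\cC_{\alpha_{\ii,j}} = \IC(\ol{O}, \kk)$, which by Proposition~\ref{Prop:Lus} equals $\IC(\bdelta_j)$, and so the decomposition~\eqref{eq:DTL} reads $\cL_\beta \simeq (\cC_{\alpha_{\ii,j}} \otimes_\kk L^\bullet_{\ii,j}) \oplus \mathcal{K}$ with $\mathcal{K}$ supported on the closed complement $X(\beta)\setminus O$; in particular $i_O^*\mathcal{K} = 0$ and $i_O^*\cL_\beta \simeq \ul{\kk}_O[d] \otimes_\kk L^\bullet_{\ii,j}$.

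First I would pin down the underlying graded vector space. By the adjunction $(i_O)_! \dashv i_O^!$ together with $i_O^! = i_O^*$ (open immersion), there is a natural isomorphism $\Hom^\bullet_{G(\beta)}((i_O)_!\ul{\kk}_O, \cL_\beta) \simeq \rH^\bullet_{G(\beta)}(O, i_O^*\cL_\beta)$, which by the previous paragraph is $\rH^\bullet_{G(\beta)}(O, \kk) \otimes_\kk L^\bullet_{\ii,j}$ up to the cohomological shift $[d]$ carried by $\cC_{\alpha_{\ii,j}}$ --- exactly the shift that the outer twist $\langle \dim X(\alpha_{\ii,j}) \rangle$ in the statement compensates. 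Since $O \simeq G(\beta)/\Gm$ as a $G(\beta)$-variety, we get $\rH^\bullet_{G(\beta)}(O, \kk) \simeq \rH^\bullet_{\Gm}(\pt, \kk) = \kk[z]$ with $\deg z = 2$, so the right-hand side of the asserted isomorphism is $L^\bullet_{\ii,j} \otimes_\kk \kk[z] = L^\bullet_{\ii,j}[z]$ as a graded $\kk$-vector space.

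The remaining --- and essential --- point is to match the $H_{\alpha_{\ii,j}}$-module structures. Under Varagnolo--Vasserot's isomorphism~\eqref{eq:VVg}, $H_\beta = \Hom^\bullet_{G(\beta)}(\cL_\beta, \cL_\beta)$ acts on $\Hom^\bullet_{G(\beta)}((i_O)_!\ul{\kk}_O, \cL_\beta)$ by post-composition; transporting this action through the adjunction above and using $i_O^*(i_O)_!\ul{\kk}_O \simeq \ul{\kk}_O$, it factors through the graded ring homomorphism $i_O^* \colon \Hom^\bullet_{G(\beta)}(\cL_\beta, \cL_\beta) \to \Hom^\bullet_{G(\beta)}(i_O^*\cL_\beta, i_O^*\cL_\beta)$. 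Identifying $D^b_{G(\beta)}(O, \kk)$ with $D^b_{\Gm}(\pt, \kk)$ (one orbit, stabilizer $\Gm$), the target becomes $\kk[z] \otimes_\kk \End_\kk(L^\bullet_{\ii,j})$, and setting $z = 0$ recovers the simple module $L^\bullet_{\ii,j}$ in its geometric realization as (a shift of) $i_{x(\alpha_{\ii,j})}^*\cL_\beta$. I would then compute the images of the generators of $H_\beta$ under $i_O^*$ using their explicit geometric descriptions in \cite{VV11}: the idempotents $e(\nu)$ and the intertwining elements $\tau_l$ are built from correspondences into which the $\Gm$-equivariant parameter does not enter, hence act on $L^\bullet_{\ii,j}[z]$ just as on $L^\bullet_{\ii,j}$; each $x_k$, on the other hand, is the equivariant Euler class of the $k$-th tautological line bundle, and since $\Gm$ acts on $V^\beta$ by scalars, every such line bundle has $\Gm$-weight $1$ along $O$, so its equivariant Euler class equals the non-equivariant one plus the generator $z \in \rH^2_{\Gm}(\pt, \kk)$. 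Thus $i_O^*(x_k) = x_k + z$, which is precisely the defining formula~\eqref{eq:affx} of the affinization, and the identification follows. The hard part is exactly this last step --- carefully carrying \cite{VV11}'s tautological bundles through restriction to the single orbit $O$, and verifying that $e(\nu)$ and $\tau_l$ are genuinely unaffected by the equivariant parameter; everything before it is formal.
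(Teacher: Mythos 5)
Your argument follows essentially the same route as the paper's proof: identify both sides with the ($\Gm$-equivariant and non-equivariant) stalk/costalk of $\cL_{\alpha_{\ii,j}}$ at the open dense orbit (the paper restricts further to the point $x(\alpha_{\ii,j})$ and uses the induction equivalence with stabilizer $\Gm$, which is the same thing), and then match the $H_{\alpha_{\ii,j}}$-action by noting that the generators $x_k$ act as equivariant Chern classes of tautological line bundles, which become the ordinary ones plus the generator $z$ since the scalar $\Gm$ has weight $1$ on the fibers. Your closing remark correctly flags that the only real content is checking that $e(\nu)$ and $\tau_l$ are unaffected by the equivariant parameter while $x_k$ picks up $+z$, which is precisely the computation the paper carries out.
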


We retain the notation from the previous subsections.
Let us consider a cocharacter $\rho^\vee \in X_*(T_\bd)$ given by 
\[ \rho^\vee(s) |_{D_j} =s^j \cdot \id_{D_j}\]
for any $j \in J$.
In what follows, we regard a $T_\bd$-variety as a $\Gm$-variety through $\rho^\vee \colon \Gm \to T_\bd$. 

\begin{Prop} \label{Prop:Mig}
For each $\bep \in J^\bd$, we have an isomorphism of $\hH_\beta$-modules
\[\tM_{\ii}(\bep) \simeq \wh{\rH}^\bullet_{\Gm}(i_{x(\bd)}^!\kappa_{\bep *} \iota_\bep^! \bar{\cL}_\beta), \]
which specializes to
\[ M_{\ii}(\bep) \simeq \rH^\bullet(i_{x(\bd)}^!\kappa_{\bep *} \iota_\bep^! \bar{\cL}_\beta).\]
\end{Prop}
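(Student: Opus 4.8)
The plan is to compute the convolution product $\tL_{\ii,\ep_1} \star_\Oo \cdots \star_\Oo \tL_{\ii,\ep_d}$ geometrically via the induction functor, then identify the result with a hyperbolic localization of $\bar{\cL}_\beta$. First I would rewrite the deformed cuspidal module using the affinization identification $\tL_{\ii,j} \simeq L^\bullet_{\ii,j}[z] \otimes_{\kk[z]}\Oo$ (via $z \mapsto jz$) together with Lemma~\ref{Lem:Lg}, so that $\tL_{\ii,\ep_k}$ is realized as $\wh{\rH}^\bullet_{\Gm}$ of a costalk-type complex $(i_{O(\alpha_{\ii,\ep_k})})_! \ul{\kk}$ on $X(\alpha_{\ii,\ep_k})$, where the $\Gm$-equivariance encodes the deformation parameter $jz$ through the cocharacter $\rho^\vee$ scaling $D_j$ by $s^j$. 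The key point is that the single affinization variable $z$ is common to all factors but acts with the weight $\ep_k$ on the $k$-th factor, which is exactly matched by $\rho^\vee$.

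Next I would use the monoidality (strong associativity) of the induction functor $\Ind_{\alpha_{\ii,\ep_1},\ldots,\alpha_{\ii,\ep_d}}$ recalled in Section~\ref{ssec:Lusztig_construction} to express the $d$-fold convolution $\tL_{\ii,\ep_1}\star_\Oo \cdots \star_\Oo \tL_{\ii,\ep_d}$ as $\Ind_{\alpha_{\ii,\ep_1},\ldots,\alpha_{\ii,\ep_d}}$ applied to the external product of the costalk complexes, then compute its $\RHom$ against $\bar{\cL}_\beta$. By the adjunction $(\Ind, \Res)$ and the construction of $\Res_{\alpha_{\ii,\ep_1},\ldots,\alpha_{\ii,\ep_d}}$ via the diagram $X(\bd) \xleftarrow{\kappa_\bep} F(\bep) \xrightarrow{\iota_\bep} X(\beta)$, the graded $\hH_\beta$-module structure transports to $\wh{\rH}^\bullet_{\Gm}$ of $i_{x(\bd)}^! \kappa_{\bep*}\iota_\bep^!\bar{\cL}_\beta$ — here the $!$-restriction to the point $x(\bd)$ comes from the costalk $(i_O)_!\ul{\kk}$ factors, and the Borel–Moore/cohomology duality converts the external product of costalks into $i_{x(\bd)}^!$. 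One must be careful to track the cohomological shifts: the shifts $\langle \dim X(\alpha_{\ii,j})\rangle$ from Lemma~\ref{Lem:Lg}, the shift $c = \sum_{j<k}\langle\beta_j,\beta_k\rangle_Q$ built into $\Ind/\Res$, and the shift $2\dim U$ should combine to give a clean statement; a bookkeeping check using \eqref{eq:inner} and Lemma~\ref{Lem:dhom} should make them cancel or reorganize into the expected normalization. The specialization statement $M_\ii(\bep) \simeq \rH^\bullet(i_{x(\bd)}^!\kappa_{\bep*}\iota_\bep^!\bar{\cL}_\beta)$ then follows by setting $z = 0$, using Corollary~\ref{Cor:hr} (applied with the generic cocharacter situation) to see that $\wh{\rH}^\bullet_{\Gm}$ of the relevant complex is free over $\Oo$ with $(-)_0$ recovering the non-equivariant cohomology.

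The main obstacle I anticipate is the correct identification of the $\hH_\beta$-action on both sides, rather than merely an isomorphism of graded vector spaces: on the algebraic side the action comes from the convolution structure on quiver Hecke algebras, while on the geometric side it comes from the Yoneda algebra $\Hom^\bullet_{G_\beta}(\bar{\cL}_\beta,\bar{\cL}_\beta)^\wedge \simeq \hH_\beta$ (the corollary to Theorem~\ref{Thm:VVg}) acting on $\wh{\rH}^\bullet_{\Gm}(i_{x(\bd)}^!\kappa_{\bep*}\iota_\bep^!\bar{\cL}_\beta)$ by post-composition. Reconciling these requires unwinding that the Varagnolo–Vasserot isomorphism intertwines the convolution product of modules with the composition $\Ind \circ (\boxtimes)$ of the corresponding complexes, which is essentially the compatibility of \eqref{eq:VVg} with the monoidal structures; I would cite \cite{VV11} (and possibly \cite{KKK}) for this compatibility and then only need to check that the affinization construction \eqref{eq:affx} corresponds under Lemma~\ref{Lem:Lg} to equipping the costalk complex with its natural $\Gm$-equivariant structure — which is exactly where the choice $\tL_{\ii,j} = (L_{\ii,j})_{jz}$ and the cocharacter $\rho^\vee$ scaling $D_j$ by degree $j$ come in. Granting that compatibility, the rest is a combination of base change, Proposition~\ref{Prop:attr}, and the degree-shift bookkeeping already used in the proof of Proposition~\ref{Prop:Mit_geom} and Lemma~\ref{Lem:slice}.
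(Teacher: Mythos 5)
Your proposal follows essentially the same route as the paper's proof: realize the affinized cuspidal modules geometrically via Lemma~\ref{Lem:Lg}, use the Varagnolo--Vasserot isomorphism together with the $(\Ind,\Res)$ adjunction to turn the $d$-fold convolution into $\Hom$'s into $\kappa_{\bep*}\iota_\bep^!\cL_\beta$, encode the deformation parameters by $\Gm$-equivariance through $\rho^\vee$ (plus the induction equivalence down to the $T_\bd$-fixed point $x(\bd)$), and specialize at $z=0$ using freeness over $\kk[z]$. The one step you outsource to a citation --- that the module-theoretic convolution is computed by $\Hom^\bullet\bigl((i_{O^\bd})_!\ul{\kk}_{O^\bd},\kappa_{\bep*}\iota_\bep^!\cL_\beta\bigr)$ --- is proved in the paper directly by observing that $\kappa_{\bep*}\iota_\bep^!\cL_\beta$ lies in $\scrQ^\bd$, every indecomposable of which occurs as a shifted summand of $\cL^\bd$, so the Yoneda tensor product collapses; and the semisimplicity of this complex is also what justifies the specialization (via \cite[Lemma 6.7.4]{AcharBook}), whereas Corollary~\ref{Cor:hr} as stated does not literally apply on $X(\beta)$ at the non-attractive fixed point $x(\bd)$.
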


\begin{proof}
From the definition, we have
\begin{align} \tM_{\ii} (\bep) \simeq ( L^\bullet_{\ii,\ep_1}[z_1] \star \cdots \star L^\bullet_{\ii,\ep_{d}}[z_{d}] ) \otimes_{\C[z_1,  \ldots, z_{d}]} \Oo.
\end{align}
Here the tensor product $- \otimes_{\kk[z_1,  \ldots, z_{d}]} \Oo$ is taken with respect to the $\kk$-algebra homomorphism $\kk[z_1, \ldots, z_{d}] \to \Oo = \kk[\![ z ]\!]$ given by $z_k \mapsto \ep_k z$ for $1 \le k \le d$, which is identified with the homomorphism $\rH^\bullet_{T_\bd}(\mathrm{pt}, \kk) \to \wh{\rH}_{\Gm}^\bullet(\mathrm{pt}, \kk)$ induced from the cocharacter $\rho^\vee \colon \Gm \to T_\bd$.
Unpacking the definition, we have
\[
  L^\bullet_{\ii,\ep_1}[z_1] \star \cdots \star L^\bullet_{\ii,\ep_{d}}[z_{d}] =
H_\beta e(\bep) \otimes_{H_\bep}  \left( L^\bullet_{\ii,\ep_1}[z_1] \otimes \cdots \otimes L^\bullet_{\ii,\ep_{d}}[z_{d}] \right), 
\]
where we abbreviate $e(\bep) \seq e(\alpha_{\ii, \ep_1}, \ldots, \alpha_{\ii, \ep_{d}})$ and  $H_\bep \seq H_{\alpha_{\ii, \ep_1}, \ldots, \alpha_{\ii, \ep_{d}}}$.
Thanks to Theorem~\ref{Thm:VVg}, Lemma~\ref{Lem:Lg} and \cite[Proposition 6.7.5]{AcharBook}, we have the graded isomorphisms
\begin{gather} 
H_\beta e(\bep)  \simeq \Hom^\bullet_{G(\beta)} (\cL_{\alpha_{\ii, \ep_1}} \star \cdots \star \cL_{\alpha_{\ii, \ep_{d}}}, \cL_\beta) \simeq \Hom^\bullet_{G^\bd}(\cL^\bd, \kappa_{\bep*} \iota_\bep^! \cL_\beta) \langle  - c  \rangle, 
\\
H_\bep  \simeq \Hom^\bullet_{G^\bd}(\cL^\bd, \cL^\bd),
\\
 L^\bullet_{\ii,\ep_1}[z_1] \otimes \cdots \otimes L^\bullet_{\ii,\ep_{d}}[z_{d}]   \simeq \Hom_{G^\bd}((i_{O^\bd})_! \ul{\kk}_{O^\bd}, \cL^\bd) \langle \dim X(\bd) \rangle,
\end{gather}
where $G^\bd \seq G(\alpha_{\ii, 1})^{d_1} \times \cdots \times G(\alpha_{\ii, \ell})^{d_\ell}$, $O^\bd \seq O(\alpha_{\ii, 1})^{d_1} \times \cdots \times O(\alpha_{\ii, \ell})^{d_\ell} \subset X(\bd)$, $\cL^\bd \seq \cL_{\alpha_{\ii, 1}}^{\boxtimes d_1} \boxtimes \cdots \boxtimes \cL_{\alpha_{\ii, \ell}}^{\boxtimes d_\ell} \in D^b_{G^\bd}(X(\bd), \kk)$ and $c \seq \sum_{1 \le k < l \le d} \langle \alpha_{\ii, \ep_k}, \alpha_{\ii, \ep_l} \rangle_Q$.
(Be aware that the group $G^\bd$ is different from the group $G_\bd$ from the previous sections. In fact, we have $G_\bd \cap G^\bd = T_\bd$.) 
Note that both $\cL^\bd$ and $\kappa_{\bep *} \iota_\bep^! \cL_\beta$ belong to the category $\scrQ^\bd \seq \scrQ_{\alpha_{\ii, 1}}^{\boxtimes d_1} \boxtimes \cdots \boxtimes \scrQ_{\alpha_{\ii, \ell}}^{\boxtimes d_\ell} \subset D^b_{G^\bd}(X(\bd), \kk)$, and every indecomposable object of $\scrQ^\bd$ appears up to shift as a direct summand of the object $\cL^\bd$ by definition.  
Therefore, we have 
\begin{align}
& L^\bullet_{\ii,\ep_1}[z_1] \star \cdots \star L^\bullet_{\ii,\ep_{d}}[z_{d}]  \\
& \simeq 
\Hom^\bullet_{G^\bd}(\cL^\bd, \kappa_{\bep*} \iota_\bep^! \cL_\beta) \langle  -c  \rangle \otimes_{\Hom^\bullet_{G^\bd}(\cL^\bd, \cL^\bd)} 
\Hom_{G^\bd}((i_{O^\bd})_! \ul{\kk}_{O^\bd}, \cL^\bd) \langle \dim X(\bd) \rangle \\
& \simeq \Hom^\bullet_{G^\bd}((i_{O^\bd})_! \ul{\kk}_{O^\bd}, \kappa_{\bep*} \iota_\bep^! \cL_\beta)\langle  \dim X(\bd) -c  \rangle \\
& \simeq \rH^\bullet_{G^\bd}(i_{O^\bd}^! \kappa_{\bep*} \iota_\bep^! \cL_\beta )\langle  \dim X(\bd) - c  \rangle \\
&\simeq \rH^\bullet_{T_\bd}(i_{x(\bd)}^! \kappa_{\bep*}\iota_\bep^! \cL_\beta)\langle  \dim X(\bd) -c  \rangle
\end{align}
as graded $H_\beta$-modules,
where the last equality is due to the induction equivalence with $\Stab_{G^\bd} x(\bd) = T_\bd$.
As a consequence, we obtain
\[ \tM_{\ii} (\bep) \simeq \rH^\bullet_{T_\bd}(i_{x(\bd)}^! \kappa_{\bep*}\iota_\bep^! \cL_\beta)\otimes_{\rH^\bullet_{T_\bd}(\mathrm{pt}, \kk)} \wh{\rH}^\bullet_{\Gm}(\mathrm{pt}, \kk) \simeq \wh{\rH}_{\Gm}^\bullet(i_{x(\bd)}^! \kappa_{\bep*}\iota_\bep^! \bar{ \cL}_\beta) \]
as $\wh{H}_\beta$-modules.
Here the last isomorphism follows from \cite[Lemma 6.7.4]{AcharBook}.
The same lemma in \cite{AcharBook} also yields the specialized isomorphism
\[ M_\ii(\bep) \simeq \rH_{\Gm}^\bullet(i_{x(\bd)}^! \kappa_{\bep*}\iota_\bep^! \bar{ \cL_\beta}) \otimes_{\rH^\bullet_{\C^\times}(\mathrm{pt}, \kk)} \kk \simeq \rH^\bullet(i_{x(\bd)}^! \kappa_{\bep*}\iota_\bep^! \bar{ \cL}_\beta),  \] 
which completes the proof.
 \end{proof}
Now, restricting to the transversal slice $S(\bd) \subset X(\beta)$ considered in Section~\ref{Ssec:slice}, we define
\begin{align} 
\ocA_\bd &\seq i_{S(\bd)}^! \bar{\cL}_\beta [\dim X(\beta) - \dim S(\bd)] \\
&\simeq \bigoplus_{\bd' \in \mathrm{KP}_\ii(\beta)} \IC(\ol{O(\bd')} \cap S(\bd), \kk) \otimes L_\ii(\bd'), \label{eq:DecA} 
\end{align}
which is a $T_\bd$-equivariant semisimple perverse sheaf. 
Here, the isomorphism is due to \eqref{eq:strS}, Proposition~\ref{Prop:ICa} and \cite[Theorem 5.4.1]{GM2}.
The functor $i_{S(\bd)}^!$ induces a $\kk$-algebra homomorphism 
\begin{equation} \label{eq:rest_hom}
\hH_\beta \simeq \Hom^\bullet_{G(\beta)}(\bar{\cL}_\beta, \bar{\cL}_\beta)^\wedge  
\to
\Hom^\bullet_{T_\bd}(\ocA_\bd, \ocA_\bd)^\wedge.
\end{equation}

\begin{Prop} \label{Prop:M=H}
For each $\bep \in J^\bd$, we have an isomorphism of $\hH_\beta$-modules
\[\tM_{\ii}(\bep) \simeq \wh{\rH}^\bullet_{\Gm}(i_{\bep, x(\bd)}^* i_\bep^! \ocA_\bd), \]
 where the $\hH_\beta$-module structure on the RHS is given through the homomorphism \eqref{eq:rest_hom}.
Specializing at $z=0$, we obtain
\[ M_{\ii}(\bep) \simeq \rH^\bullet(i_{\bep, x(\bd)}^* i_\bep^! \ocA_\bd).\]
\end{Prop}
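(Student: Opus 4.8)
The plan is to assemble Proposition~\ref{Prop:M=H} directly from Proposition~\ref{Prop:Mig} together with the transversal-slice identity of Lemma~\ref{Lem:slice}, so that essentially no new geometric input is needed. First I would recall that, by Proposition~\ref{Prop:Mig}, $\tM_\ii(\bep) \simeq \wh{\rH}^\bullet_{\Gm}(i_{x(\bd)}^! \kappa_{\bep*} \iota_\bep^! \bar{\cL}_\beta)$ as $\hH_\beta$-modules, where $\hH_\beta$ acts through the isomorphism $\hH_\beta \simeq \Hom^\bullet_{G(\beta)}(\bar{\cL}_\beta, \bar{\cL}_\beta)^\wedge$ of Theorem~\ref{Thm:VVg}. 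I would then substitute the natural isomorphism of functors
\[ i_{x(\bd)}^! \kappa_{\bep *} \iota_\bep^! \simeq i_{\bep, x(\bd)}^* i_{\bep}^! i_{S(\bd)}^! [\dim X(\beta) - \dim S(\bd) + c(\bep)] \]
supplied by Lemma~\ref{Lem:slice}, and observe that $\ocA_\bd = i_{S(\bd)}^! \bar{\cL}_\beta[\dim X(\beta) - \dim S(\bd)]$ by definition. Applying $\wh{\rH}^\bullet_{\Gm}(-)$ and discarding the cohomological shift by $c(\bep)$ — harmless because $\hH_\beta$ is the completion along the grading and we work with ungraded modules, so a shift $[k]$ leaves the underlying $\hH_\beta$-module unchanged — yields $\tM_\ii(\bep) \simeq \wh{\rH}^\bullet_{\Gm}(i_{\bep, x(\bd)}^* i_\bep^! \ocA_\bd)$ at the level of $\kk$-vector spaces.

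The one point requiring genuine care — and which I expect to be the main, though mild, obstacle — is the compatibility of the two $\hH_\beta$-module structures. On the right-hand side the action is defined through the algebra homomorphism \eqref{eq:rest_hom}, i.e., through $i_{S(\bd)}^!$ applied to $\End^\bullet_{G(\beta)}(\bar{\cL}_\beta)$ followed by $i_{\bep, x(\bd)}^* i_\bep^!$; on the left-hand side it is $i_{x(\bd)}^! \kappa_{\bep *} \iota_\bep^!$ applied to $\End^\bullet_{G(\beta)}(\bar{\cL}_\beta)$. Since the isomorphism of Lemma~\ref{Lem:slice} is one of \emph{functors}, it is in particular natural with respect to every endomorphism of $\bar{\cL}_\beta$ in $D^b_{G(\beta)}(X(\beta),\kk)$; hence for each $\phi \in \End^\bullet_{G(\beta)}(\bar{\cL}_\beta)$ it intertwines the endomorphism $i_{x(\bd)}^! \kappa_{\bep *} \iota_\bep^!(\phi)$ with $i_{\bep, x(\bd)}^* i_\bep^! i_{S(\bd)}^!(\phi)$, and the latter is exactly the endomorphism coming from \eqref{eq:rest_hom}. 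Passing to $\wh{\rH}^\bullet_{\Gm}$ and completing, the two module structures coincide, so the isomorphism displayed above is $\hH_\beta$-linear.

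For the specialization statement I would simply feed the specialized conclusion $M_\ii(\bep) \simeq \rH^\bullet(i_{x(\bd)}^! \kappa_{\bep*} \iota_\bep^! \bar{\cL}_\beta)$ of Proposition~\ref{Prop:Mig} into the forgetful ($\For$) version of Lemma~\ref{Lem:slice}, valid since $\For$ commutes with the six operations and with Verdier duality, to obtain $M_\ii(\bep) \simeq \rH^\bullet(i_{\bep, x(\bd)}^* i_\bep^! \ocA_\bd)$. One can alternatively note that $\tau_\bep$ is a generic cocharacter for the attractive $T_\bd$-space $E(\bd) \cong \Ext^1_Q(x(\bd), x(\bd))$: indeed $E(\bd)^0_{\tau_\bep} = \{0\}$ because the only candidate contributions come from $\Hom_\C(\C v_{\sigma_\bep(k)}, \C v_{\sigma_\bep(k)}) \otimes \Ext^1_Q(x(\alpha_{\ii,\ep_k}), x(\alpha_{\ii,\ep_k}))$ and $\dext(\alpha_{\ii,\ep_k}, \alpha_{\ii,\ep_k}) = 0$ by Lemma~\ref{Lem:dhom}(1). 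Then Corollary~\ref{Cor:hr} applies and shows that $\wh{\rH}^\bullet_{\Gm}(i_{\bep, x(\bd)}^* i_\bep^! \ocA_\bd)$ is free over $\Oo$ with fibre $\rH^\bullet(i_{\bep, x(\bd)}^* i_\bep^! \ocA_\bd)$ at $z = 0$, which both re-proves the specialization and records the freeness needed for the Jantzen-filtration argument to follow.
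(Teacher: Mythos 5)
Your proposal is correct and follows exactly the paper's own route: the proof there consists precisely of combining Proposition~\ref{Prop:Mig} with Lemma~\ref{Lem:slice}, using the definition $\ocA_\bd = i_{S(\bd)}^!\bar{\cL}_\beta[\dim X(\beta)-\dim S(\bd)]$. Your additional remarks — that the residual shift $[c(\bep)]$ is immaterial for ungraded (completed) $\hH_\beta$-modules, that naturality of the functor isomorphism in Lemma~\ref{Lem:slice} identifies the action through \eqref{eq:rest_hom} with the one from Proposition~\ref{Prop:Mig}, and the genericity/freeness observation via Corollary~\ref{Cor:hr} — are sound elaborations of points the paper leaves implicit.
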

\begin{proof}
The assertion follows from Proposition~\ref{Prop:Mig} together with Lemma~\ref{Lem:slice}.
\end{proof}

\subsection{Geometric interpretation of $R$-matrices}
In this subsection, we establish a geometric interpretation of the renormalized $R$-matrices between the deformed mixed tensor products, analogous to Theorem \ref{tau} for the quantum loop algebras.   
First, we need a lemma.
 Recall the quantity $\alpha(j,k)$ for $j,k \in J$ from \eqref{eq:dd}. 
\begin{Lem} \label{Lem:ddqH}
For any $j,k \in J$, we have
\[
\alpha(j,k) = \dext(\alpha_{\ii, j}, \alpha_{\ii, k}) + \dext(\alpha_{\ii, k}, \alpha_{\ii, j}).
\]

\end{Lem}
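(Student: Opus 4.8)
The statement to be proved is Lemma~\ref{Lem:ddqH}, giving a representation-theoretic formula for the number $\alpha(j,k)$ attached to the renormalized $R$-matrices between the deformed cuspidal modules $\tL_{\ii,j}$ and $\tL_{\ii,k}$. Recall that $\alpha(j,k)$ is defined by $R_{j,k}\circ R_{k,j}\equiv z^{\alpha(j,k)}\,\id\ \mathrm{mod}\ \Oo^\times$, see \eqref{eq:dd}, and that in the quiver Hecke setting the renormalized $R$-matrix $R_{j,k}$ is obtained from the homomorphism $R_{\tL_{\ii,j},\tL_{\ii,k}}$ (with an appropriate power of $z$ stripped off). The plan is to compute the composite $R_{\tL_{\ii,k},\tL_{\ii,j}}\circ R_{\tL_{\ii,j},\tL_{\ii,k}}$ directly using the formula recalled in the proof of Lemma~\ref{Lem:KP}: for $M'\in H_\beta\Mod$ and $N'\in H_{\beta'}\Mod$ the composition $(R_{N',M'}\circ R_{M',N'})|_{e(\beta,\beta')(M'\otimes N')}$ is multiplication by $X=\sum_{\nu,\nu'}\bigl(\prod_{\nu_p\neq\nu'_q}Q_{\nu_p,\nu'_q}(x_p,x_{d+q})\bigr)e(\nu*\nu')$. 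Specializing to $M'=\tL_{\ii,j}$ and $N'=\tL_{\ii,k}$, where the deformation shifts the $x$-variables by $jz$ and $kz$ respectively (see \eqref{eq:affx}, \eqref{eq:tLik}), and using $Q_{i,i'}(u,v)=\delta(i\neq i')(u-v)^{-c_{ii'}}$, the action of $X$ becomes multiplication by $(jz-kz)^{N}$ for some $N\in\N$, exactly as in the proof of Lemma~\ref{Lem:KP}. Since $j\neq k$ gives $j-k\in\kk^\times$, this shows $\alpha(j,k)=N$, and the whole task reduces to identifying $N$ with $\dext(\alpha_{\ii,j},\alpha_{\ii,k})+\dext(\alpha_{\ii,k},\alpha_{\ii,j})$.

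Next I would pin down $N$ combinatorially. By construction $N=\#\{(p,q)\mid 1\le p\le |\alpha_{\ii,j}|,\ 1\le q\le |\alpha_{\ii,k}|,\ \nu_p\neq\nu'_q\}\cdot(\text{weighted by }-c_{\nu_p,\nu'_q})$ summed appropriately; more precisely, expanding the product over a fixed pair $(\nu,\nu')\in I^{\alpha_{\ii,j}}\times I^{\alpha_{\ii,k}}$, the exponent of $(j-k)z$ is $\sum_{p,q:\ \nu_p\neq\nu'_q}(-c_{\nu_p,\nu'_q})$, which depends only on the dimension vectors $\alpha_{\ii,j}$ and $\alpha_{\ii,k}$ and not on $\nu,\nu'$. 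Writing $\alpha_{\ii,j}=\sum_i a_i\alpha_i$ and $\alpha_{\ii,k}=\sum_i b_i\alpha_i$, one gets
\[
N=\sum_{i\neq i'}(-c_{ii'})\,a_i b_{i'} \;=\; -\sum_{i,i'}c_{ii'}a_ib_{i'}+\sum_i c_{ii}a_ib_i \;=\; -(\alpha_{\ii,j},\alpha_{\ii,k})+2\sum_i a_ib_i,
\]
using $(\alpha_i,\alpha_{i'})=c_{ii'}$ and $c_{ii}=2$. On the other hand, from \eqref{eq:Eu} we have $\langle\alpha_{\ii,j},\alpha_{\ii,k}\rangle_Q=\dhom(\alpha_{\ii,j},\alpha_{\ii,k})-\dext(\alpha_{\ii,j},\alpha_{\ii,k})$, and \eqref{eq:inner} gives $(\alpha_{\ii,j},\alpha_{\ii,k})=\langle\alpha_{\ii,j},\alpha_{\ii,k}\rangle_Q+\langle\alpha_{\ii,k},\alpha_{\ii,j}\rangle_Q$. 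Combining these,
\[
\dext(\alpha_{\ii,j},\alpha_{\ii,k})+\dext(\alpha_{\ii,k},\alpha_{\ii,j})
=\dhom(\alpha_{\ii,j},\alpha_{\ii,k})+\dhom(\alpha_{\ii,k},\alpha_{\ii,j})-(\alpha_{\ii,j},\alpha_{\ii,k}).
\]
So the lemma will follow once I show $\dhom(\alpha_{\ii,j},\alpha_{\ii,k})+\dhom(\alpha_{\ii,k},\alpha_{\ii,j})=2\sum_i a_ib_i=2\dim_\C L(\alpha_{\ii,j},\alpha_{\ii,k})$; equivalently, $\dim_\C\Hom_Q(x(\alpha_{\ii,j}),x(\alpha_{\ii,k}))+\dim_\C\Hom_Q(x(\alpha_{\ii,k}),x(\alpha_{\ii,j}))=2\sum_i a_ib_i$.

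For this last identity I would use Lemma~\ref{Lem:dhom}(1): if $j<k$ then $\dhom(\alpha_{\ii,j},\alpha_{\ii,k})=0$ and $\dext(\alpha_{\ii,k},\alpha_{\ii,j})=0$. Plugging into the $4$-term exact sequence of Section~\ref{Ssec:slice} applied to $x=x(\alpha_{\ii,j})$, $x'=x(\alpha_{\ii,k})$ (or rather its analogue for two distinct indecomposables), together with its mirror, and comparing Euler forms, one reduces the claim to $\dhom(\alpha_{\ii,k},\alpha_{\ii,j})-\dext(\alpha_{\ii,j},\alpha_{\ii,k})=\langle\alpha_{\ii,k},\alpha_{\ii,j}\rangle_Q$ plus the already-known vanishings, which is exactly \eqref{eq:Eu}; the upshot is that when $j<k$ both sides of the desired formula equal $\dhom(\alpha_{\ii,k},\alpha_{\ii,j})+\dext(\alpha_{\ii,j},\alpha_{\ii,k})$, and by skew-considerations and symmetry of $N$ in $j,k$ the case $j>k$ is identical, while $j=k$ is trivial since $\alpha(i,i)=0$ and $\Ext^1$ of a rigid indecomposable with itself vanishes. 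The main obstacle I anticipate is bookkeeping the grading/$z$-shift conventions precisely — making sure that the power of $z$ that gets stripped in passing from $R_{\tL_{\ii,j},\tL_{\ii,k}}$ to the renormalized $R_{j,k}$ does not contribute to $\alpha(j,k)$, i.e. that $\alpha(j,k)$ genuinely equals the full exponent $N$ in $X$ and not $N$ minus some shift; this is handled by noting that $\alpha(j,k)$ is defined via the \emph{composite} $R_{j,k}\circ R_{k,j}$, in which the two stripped powers of $z$ add up symmetrically, so the definition is insensitive to the individual normalizations and reads off precisely the exponent of $(j-k)z$ in $X$. Everything else is the routine Euler-form computation sketched above.
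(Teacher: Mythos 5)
Your proposal has a genuine gap at the crucial step. You compute the exponent $N=\sum_{i\neq i'}(-c_{ii'})\,a_ib_{i'}$ in the identity $R_{\tL_{\ii,k},\tL_{\ii,j}}\circ R_{\tL_{\ii,j},\tL_{\ii,k}}=(jz-kz)^N\cdot(\text{unit})$ and then assert $\alpha(j,k)=N$, arguing that the stripped powers of $z$ ``add up symmetrically'' and cancel. They do not. Writing $R_{jk}=z^{s_{jk}}R_{\tL_{\ii,j},\tL_{\ii,k}}$ for the renormalized $R$-matrix, both $s_{jk}$ and $s_{kj}$ are non-positive (the un-renormalized map already preserves lattices but may have image inside $z^m$ times the lattice), and $\alpha(j,k)=N+s_{jk}+s_{kj}$. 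There is no cancellation; the correction $s_{jk}+s_{kj}$ is precisely what you need to control, and it is generically non-zero.

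A concrete counterexample: take $\fg$ of type $A_2$, $Q\colon 1\to 2$, $w=w_0$, $\ii=(1,2,1)$ (adapted to $Q$), and $(j,k)=(1,2)$, so $\alpha_{\ii,1}=\alpha_1$, $\alpha_{\ii,2}=\alpha_1+\alpha_2$. One computes $x(\alpha_1)=S_1$, $x(\alpha_1+\alpha_2)=P_1=I_2$, and hence $\Ext^1_Q(S_1,P_1)=\Ext^1_Q(P_1,S_1)=0$, so the lemma gives $\alpha(1,2)=0$. On the other hand $N=a_1b_2(-c_{12})+a_2b_1(-c_{21})=1$. So $\alpha(j,k)\neq N$ here; the renormalization contributes $-1$. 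Your subsequent reduction to the identity $\dhom(\alpha_{\ii,j},\alpha_{\ii,k})+\dhom(\alpha_{\ii,k},\alpha_{\ii,j})=2\dim_\C L(\alpha_{\ii,j},\alpha_{\ii,k})$ is likewise false (in the example the left side is $0+1=1$ while the right side is $2$), which is another manifestation of the same gap.

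The paper avoids this issue entirely by realizing $\tL_{\ii,j}\star_\Oo\tL_{\ii,k}$ and $\tL_{\ii,k}\star_\Oo\tL_{\ii,j}$ as equivariant hypercohomologies of $\ocA_\bd$ on the transversal slice (Proposition~\ref{Prop:M=H}), and constructing geometric morphisms $\cR_{j,k},\cR_{k,j}$ from the adjunction maps $\ul{\kk}_S\to i_*i^*\ul{\kk}_S$ and $i_!i^!\ul{\kk}_S[2e]\to\ul{\kk}_S[2e]$. The key point that replaces your missing step is that the specializations $\bar{\cR}_{j,k},\bar{\cR}_{k,j}$ at $z=0$ are shown to be non-zero directly from the decomposition~\eqref{eq:DecA} of $\ocA_\bd$ (both $\ul{\kk}_{\{x\}}$ and $\ul{\kk}_S[e]$ occur as summands), so the geometric $\cR_{j,k}$ is automatically the renormalized $R$-matrix, with no $s_{jk}$ to worry about. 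Then $\alpha(j,k)$ is read off as half the cohomological degree of $\cR_{k,j}\circ\cR_{j,k}$, which is $2e$ with $e=\dim E(\bd)=\dext(\alpha_{\ii,k},\alpha_{\ii,j})$. If you want to salvage your algebraic route you would have to compute $s_{jk}+s_{kj}$ directly, which amounts to redoing the normality/nonvanishing analysis and is not easier than the geometric argument.
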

\begin{proof}
We may assume $j > k$ without loss of generality.
Then, we have $\dext(\alpha_{\ii, j}, \alpha_{\ii, k}) = 0$ by Lemma~\ref{Lem:dhom}~(1).
We have to show $\alpha(j,k) = e \seq \dext(\alpha_{\ii, k}, \alpha_{\ii, j})$.
Let $\bd = \bdelta_j + \bdelta_k \in \N^{\oplus J}$, $\bep = \bep_s(\bd) = (j,k)$ and $\bep' = \bep_c(\bd) =(k,j)$.
We abbreviate $E = E(\bd) \simeq \C^e$, $x = x(\bd)$, and $S = S(\bd) = x + E$.
We have $S(\bep) = \{ x \}$ and  $S(\bep') = S$.
The action of $s \in \C^\times$ on $E$ (through $\rho^\vee$) is simply the multiplication by $s^{j-k}$. 
By Proposition~\ref{Prop:M=H} and Proposition~\ref{Prop:attr}, we have isomorphisms
\begin{align}
\tL_{\ii, j} \star_\Oo \tL_{\ii, k} 
&\simeq \wh{\rH}^\bullet_{\C^\times}(i^! \ocA_\bd) 
\simeq \Hom^\bullet_{\C^\times}(\ul{\kk}_{\{x\}}, i^!\ocA_\bd)^\wedge
\simeq \Hom^\bullet_{\C^\times}(i_{*}i^* \ul{\kk}_{S},\ocA_\bd)^\wedge,
\\
\tL_{\ii, k} \star_\Oo \tL_{\ii, j} 
&\simeq \wh{\rH}^\bullet_{\C^\times}(i^* \ocA_\bd)
\simeq \Hom^\bullet_{\C^\times}(\ul{\kk}_{\{x\}}, p_{*}\ocA_\bd)^\wedge
\simeq \Hom^\bullet_{\C^\times}(\ul{\kk}_{S},\ocA_\bd)^\wedge,
\end{align} 
where $i \colon \{ x \} \hookrightarrow S$ and $p \colon S \to \{x\}$ are the trivial maps. 
The adjunction morphisms
\[ \ul{\kk}_{S} \to i_* i^* \ul{\kk}_{S}, \quad \text{ and } \quad i_* i^* \ul{\kk}_{S} \simeq i_! i^! \ul{\kk}_{S} [2e] \to \ul{\kk}_{S} [2e]\]
respectively give rise to the homomorphisms
\begin{align} 
\cR_{j,k} &\colon \Hom^\bullet_{\C^\times}(i_{*}i^* \ul{\kk}_{S},\ocA_\bd) \to \Hom^\bullet_{\C^\times}(\ul{\kk}_{S},\ocA_\bd), \\ 
\cR_{k,j} &\colon 
 \Hom^\bullet_{\C^\times}(\ul{\kk}_{S},\ocA_\bd) \to \Hom^\bullet_{\C^\times}(i_{*}i^* \ul{\kk}_{S},\ocA_\bd) \langle  -2e  \rangle
\end{align}
of graded $\Hom^\bullet_{T_\bd}(\ocA_\bd, \ocA_\bd)$-modules. 
Let $\bar{\cR}_{j,k}$ and $\bar{\cR}_{k,j}$ denote their specializations at $z=0$ respectively, which are obtained simply by forgetting the $\C^\times$-equivariance. 
Since $S$ has the finite stratification \eqref{eq:strS}, there is a unique $\bd' \in \mathrm{KP}_\ii(\beta)$ with $\beta = \alpha_{\ii, j} + \alpha_{\ii, k}$ such that $S = \ol{O(\bd')} \cap S$.
Then, the decomposition \eqref{eq:DecA} tells us that the perverse sheaf $\ocA_\bd$ contains both $\ul{\kk}_{\{x\}} = i_*i^*\ul{\kk}_{S}$ and $\kk_{S}[e]$ as summands.  
Thus, it follows that the specializations $\bar{\cR}_{j,k}$ and $\bar{\cR}_{k,j}$ are both non-zero, and hence the completions of $\cR_{j,k}$ and $\cR_{k,j}$ are identical to the renormalized $R$-matrices $R_{j,k}$ and $R_{k,j}$ respectively (up to multiples in $\Oo^\times$).  
In particular, we have $\cR_{k,j} \circ \cR_{j,k} = a z^{\alpha(j,k)} \id$ for some $a \in \kk^\times$. 
On the other hand, $\cR_{k,j} \circ \cR_{j,k}$ is a graded homomorphism of degree $2e$ by construction.
Therefore, we get $\alpha(j,k) = e$ as desired.
\end{proof}

Recall the preorder $\lesssim$ of $J^\bd$ from Section \ref{mjf}.
Lemmas \ref{Lem:dhom} (1) and \ref{Lem:ddqH} implies the following.

\begin{Cor}
For $\bep, \bep' \in J^\bd$, we have $S(\bep) \subset S(\bep')$ if and only if $\bep \lesssim \bep'$. 
For the standard (resp.~costandard) sequence $\bep_s$ (resp.~$\bep_c$), we have $S(\bep_s) = \{x(\bd)\}$ (resp.~$S(\bep_c) = S(\bd)$).
\end{Cor}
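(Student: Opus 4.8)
\medskip

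The plan is to translate the inclusion $S(\bep)\subseteq S(\bep')$ into a purely combinatorial condition on $\lesssim$ and then prove the equivalence by two inductions. Since $S(\bep)=x(\bd)+E(\bd)^+_{\tau_\bep}$ and $S(\bep')=x(\bd)+E(\bd)^+_{\tau_{\bep'}}$ are affine subspaces through the common point $x(\bd)$, we have $S(\bep)\subseteq S(\bep')$ if and only if $E(\bd)^+_{\tau_\bep}\subseteq E(\bd)^+_{\tau_{\bep'}}$. Writing $\bep_c=(j_1,\dots,j_{d})$ and using $x(\bd)\simeq\bigoplus_{l=1}^{d}x(\alpha_{\ii,j_l})$, where $\C v_l$ spans the $l$-th factor of the torus $T_\bd$, the identification $E(\bd)\simeq\Ext^1_Q(x(\bd),x(\bd))$ gives the $T_\bd$-weight decomposition
\[
E(\bd)\simeq\bigoplus_{1\le a,b\le d}\Ext^1_Q\bigl(x(\alpha_{\ii,j_a}),x(\alpha_{\ii,j_b})\bigr)\otimes\C v_a^*\otimes\C v_b,
\]
the $(a,b)$-summand carrying the $T_\bd$-weight $\chi_b-\chi_a$ (with $\chi_l$ the weight of $v_l$). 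By Lemma~\ref{Lem:dhom}~(1) this summand vanishes unless $j_a<j_b$, and in that case its dimension is $\dext(\alpha_{\ii,j_a},\alpha_{\ii,j_b})=\alpha(j_a,j_b)$ by Lemma~\ref{Lem:ddqH}. As $\langle\tau_\bep,\chi_b-\chi_a\rangle$ equals the difference of the positions of $v_b$ and of $v_a$ in $\bep$ — here I identify each position of $\bep$ with a basis vector $v_l$ through the length-minimal permutation $\sigma_\bep$, so that $v_l$ becomes an intrinsically labelled copy of the value $j_l$ — it follows that $E(\bd)^+_{\tau_\bep}$ is the sum of the summands indexed by
\[
\mathrm{Inv}(\bep)\seq\{(a,b)\mid j_a<j_b,\ \alpha(j_a,j_b)>0,\ v_b\text{ occurs after }v_a\text{ in }\bep\}.
\]
Hence $S(\bep)\subseteq S(\bep')$ if and only if $\mathrm{Inv}(\bep)\subseteq\mathrm{Inv}(\bep')$, and the assertions on $\bep_s,\bep_c$ follow at once since $\mathrm{Inv}(\bep_s)=\varnothing$ (as $\bep_s$ is decreasing) while $\mathrm{Inv}(\bep_c)$ indexes every nonzero summand (as $\bep_c$ is increasing), giving $E(\bd)^+_{\tau_{\bep_s}}=\{0\}$ and $E(\bd)^+_{\tau_{\bep_c}}=E(\bd)$.

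To prove $\bep\lesssim\bep'\Rightarrow\mathrm{Inv}(\bep)\subseteq\mathrm{Inv}(\bep')$ it suffices, by transitivity, to treat a single elementary move replacing a consecutive pair $(a,b)$ of $\bep$ by $(b,a)$. If $\alpha(a,b)=0$, then Lemma~\ref{Lem:ddqH} forces both $\Ext^1_Q(x(\alpha_{\ii,a}),x(\alpha_{\ii,b}))$ and $\Ext^1_Q(x(\alpha_{\ii,b}),x(\alpha_{\ii,a}))$ to vanish, so the move alters no membership in $\mathrm{Inv}$. If $\alpha(a,b)>0$ and $a>b$, then Lemma~\ref{Lem:dhom}~(1) kills $\Ext^1_Q(x(\alpha_{\ii,a}),x(\alpha_{\ii,b}))$, and the only pair whose status changes is the one formed by the two displaced copies (of values $b$ and $a$), which is inactive before the move and active after it; hence $\mathrm{Inv}$ can only grow under $\lesssim$.

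For the converse I would assume $\mathrm{Inv}(\bep)\subseteq\mathrm{Inv}(\bep')$ and prove $\bep\lesssim\bep'$ by induction on the number $\mathrm{inv}(\bep,\bep')$ of pairs of basis vectors $(v_l,v_{l'})$ occurring in opposite relative order in $\bep$ and in $\bep'$. When this number vanishes, $\bep=\bep'$. Otherwise a bubble-sort argument produces consecutive positions of $\bep$ carrying distinct values $a,b$ whose two copies are inverted in $\bep'$; swapping them yields $\bep''$ with $\mathrm{inv}(\bep'',\bep')=\mathrm{inv}(\bep,\bep')-1$. This swap is an elementary $\lesssim$-move: it could fail only if $a<b$ and $\alpha(a,b)>0$, but then the pair formed by the copies at these two positions satisfies $j_a<j_b$, $\alpha(j_a,j_b)>0$ and has the $b$-copy after the $a$-copy in $\bep$, so it lies in $\mathrm{Inv}(\bep)$, while by the choice of the pair it does not lie in $\mathrm{Inv}(\bep')$, contradicting the hypothesis. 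Moreover, by the computation of the previous paragraph, $\mathrm{Inv}(\bep'')$ is either $\mathrm{Inv}(\bep)$ or $\mathrm{Inv}(\bep)$ enlarged by exactly the pair that is inverted between $\bep$ and $\bep'$, which lies in $\mathrm{Inv}(\bep')$; thus $\mathrm{Inv}(\bep'')\subseteq\mathrm{Inv}(\bep')$ and the induction hypothesis applies to $(\bep'',\bep')$, giving $\bep\lesssim\bep''\lesssim\bep'$. The genuine difficulty lies in this last step: when $\bd$ has multiplicities one must fix once and for all the labelling of the positions of each sequence by the basis vectors $v_l$ via the length-minimal permutation, and check that the bubble-sort swap, the contradiction argument and the monotonicity of $\mathrm{Inv}$ are all consistent with this intrinsic labelling; the first two paragraphs are essentially formal once the weight decomposition of $E(\bd)$ has been written down.
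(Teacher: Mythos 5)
Your proof is correct and follows the route the paper intends: the paper states this corollary with no proof beyond the remark that it follows from Lemmas \ref{Lem:dhom}(1) and \ref{Lem:ddqH}, and your $T_\bd$-weight decomposition of $E(\bd)\simeq\Ext^1_Q(x(\bd),x(\bd))$ together with the bubble-sort comparison with $\lesssim$ is exactly the intended filling-in of that remark. The labelling subtlety you flag at the end is settled just as you suggest: elementary moves and your bubble-sort swaps only exchange adjacent entries with \emph{distinct} values, so the length-minimal (stable) labelling transports unchanged and your bookkeeping of $\mathrm{Inv}$ is valid.
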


For $\bep, \bep' \in J^\bd$ satisfying $\bep \lesssim \bep'$, let $i_{\bep', \bep} \colon S(\bep) \hookrightarrow S(\bep')$ denote the inclusion. 
Note that we have $i_{\bep, x(\bd)} = i_{\bep, \bep_s}$ and $i_\bep = i_{\bep_c, \bep}$ for any $\bep \in J^\bd$.
We have the following diagram of inclusions
$$
\xymatrix{
& S(\bep_c) = S(\bd)& \\
S(\bep)
\ar@{->}[ur]^-{i_\bep} \ar@{->}[rr]^-{i_{\bep', \bep}}
&& S(\bep'). \ar@{->}[ul]_-{i_{\bep'}} \\
& S(\bep_s) = \{x(\bd)\}
\ar@{->}[ur]_-{i_{\bep',x(\bd)}} \ar@{->}[ul]^-{i_{\bep,x(\bd)}}& 
}
$$
Then the canonical morphism of functors $i_{\bep', \bep}^! \to i_{\bep', \bep}^*$ induces a morphism
\[ i_{\bep,x(\bd)}^* i_\bep^! \ocA_\bd = i_{\bep,x(\bd)}^* i_{\bep', \bep}^! i_{\bep'}^! \ocA_\bd \to i_{\bep,x(\bd)}^* i_{\bep', \bep}^* i_{\bep'}^! \ocA_\bd = i_{\bep',x(\bd)}^* i_{\bep'}^! \ocA_\bd.\]
Taking the cohomology, we obtain a homomorphism of graded $H_\beta$-modules:
\[ \cR_{\bep', \bep} \colon \rH^\bullet_\Gm(i_{\bep,0}^* i_\bep^! \ocA_\bd) \to \rH^\bullet_\Gm(i_{\bep',0}^* i_{\bep'}^! \ocA_\bd). \]
A proof of the following proposition can be the same as Proposition~\ref{tau}.

\begin{Prop}
\label{Prop:Rmatg_qH}
Let $\bep, \bep' \in J^\bd$ satisfying $\bep \lesssim \bep'$.
The completion $\wh{\cR}_{\bep', \bep}$ of the homomorphism $\cR_{\bep', \bep}$ is identical to the intertwiner $R_{\bep',\bep}$ up to multiples in $\Oo^\times$. 
\end{Prop}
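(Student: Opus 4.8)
The plan is to reproduce the proof of Proposition~\ref{tau} essentially verbatim, with the graded quiver varieties there replaced by the quiver representation spaces and Lusztig's transversal slice $S(\bd)$ of the present section. First I would record, as in the paragraph preceding Proposition~\ref{tau}, that the graded homomorphisms $\cR_{\bep',\bep}$ are compatible with composition: $\cR_{\bep'',\bep'}\circ\cR_{\bep',\bep}=\cR_{\bep'',\bep}$ for $\bep\lesssim\bep'\lesssim\bep''$ and $\cR_{\bep,\bep'}\circ\cR_{\bep',\bep}=\id$ for $\bep\sim\bep'$; both follow from the construction of $\cR_{\bep',\bep}$ out of the canonical morphism $i_{\bep',\bep}^{!}\to i_{\bep',\bep}^{*}$ together with transitivity of $!$- and $*$-restrictions. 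Since $\cR_{\bep',\bep}$ is a morphism of graded $\Hom^{\bullet}_{T_{\bd}}(\ocA_{\bd},\ocA_{\bd})$-modules, its completion $\wh{\cR}_{\bep',\bep}$ becomes an $\hH_{\beta}$-linear homomorphism $\tM_{\ii}(\bep)\to\tM_{\ii}(\bep')$ through the isomorphisms of Proposition~\ref{Prop:M=H} and the algebra map~\eqref{eq:rest_hom}.

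Next I would establish the characterization of $R_{\bep',\bep}$ that makes the identification possible. Because the cuspidal module $L_{\ii,j}$ is real for every $j$, Lemma~\ref{Lem:KP} gives that $\tM_{\ii}(\bep)_{\Kk}\simeq\tM_{\ii}(\bep')_{\Kk}$ is simple with endomorphism ring $\Kk\id$. Moreover the deformation $\{\tL_{\ii,j}\}_{j\in J}$ is normal (see Section~\ref{ssec:dPBWqH}), so $\beta(\bd)=0$, and the superadditivity of $\beta$ (property~(4) of Section~\ref{rnomat}) applied along $\bep_{s}\lesssim\bep\lesssim\bep'\lesssim\bep_{c}$ forces $\beta(\bep',\bep)=0$ whenever $\bep\lesssim\bep'$ in $J^{\bd}$. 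Hence, exactly as in the first paragraph of the proof of Proposition~\ref{tau}, $R_{\bep',\bep}$ is characterized up to a factor in $\Oo^{\times}$ as the unique $\hH_{\Oo}$-homomorphism $\tM_{\ii}(\bep)\to\tM_{\ii}(\bep')$ whose specialization at $z=0$ is nonzero. Thus it remains to check that the specialization of $\cR_{\bep',\bep}$ at $z=0$ is nonzero.

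For this last step I would argue as at the ends of the proofs of Proposition~\ref{tau} and Lemma~\ref{Lem:ddqH}. The cocharacter $\tau_{\bep}$ is generic for the $T_{\bd}$-action on $E(\bd)$ — indeed $\dext(\alpha_{\ii,j},\alpha_{\ii,j})=0$ by Lemma~\ref{Lem:dhom}(1), so $E(\bd)^{0}_{\tau_{\bep}}=\{0\}$ — and $\ocA_{\bd}$ is a $T_{\bd}$-equivariant semisimple perverse sheaf, so Corollary~\ref{Cor:hr} shows that the specialization of $\cR_{\bep',\bep}$ at $z=0$ is the non-equivariant homomorphism $\rH^{\bullet}(i_{\bep,x(\bd)}^{*}i_{\bep}^{!}\ocA_{\bd})\to\rH^{\bullet}(i_{\bep',x(\bd)}^{*}i_{\bep'}^{!}\ocA_{\bd})$ induced by $i_{\bep',\bep}^{!}\to i_{\bep',\bep}^{*}$. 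By the decomposition~\eqref{eq:DecA}, the summand of $\ocA_{\bd}$ indexed by $\bd'=\bd$ is $\ul{\kk}_{\{x(\bd)\}}\otimes L_{\ii}(\bd)$ with $L_{\ii}(\bd)\neq0$ (here $\ol{O(\bd)}\cap S(\bd)=\{x(\bd)\}$ by transversality of Lusztig's slice), so $\ul{\kk}_{\{x(\bd)\}}$ is a direct summand of $\ocA_{\bd}$. Since $x(\bd)$ lies in every $S(\bep)$, both hyperbolic localizations send this summand to $\ul{\kk}_{\{x(\bd)\}}$ and the morphism $i_{\bep',\bep}^{!}\to i_{\bep',\bep}^{*}$ restricts there to the identity; hence the specialization of $\cR_{\bep',\bep}$ is nonzero, and combined with the previous paragraph this gives $\wh{\cR}_{\bep',\bep}=R_{\bep',\bep}$ up to a unit in $\Oo^{\times}$. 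The only point needing genuine care is the bookkeeping of the first two paragraphs — making sure $\wh{\cR}_{\bep',\bep}$ really is $\hH_{\Oo}$-linear (not merely $\kk$-linear) and that the uniqueness of $R_{\bep',\bep}$ is available, for which the vanishing $\beta(\bep',\bep)=0$ deduced from normality is essential; once these are in place, the geometric input is routine and parallel to the quantum loop algebra case.
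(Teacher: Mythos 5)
Your proposal is correct and follows exactly the route the paper intends: the paper's own proof consists of the single sentence ``A proof of the following proposition can be the same as Proposition~\ref{tau},'' and your three-step argument (completion of $\cR_{\bep',\bep}$ is $\hH_\beta$-linear via Proposition~\ref{Prop:M=H} and \eqref{eq:rest_hom}; $R_{\bep',\bep}$ is characterized by having non-zero specialization, using $\End_{\hH_\Kk}=\Kk\id$ and $\beta(\bep',\bep)=0$ from normality; the specialization of $\cR_{\bep',\bep}$ is non-zero because $\ocA_\bd$ contains the summand $\ul{\kk}_{\{x(\bd)\}}$) is precisely the transposition of that proof to the present setting. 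The one place where you supply an argument not spelled out even in the original Proposition~\ref{tau} is the deduction $\beta(\bep',\bep)=0$ from $\beta(\bd)=0$ via the superadditivity (property~(4) of Section~\ref{rnomat}); this is the right replacement for the highest-weight-vector argument used in the quantum loop algebra case, which has no direct analogue here.
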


\subsection{Proof of Theorem \ref{Thm:adapted}}
Now, we are ready to prove our main theorem. 
One can verify the following analog of Theorem~\ref{Amain} by the same argument using Propositions \ref{Prop:M=H} and \ref{Prop:Rmatg_qH} instead of Propositions \ref{Nsheaf} and \ref{tau} respectively. 

\begin{Thm} 
\label{Thm:main_qH}
Assume that $\ii$ is a reduced word for $w$ adapted to the quiver $Q$. 
For any $\bd \in \mathrm{KP}_\ii(\beta)$ and $\bep \in J^\bd$, we have
the following equality in $K(\Cc_w)_t$:
\begin{equation} \label{eq:Mt_for_qH}
[M_\ii(\bep)]_t = \sum_{\bd' \in \mathrm{KP}_\ii(\beta)} \left( \sum_{n \in \Z} t^n \dim_\kk \rH^n (i_{\bep,x(\bd)}^* i_\bep^! \IC(\ol{O(\bd')}\cap S(\bd), \kk)) \right) [L_\ii(\bd')]. 
\end{equation}
\end{Thm}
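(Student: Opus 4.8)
The plan is to transcribe, \emph{mutatis mutandis}, the proof of Theorem~\ref{Amain}, replacing the inputs from the quiver variety side (Propositions~\ref{Nsheaf} and~\ref{tau}) by their quiver Hecke counterparts (Propositions~\ref{Prop:M=H} and~\ref{Prop:Rmatg_qH}), and using Lusztig's transversal slice $S(\bd)$ from Section~\ref{Ssec:slice} in place of the affine graded quiver variety $\qv_0(\bd)$.

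Fix $\bd\in\mathrm{KP}_\ii(\beta)$ and, for $\bep\in J^\bd$, set $\rH(\bep)\seq\rH^\bullet_{\Gm}(i_{\bep,x(\bd)}^*i_\bep^!\ocA_\bd)$, where $T_\bd$ acts on all varieties and sheaves in sight and $\Gm$ acts through $\rho^\vee\colon\Gm\to T_\bd$. The first thing I would establish is the geometric dictionary: via the identification $S(\bd)\simeq E(\bd)\simeq\Ext^1_Q(x(\bd),x(\bd))$ carrying $x(\bd)$ to the origin, the $\rho^\vee$-action on $E(\bd)$ is \emph{attractive} and each $\tau_\bep$ is a \emph{generic} cocharacter on $E(\bd)$, i.e.\ $E(\bd)^0_{\tau_\bep}=\{0\}$. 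Both are immediate from Lemma~\ref{Lem:dhom}(1): only the summands $\Ext^1_Q(x(\alpha_{\ii,j}),x(\alpha_{\ii,k}))\otimes\Hom_\C(D_j,D_k)$ with $j<k$ occur, on which $\rho^\vee$ acts with weight $k-j>0$, while $\tau_\bep$ acts with pairwise distinct weights on the chosen basis $v_1,\dots,v_d$ of $\bigoplus_{j\in J}D_j$, so $\Hom_\C(D_j,D_k)$ with $j\ne k$ has no $\tau_\bep$-invariants. Hence $i_{\bep,x(\bd)}^*i_\bep^!$ is exactly the Braden hyperbolic localization attached to $\tau_\bep$ (with $i_{x(\bd)}^!=i_{\bep_s,x(\bd)}^*i_{\bep_s}^!$ and $i_{x(\bd)}^*=i_{\bep_c,x(\bd)}^*i_{\bep_c}^!$ the extreme cases, cf.\ Example~\ref{Ex:hr}), so Corollary~\ref{Cor:hr} shows that $\rH(\bep)$ is a graded free $\kk[z]$-module of finite rank with $\rH(\bep)/z\rH(\bep)\simeq\rH^\bullet(i_{\bep,x(\bd)}^*i_\bep^!\ocA_\bd)$; and Proposition~\ref{Prop:Rmatg_qH} combined with the injectivity of the renormalized $R$-matrices (true because $\tM_\ii(\bep)_\Kk\simeq\tM_\ii(\bep_c)_\Kk$) yields, exactly as in Corollary~\ref{Cor:Rinj}, injections $\rH(\bep_s)\hookrightarrow\rH(\bep)\hookrightarrow\rH(\bep_c)$ which match the maps $R_{\bep,\bep_s}$ and $R_{\bep_c,\bep}^{-1}$ under the identifications of Proposition~\ref{Prop:M=H}.

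Next I would invoke the ``Fundamental Example''. Since $\rho^\vee$ is attractive on $E(\bd)$, Theorem~\ref{Thm:Fex} applies to each $T_\bd$-stable closed subvariety $\ol{O(\bd')}\cap S(\bd)$ of $S(\bd)\simeq E(\bd)$; summing the resulting short exact sequences~\eqref{eq:fex} over $\bd'$ with multiplicity spaces $L_\ii(\bd')$ according to the decomposition~\eqref{eq:DecA} of $\ocA_\bd$ produces a short exact sequence
\[ 0\to\rH(\bep_s)\to\rH(\bep_c)\to N\langle1\rangle\to0 \]
in $\kk[z]\gMod$, with $N\seq\bigoplus_{\bd'}\IH^\bullet(\bP_{\rho^\vee}(\ol{O(\bd')}\cap S(\bd)),\kk)\otimes L_\ii(\bd')$, whose first arrow is precisely $\cR_{\bep_c,\bep_s}$ and which satisfies hypotheses (i), (ii), (iii) of Section~\ref{ssec:Groj1}. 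By Lemma~\ref{Lem:Lef} the graded $\kk[z]$-module $L\seq\rH(\bep_c)/z\rH(\bep_s)$ then enjoys the hard Lefschetz property, and the rest is the argument of Theorem~\ref{Amain} verbatim: unwinding~\eqref{expfm}, pushing $F_n\tM_\ii(\bep)$ along the quotient $\tM_\ii(\bep)\to\tM_\ii(\bep)/zR_{\bep,\bep_s}\tM_\ii(\bep_s)\simeq\rH(\bep)/z\rH(\bep_s)\subset L$ and applying Lemma~\ref{Lem:hL} to $L$ gives $F_nM_\ii(\bep)\simeq\rH^\bullet(i_{\bep,x(\bd)}^*i_\bep^!\ocA_\bd)^{\ge n}$; taking associated graded pieces, using~\eqref{eq:DecA} and the fact that the $\hH_\beta$-action on $\rH^\bullet(i_{\bep,x(\bd)}^*i_\bep^!\ocA_\bd)$ factors through its semisimple degree-zero part $\bigoplus_{\bd'}\End_\kk(L_\ii(\bd'))$, gives $\Gr^F_nM_\ii(\bep)\simeq\bigoplus_{\bd'}\rH^n(i_{\bep,x(\bd)}^*i_\bep^!\IC(\ol{O(\bd')}\cap S(\bd),\kk))\otimes L_\ii(\bd')$, which is~\eqref{eq:Mt_for_qH} after multiplying by $t^n$ and summing over $n$. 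Comparing with Proposition~\ref{Prop:Mtg_slice} then yields $\phi([M_\ii(\bep)]_t)=\tE^*_\ii(\bep)$, i.e.\ Conjecture~\ref{qHmix}, hence Theorem~\ref{Thm:adapted}.

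The step I expect to be the real obstacle is the geometric bookkeeping behind the second and third paragraphs: checking that $i_{\bep,x(\bd)}^*i_\bep^!$ genuinely coincides with the Braden functor for the generic cocharacter $\tau_\bep$ on $E(\bd)$, that $(S(\bd),x(\bd))$ is an attractive linear space for $\rho^\vee$, that the adjunction morphism driving Theorem~\ref{Thm:Fex} agrees with $\cR_{\bep_c,\bep_s}$, and that hypotheses (i)--(iii) of Section~\ref{ssec:Groj1} hold on the slice (the analog of Corollary~\ref{Cor:hr} and Example~\ref{Ex:hr} in this setting). All of this rests on Lemma~\ref{Lem:dhom} and the slice construction of Section~\ref{Ssec:slice}; once it is secured, the decategorification step is a mechanical transcription of the proof of Theorem~\ref{Amain}.
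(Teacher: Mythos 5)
Your proposal is exactly the paper's argument: the proof of Theorem~\ref{Thm:main_qH} given there is precisely the transcription of the proof of Theorem~\ref{Amain}, with Propositions~\ref{Prop:M=H} and~\ref{Prop:Rmatg_qH} replacing Propositions~\ref{Nsheaf} and~\ref{tau}, on the slice $S(\bd)\simeq E(\bd)\simeq\Ext^1_Q(x(\bd),x(\bd))$. The verifications you flag (attractivity of $\rho^\vee$ and genericity of $\tau_\bep$ on $E(\bd)$ via Lemma~\ref{Lem:dhom}, identification of the adjunction map with $\cR_{\bep_c,\bep_s}$, and the analogs of Corollaries~\ref{Cor:Rinj} and~\ref{Cor:hr}) are exactly the implicit content of the paper's ``same argument'' remark, and you carry them out correctly.
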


Comparing \eqref{eq:Mtg_slice} with \eqref{eq:Mt_for_qH}, we obtain the desired equality \eqref{eq:mixqH} when our reduced word  $\ii$ is adapted to $Q$.
Thus, we have proved Theorem \ref{Thm:adapted}.

As a byproduct of the proof, we also obtain the following analog of Corollary \ref{Cor:ss}. 
\begin{Cor} 
When $\ii$ is adapted to a quiver of type $\fg$,  the filtration layer $\Gr_n^FM_\ii(\bep) = F_{n}M_\ii(\bep)/F_{n+1}M_\ii(\bep)$ is a semisimple $\hH$-module for any $\bep \in J^d$ and $n \in \Z$.
\end{Cor}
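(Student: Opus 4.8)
The statement to prove is the final Corollary: when $\ii$ is adapted to a quiver of type $\fg$, each filtration layer $\Gr_n^F M_\ii(\bep)$ is a semisimple $\hH$-module. The plan is to mirror the proof of Corollary~\ref{Cor:ss} verbatim, replacing the inputs from the quantum-loop-algebra setting with their quiver-Hecke-algebra counterparts established in this section. Concretely, by Theorem~\ref{Thm:main_qH} (or rather by the sheaf-theoretic description of the filtration that underlies it, obtained in the proof of Theorem~\ref{Thm:main_qH} exactly as in Theorem~\ref{Amain}), the graded piece $\Gr_n^F M_\ii(\bep)$ is computed as a subquotient of $\rH^\bullet(i_{\bep, x(\bd)}^* i_\bep^! \ocA_\bd)$ on which the $\hH_\beta$-action factors through the homomorphism \eqref{eq:rest_hom} to $\Hom^\bullet_{T_\bd}(\ocA_\bd, \ocA_\bd)^\wedge$, and moreover through its degree-zero part, which is the semisimple algebra $\bigoplus_{\bd'} \End_\kk(L_\ii(\bd'))$.

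First I would spell out why the action on $\Gr_n^F M_\ii(\bep)$ factors through degree zero. The argument of Theorem~\ref{Amain}, transported here via Propositions~\ref{Prop:M=H} and \ref{Prop:Rmatg_qH} (this transport is precisely what yields Theorem~\ref{Thm:main_qH}), identifies $F_n M_\ii(\bep)$ with $\rH^\bullet(i_{\bep,x(\bd)}^* i_\bep^! \ocA_\bd)^{\ge n}$ inside the graded $A_\bd \seq \Hom^\bullet_{T_\bd}(\ocA_\bd, \ocA_\bd)$-module $\rH^\bullet(i_{\bep,x(\bd)}^* i_\bep^! \ocA_\bd)$. Since $A_\bd$ is non-negatively graded with $A_\bd^0 = \bigoplus_{\bd'} \End_\kk(L_\ii(\bd'))$, the positive-degree part $A_\bd^{> 0}$ sends the $\ge n$ part into the $\ge n+1$ part, hence acts by zero on the quotient $\Gr_n^F M_\ii(\bep) = F_n M_\ii(\bep)/F_{n+1}M_\ii(\bep)$. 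Therefore the $\hH_\beta$-module structure on $\Gr_n^F M_\ii(\bep)$, which is defined via \eqref{eq:rest_hom} followed by the $A_\bd$-action, actually factors through the semisimple quotient $A_\bd \twoheadrightarrow A_\bd^0$. A module over a semisimple algebra is semisimple, which gives the claim.

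The only genuine subtlety, and the step I expect to be the main (minor) obstacle, is making sure that the sheaf-theoretic identification of $F_\bullet M_\ii(\bep)$ is actually available in full — i.e.\ that the analogue of Theorem~\ref{Amain} (not just its decategorified shadow Theorem~\ref{Thm:main_qH}) has been recorded. In the quiver Hecke setting the relevant short exact sequence $0 \to \rH^\bullet_{\Gm}(i_{\bep,x(\bd)}^! i_\bep^! \ocA_\bd) \to \rH^\bullet_{\Gm}(i_{\bep,x(\bd)}^* i_\bep^! \ocA_\bd) \to \IH^\bullet(\bP_{\rho^\vee}\cdots)\langle 1\rangle \to 0$ of Theorem~\ref{Thm:Fex}, the hard Lefschetz input (Lemma~\ref{Lem:Lef}), and Lemma~\ref{Lem:hL} apply after restricting to Lusztig's slice $S(\bd)$, on which the $\Gm = \rho^\vee(\Gm)$-action is attractive with the single fixed point $x(\bd)$ (this is exactly the hypothesis verified in Section~\ref{Ssec:slice}). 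So the body of the proof of Theorem~\ref{Amain} goes through word for word with $(\qv_0(\bd), \ocA_\bd, E(\bd))$ replaced by $(S(\bd), \ocA_\bd, E(\bd))$ and Propositions~\ref{Nsheaf},~\ref{tau} replaced by Propositions~\ref{Prop:M=H},~\ref{Prop:Rmatg_qH}; the upshot is $F_n M_\ii(\bep) \simeq \rH^\bullet(i_{\bep,x(\bd)}^* i_\bep^! \ocA_\bd)^{\ge n}$.

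In the write-up I would therefore state the one-paragraph argument as: by the proof of Theorem~\ref{Thm:main_qH} (the sheaf-theoretic version of Theorem~\ref{Amain}), $F_n M_\ii(\bep)$ corresponds to $\rH^\bullet(i_{\bep,x(\bd)}^* i_\bep^! \ocA_\bd)^{\ge n}$, the $\hH$-action factors through $A_\bd = \Hom^\bullet_{T_\bd}(\ocA_\bd,\ocA_\bd)$, which is non-negatively graded with semisimple degree-zero part $\bigoplus_{\bd'} \End_\kk(L_\ii(\bd'))$; passing to the subquotient $\Gr_n^F M_\ii(\bep)$ kills the positive-degree part, so $\hH$ acts through the semisimple algebra $A_\bd^0$, whence $\Gr_n^F M_\ii(\bep)$ is semisimple. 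This is the complete proof; no serious obstacle remains once the transport of Theorem~\ref{Amain} to the slice is acknowledged.
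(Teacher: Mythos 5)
Your argument is correct and matches the paper's intended proof exactly: the paper states the Corollary "as a byproduct of the proof" of Theorem~\ref{Thm:main_qH}, which is precisely the transport of Theorem~\ref{Amain} you carry out, and the semisimplicity conclusion is drawn (just as in Corollary~\ref{Cor:ss}) from the fact that the $\hH_\beta$-action on the $n$-th graded piece $\rH^n(i_{\bep,x(\bd)}^* i_\bep^! \ocA_\bd)$ factors through the degree-zero part of the non-negatively graded Yoneda algebra $\Hom^\bullet_{T_\bd}(\ocA_\bd,\ocA_\bd)$, which is semisimple. Your explicit justification of why $A_\bd^{>0}$ kills the subquotient and your remark that only the proof (not just the decategorified statement) of Theorem~\ref{Thm:main_qH} is what is needed are both accurate and consonant with the paper.
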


\subsection{Proof of Lemma~\ref{Lem:Lg}}
\label{Ssec:pfLem}
In this subsection,  we give a proof of Lemma~\ref{Lem:Lg} above.
First we recall the construction of the isomorphism~\eqref{eq:VVg}. 
Let $\beta \in \sQ^+$ and recall the $G(\beta)$-variety $\tF_\nu$, which we identify with the variety of pairs $(x, F^\bullet)$ of $x \in X(\beta)$ and $I$-graded flag $F^\bullet = (V^\beta = F^0 \supset F^1 \supset \cdots \supset F^{|\beta|} = 0)$ such that $\vdim F^{n-1}/ F^n = \alpha_{\nu_n}$ and $x(F^n) \subset F^n$ for any $1 \le n \le |\beta|$.  
Then the proper morphism $p_3 \colon \tF_\nu \to X(\beta)$ is simply the projection $(x, F^\bullet) \mapsto x$.
We consider the convolution algebra of the $G(\beta)$-equivariant Borel-Moore homologies:
\[ Z_\beta \seq \bigoplus_{\nu, \nu' \in I^\beta} \rH^{G(\beta)}_\bullet(\tF_\nu \times_{X(\beta)} \tF_{\nu'}, \kk).\]
With this notation, the isomorphism \eqref{eq:VVg} in Theorem~\ref{Thm:VVg} is constructed as the composition of two isomorphisms of $\kk$-algebras:
\begin{equation} \label{eq:VVg2}
H_\beta \simeq Z_\beta \simeq \Hom^\bullet_{G(\beta)}(\cL_\beta, \cL_\beta).
\end{equation}
Through the first isomorphism $H_\beta \simeq Z_\beta$, the idempotent $e(\nu)$ goes to the fundamental class of the diagonal $[\Delta(\tF_\nu)] = \Delta_*[\tF_\nu]$, where $\Delta$ is the diagonal embedding, and the element $\tau_k e(\nu)$ goes to the fundamental class of a certain subvariety of $\tF_{\sigma_k \nu} \times \tF_\nu$.  The element $x_n e(\nu)$ goes to $\Delta_* c_1^{G(\beta)}(\mathcal{O}_\nu(n))$, where $c_1^{G(\beta)}(\mathcal{O}_\nu(n))$ denotes the first equivariant Chern class of the $G(\beta)$-equivariant line bundle $\mathcal{O}_\nu(n)$ on $\tF_\nu$ whose fiber at $(x,F^\bullet)$ is $F^{n-1}/F^n$.
The second isomorphism $Z_\beta \simeq \Hom^\bullet_{G(\beta)}(\cL_\beta, \cL_\beta)$ is an equivariant version of the isomorphism in \cite[Section 8.6]{CG}.

In what follows, we fix $j \in J$ and put $\beta = \alpha_{\ii, j}$ for the sake of brevity. 
Let $M^\bullet$ denote the RHS of the desired isomorphism.
We have
\begin{align}
M^\bullet & =
\Hom^\bullet_{G(\beta)}((i_{O(\beta)})_! \ul{\kk}_{O(\beta)}, \cL_{\beta})\langle \dim X(\beta) \rangle \\
& \simeq \rH^\bullet_{G(\beta)}(i_{O(\beta)}^* \cL_\beta [-\dim X(\beta)]) \\
& \simeq \rH^\bullet_{\C^\times}(i_{x(\beta)}^* \cL_\beta[- \dim X(\beta)]),
\end{align}
where the last isomorphism comes from the induction equivalence together with $\Stab_{G(\beta)} x(\beta) = \C^\times \id_{V^\beta} \simeq \C^\times$.
By the definition of $\cL_\beta$ and the decomposition theorem, we have
\begin{equation} \label{eq:iL=Hk}
i_{x(\beta)}^* \cL_\beta[- \dim X(\beta)] \simeq \bigoplus_{\nu \in I^\beta} \rH^\bullet(\tF_\nu (x(\beta)), \kk) \langle  d(\nu, \beta)  \rangle\otimes \ul{\kk}_{\{ x(\beta)\}}, 
\end{equation}
where $\tF_\nu (x(\beta)) \seq p_3^{-1}(x(\beta)) \subset \tF_\nu$ denotes the variety  of $I$-graded flags stable under $x(\beta)$, and $d(\nu, \beta) \seq - \dim \tF_\nu + \dim X(\beta)$. 
Note that $i_{x(\beta)}^* \IC(\bd)$ is isomorphic to $\ul{\kk}_{\{x(\beta)\} }[\dim X(\beta)]$ if $\bd = \bdelta_j$, and zero otherwise.
Thus, the decomposition \eqref{eq:DTL} implies an isomorphism
\[i_{x(\beta)}^* \cL_\beta[- \dim X(\beta)] \simeq L^\bullet_{\ii,  j } \otimes \ul{\kk}_{\{ x(\beta)\}}.
\]
Comparing this with \eqref{eq:iL=Hk}, we get an isomorphism
\begin{equation} \label{eq:L=H}
L^\bullet_{\ii, j} \simeq \bigoplus_{\nu \in I^\beta} \rH^\bullet(\tF_\nu(x(\beta)), \kk) \langle  d(\nu, \beta)  \rangle 
\end{equation}
of graded vector spaces. 
By construction, the $H_\beta$-action on $L_{\ii,j}^\bullet$ corresponds to the convolution action of $Z_\beta$ on the RHS of \eqref{eq:L=H}. 
On the other hand, from \eqref{eq:iL=Hk}, we get
\begin{equation} \label{eq:M=H}
 M^\bullet \simeq \bigoplus_{\nu \in I^\beta} \rH^\bullet(\tF_\nu (x(\beta)), \kk) \langle  d(\nu, \beta)  \rangle\otimes \rH^\bullet_{\C^\times}(\pt, \kk). 
\end{equation}
Through the isomorphisms  $H_\beta \simeq Z_\beta$  and \eqref{eq:M=H}, the $H_\beta$-action on $M^\bullet$ is translated into the convolution action of $Z_\beta$ on the RHS of \eqref{eq:M=H}. 
In particular, the action of the element $x_n e(\nu)$ on $M^\bullet$ corresponds to the multiplication of the equivariant Chern class $c_1^{\C^\times}(\mathcal{O}_\nu(n)|_{\tF_\nu(x(\beta))})$ on the RHS of \eqref{eq:M=H}.
Let us make an identification $\rH^\bullet_{\C^\times}(\pt) = \kk[z]$ such that $z$ represents the fundamental weight of $\C^\times$. 
Since the $\C^\times$-action on the fibers of $\mathcal{O}_\nu(n)$ is of weight $1$, we have 
\[c_1^{\C^\times}(\mathcal{O}_\nu(n)|_{\tF_\nu(x(\beta))}) = c_1(\mathcal{O}_\nu(n)|_{\tF_\nu(x(\beta))}) + z.\]
This matches with the formula \eqref{eq:affx} (with $a(z) = z$) defining the action of $x_n$ on the affinization $L^\bullet_{\ii,j}[z]$.
Thus, the isomorphisms \eqref{eq:L=H} and \eqref{eq:M=H} yield an isomorphism $M^\bullet \simeq L^\bullet_{\ii, j} [z]$ of graded $H_\beta$-modules, which completes the proof.

%\bibliography{ref}
%\bibliographystyle{alpha}

% \bib, bibdiv, biblist are defined by the amsrefs package.
\begin{bibdiv}
\begin{biblist}

\bib{AcharBook}{book}{
      author={Achar, Pramod~N.},
       title={Perverse sheaves and applications to representation theory},
      series={Mathematical Surveys and Monographs},
   publisher={American Mathematical Society, Providence, RI},
        date={[2021] \copyright 2021},
      volume={258},
        ISBN={978-1-4704-5597-2},
         url={https://doi.org/10.1090/surv/258},
      review={\MR{4337423}},
}

\bib{BZ05}{article}{
      author={Berenstein, Arkady},
      author={Zelevinsky, Andrei},
       title={Quantum cluster algebras},
        date={2005},
        ISSN={0001-8708},
     journal={Adv. Math.},
      volume={195},
      number={2},
       pages={405\ndash 455},
         url={https://doi.org/10.1016/j.aim.2004.08.003},
      review={\MR{2146350}},
}

\bib{BL94}{book}{
      author={Bernstein, Joseph},
      author={Lunts, Valery},
       title={Equivariant sheaves and functors},
      series={Lecture Notes in Mathematics},
   publisher={Springer-Verlag, Berlin},
        date={1994},
      volume={1578},
        ISBN={3-540-58071-9},
         url={https://doi.org/10.1007/BFb0073549},
      review={\MR{1299527}},
}

\bib{BB}{incollection}{
      author={Be\u{\i}linson, A.},
      author={Bernstein, J.},
       title={A proof of {J}antzen conjectures},
        date={1993},
   booktitle={I. {M}. {G}el\cprime fand {S}eminar},
      series={Adv. Soviet Math.},
      volume={16, Part 1},
   publisher={Amer. Math. Soc., Providence, RI},
       pages={1\ndash 50},
      review={\MR{1237825}},
}

\bib{BBDVW22}{article}{
      author={Blundell, Charles},
      author={Buesing, Lars},
      author={Davies, Alex},
      author={Veli\v{c}kovi\'{c}, Petar},
      author={Williamson, Geordie},
       title={Towards combinatorial invariance for {K}azhdan-{L}usztig
  polynomials},
        date={2022},
     journal={Represent. Theory},
      volume={26},
       pages={1145\ndash 1191},
         url={https://doi.org/10.1090/ert/624},
      review={\MR{4510816}},
}

\bib{Braden}{article}{
      author={Braden, Tom},
       title={Hyperbolic localization of intersection cohomology},
        date={2003},
        ISSN={1083-4362},
     journal={Transform. Groups},
      volume={8},
      number={3},
       pages={209\ndash 216},
         url={https://doi.org/10.1007/s00031-003-0606-4},
      review={\MR{1996415}},
}

\bib{cw}{article}{
      author={Cautis, Sabin},
      author={Williams, Harold},
       title={Cluster theory of the coherent {S}atake category},
        date={2019},
        ISSN={0894-0347},
     journal={J. Amer. Math. Soc.},
      volume={32},
      number={3},
       pages={709\ndash 778},
         url={https://doi.org/10.1090/jams/918},
      review={\MR{3981987}},
}

\bib{Chari}{article}{
      author={Chari, Vyjayanthi},
       title={Braid group actions and tensor products},
        date={2002},
        ISSN={1073-7928},
     journal={Int. Math. Res. Not.},
      number={7},
       pages={357\ndash 382},
         url={https://doi.org/10.1155/S107379280210612X},
      review={\MR{1883181}},
}

\bib{CP}{book}{
      author={Chari, Vyjayanthi},
      author={Pressley, Andrew},
       title={A guide to quantum groups},
   publisher={Cambridge University Press, Cambridge},
        date={1995},
        ISBN={0-521-55884-0},
        note={Corrected reprint of the 1994 original},
      review={\MR{1358358}},
}

\bib{CG}{book}{
      author={Chriss, Neil},
      author={Ginzburg, Victor},
       title={Representation theory and complex geometry},
   publisher={Birkh\"{a}user Boston, Inc., Boston, MA},
        date={1997},
        ISBN={0-8176-3792-3},
      review={\MR{1433132}},
}

\bib{DW}{book}{
      author={Derksen, Harm},
      author={Weyman, Jerzy},
       title={An introduction to quiver representations},
      series={Graduate Studies in Mathematics},
   publisher={American Mathematical Society, Providence, RI},
        date={2017},
      volume={184},
        ISBN={978-1-4704-2556-2},
         url={https://doi.org/10.1090/gsm/184},
      review={\MR{3727119}},
}

\bib{ES}{unpublished}{
      author={Etingof, Pavel},
      author={Semenyakin, Mykola},
       title={A brief introduction to quantum groups},
        date={2021},
         url={https://arxiv.org/abs/2106.05252},
        note={Preprint, \arxiv{2106.05252}},
}

\bib{efk}{book}{
      author={Etingof, Pavel~I.},
      author={Frenkel, Igor~B.},
      author={Kirillov, Alexander~A., Jr.},
       title={Lectures on representation theory and {K}nizhnik-{Z}amolodchikov
  equations},
      series={Mathematical Surveys and Monographs},
   publisher={American Mathematical Society, Providence, RI},
        date={1998},
      volume={58},
        ISBN={0-8218-0496-0},
         url={https://doi.org/10.1090/surv/058},
      review={\MR{1629472}},
}

\bib{FW14}{article}{
      author={Fiebig, Peter},
      author={Williamson, Geordie},
       title={Parity sheaves, moment graphs and the {$p$}-smooth locus of
  {S}chubert varieties},
        date={2014},
        ISSN={0373-0956},
     journal={Ann. Inst. Fourier (Grenoble)},
      volume={64},
      number={2},
       pages={489\ndash 536},
         url={http://aif.cedram.org/item?id=AIF_2014__64_2_489_0},
      review={\MR{3330913}},
}

\bib{FM01}{article}{
      author={Frenkel, Edward},
      author={Mukhin, Evgeny},
       title={Combinatorics of {$q$}-characters of finite-dimensional
  representations of quantum affine algebras},
        date={2001},
        ISSN={0010-3616},
     journal={Comm. Math. Phys.},
      volume={216},
      number={1},
       pages={23\ndash 57},
         url={https://doi.org/10.1007/s002200000323},
      review={\MR{1810773}},
}

%\bib{FR99}{incollection}{
%      author={Frenkel, Edward},
%      author={Reshetikhin, Nicolai},
%       title={The {$q$}-characters of representations of quantum affine
%  algebras and deformations of {$\mathscr{W}$}-algebras},
%        date={1999},
%   booktitle={Recent developments in quantum affine algebras and related topics
%  ({R}aleigh, {NC}, 1998)},
%      series={Contemp. Math.},
%      volume={248},
%   publisher={Amer. Math. Soc., Providence, RI},
%       pages={163\ndash 205},
%         url={https://doi.org/10.1090/conm/248/03823},
%      review={\MR{1745260}},
%}

\bib{FR}{incollection}{
      author={Frenkel, Edward},
      author={Reshetikhin, Nicolai},
       title={The {$q$}-characters of representations of quantum affine
  algebras and deformations of {$\scr W$}-algebras},
        date={1999},
   booktitle={Recent developments in quantum affine algebras and related topics
  ({R}aleigh, {NC}, 1998)},
      series={Contemp. Math.},
      volume={248},
   publisher={Amer. Math. Soc., Providence, RI},
       pages={163\ndash 205},
         url={https://doi.org/10.1090/conm/248/03823},
      review={\MR{1745260}},
}

\bib{ifre}{article}{
      author={Frenkel, Igor},
      author={Reshetikhin, Nicolai},
       title={Quantum affine algebras and holonomic difference equations},
        date={1992},
        ISSN={0010-3616},
     journal={Comm. Math. Phys.},
      volume={146},
      number={1},
       pages={1\ndash 60},
         url={http://projecteuclid.org/euclid.cmp/1104249974},
      review={\MR{1163666}},
}

\bib{Fuj}{article}{
      author={Fujita, Ryo},
       title={Graded quiver varieties and singularities of normalized
  {$R$}-matrices for fundamental modules},
        date={2022},
        ISSN={1022-1824},
     journal={Selecta Math. (N.S.)},
      volume={28},
      number={1},
       pages={Paper No. 2, 45},
         url={https://doi.org/10.1007/s00029-021-00715-5},
      review={\MR{4333501}},
}

\bib{FHOO2}{unpublished}{
      author={Fujita, Ryo},
      author={Hernandez, David},
      author={Oh, Se-jin},
      author={Oya, Hironori},
       title={Isomorphisms among quantum {G}rothendieck rings and cluster
  algebras},
         url={https://arxiv.org/abs/2304.02562},
        note={preprint. \arxiv{2304.02562}v2},
}

\bib{FHOO}{article}{
      author={Fujita, Ryo},
      author={Hernandez, David},
      author={Oh, Se-jin},
      author={Oya, Hironori},
       title={Isomorphisms among quantum {G}rothendieck rings and propagation
  of positivity},
        date={2022},
        ISSN={0075-4102},
     journal={J. Reine Angew. Math.},
      volume={785},
       pages={117\ndash 185},
         url={https://doi.org/10.1515/crelle-2021-0088},
      review={\MR{4402493}},
}

\bib{FO21}{article}{
      author={Fujita, Ryo},
      author={Oh, Se-jin},
       title={Q-data and representation theory of untwisted quantum affine
  algebras},
        date={2021},
        ISSN={0010-3616},
     journal={Comm. Math. Phys.},
      volume={384},
      number={2},
       pages={1351\ndash 1407},
         url={https://doi.org/10.1007/s00220-021-04028-8},
      review={\MR{4259388}},
}

\bib{GJ}{article}{
      author={Gabber, O.},
      author={Joseph, A.},
       title={Towards the {K}azhdan-{L}usztig conjecture},
        date={1981},
        ISSN={0012-9593},
     journal={Ann. Sci. \'{E}cole Norm. Sup. (4)},
      volume={14},
      number={3},
       pages={261\ndash 302},
         url={http://www.numdam.org/item?id=ASENS_1981_4_14_3_261_0},
      review={\MR{644519}},
}

\bib{GLS13}{article}{
      author={Gei\ss, C.},
      author={Leclerc, B.},
      author={Schr\"{o}er, J.},
       title={Cluster structures on quantum coordinate rings},
        date={2013},
        ISSN={1022-1824},
     journal={Selecta Math. (N.S.)},
      volume={19},
      number={2},
       pages={337\ndash 397},
         url={https://doi.org/10.1007/s00029-012-0099-x},
      review={\MR{3090232}},
}

\bib{GM2}{article}{
      author={Goresky, Mark},
      author={MacPherson, Robert},
       title={Intersection homology. {II}},
        date={1983},
        ISSN={0020-9910,1432-1297},
     journal={Invent. Math.},
      volume={72},
      number={1},
       pages={77\ndash 129},
         url={https://doi.org/10.1007/BF01389130},
      review={\MR{696691}},
}

\bib{Groj}{unpublished}{
      author={Grojnowski, Ian},
       title={{J}antzen filtrations},
        date={1996},
        note={unpublished note},
}

\bib{H1}{article}{
      author={Hernandez, David},
       title={Algebraic approach to {$q,t$}-characters},
        date={2004},
        ISSN={0001-8708},
     journal={Adv. Math.},
      volume={187},
      number={1},
       pages={1\ndash 52},
         url={https://doi.org/10.1016/j.aim.2003.07.016},
      review={\MR{2074171}},
}

\bib{Her05}{article}{
      author={Hernandez, David},
       title={Monomials of {$q$} and {$q, t$}-characters for non simply-laced
  quantum affinizations},
        date={2005},
        ISSN={0025-5874},
     journal={Math. Z.},
      volume={250},
      number={2},
       pages={443\ndash 473},
         url={https://doi.org/10.1007/s00209-005-0762-4},
      review={\MR{2178794}},
}

\bib{HL10}{article}{
      author={Hernandez, David},
      author={Leclerc, Bernard},
       title={Cluster algebras and quantum affine algebras},
        date={2010},
        ISSN={0012-7094},
     journal={Duke Math. J.},
      volume={154},
      number={2},
       pages={265\ndash 341},
         url={https://doi.org/10.1215/00127094-2010-040},
      review={\MR{2682185}},
}

\bib{HL15}{article}{
      author={Hernandez, David},
      author={Leclerc, Bernard},
       title={Quantum {G}rothendieck rings and derived {H}all algebras},
        date={2015},
        ISSN={0075-4102},
     journal={J. Reine Angew. Math.},
      volume={701},
       pages={77\ndash 126},
         url={https://doi.org/10.1515/crelle-2013-0020},
      review={\MR{3331727}},
}

\bib{J}{book}{
      author={Jantzen, Jens~Carsten},
       title={Moduln mit einem h\"{o}chsten {G}ewicht},
      series={Lecture Notes in Mathematics},
   publisher={Springer, Berlin},
        date={1979},
      volume={750},
        ISBN={3-540-09558-6},
      review={\MR{552943}},
}

\bib{J2}{book}{
      author={Jantzen, Jens~Carsten},
       title={Representations of algebraic groups},
     edition={Second},
      series={Mathematical Surveys and Monographs},
   publisher={American Mathematical Society, Providence, RI},
        date={2003},
      volume={107},
        ISBN={0-8218-3527-0},
      review={\MR{2015057}},
}

\bib{KKK15}{article}{
      author={Kang, Seok-Jin},
      author={Kashiwara, Masaki},
      author={Kim, Myungho},
       title={Symmetric quiver {H}ecke algebras and {$R$}-matrices of quantum
  affine algebras, {II}},
        date={2015},
        ISSN={0012-7094,1547-7398},
     journal={Duke Math. J.},
      volume={164},
      number={8},
       pages={1549\ndash 1602},
         url={https://doi.org/10.1215/00127094-3119632},
      review={\MR{3352041}},
}

\bib{KKK}{article}{
      author={Kang, Seok-Jin},
      author={Kashiwara, Masaki},
      author={Kim, Myungho},
       title={Symmetric quiver {H}ecke algebras and {R}-matrices of quantum
  affine algebras},
        date={2018},
        ISSN={0020-9910},
     journal={Invent. Math.},
      volume={211},
      number={2},
       pages={591\ndash 685},
         url={https://doi.org/10.1007/s00222-017-0754-0},
      review={\MR{3748315}},
}

\bib{kkko}{article}{
      author={Kang, Seok-Jin},
      author={Kashiwara, Masaki},
      author={Kim, Myungho},
      author={Oh, Se-jin},
       title={Simplicity of heads and socles of tensor products},
        date={2015},
        ISSN={0010-437X},
     journal={Compos. Math.},
      volume={151},
      number={2},
       pages={377\ndash 396},
         url={https://doi.org/10.1112/S0010437X14007799},
      review={\MR{3314831}},
}

\bib{Kas}{article}{
      author={Kashiwara, Masaki},
       title={On level-zero representations of quantized affine algebras},
        date={2002},
        ISSN={0012-7094},
     journal={Duke Math. J.},
      volume={112},
      number={1},
       pages={117\ndash 175},
         url={https://doi.org/10.1215/S0012-9074-02-11214-9},
      review={\MR{1890649}},
}

\bib{KK19}{article}{
      author={Kashiwara, Masaki},
      author={Kim, Myungho},
       title={Laurent phenomenon and simple modules of quiver {H}ecke
  algebras},
        date={2019},
        ISSN={0010-437X},
     journal={Compos. Math.},
      volume={155},
      number={12},
       pages={2263\ndash 2295},
         url={https://doi.org/10.1112/s0010437x19007565},
      review={\MR{4016058}},
}

\bib{KKOP18}{article}{
      author={Kashiwara, Masaki},
      author={Kim, Myungho},
      author={Oh, Se-jin},
      author={Park, Euiyong},
       title={Monoidal categories associated with strata of flag manifolds},
        date={2018},
        ISSN={0001-8708},
     journal={Adv. Math.},
      volume={328},
       pages={959\ndash 1009},
         url={https://doi.org/10.1016/j.aim.2018.02.013},
      review={\MR{3771147}},
}

\bib{kkop0}{article}{
      author={Kashiwara, Masaki},
      author={Kim, Myungho},
      author={Oh, Se-jin},
      author={Park, Euiyong},
       title={Monoidal categorification and quantum affine algebras},
        date={2020},
        ISSN={0010-437X},
     journal={Compos. Math.},
      volume={156},
      number={5},
       pages={1039\ndash 1077},
         url={https://doi.org/10.1112/s0010437x20007137},
      review={\MR{4094378}},
}

\bib{KKOPaff}{article}{
      author={Kashiwara, Masaki},
      author={Kim, Myungho},
      author={Oh, Se-jin},
      author={Park, Euiyong},
       title={Affinizations, {R}-matrices and reflection functors},
        date={2024},
        ISSN={0001-8708,1090-2082},
     journal={Adv. Math.},
      volume={443},
       pages={Paper No. 109598, 83},
         url={https://doi.org/10.1016/j.aim.2024.109598},
      review={\MR{4717658}},
}

\bib{KKOP}{article}{
      author={Kashiwara, Masaki},
      author={Kim, Myungho},
      author={Oh, Se-jin},
      author={Park, Euiyong},
       title={P{BW} theory for quantum affine algebras},
        date={2024},
        ISSN={1435-9855,1435-9863},
     journal={J. Eur. Math. Soc. (JEMS)},
      volume={26},
      number={7},
       pages={2679\ndash 2743},
         url={https://doi.org/10.4171/jems/1323},
      review={\MR{4756573}},
}

\bib{KO19}{article}{
      author={Kashiwara, Masaki},
      author={Oh, Se-jin},
       title={Categorical relations between {L}anglands dual quantum affine
  algebras: doubly laced types},
        date={2019},
        ISSN={0925-9899,1572-9192},
     journal={J. Algebraic Combin.},
      volume={49},
      number={4},
       pages={401\ndash 435},
         url={https://doi.org/10.1007/s10801-018-0829-z},
      review={\MR{3954429}},
}

\bib{KP18}{article}{
      author={Kashiwara, Masaki},
      author={Park, Euiyong},
       title={Affinizations and {R}-matrices for quiver {H}ecke algebras},
        date={2018},
        ISSN={1435-9855},
     journal={J. Eur. Math. Soc. (JEMS)},
      volume={20},
      number={5},
       pages={1161\ndash 1193},
         url={https://doi.org/10.4171/JEMS/785},
      review={\MR{3790066}},
}

\bib{Kato20}{article}{
      author={Kato, Syu},
       title={On the monoidality of {S}aito reflection functors},
        date={2020},
        ISSN={1073-7928,1687-0247},
     journal={Int. Math. Res. Not. IMRN},
      number={22},
       pages={8600\ndash 8623},
         url={https://doi.org/10.1093/imrn/rny233},
      review={\MR{4216698}},
}

\bib{KL}{article}{
      author={Kazhdan, David},
      author={Lusztig, George},
       title={Representations of {C}oxeter groups and {H}ecke algebras},
        date={1979},
        ISSN={0020-9910},
     journal={Invent. Math.},
      volume={53},
      number={2},
       pages={165\ndash 184},
         url={https://doi.org/10.1007/BF01390031},
      review={\MR{560412}},
}

\bib{KS}{article}{
      author={Keller, Bernhard},
      author={Scherotzke, Sarah},
       title={Graded quiver varieties and derived categories},
        date={2016},
        ISSN={0075-4102},
     journal={J. Reine Angew. Math.},
      volume={713},
       pages={85\ndash 127},
         url={https://doi.org/10.1515/crelle-2013-0124},
      review={\MR{3483626}},
}

\bib{KL09}{article}{
      author={Khovanov, Mikhail},
      author={Lauda, Aaron~D.},
       title={A diagrammatic approach to categorification of quantum groups.
  {I}},
        date={2009},
     journal={Represent. Theory},
      volume={13},
       pages={309\ndash 347},
         url={https://doi.org/10.1090/S1088-4165-09-00346-X},
      review={\MR{2525917}},
}

\bib{Kimura}{article}{
      author={Kimura, Yoshiyuki},
       title={Quantum unipotent subgroup and dual canonical basis},
        date={2012},
        ISSN={2156-2261},
     journal={Kyoto J. Math.},
      volume={52},
      number={2},
       pages={277\ndash 331},
         url={https://doi.org/10.1215/21562261-1550976},
      review={\MR{2914878}},
}

\bib{Kub12}{article}{
      author={K\"{u}bel, Johannes},
       title={Tilting modules in category {$\scr{O}$} and sheaves on moment
  graphs},
        date={2012},
        ISSN={0021-8693,1090-266X},
     journal={J. Algebra},
      volume={371},
       pages={559\ndash 576},
         url={https://doi.org/10.1016/j.jalgebra.2012.09.008},
      review={\MR{2975413}},
}

\bib{Lus90}{article}{
      author={Lusztig, G.},
       title={Canonical bases arising from quantized enveloping algebras},
        date={1990},
        ISSN={0894-0347,1088-6834},
     journal={J. Amer. Math. Soc.},
      volume={3},
      number={2},
       pages={447\ndash 498},
         url={https://doi.org/10.2307/1990961},
      review={\MR{1035415}},
}

\bib{LusB}{book}{
      author={Lusztig, George},
       title={Introduction to quantum groups},
      series={Progress in Mathematics},
   publisher={Birkh\"{a}user Boston, Inc., Boston, MA},
        date={1993},
      volume={110},
        ISBN={0-8176-3712-5},
      review={\MR{1227098}},
}

\bib{Lus97}{incollection}{
      author={Lusztig, George},
       title={Canonical bases and {H}all algebras},
        date={1998},
   booktitle={Representation theories and algebraic geometry ({M}ontreal, {PQ},
  1997)},
      series={NATO Adv. Sci. Inst. Ser. C: Math. Phys. Sci.},
      volume={514},
   publisher={Kluwer Acad. Publ., Dordrecht},
       pages={365\ndash 399},
      review={\MR{1653038}},
}

\bib{Nak01}{article}{
      author={Nakajima, Hiraku},
       title={Quiver varieties and finite-dimensional representations of
  quantum affine algebras},
        date={2001},
        ISSN={0894-0347},
     journal={J. Amer. Math. Soc.},
      volume={14},
      number={1},
       pages={145\ndash 238},
         url={https://doi.org/10.1090/S0894-0347-00-00353-2},
      review={\MR{1808477}},
}

\bib{Naktns}{article}{
      author={Nakajima, Hiraku},
       title={Quiver varieties and tensor products},
        date={2001},
        ISSN={0020-9910},
     journal={Invent. Math.},
      volume={146},
      number={2},
       pages={399\ndash 449},
         url={https://doi.org/10.1007/PL00005810},
      review={\MR{1865400}},
}

\bib{Nak04}{article}{
      author={Nakajima, Hiraku},
       title={Quiver varieties and {$t$}-analogs of {$q$}-characters of quantum
  affine algebras},
        date={2004},
        ISSN={0003-486X},
     journal={Ann. of Math. (2)},
      volume={160},
      number={3},
       pages={1057\ndash 1097},
         url={https://doi.org/10.4007/annals.2004.160.1057},
      review={\MR{2144973}},
}

\bib{Naoi21}{article}{
      author={Naoi, Katsuyuki},
       title={Equivalence between module categories over quiver {H}ecke
  algebras and {H}ernandez-{L}eclerc's categories in general types},
        date={2021},
        ISSN={0001-8708,1090-2082},
     journal={Adv. Math.},
      volume={389},
       pages={Paper No. 107916, 47},
         url={https://doi.org/10.1016/j.aim.2021.107916},
      review={\MR{4290135}},
}

\bib{OS19}{article}{
      author={Oh, Se-jin},
      author={Scrimshaw, Travis},
       title={Categorical relations between {L}anglands dual quantum affine
  algebras: exceptional cases},
        date={2019},
        ISSN={0010-3616,1432-0916},
     journal={Comm. Math. Phys.},
      volume={368},
      number={1},
       pages={295\ndash 367},
         url={https://doi.org/10.1007/s00220-019-03287-w},
      review={\MR{3946410}},
}

\bib{Rou08}{unpublished}{
      author={Rouquier, Rapha\"{e}l},
       title={2-{K}ac-{M}oody algebras},
        note={preprint. \arxiv{0812.5023}},
}

\bib{Rou12}{article}{
      author={Rouquier, Rapha\"{e}l},
       title={Quiver {H}ecke algebras and 2-{L}ie algebras},
        date={2012},
        ISSN={1005-3867},
     journal={Algebra Colloq.},
      volume={19},
      number={2},
       pages={359\ndash 410},
         url={https://doi.org/10.1142/S1005386712000247},
      review={\MR{2908731}},
}

\bib{Soe08}{article}{
      author={Soergel, Wolfgang},
       title={Andersen filtration and hard {L}efschetz},
        date={2008},
        ISSN={1016-443X,1420-8970},
     journal={Geom. Funct. Anal.},
      volume={17},
      number={6},
       pages={2066\ndash 2089},
         url={https://doi.org/10.1007/s00039-007-0640-9},
      review={\MR{2399092}},
}

\bib{VV}{article}{
      author={Varagnolo, M.},
      author={Vasserot, E.},
       title={Standard modules of quantum affine algebras},
        date={2002},
        ISSN={0012-7094},
     journal={Duke Math. J.},
      volume={111},
      number={3},
       pages={509\ndash 533},
         url={https://doi.org/10.1215/S0012-7094-02-11135-1},
      review={\MR{1885830}},
}

\bib{VV03}{incollection}{
      author={Varagnolo, M.},
      author={Vasserot, E.},
       title={Perverse sheaves and quantum {G}rothendieck rings},
        date={2003},
   booktitle={Studies in memory of {I}ssai {S}chur ({C}hevaleret/{R}ehovot,
  2000)},
      series={Progr. Math.},
      volume={210},
   publisher={Birkh\"{a}user Boston, Boston, MA},
       pages={345\ndash 365},
      review={\MR{1985732}},
}

\bib{VV11}{article}{
      author={Varagnolo, M.},
      author={Vasserot, E.},
       title={Canonical bases and {KLR}-algebras},
        date={2011},
        ISSN={0075-4102},
     journal={J. Reine Angew. Math.},
      volume={659},
       pages={67\ndash 100},
         url={https://doi.org/10.1515/CRELLE.2011.068},
      review={\MR{2837011}},
}

\bib{Wil16}{article}{
      author={Williamson, Geordie},
       title={Local {H}odge theory of {S}oergel bimodules},
        date={2016},
        ISSN={0001-5962,1871-2509},
     journal={Acta Math.},
      volume={217},
      number={2},
       pages={341\ndash 404},
         url={https://doi.org/10.1007/s11511-017-0146-8},
      review={\MR{3689943}},
}

\bib{XZ17}{article}{
      author={Xiao, Jie},
      author={Zhao, Minghui},
       title={Geometric realizations of {L}usztig's symmetries},
        date={2017},
        ISSN={0021-8693,1090-266X},
     journal={J. Algebra},
      volume={475},
       pages={392\ndash 422},
         url={https://doi.org/10.1016/j.jalgebra.2016.09.022},
      review={\MR{3612477}},
}

\end{biblist}
\end{bibdiv}

\end{document}